\documentclass{amsart}

\usepackage{xparse}
\usepackage{etoolbox}
\usepackage{aliascnt}

\usepackage[backend=biber, style=alphabetic, url=false, maxbibnames=99, sorting=nyt]{biblatex}
\addbibresource{sources.bib}

\RequirePackage{doi}

\DeclareFieldFormat{postnote}{#1}
\DeclareFieldFormat{multipostnote}{#1}

\usepackage{courier}
\usepackage[T1]{fontenc}

\usepackage[scaled]{helvet}
\usepackage[mathbf]{euler}
\usepackage[driver=pdftex,margin=3cm,heightrounded=true,centering]{geometry}

\usepackage{enumerate}
\usepackage{amssymb}
\usepackage{amsopn}
\usepackage{amsmath}
\usepackage{mathtools}

\usepackage{tikz}
\usepackage{tikz-cd}
\usepackage{ifthen}
\usepackage{graphics}
\usepackage{hyperref}
\hypersetup{
	colorlinks=true,
	linkcolor=blue,
	filecolor=blue,
	citecolor = blue,
	urlcolor=blue,
	bookmarksdepth=3,
	breaklinks=true,
}

\usepackage{amsthm}
\usepackage{topthm}

\usepackage[textsize = footnotesize]{todonotes}
\setlength{\marginparwidth}{2.5cm}
\usepackage{comment}

\author{Thorben Kastenholz}
\thanks{TK was supported by the DFG (German Research
  Foundation) – SPP 2026, Geometry at Infinity - Project 73: Geometric Chern
  characters in p-adic equivariant K-theory}
\address{University of G\"ottingen, Mathematisches Institut, Bunsenstrasse 3-5, 37073 G\"ottingen, Germany}
\email{thorben.kastenholz@mathematik.uni-goettingen.de}

\author{Robin J. Sroka}
\thanks{RJS was supported by NSERC Discovery Grant A4000 in connection with a
Postdoctoral Fellowship at McMaster University, by the Swedish Research Council
under grant no.\ 2016-06596 while in residence at Institut Mittag-Leffler in
Djursholm, Sweden during the semester \emph{Higher algebraic structures in
algebra, topology and geometry}, and by the European Research Council (ERC
grant agreement No.772960) and the Danish National Research Foundation (DNRF92,
DNRF151) as a PhD Fellow at the University of Copenhagen.}
\address{University of Münster, Mathematisches Institut, Einsteinstrasse 62, 48149 Münster, Germany}
\email{robinjsroka@uni-muenster.de}

\title{Simplicial bounded cohomology and stability}

\begin{document}

\newcommand{\introduce}[1]
  {\emph{#1}}
\newcommand{\tk}[1]
  {\todo[size=\tiny,color=green!40]{TK: #1}}
\newcommand{\rs}[1]
  {\todo[size=\tiny,color=blue!40]{RS: #1}}
\newcommand\blfootnote[1]{
  \begingroup
  \renewcommand\thefootnote{}\footnote{#1}
  \addtocounter{footnote}{-1}
  \endgroup
}

\newcommand{\on}[1]{\operatorname{#1}}
\newcommand{\apply}[2]
  {{#1}\!\left({#2}\right)}
\newcommand{\at}[2]
  {\left.{#1}\right\rvert_{#2}}
\newcommand{\Identity}
  {\mathrm{Id}}
\newcommand{\NaturalNumbers}
  {\mathbb{N}}
\newcommand{\Integers}
  {\mathbb{Z}}
\newcommand{\Rationals}
  {\mathbb{Q}}
\newcommand{\Reals}
  {\mathbb{R}}
\newcommand{\ComplexNumbers}
  {\mathbb{C}}
\newcommand{\Field}
  {\mathbb{K}}
\newcommand{\SubVectorSpace}
  {V}
\newcommand{\AbstractProjection}[1]
  {p_{#1}}
\newcommand{\Norm}[1]
  {\left|\left| #1 \right|\right|}
\newcommand{\AbsoluteValue}[1]
  {\left| #1 \right|}
\newcommand{\Inclusion}
  {\iota}
\newcommand{\Factorial}[1]
  {#1!}
\newcommand{\ellone}
  {\ell^{1}}
\newcommand{\Retraction}
  {r}
\newcommand{\im}
  {Im}
\newcommand{\Colim}
  {\mathrm{colim}}

\newcommand{\Ring}
  {R}
\newcommand{\StableRank}
  {sr}
\newcommand{\Module}
  {M}
\newcommand{\Group}
  {\Gamma}
\newcommand{\Subgroup}
  {H}
\newcommand{\GroupElement}
  {g}
\newcommand{\Genus}
  {g}
\newcommand{\QuadraticModule}
  {\mathbf{M}}
\newcommand{\WittIndex}[1]
  {\apply{\Genus}{#1}}
\newcommand{\StableWittIndex}[1]
  {\apply{\overline{\Genus}}{#1}}
\newcommand{\ChainContraction}[1]
  {H_{#1}}
\newcommand{\GLnShort}[1]
  {\mathrm{GL}_{#1}}
\newcommand{\GLn}[2]
  {\apply{\GLnShort{#1}}{#2}}
\newcommand{\Hom}
  {\mathrm{Hom}}
\newcommand{\HomSpace}[2]
  {\apply{\Hom}{#1, #2}}
\newcommand{\BoundaryChainComplex}[1]
  {\partial_{#1}}
\newcommand{\AbstractChainComplex}[1]
  {C_{#1}}

\newcommand{\SectionSetup}
  {\operatorname{s}}
\newcommand{\EpimorphismSetup}
  {\pi}
\newcommand{\Stabilizer}
  {H}
\newcommand{\CompatibilityFunction}
  {\tau}
\newcommand{\AcyclicityFunction}
  {\gamma}

\newcommand{\HomologyClass}
  {\alpha}
\newcommand{\HomologyOfSpaceObject}[3]
  {\apply{H_{#1}}{#2 ; #3}}
\newcommand{\ReducedHomologyOfSpaceObject}[3]
  {\apply{\widetilde{H}_{#1}}{#2 ; #3}}
\newcommand{\CohomologyOfSpaceObject}[3]
  {\apply{H^{#1}}{#2 ; #3}}
\newcommand{\BoundedCohomologyOfSpaceObject}[3]
  {\apply{H^{#1}_{\text{b}}}{#2 ; #3}}
\newcommand{\ContinuousBoundedCohomologyOfSpaceObject}[3]
  {\apply{H^{#1}_{\text{b,c}}}{#2 ; #3}}
\newcommand{\BoundedCohomologySymbol}{H^{*}_{\text{b}}}
\newcommand{\BoundedCohomologyOfSimplicialObject}[3]
  {\apply{H^{#1}_{\text{b, s}}}{#2 ; #3}}
\newcommand{\ReducedBoundedCohomologyOfSimplicialObject}[3]
  {\apply{\widetilde{H}^{#1}_{\text{b, s}}}{#2 ; #3}}
\newcommand{\HomologyOfSpaceMorphism}[1]
  {{#1}_{\ast}}
\newcommand{\HomologyOfGroupObject}[3]
  {\apply{H_{#1}}{#2; #3}}
\newcommand{\HomologyOfGroupMorphism}[1]
  {{#1}_{\ast}}
\newcommand{\HomologyOfSpacePairObject}[3]
  {\apply{H_{#1}}{{#2},{#3}}}
\newcommand{\Multiple}
  {\lambda}
\newcommand{\CoefficientModule}
  {R}
\newcommand{\Spectralsequence}[3]
  {E^{#1,#2}_{#3}}
\newcommand{\SSDifferential}[3]
  {d^{#1,#2}_{#3}}
\newcommand{\BoundedCohomologyChainComplex}[3]
  {\apply{H^{#1}_{\text{b, s}}}{#2 ; #3}}

\newcommand{\TopologicalSpace}
  {X}
\newcommand{\Point}
  {\ast}
\newcommand{\ContinuousMap}
  {f}
\newcommand{\ContinuousMapALT}
  {g}
\newcommand{\maps}
  {\ensuremath{\text{maps}}}
\newcommand{\HomotopyGroupOfObject}[3]
  {\apply{\pi_{#1}}{{#2},{#3}}}
\newcommand{\HomotopyGroupOfPairObject}[4]
  {\apply{\pi_{#1}}{{#2},{#3},{#4}}}
\newcommand{\HomotopyGroupMorphism}[1]
  {{#1}_{\ast}}
\newcommand{\EMSpace}[2]
  {\apply{K}{{#1},{#2}}}
\newcommand{\ClassifyingSpace}[1]
  {B#1}
\newcommand{\EG}[1]
  {E#1}
\newcommand{\UniversalCovering}[1]
  {\widetilde{#1}}
\newcommand{\UniversalCoveringMap}[1]
  {\widetilde{#1}}
\newcommand{\HomotopyQuotient}[2]
  {#1 \times_{#2} \EG{#2}}

\newcommand{\SimplicialComplex}
  {X}
\newcommand{\SimplicialComplexS}
  {S}
\newcommand{\AuxSimplicialComplex}
  {K}
\newcommand{\Subcomplex}
  {Y}
\newcommand{\AuxSubcomplex}
  {L}
 \newcommand{\ChainComplexWithCoeffients}[3]
  {\apply{C_{#1}}{#2; #3}}
\newcommand{\ChainComplex}[2]
  {\apply{C_{#1}}{#2}}
\newcommand{\ReducedChainComplex}[2]
{\apply{\widetilde{C}_{#1}}{#2}}
\newcommand{\CoChainComplex}[2]
  {\apply{C^{#1}}{#2}}
\newcommand{\BoundedCoChainComplex}[2]
  {\apply{C_{b}^{#1}}{#2}}
\newcommand{\Simplex}[1]
  {\sigma_{#1}}
\newcommand{\Linkfunction}
  {\text{Lk}}
\newcommand{\Link}[2]
  {\apply{\Linkfunction_{#1}}{#2}}
\newcommand{\Starfunction}
  {\text{St}}
\newcommand{\Star}[2]
  {\apply{\Starfunction_{#1}}{#2}}
\newcommand{\BoundaryIndexSimplex}[2]
  {\apply{\partial_{#1}}{#2}}
\newcommand{\BoundarySimplex}
  {\partial}
\newcommand{\FaceMap}
  {d}
\newcommand{\Coboundary}
  {\delta}
\newcommand{\StandardSimplex}[1]
  {\Delta^{#1}}
\newcommand{\Horn}[2]
  {\Lambda_{#1}^{#2}}
\newcommand{\vertex}
  {v}
\newcommand{\GeometricRealization}[1]
  {\left\lvert #1 \right\rvert}
\newcommand{\BoundHomotopy}
  {N}
\newcommand{\Chain}
  {\sigma}
\newcommand{\BoundingChain}
  {\rho}
\newcommand{\Cone}[1]
  {C#1}
\newcommand{\Homotopy}
  {H}
\newcommand{\BarycentricSubdivion}[1]
  {\apply{\operatorname{Sd}}{#1}}

\newcommand{\UBCConstant}[2]
  {K^{#1}_{#2}}
\newcommand{\UBCConstantSuspension}[3]
  {\apply{K^{\Sigma}}{#1, #2, #3}}
\newcommand{\UBCConstantCellAttachment}[3]
  {\apply{K^{\on{Cell}}}{#1, #2, #3}}
\newcommand{\UBCConstantMorseTheory}[4]
  {\apply{K^{\on{MT}}}{#1, #2, #3, #4}}
\newcommand{\UBCConstantMayerVietoris}[4]
  {\apply{K^{\on{MV}}}{#1, #2, #3, #4}}
\newcommand{\UBCConstantTwoOutOfThreeThree}[3]
  {\apply{K^{\on{Three}}}{#1, #2, #3}}
\newcommand{\UBCConstantTwoOutOfThreeTwo}[3]
  {\apply{K^{\on{Two}}}{#1, #2, #3}}
\newcommand{\UBCConstantTwoOutOfThreeOne}[3]
  {\apply{K^{\on{One}}}{#1, #2, #3}}
\newcommand{\UBCConstantFactorThrough}[3]
  {\apply{K^{\on{Fact}}}{#1, #2, #3}}
\newcommand{\UBCConstantOrdConstruction}[2]
  {\apply{K^{\on{ord}}}{#1, #2}}
\newcommand{\UBCConstantOrdConstructionStar}[2]
  {\apply{K^{\on{ord}}_{\on{St}}}{#1, #2}}
\newcommand{\UBCConstantTechnicalLemmaI}[3]
  {\apply{K^{\on{TLI}}}{#1, #2, #3}}
\newcommand{\UBCConstantTechnicalLemmaII}[2]
  {\apply{K^{\on{TLII}}}{#1, #2}}
\newcommand{\UBCConstantIntermediateGLI}[5]
  {\apply{K^{\on{GLI}}}{#1, #2, #3, #4, #5}}
\newcommand{\UBCConstantIntermediateGLIplus}[5]
  {\apply{K^{\on{GLI^+}}}{#1, #2, #3, #4, #5}}
\newcommand{\UBCConstantIntermediateGLIplusP}[6]
  {\apply{K_{P_{#1}}^{\on{GLI^+}}}{#2, #3, #4, #5, #6}}
\newcommand{\UBCConstantIntermediateGLIplusQ}[6]
  {\apply{K_{Q_{#1}}^{\on{GLI^+}}}{#2, #3, #4, #5, #6}}
\newcommand{\UBCConstantIntermediateGLIplusLkQ}[6]
  {\apply{K_{\on{Lk}_{Q_{#1}}}^{\on{GLI^+}}}{#2, #3, #4, #5, #6}}
\newcommand{\UBCConstantIntermediateGLIplusLkQplus}[6]
  {\apply{K_{\on{Lk}_{Q_{#1}}^+}^{\on{GLI^+}}}{#2, #3, #4, #5, #6}}
\newcommand{\UBCConstantIntermediateGLII}[4]
  {\apply{K^{\on{GLII}}}{#1, #2, #3, #4}}
\newcommand{\UBCConstantIntermediateGLIIPOne}[5]
  {\apply{K_{P_{#1},(1)}^{\on{GLII}}}{#2, #3, #4, #5}}
\newcommand{\UBCConstantIntermediateGLIILk}[5]
  {\apply{K_{\on{Lk}_{P_{#1}}}^{\on{GLII}}}{#2, #3, #4, #5}}
\newcommand{\UBCConstantIntermediateGLIIOne}[4]
  {\apply{K^{\on{GLII}}_{(1)}}{#1, #2, #3, #4}}
\newcommand{\UBCConstantIntermediateGLIIPTwo}[5]
  {\apply{K_{P_{#1}, (2)}^{\on{GLII}}}{#2, #3, #4, #5}}
\newcommand{\UBCConstantIntermediateGLIITwo}[4]
  {\apply{K_{(2)}^{\on{GLII}}}{#1, #2, #3, #4}}
\newcommand{\UBCConstantGLOne}[2]
  {\apply{K^{(1)}}{#1, #2}}
\newcommand{\UBCConstantGLTwo}[3]
  {\apply{K^{(2)}}{#1, #2, #3}}
\newcommand{\UBCConstantGLThree}[2]
  {\apply{K^{(3)}}{#1, #2}}
\newcommand{\UBCConstantGLFour}[3]
  {\apply{K^{(4)}}{#1, #2, #3}}

\newcommand{\Poset}
  {F}
\newcommand{\Subposet}
  {S}
\newcommand{\ElementsBelow}[2]
  {#1_{\leq#2}}
\newcommand{\ElementsAbove}[2]
  {#1_{\geq#2}}
\newcommand{\PosetElement}
  {x}
\newcommand{\OrderedSequences}[1]
{\mathcal{O}(#1)}
\newcommand{\UnimodularSequences}[1]
  {\mathcal{U}(#1)}
\newcommand{\TitsBuilding}[2]
  {T^{#1}_{#2}}

\newcommand{\CMub}{\operatorname{CM_{ub}}}
\newcommand{\lCMub}{\operatorname{lCM_{ub}}}
\newcommand{\Ord}[2]{#1^{\operatorname{ord}}_{#2}}

\newcommand{\SpaceOfBoundedOperators}[2]
  {\apply{B}{#1,#2}}

\newcommand{\U}[4]{\apply{\mathrm{U}^{#4}_{#1}}{#2, #3}}
\newcommand{\Aut}[4]{\apply{\mathrm{Aut}^{#4}_{#1}}{#2, #3}}
\newcommand{\FormParameter}{(\epsilon, \Lambda)}
\newcommand{\BilinearForm}{\lambda}
\newcommand{\QuadraticFrom}{\mu}
\newcommand{\KComplex}[1]{\apply{\mathscr{K}^a}{#1}}
\newcommand{\HyperbolicModule}{\mathcal{H}}
\newcommand{\UBCUnitaryGroups}{K^{\on{Aut}}}

\begin{abstract}
	We introduce a set of combinatorial techniques for studying the simplicial
	bounded cohomology of semi-simplicial sets, simplicial complexes and
	posets. We apply these methods to prove several new
	\emph{bounded} acyclicity results for semi-simplicial sets appearing in the
	homological stability literature. Our strategy is to recast classical
	arguments (due to Bestvina, Maazen, van der Kallen, Vogtmann, Charney and, recently,
	Galatius--Randal-Williams) in the setting of bounded cohomology using
	uniformly bounded refinements of well-known simplicial tools. Combined with
	ideas developed by Monod and De la Cruz Mengual--Hartnick, we deduce
	slope-$1/2$ stability results for the bounded cohomology of two
	large classes of linear groups: general linear groups over any ring with
	finite Bass stable rank and certain automorphism groups of quadratic modules over
	the integers or any field of characteristic zero. We expect that many other results in the
	literature on homological stability admit bounded cohomological analogues
	by applying the blueprint provided in this work.
\end{abstract}

\maketitle
\setcounter{tocdepth}{1}
\tableofcontents

\section{Introduction}
Bounded cohomology is a ``norm-enriched'' version of group cohomology. It
assigns to a group $\Group$ a sequence of real vector spaces
$\{\BoundedCohomologyOfSpaceObject{q}{\Group}{\Reals}\}_{q \in \NaturalNumbers}$,
each equipped with a seminorm. There is a natural comparison map
\begin{equation}
	\label{eq:comparison-map-groups}
\BoundedCohomologyOfSpaceObject{q}{\Group}{\Reals} \to \CohomologyOfSpaceObject{q}{\Group}{\Reals}
\end{equation}
to the usual group cohomology of $\Group$ with trivial $\Reals$-coefficients. For
discrete groups bounded cohomology was introduced by Johnson and Trauber
\cite{johnson1972}, and further pioneered by Gromov \cite{gromov1982}, who used
it to study characteristic classes of flat $\on{GL}_n$-bundles and the volume
of manifolds. For (locally compact) topological groups, the theory was
developed by Burger--Monod \cite{burgermonod2002} and is referred to as
continuous bounded cohomology. Since then (continuous) bounded cohomology has
found many applications in geometric topology, e.g.~ \cite{dupont1978,
matsumotomorita1985boundedcohomologyofcertaingroupsofhomeomorphisms,
bucherkarlsson2007,MonodStabilization, MonodSemiSimple, NarimanMonod}.

Computing the (continuous) bounded cohomology of a group $\Group$ is a challenging
problem and only few results are contained in the literature (see e.g.\
\cite{monod2006}). The known results usually come in two flavours: acyclicity
theorems stating that $\BoundedCohomologyOfSpaceObject{q}{\Group}{\Reals} = 0$ if $q
\neq 0$ \cite{johnson1972, gromov1982,
matsumotomorita1985boundedcohomologyofcertaingroupsofhomeomorphisms, loeh2017,
fournierfacioloehmoraschini2022} or non-vanishing results in low cohomological
degrees $q$ \cite{brooks1981, grigorchuk1995, teruhiko1997,
delacruzthirdbounded, DCMThesis, buchermonod2019,
franceschinifrigeriopozzettisisto2019}. Monod \cite{MonodStabilization,
MonodSemiSimple} and, recently, De la Cruz Mengual--Hartnick
\cite{DCMThesis, HartnickDeLaCruzQuillen, HartnickDeLaCruzStability} introduced homological
stability ideas to the field, which allows to leverage low-degree computations
and have a long history as fruitful tools for computations in the setting of
usual group (co-)homology (see e.g.\ \cite{wahl2022}). Related techniques enabled
Monod--Nariman \cite{NarimanMonod} to achieve the first computation of a
non-trivial bounded cohomology ring.

The first goal of this work is to complement the recent development of
computation tools by introducing a set of simplicial and combinatorial
techniques crafted for the study of bounded cohomology. The second is to
showcase some applications of these tools within the realm of homological
stability ideas.

The combinatorial tools, that we introduce, are bounded cohomological
refinements of standard techniques that have long and successfully been
employed in the study of classical group (co-)homology (see e.g.\
\cite{quillen1978, Björner_1995, kozlov2008}).
This is a feature. Indeed, we show that many of these techniques
can be adapted to the setting of bounded cohomology using small variations of
one simple idea. The theoretical upshot of this work is therefore a blueprint
that explains how well-established simplicial methods in the literature on
group (co-)homology can be adapted to the ``norm-enriched'' setting of bounded
(co-)homology. The authors expect that this approach has many other
applications. The present article serves as a proof of concept.

\subsection{Uniformly bounded simplicial methods and acyclicity results}
\label{sec:introduction-simplicial-methods}

Simplicial and combinatorial geometries enter the ``norm-enriched'' theory
(among others \cite{IvanovSimplicialMappingTheorem}) via the following approach
for studying the bounded cohomology of a discrete group $\Group$ (compare e.g.\
\cite{MonodStabilization, monod2007, NarimanMonod,
HartnickDeLaCruzQuillen, HartnickDeLaCruzStability}):
Consider the action of $\Group$ on a semi-simplicial set or ordered simplicial
complex $\SimplicialComplex_\bullet$. There is a notion of bounded cohomology
for $\SimplicialComplex_\bullet$ called \emph{simplicial bounded cohomology},
which we denoted by
$\BoundedCohomologyOfSimplicialObject{q}{\SimplicialComplex_\bullet}{\Reals}$.
If the reduced simplicial bounded cohomology of $\SimplicialComplex_\bullet$
vanishes in a range of degrees, one obtains an isotropy spectral sequence
which converges to the bounded cohomology of $\Group$ in a range.

Hence, a careful
analysis of this spectral sequence and the bounded cohomology of the
isotropy groups of the $\Group$-action on $\SimplicialComplex_\bullet$ yields a
strategy for obtaining information about or even computing
$\BoundedCohomologyOfSpaceObject{q}{\Group}{\Reals}$ in a range of cohomological
degrees. This is completely analogous to a standard approach for studying
the classical group (co-)homology of $\Group$ (see e.g.\ \cite[Chapter
VII]{Brown_1982}).

The set of combinatorial techniques that we introduce in this work can be used
to prove \emph{bounded acyclicity} results for semi-simplicial sets
$X_\bullet$. In particular, this toolbox can be used to check the condition
on which the strategy for studying the bounded cohomology of a discrete group
$\Group$ in the previous paragraph relies: that the reduced simplicial bounded
cohomology
$\ReducedBoundedCohomologyOfSimplicialObject{q}{\SimplicialComplex_\bullet}{\Reals}$
 ought to be zero in a range of degrees. We illustrate how these methods can be
 applied
 by establishing new bounded acyclicity results for
several well-known combinatorial geometries attached to a large class of
discrete groups: general linear groups $\GLn{n}{\Ring}$ over any ring $\Ring$
of finite Bass stable rank as well as, if $\Ring$ is $\Integers$ or any field
of characteristic zero, certain automorphism groups of quadratic
modules $\Aut{n}{\Ring}{\Lambda}{\epsilon}$, i.e.\ essentially the symplectic
groups $\on{Sp}_{2n}(\Ring)$ and orthogonal groups $\on{O}_{n,n}(\Ring)$.

\begin{theorem}
	\label{thm:general-connectivity}
	If $\SimplicialComplex_\bullet$ is one of the following semi-simplicial sets, then its reduced simplicial bounded cohomology vanishes, $\ReducedBoundedCohomologyOfSimplicialObject{q}{\SimplicialComplex_\bullet}{\Reals} = 0$,
	in a range of degrees $q \leq c(\SimplicialComplex_\bullet)$.
	\begin{enumerate}
		\item \textbf{$\on{GL}_n$-complexes:}
		\begin{enumerate}
			\item \label{item:solomon-tits-an} the Tits building of type $\mathtt{A}_{n-1}$ of any field $\Field$ 
			(see \autoref{def:tits-building}) with $c(\SimplicialComplex_\bullet) = n - 2$.
			\item \label{item:stability-complex-gl} the complex of $R$-split
			injections into $\Ring^n$ for any ring $\Ring$ (see
			\autoref{def:ComplexOfSplitInjections}) with
			$c(\SimplicialComplex_\bullet) = n - \StableRank(R) - 1$, where
			$\StableRank(R) \in [1, \infty]$ is the Bass stable rank of $R$ (see \autoref{def:stable-rank}).
		\end{enumerate}
		\item \textbf{$\on{Aut}_n^\epsilon$-complexes:}
		\begin{enumerate}
			\item \label{item:solomon-tits-bncn} the Tits building of type
			$\mathtt{C}_{n}$ of any field $\Field$ (see 
			\autoref{def:symplectic-tits-building}) with
			$c(\SimplicialComplex_\bullet) = n - 1$.
			\item \label{item:stability-complex-sp} the complex of hyperbolic
			split injections into certain quadratic modules
			$\HyperbolicModule^n = \Ring^{2n}$ for
			$\Ring = \Integers$ or any field of characteristic zero (see
			\autoref{def:ComplexOfHyperbolicSplitInjections}) with
			$c(\SimplicialComplex_\bullet) = \lfloor \frac{n-4}{2} \rfloor$.
		\end{enumerate}
	\end{enumerate}
\end{theorem}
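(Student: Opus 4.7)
The plan is to follow the classical template for proving high connectivity of these complexes (Solomon--Tits for (1a) and (2a); van der Kallen for (1b); Charney and Galatius--Randal-Williams for (2b)) while tracking every contracting chain homotopy and every null-cochain construction in the $\ellone$-norm, so that \emph{uniformly bounded} refinements replace their classical counterparts. Because reduced simplicial bounded cohomology vanishes in degrees $\leq c$ if and only if every bounded reduced cocycle in those degrees admits a uniformly bounded bounding cochain, the key ingredient is a toolbox of bounded-cohomological analogues of standard simplicial constructions: suspensions, Mayer--Vietoris sequences, cell attachments, discrete Morse theory, and Quillen-style factorization arguments. The macro names $\UBCConstantSuspension{}{}{}$, $\UBCConstantCellAttachment{}{}{}$, $\UBCConstantMorseTheory{}{}{}{}$, $\UBCConstantMayerVietoris{}{}{}{}$, $\UBCConstantFactorThrough{}{}{}$ visible in the preamble indicate that each such refinement should yield an explicit constant controlling the norm blow-up, so that chaining them finitely many times preserves uniform boundedness.

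For the Tits buildings in (1a) and (2a), the Solomon--Tits argument contracts the augmented chain complex by fixing an apartment $A$ and sending a simplex $\sigma$ to a chain supported on simplices obtained from $\sigma$ by joining with certain intersections with the flag determining $A$. Since each such join introduces at most a bounded (dimension-only) number of new simplices with coefficients $\pm 1$, this contraction is manifestly uniformly bounded, giving bounded acyclicity in the advertised range for free. I would make this explicit by exhibiting the Solomon--Tits homotopy as an $\ellone$-bounded operator on bounded cochains and dualizing.

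For the stability complexes (1b) and (2b), the plan is induction on $n$. The natural map from the complex of ($R$-split, resp.\ hyperbolic split) injections to the corresponding Tits building forgets the splittings; its fibers, together with links of simplices, are (after a Morse-theoretic reorganization in the style of van der Kallen and Galatius--Randal-Williams) essentially stability complexes for smaller~$n$. Combining this with the uniformly bounded nerve / Mayer--Vietoris refinements reduces the statement to the base cases and to the bounded acyclicity of the Tits buildings already established. The Bass stable rank assumption in (1b), and the restriction to $R = \Integers$ or a characteristic-zero field in (2b), enter exactly where the classical proofs use them, namely to guarantee that certain links are sufficiently highly connected so that the inductive hypothesis can be invoked.

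The main obstacle will be step three: constructing the bounded Morse-theoretic / cell-attachment machinery underlying the inductive reduction. Classically, one filters a complex by sub-complexes and contracts each filtration step using bottleneck arguments on vertex links; naively translated to bounded cohomology, each filtration step composes a new chain homotopy and can in principle cause the operator norm to grow with $n$. Avoiding this requires that the filtrations be of \emph{uniformly bounded length per dimension} and that the per-step homotopies depend only on the dimension of the attached cells, not on~$n$. I expect the bulk of the technical work to be a careful design of these filtrations (specifically, ordering cells so that link complexes are sufficiently bounded-acyclic by the inductive hypothesis) and the verification that the resulting composite constants $\UBCConstantMorseTheory{}{}{}{}$ and $\UBCConstantCellAttachment{}{}{}$ remain finite. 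Once this bounded Morse-theoretic machinery is in place, cases (1b) and (2b) follow by the same inductive skeletons as their classical counterparts, with the base cases handled by (1a) and (2a).
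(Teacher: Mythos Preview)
Your overall strategy---track $\ellone$-norms through the classical connectivity arguments and build a uniformly bounded toolbox---matches the paper exactly, and your identification of the toolbox components is spot-on. For (1a) and (2a) you propose the direct Solomon--Tits apartment contraction; the paper instead refines Bestvina's Morse-theoretic proof (filtering the building by subposets $Q_r$ obtained by adding subspaces not containing a fixed line, then showing each attachment step has uniformly acyclic links via \autoref{cor:uniform-discrete-morse-theory}). Your approach should also work and is arguably more direct, though one would need to verify carefully that the apartment-retraction chain homotopy is $\ellone$-bounded; the paper's approach has the advantage of testing the Morse-theory toolbox in a simple case before the harder ones.

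For (1b) and (2b), however, your proposed reduction is off. There is no ``natural map to the corresponding Tits building'' for an arbitrary ring $\Ring$---Tits buildings require a field---so (1b) cannot be reduced to (1a) in the way you sketch. Moreover, neither van der Kallen's nor Galatius--Randal-Williams' classical argument proceeds via such a map; the paper's proofs do not use (1a) or (2a) as input to (1b) or (2b) at all. For (1b), the paper follows van der Kallen literally: one proves a four-part theorem (\autoref{thm:BoundedAcyclicityResultTechnical}) about several auxiliary posets of unimodular sequences by simultaneous induction on a ``size'' parameter, using two technical lemmas (\autoref{lem:LinkLemmaI}, \autoref{lem:LinkLemmaII}) that analyze links in these posets via poset deformation retracts and finitely many Morse attachments; the finiteness of the filtrations comes from \autoref{rem:MaximalLengthOfUnimodularSequences} (Warfield cancellation bounds the length of unimodular sequences by $n + \StableRank(\Ring)$). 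For (2b), the paper refines Galatius--Randal-Williams: fix a vertex $h$, factor the inclusion $\SimplicialComplexS^{\QuadraticModule'} \hookrightarrow \SimplicialComplexS^{\QuadraticModule' \oplus \Ring\{e\}} \hookrightarrow \SimplicialComplexS^{\QuadraticModule}$ where $\QuadraticModule' = h(\HyperbolicModule)^\perp$, prove both maps are uniformly highly acyclic using \autoref{lem:FullSubcomplex}, and conclude via \autoref{lem:factoring-through-highly-uniformly-acyclic-subcomplex} since the composite lands in $\Star{}{h}$. The base cases are nonemptiness (from the stable rank / unitary stable rank condition) and a direct diameter bound (\autoref{prop:UnitaryGroupsUniformAcyclicityLowDegrees}), not the Solomon--Tits theorems.
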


\autoref{item:solomon-tits-an} and \autoref{item:solomon-tits-bncn} of
\autoref{thm:general-connectivity} are bounded cohomology analogues of the
celebrated Solomon--Tits theorem
\cite{solomon1969thesteinbergcharacterofafinitegroupwithbnpair}; 
a fundamental result in the theory of buildings 
which is important in the study of algebraic groups (see e.g.\
\cite{borelserre1973}). The classical Solomon--Tits theorem implies that these
complexes have the homotopy type of a bouquet of spheres. Similarly,
\autoref{thm:general-connectivity} states that their reduced simplicial bounded
cohomology is also concentrated in a single degree, $q =
c(\SimplicialComplex_\bullet)$. A version of this result for continuous bounded cohomology 
was previously established and used by Monod (see \cite[Theorem~3.9]{MonodSemiSimple}).
The complexes in
\autoref{item:stability-complex-gl} and \autoref{item:stability-complex-sp} are
well-known to experts interested in homological stability ideas. They have been
studied by van der Kallen
\cite{vanderkallen1980homologystabilityforlineargroups} and Maazen
\cite{maazen79, MaazenStability}, Vogtmann \cite{VogtmannStability}, Charney \cite{CharneyStability} and, recently,
Galatius--Randal-Williams \cite{GalatiusRandalWilliamsStability}. In
particular, van der Kallen and Galatius--Randal-Williams established that their
homotopy groups are trivial, $\pi_{q}(\SimplicialComplex_\bullet) = 0$, in 
the range of degrees $q \leq c(\SimplicialComplex_\bullet)$ stated in \autoref{thm:general-connectivity}. Again, our result is an analogue
proving that the same holds for their reduced simplicial bounded cohomology.

The similarity of \autoref{thm:general-connectivity} to such classical results is not a coincidence: The aim of this work is to convince the reader that many of the existing acyclicity arguments in the literature admit refinements to the setting of simplicial bounded cohomology. In particular, \autoref{thm:general-connectivity} is obtained by recasting simplicial and poset theoretic arguments due to Bestvina \cite{BestvinaPLMorseTheory}, van der Kallen \cite{vanderkallen1980homologystabilityforlineargroups} and Galatius--Randal-Williams \cite{GalatiusRandalWilliamsStability} using the combinatorial tools we developed.

The central idea behind our \emph{uniformly bounded simplicial methods} can be summarized as follows: Let $\SimplicialComplex_\bullet$ be a semi-simplicial set or ordered simplicial complex. Similar to \autoref{eq:comparison-map-groups}, there is a comparison map between the simplicial bounded cohomology of $\SimplicialComplex_\bullet$ and its ordinary simplicial cohomology
\begin{equation}
	\label{eq:comparison-map-simplicial}
	\BoundedCohomologyOfSpaceObject{q}{\SimplicialComplex_\bullet}{\Reals} \to \CohomologyOfSpaceObject{q}{\SimplicialComplex_\bullet}{\Reals}.
\end{equation}
The results in the literature (i.e.\ for \autoref{thm:general-connectivity}
e.g.\ \cite{solomon1969thesteinbergcharacterofafinitegroupwithbnpair,
BestvinaPLMorseTheory, maazen79, MaazenStability,
vanderkallen1980homologystabilityforlineargroups, VogtmannStability, CharneyStability,
GalatiusRandalWilliamsStability}) imply that
$\CohomologyOfSpaceObject{q}{\SimplicialComplex_\bullet}{\Reals} = 0$ for $q
\leq c(\SimplicialComplex_\bullet)$. Therefore, it suffices to prove that the
comparison map in \autoref{eq:comparison-map-simplicial} is injective for $q
\leq c(\SimplicialComplex_\bullet)$ to obtain a bounded acyclicity theorem. The
seminal work of Matsumoto--Morita
\cite{matsumotomorita1985boundedcohomologyofcertaingroupsofhomeomorphisms}
introduced a condition on the ordinary simplicial $\Reals$-chain complex
$\ChainComplexWithCoeffients{*}{\SimplicialComplex_\bullet}{\Reals}$ of the
semi-simplicial set $\SimplicialComplex_\bullet$, which implies the desired
injectivity. This condition is called the \emph{uniform boundary condition},
short UBC. Our strategy is thus to check that this conditions holds in the
relevant range of degrees $q$. The key point is this:
$\ChainComplexWithCoeffients{*}{\SimplicialComplex_\bullet}{\Reals}$ is exactly
the same chain complex that one needs to investigate if one is interested in
the ordinary simplicial homology,
$\CohomologyOfSpaceObject{q}{\SimplicialComplex_\bullet}{\Reals} = 0$ for $q
\leq c(\SimplicialComplex_\bullet)$, of $\SimplicialComplex_\bullet$. In
particular, there is a vast amount of literature in algebraic topology
containing strategies for studying it (e.g.\ \cite{Björner_1995, kozlov2008});
ranging from the homotopy theory of posets (e.g.\ \cite{quillen1978}),
subdivision techniques (e.g.\ \cite{rourkesanderson1971}) and discrete Morse
theory (e.g.\ \cite{BestvinaPLMorseTheory} and \cite{forman1998}) to standard
tools such as the Mayer--Vietoris sequence and the long exact sequence of a
pair. The uniformly bounded simplicial methods introduced in this work are
refinements of a selection of these standard tools, with an extra layer of
complexity that keeps track of the UBC. These ``norm-enriched'' combinatorial
tools then allow us to inductively check that UBC holds, while running
connectivity arguments due to Bestvina \cite{BestvinaPLMorseTheory}, van der
Kallen \cite{vanderkallen1980homologystabilityforlineargroups} and
Galatius--Randal-Williams \cite{GalatiusRandalWilliamsStability}.

We expect that similar ideas can be used to adapt many other simplicial or homotopy theoretic techniques in the literature to the ``norm-enriched''` setting of bounded cohomology.

\subsection{Stability patterns in the bounded cohomology of linear groups}

Homological stability ideas originate from Quillen's work on algebraic K-theory \cite{quillen1971}, and have proven to be an extremely fruitful tool for computations in the setting of group (co-)homology (see e.g.\ \cite{wahl2022}). The homology of many important sequences of groups are known to satisfy stability patterns, for example: Symmetric groups \cite{NakaokaSymmetricGroups}, mapping class groups of surfaces \cite{HarerStability, wahl2008}, automorphism groups of free groups
\cite{hatchervogtmann1998, HatcherVogtmann, HatcherVogtmannWahl, hatcherwahl2005}, general linear groups \cite{maazen79, MaazenStability, Charney1980, vanderkallen1980homologystabilityforlineargroups, friedrich2016homologicalstabilityofautomorphismgroupsofquadraticmodulesandmanifolds}, automorphism groups
of formed spaces e.g.\ $\apply{\mathrm{O}_{n,n}}{\Ring}$,
$\apply{\mathrm{Sp}_{2n}}{\Ring}$ and $\apply{\mathrm{U}_{n}}{\Ring}$
\cite{VogtmannStability, CharneyStability, mirzaiivanderkallen2002, vanderkallenlooijenga2011, friedrich2016homologicalstabilityofautomorphismgroupsofquadraticmodulesandmanifolds}, (discrete) diffeomorphism groups of certain high-dimensional manifolds \cite{GalatiusRandalWilliamsStability, friedrich2016homologicalstabilityofautomorphismgroupsofquadraticmodulesandmanifolds, NarimanStability} and Coxeter groups \cite{HepworthStability}.

Monod \cite{MonodStabilization, MonodSemiSimple} and, recently, De la
Cruz Mengual--Hartnick \cite{DCMThesis, HartnickDeLaCruzQuillen,
HartnickDeLaCruzStability} introduced
homological stability techniques to the realm of (continuous) bounded
cohomology. We
say that a nested sequence of groups
\[
\Group_0
\xhookrightarrow{}
\Group_1
\xhookrightarrow{}
\ldots
\xhookrightarrow{}
\Group_n
\xhookrightarrow{\iota_n}
\Group_{n+1}
\xhookrightarrow{}
\ldots
\]
satisfies \emph{bounded cohomological stability} with slope $s \in \Rationals_{\geq 0}$ and offset $c \in \Reals$ if the inclusion induced map $\BoundedCohomologyOfSpaceObject{q}{\iota_n}{\Reals}$ in the sequence obtained by passing to $q$-th bounded cohomology
\[
\BoundedCohomologyOfSpaceObject{q}{\Group_0}{\Reals}
\xleftarrow{}
\BoundedCohomologyOfSpaceObject{q}{\Group_1}{\Reals}
\xleftarrow{}
\ldots
\xleftarrow{}
\BoundedCohomologyOfSpaceObject{q}{\Group_n}{\Reals}
\xleftarrow{\BoundedCohomologyOfSpaceObject{q}{\iota_n}{\Reals}}
\BoundedCohomologyOfSpaceObject{q}{\Group_{n+1}}{\Reals}
\xleftarrow{}
\ldots
\]
is an isomorphism whenever $s \cdot n + c \geq q$. In other words, the inclusion maps eventually induce isomorphisms in bounded cohomology.

De la Cruz Mengual--Hartnick \cite{HartnickDeLaCruzQuillen} introduced a
framework, which can be seen as a bounded cohomology
analogue of Randal-Williams--Wahl's work for classical group (co-)homology
\cite{randalwilliamswahl2017homologicalstabilityforautomorphismgroups}, within
which bounded cohomological stability questions can be formulated and studied.
Working in this framework, the most difficult part for establishing a bounded
cohomology stability result for a nested sequence of discrete groups
$\{\Group_n\}_{n \in \NaturalNumbers}$ is the proof of a bounded acyclicity
theorem of the type outlined at the beginning of
\autoref{sec:introduction-simplicial-methods}: For each group $\Group_n$ one
needs to construct semi-simplicial set $\SimplicialComplex^n_\bullet$ on which
$\Group_n$ acts. Among others, the sequence of semi-simplicial sets
$\{\SimplicialComplex^n_\bullet\}_{n \in \NaturalNumbers}$ ought to be highly
boundedly acyclic (increasingly as $n$ goes to $\infty$).

\begin{remark}
	\cite{HartnickDeLaCruzQuillen} only state their framework for countable
	discrete groups. This is an artefact of them primarily being interested in
	the	continuous bounded cohomology of locally compact second countable
	topological	groups. In \autoref{rem:CountabilityQuillen}, we
	explain why their result holds for arbitrary discrete groups.
\end{remark}

The second goal of this work is to explain how \autoref{thm:general-connectivity} implies the following bounded cohomological stability results.

\begin{theorem} \label{thm:BoundedCohomologicalStabilityForLinearGroups}
  Let $\Ring$ be an unital ring and consider the sequence of general linear groups
  \[
    \GLn{0}{\Ring}
    \hookrightarrow
    \GLn{1}{\Ring}
    \hookrightarrow
    \ldots
    \hookrightarrow
    \GLn{n}{\Ring}
    \xhookrightarrow{\iota_n}
    \GLn{n+1}{\Ring}
    \hookrightarrow
    \ldots
  \]
  Then the inclusion $\GLn{n}{\Ring} \xhookrightarrow{\iota_n} \GLn{n+1}{\Ring}$ induces an isomorphism
  \[
    \BoundedCohomologyOfSpaceObject{q}{\GLn{n}{\Ring}}{\Reals}
    \xleftarrow{\BoundedCohomologyOfSpaceObject{i}{\iota_n}{\Reals}}
    \BoundedCohomologyOfSpaceObject{q}{\GLn{n+1}{\Ring}}{\Reals}
  \]
  whenever $q \leq \frac{n-\StableRank(\Ring)}{2}$, and an injection whenever $q \leq \frac{n-\StableRank(\Ring)}{2} + 1$.
  Here, $\StableRank(\Ring) \in [1, \infty]$ denotes the Bass stable rank of 
  the ring $\Ring$ (see \autoref{def:stable-rank}).
\end{theorem}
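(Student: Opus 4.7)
The plan is to feed the bounded acyclicity statement of \autoref{thm:general-connectivity} into the bounded cohomological stability framework of De la Cruz Mengual--Hartnick \cite{HartnickDeLaCruzQuillen}, which is the bounded cohomology analogue of the Randal-Williams--Wahl machine. Concretely, the setup is the action of $\GLn{n}{\Ring}$ on the semi-simplicial set $\SimplicialComplex^n_\bullet$ of $\Ring$-split injections into $\Ring^n$ together with the compatible tower of such complexes induced by $\iota_n \colon \GLn{n}{\Ring} \hookrightarrow \GLn{n+1}{\Ring}$. The single input from \autoref{thm:general-connectivity} is the bounded acyclicity bound $\ReducedBoundedCohomologyOfSimplicialObject{q}{\SimplicialComplex^n_\bullet}{\Reals} = 0$ for $q \leq n - \StableRank(\Ring) - 1$. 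The remaining setup axioms of the framework — transitivity of the $\GLn{n}{\Ring}$-action on $p$-simplices, compatibility with the tower, and identification of the stabilizer of a standard $p$-simplex with $\GLn{n-p-1}{\Ring}$ up to a semidirect abelian (hence amenable, hence boundedly acyclic) unipotent factor that induces the identity on bounded cohomology — can be verified directly from the definition of $\SimplicialComplex^n_\bullet$.

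Given these inputs, the associated isotropy spectral sequence for the $\GLn{n}{\Ring}$-action on $\SimplicialComplex^n_\bullet$ takes the form
\[
  E_1^{p,q} \cong \BoundedCohomologyOfSpaceObject{q}{\GLn{n-p-1}{\Ring}}{\Reals} \Longrightarrow \BoundedCohomologyOfSpaceObject{p+q}{\GLn{n}{\Ring}}{\Reals}
\]
and converges in the range dictated by $c(\SimplicialComplex^n_\bullet) = n - \StableRank(\Ring) - 1$. Comparing this with the analogous spectral sequence for $\GLn{n+1}{\Ring}$ and running an induction on $n$, the inclusion-induced map $\BoundedCohomologyOfSpaceObject{q}{\iota_n}{\Reals}$ is forced to be an isomorphism as soon as every entry along the total-degree-$q$ diagonal of the $E_1$-comparison — each of which is itself an instance of the same inclusion-induced map in strictly lower rank — is an isomorphism. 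A standard diagonal chase then converts the slope-$1$ acyclicity range of $\SimplicialComplex^n_\bullet$ into a slope-$1/2$ stability range, yielding the claimed isomorphism range $q \leq \frac{n - \StableRank(\Ring)}{2}$ and the injection range $q \leq \frac{n - \StableRank(\Ring)}{2} + 1$; the latter improvement is the usual observation that surjectivity one degree higher inside the induction step produces injectivity in the next degree.

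The hard part is not the spectral sequence bookkeeping but verifying that $\SimplicialComplex^n_\bullet$ slots precisely into the axioms of \cite{HartnickDeLaCruzQuillen}: one must pin down the stabilizer of each $p$-simplex carefully, confirm that the semidirect unipotent factor really acts trivially on bounded cohomology, and address that \cite{HartnickDeLaCruzQuillen} formally state their framework only for countable discrete groups — this last point being handled by \autoref{rem:CountabilityQuillen}. Once these verifications are in place, the numerical slope-$1/2$ conversion of the acyclicity bound $n - \StableRank(\Ring) - 1$ into the stated stability ranges is a direct exercise in tracking the diagonal of the $E_1$-comparison.
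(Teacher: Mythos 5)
Your proposal is correct and follows essentially the same route as the paper: the paper verifies that $(\GLn{n}{\Ring}, \SimplicialComplex^n_\bullet)_{n}$ is a Quillen family (bounded acyclicity from \autoref{thm:BoundedAcyclicityResultForComplexOfSplitInjections}, transitivity via Warfield cancellation, compatibility with amenable unipotent kernels) and then invokes \autoref{thm:stability-for-quillen-families} with $q_0 = 1$, which packages exactly the isotropy spectral sequence comparison and induction you describe. The only minor difference is that you unfold the spectral-sequence induction that the paper treats as a black box via the De la Cruz Mengual--Hartnick theorem.
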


Previously, this result was only known for the integers $\Ring = \Integers$ and
the Gaussian integers $\Ring = \Integers[i]$ by work of Monod (combining
\cite[Theorem 1.1]{MonodStabilization} and \cite[Note on p.685]{monod2007}, see
also \cite[Example 1.2]{HartnickDeLaCruzQuillen}, with
\cite[Corollary 4.8]{MonodSemiSimple}).
\autoref{thm:BoundedCohomologicalStabilityForLinearGroups} is the bounded
cohomology analogue of van der Kallen's classical homological stability theorem
for general linear groups
\cite{vanderkallen1980homologystabilityforlineargroups} (see
\cite{friedrich2016homologicalstabilityofautomorphismgroupsofquadraticmodulesandmanifolds}
 for a generalization), and follows from \autoref{item:stability-complex-gl} of
\autoref{thm:general-connectivity}. It applies to a vast set of discrete rings
$R$: any Dedekind domain $R$ satisfies $\StableRank(R) \leq 2$
\cite[4.1.11]{hahnomeara1989} (this includes e.g.\ discrete valuation rings
such as the $p$-adic integers $\Integers_p^\delta$, all euclidean rings such as
the integers $\Integers$ or Gaussian integers $\Integers[i]$, more generally all
principal ideal domains and all rings of integers of a number field
$\Field/\Rationals$); any semi-local ring satisfies $\StableRank(R) = 1$
\cite[4.1.17]{hahnomeara1989} (this includes e.g.\ all discrete fields such as
$\Reals^\delta, \ComplexNumbers^\delta$ and the discrete field of $p$-adic
numbers $\Rationals_p^\delta$ as well as all local rings); more generally any
commutative Noetherian ring of finite Krull dimension $d$ satisfies
$\StableRank(R) \leq d+1$ \cite[4.1.15]{hahnomeara1989}. We remark that e.g.\
the bounded cohomology of $\GLn{n}{\Reals^\delta}$ contains all characteristic
classes of flat $\GLn{n}{\Reals}$-bundles by a result of Gromov
\cite{gromov1982}.

The next stability result concerns automorphism groups
$\Aut{n}{\Ring}{\Lambda}{\epsilon}$ of certain quadratic modules in the sense
of Bak \cite{bak1969, bak1981} over the integers and any
field of characteristic zero $\Ring$. In our setting (see
\autoref{sec:automorphism-groups-of-quadratic-modules}), these are essentially
the symplectic groups $\on{Sp}_{2n}(\Ring)$ and the orthogonal groups
$\on{O}_{n,n}(\Ring)$.

\begin{theorem} \label{thm:BoundedCohomologicalStabilityForUnitaryGroups}
	Let $\Ring$ be the integers $\Integers$ or any field $\Field$ of characteristic zero. Let 
	$\Aut{n}{\Ring}{\Lambda}{\epsilon}$ denote the automorphisms of the $\FormParameter$-quadratic module $\HyperbolicModule^{\oplus n}$ as in \autoref{sec:automorphism-groups-of-quadratic-modules}. Consider the 
	sequence
	\[
	\Aut{0}{\Ring}{\Lambda}{\epsilon}
	\hookrightarrow
	\Aut{1}{\Ring}{\Lambda}{\epsilon}
	\hookrightarrow
	\ldots
	\hookrightarrow
	\Aut{n}{\Ring}{\Lambda}{\epsilon}
	\xhookrightarrow{\iota_n}
	\Aut{n+1}{\Ring}{\Lambda}{\epsilon}
	\hookrightarrow
	\ldots
	\]
	Then the inclusion 
	$
	\Aut{n}{\Ring}{\Lambda}{\epsilon}
	\xhookrightarrow{\iota_n}
	\Aut{n+1}{\Ring}{\Lambda}{\epsilon}
	$
	induces an isomorphism
	\[
	\BoundedCohomologyOfSpaceObject{q}{\Aut{n}{\Ring}{\Lambda}{\epsilon}}{\Reals}
	\xleftarrow{\BoundedCohomologyOfSpaceObject{i}{\iota_n}{\Reals}}
	\BoundedCohomologyOfSpaceObject{q}{\Aut{n+1}{\Ring}{\Lambda}{\epsilon}}{\Reals}
	\]
	whenever $q \leq \frac{n-3}{2}$, and an injection whenever $q \leq \frac{n-3}{2} + 1$.
\end{theorem}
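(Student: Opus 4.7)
The plan is to combine \autoref{item:stability-complex-sp} of \autoref{thm:general-connectivity} with the bounded-cohomology stability framework of De la Cruz Mengual--Hartnick \cite{HartnickDeLaCruzQuillen}, which is a bounded cohomological analogue of Randal-Williams--Wahl \cite{randalwilliamswahl2017homologicalstabilityforautomorphismgroups}. That framework turns a sufficiently transitive action of a sequence of groups $\{\Group_n\}$ on a sequence of semi-simplicial sets $\{\SimplicialComplex^n_\bullet\}$ --- with $p$-simplex stabilizers identified with $\Group_{n-p-1}$ and with an increasing bounded acyclicity bound --- into a bounded cohomological stability theorem of the desired slope.

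I would take $\SimplicialComplex^n_\bullet$ to be the semi-simplicial set whose $p$-simplices are hyperbolic split injections $\HyperbolicModule^{p+1} \hookrightarrow \HyperbolicModule^n$ from \autoref{def:ComplexOfHyperbolicSplitInjections}, carrying the post-composition action of $\Aut{n}{\Ring}{\Lambda}{\epsilon}$. The first step is to verify the axiomatic input of \cite{HartnickDeLaCruzQuillen}: transitivity of the $\Aut{n}{\Ring}{\Lambda}{\epsilon}$-action on $p$-simplices (a Witt-type statement, which holds over $\Ring = \Integers$ and over any field of characteristic zero for this class of quadratic modules); canonical identification of the stabilizer of a $p$-simplex $\HyperbolicModule^{p+1} \hookrightarrow \HyperbolicModule^n$ with $\Aut{n-p-1}{\Ring}{\Lambda}{\epsilon}$ via the orthogonal complement; and compatibility of these identifications with the stabilization maps $\iota_n$ and the simplicial inclusions $\SimplicialComplex^n_\bullet \hookrightarrow \SimplicialComplex^{n+1}_\bullet$. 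As explained in \autoref{rem:CountabilityQuillen}, the framework of \cite{HartnickDeLaCruzQuillen} applies to arbitrary discrete groups, so neither $\Ring = \Integers$ nor $\Ring = \Field$ poses a countability obstacle.

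Into this setup I would feed the bounded acyclicity bound $c(\SimplicialComplex^n_\bullet) = \lfloor (n-4)/2 \rfloor$ supplied by \autoref{item:stability-complex-sp}. The isotropy spectral sequence for the $\Aut{n}{\Ring}{\Lambda}{\epsilon}$-action on $\SimplicialComplex^n_\bullet$ then converges, in the relevant range, to $\BoundedCohomologyOfSpaceObject{\ast}{\Aut{n}{\Ring}{\Lambda}{\epsilon}}{\Reals}$ with $E_1$-page controlled by the bounded cohomology of the smaller groups $\Aut{n-p-1}{\Ring}{\Lambda}{\epsilon}$. A standard induction on $n$, comparing the spectral sequences for $n$ and $n+1$ along $\iota_n$, extracts the claimed slope-$1/2$ ranges: isomorphism for $q \leq (n-3)/2$ and injection for $q \leq (n-3)/2 + 1$.

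The genuine mathematical content --- the bounded acyclicity of the stability complex --- is already packaged in \autoref{thm:general-connectivity}, so the hard part of the argument lies upstream. The principal remaining obstacle is bookkeeping: to check that the pair $\bigl(\Aut{n}{\Ring}{\Lambda}{\epsilon}, \SimplicialComplex^n_\bullet\bigr)$, together with the stabilizer identifications, slots cleanly into the precise axiomatic shape required by \cite{HartnickDeLaCruzQuillen}. I expect this to transport essentially verbatim from Randal-Williams--Wahl's analogous verification for hyperbolic quadratic modules in the classical homological stability setting \cite{randalwilliamswahl2017homologicalstabilityforautomorphismgroups}, with the minor subtlety of tracking seminorms through the spectral sequence comparison.
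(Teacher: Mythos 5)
Your proposal follows essentially the same route as the paper: it verifies that $(\Aut{n}{\Ring}{\Lambda}{\epsilon}, \SimplicialComplex^n_\bullet)$ forms a Quillen family in the sense of \autoref{def:QuillenFamily} (transitivity, stabilizer identification via orthogonal complements, compatibility, and the bounded acyclicity from \autoref{item:stability-complex-sp} of \autoref{thm:general-connectivity}) and then feeds this into \autoref{thm:stability-for-quillen-families}, with the remaining work being the arithmetic translating the parameters $\gamma(n) = \lfloor\frac{n-4}{2}\rfloor$ and $\tau(n) = n-4$ into the stated range $q \leq \frac{n-3}{2}$. This matches the paper's argument.
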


Previously, De la Cruz Mengual--Hartnick \cite{HartnickDeLaCruzStability}
obtained similar stability results for lattices in certain simple Lie groups,
including e.g.\ $\operatorname{Sp}_{2n}(\Ring)$ and
$\operatorname{O}_{n,n}(\Ring)$ for the integers $\Ring = \Integers$ and the
Gaussian integers $\Ring = \Integers[i]$. However, their stable range is
logarithmic in $n$. Our theorem improves this to a slope-$1/2$ range for
$\operatorname{Sp}_{2n}(\Integers)$ and $\operatorname{O}_{n,n}(\Integers)$.
For (discrete) fields of characteristic zero, the results seems to be
new. \autoref{thm:BoundedCohomologicalStabilityForUnitaryGroups} is 
a bounded cohomology analogue of results due to Vogtmann \cite{VogtmannStability}, Charney
\cite{CharneyStability}, Panin \cite{panin1990}, Mirzaii--van der Kallen
\cite{mirzaiivanderkallen2002} and van der Kallen--Looijenga \cite{vanderkallenlooijenga2011}; it follows from
\autoref{item:stability-complex-sp} of \autoref{thm:general-connectivity}.

We remark that Mirazaii--van der Kallen \cite{mirzaiivanderkallen2002} and Friedrich
\cite{friedrich2016homologicalstabilityofautomorphismgroupsofquadraticmodulesandmanifolds}
proved generic slope-$1/2$ homological stability patterns for sequences of
automorphism groups of quadratic modules, conditioned on the \emph{unitary 
stable rank} of $\Ring$ (i.e.\ similar to role of the Bass stable rank
in \autoref{thm:BoundedCohomologicalStabilityForLinearGroups}). In light of
this, we expect that a much more general version of
\autoref{thm:BoundedCohomologicalStabilityForUnitaryGroups} holds true. More
precisely, Friedrich's and Mirazaii--van der Kallen's work should have an
analogue in the setting of bounded cohomology following the blueprint we
provide in this article: Their work builds on ideas and poset techniques that
van der Kallen developed in
\cite{vanderkallen1980homologystabilityforlineargroups}. This suggests that
more general version of \autoref{item:stability-complex-sp} of
\autoref{thm:general-connectivity} should be obtainable. Its proof should be in
the style of \autoref{item:stability-complex-gl} of
\autoref{thm:general-connectivity}, following the strategy of \cite[Section
3]{friedrich2016homologicalstabilityofautomorphismgroupsofquadraticmodulesandmanifolds}
instead of \cite{vanderkallen1980homologystabilityforlineargroups}. Because of
this similarity, we decided to not implement this here and instead refined an
argument due to Galatius--Randal-Williams
\cite{GalatiusRandalWilliamsStability}, which uses a quite different set of
combinatorial techniques. The general case is currently investigated by a
student at the University of Göttingen.

\subsection{Continuous bounded cohomology and previous works}
\label{subsec:bounded-continuous-cohomology}
The theory of continuous bounded cohomology was developed by Burger--Monod
\cite{burgermonod2002} and incorporates the natural topology on groups such as
$\GLn{n}{\Reals}$, $\GLn{n}{\Rationals_p}$ or
$\operatorname{Sp}_{2n}(\ComplexNumbers)$ into bounded cohomology. The
previously known stability results for discrete groups mentioned above have all
been obtained by passing through continuous bounded cohomology: In
\cite{MonodStabilization} and \cite{HartnickDeLaCruzStability}, the authors
first establish a stability theorem for continuous bounded cohomology of an
ambient, topologized Lie groups and then invoke a comparison isomorphism
established by Monod \cite[Corollary 4.8]{MonodSemiSimple} to pass to a
sequence of lattices.
Since the comparison isomorphism works both ways, the continuous bounded
cohomology of $\{\GLn{n}{\Reals}\}_{n \in \NaturalNumbers}$,
$\{\GLn{n}{\ComplexNumbers}\}_{n \in \NaturalNumbers}$, and
$\{\Aut{n}{\Reals}{\Lambda}{\epsilon}\}_{n \in \NaturalNumbers}$ (as in
\autoref{thm:BoundedCohomologicalStabilityForUnitaryGroups})
satisfy stability in continuous bounded cohomology in the slope-$1/2$ range stated in
\autoref{thm:BoundedCohomologicalStabilityForLinearGroups} and
\autoref{thm:BoundedCohomologicalStabilityForUnitaryGroups} (using
$\StableRank(\Integers), \StableRank(\Integers[i]) \leq 2$, see e.g.\ \cite[4.1.11]{hahnomeara1989},
for general linear groups).
For the two sequences of
general linear groups, this recovers the Archimedean case of a result of Monod
\cite{MonodStabilization}, see also \cite[Note on p.685]{monod2007} and
\cite[Example 1.2]{HartnickDeLaCruzQuillen}. For
$\{\Aut{n}{\Reals}{\Lambda}{\epsilon}\}_{n \in
\NaturalNumbers}$ (as in
\autoref{thm:BoundedCohomologicalStabilityForUnitaryGroups}),
this recovers results of De la Cruz Mengual--Hartnick \cite{HartnickDeLaCruzStability} and
improves their stable range from logarithmic in $n$ to a slope-$1/2$ range.

\begin{corollaryi}
	\label{thm:ContinuousBoundedCohomologicalStability}
	Consider the sequence of automorphism groups
	$\{\Aut{n}{\Reals}{\Lambda}{\epsilon}\}_{n \in \NaturalNumbers}$ as in
	\autoref{thm:BoundedCohomologicalStabilityForUnitaryGroups} equipped with
	their standard topologies.
	Then the inclusion map \[\iota_n: \Aut{n}{\Reals}{\Lambda}{\epsilon}
	\hookrightarrow \Aut{n+1}{\Reals}{\Lambda}{\epsilon}\]
	induces an isomorphism in continuous bounded cohomology
	$$
	\ContinuousBoundedCohomologyOfSpaceObject{q}{\Aut{n}{\Reals}{\Lambda}{\epsilon}}{\Reals}
	\xleftarrow{\ContinuousBoundedCohomologyOfSpaceObject{i}{\iota_n}{\Reals}}
	\ContinuousBoundedCohomologyOfSpaceObject{q}{\Aut{n+1}{\Reals}{\Lambda}{\epsilon}}{\Reals}
	$$
	whenever $q \leq \frac{n-3}{2}$, and an injection whenever $q \leq
	\frac{n-3}{2} +1$.
\end{corollaryi}

\subsection{Mapping class groups and classifying spaces of diffeomorphism groups of high-dimensional manifolds}

A more geometric reason to be interested in 
\autoref{thm:BoundedCohomologicalStabilityForLinearGroups} and
\autoref{thm:BoundedCohomologicalStabilityForUnitaryGroups} stems 
from applications to the study of mapping class groups, $\pi_0(\on{Diff}(M))$, and classifying spaces of diffeomorphism groups, $\on{BDiff}_\partial(M)$, 
of high-dimensional manifolds $M$ following ideas of Galatius--Randal-Williams
\cite{GalatiusRandalWilliamsStability}, Kreck \cite{kreck1979}, Hatcher \cite{hatcher1978} and Hsiang--Sharpe \cite{hsiangsharpe1976}.
Indeed, our proof of \autoref{thm:BoundedCohomologicalStabilityForUnitaryGroups} generalizes the
``algebra'' part of \cite{GalatiusRandalWilliamsStability} to the
``norm-enriched'' setting. Combined with work of Kreck \cite{kreck1979},
\autoref{thm:BoundedCohomologicalStabilityForUnitaryGroups} immediately implies a bounded 
cohomology analogue of the main theorem of \cite{GalatiusRandalWilliamsStability} for the
``high-dimensional genus $g$ surfaces'' $W_{g,1}^{2m} \coloneqq D^{2m} \# (S^m \times S^m)^{\# g}$. 
Combined with work of Hatcher \cite{hatcher1978} and Hsiang--Sharpe \cite{hsiangsharpe1976},
\autoref{thm:BoundedCohomologicalStabilityForLinearGroups} yields a similar result for high-dimensional tori $T^m = (S^1)^{\times m}$.

To state these consequences, let $\on{Diff}(M)$ denote the diffeomorphism group of a smooth, compact, orientable manifold $M$ equipped with the $C^\infty$-topology.
If the boundary of $M$ is nonempty, we write $\on{Diff}_\partial(M) \subseteq \on{Diff}(M)$ for the subgroup of (orientation-preserving) diffeomorphisms fixing a neighbourhood of the boundary $\partial M$ pointwise.

\begin{corollaryi}
	\label{thm:stability-for-mcg-and-bdiff-of-high-dimensional-manifolds}
    Firstly, assume that $2m \geq 6$. Then there are isometric isomorphisms
	\[
	\BoundedCohomologyOfSpaceObject{q}{\apply{\on{BDiff}_\partial}{W_{g,1}^{2m}}}{\Reals} \cong 
	\BoundedCohomologyOfSpaceObject{q}{\apply{\pi_0}{\apply{\on{Diff}_\partial}{W_{g,1}^{2m}}}}{\Reals} \cong
	\BoundedCohomologyOfSpaceObject{q}{\Aut{g}{\Integers}{\Lambda}{\epsilon}}{\Reals},
	\]
	where $\Aut{g}{\Integers}{\Lambda}{\epsilon}$ is isomorphic to $\on{O}_{g,g}(\Integers)$ if $m$ is even,
	$\on{Sp}_{2g}(\Integers)$ if $m \in \{3, 7\}$, and $\on{Sp}_{2g}^0(\Integers)$, 
	the subgroup of $\on{Sp}_{2g}(\Integers)$ fixing a quadratic refinement of Arf invariant zero, otherwise.
	Therefore the inclusion maps (i.e.\ extending by the identity)
	\[
	\apply{\on{BDiff}_\partial}{W_{g,1}^{2m}} \hookrightarrow \apply{\on{BDiff}_\partial}{W_{g+1,1}^{2m}} \text{ and }
	\apply{\pi_0}{\apply{\on{Diff}_\partial}{W_{g,1}^{2m}}} \hookrightarrow \apply{\pi_0}{\apply{\on{Diff}_\partial}{W_{g+1,1}^{2m}}}
	 \]
	induce isomorphisms in the $q$-th bounded cohomology of spaces and groups, respectively, whenever $q \leq \frac{g-3}{2}$, and injections whenever $q \leq
	\frac{g-3}{2} +1$.
	
	Secondly, assume that $m \geq 6$. Then there are isometric isomorphisms
	\[
	\BoundedCohomologyOfSpaceObject{q}{\apply{\on{BDiff}}{T^m}}{\Reals} \cong 
	\BoundedCohomologyOfSpaceObject{q}{\apply{\pi_0}{\apply{\on{Diff}}{T^m}}}{\Reals} \cong
	\BoundedCohomologyOfSpaceObject{q}{\GLn{m}{\Integers}}{\Reals}.
	\]
	Therefore the inclusion maps (i.e.\ extending by the identity)
	\[
	\apply{\on{BDiff}}{T^m} \hookrightarrow \apply{\on{BDiff}}{T^{m+1}} \text{ and }
	\apply{\pi_0}{\apply{\on{Diff}}{T^m}} \hookrightarrow \apply{\pi_0}{\apply{\on{Diff}}{T^{m+1}}}
	\]
	induce isomorphisms in the $q$-th bounded cohomology of spaces and groups, respectively, if $m \geq 6$ and $q \leq \frac{m-2}{2}$, and injections if $m \geq 6$ and $q \leq \frac{m-2}{2} +1$.
\end{corollaryi}

\begin{remark}
	In the setting of \autoref{thm:stability-for-mcg-and-bdiff-of-high-dimensional-manifolds} and as explained by Krannich in the first paragraph of \cite[Section 1.3]{krannich2020},
	Kreck's work \cite[Proposition 3]{kreck1979} yields two exact sequences 
	$1 \to \on{T}_{g,1}^{m} \to \apply{\pi_0}{\apply{\on{Diff}_\partial}{W_{g,1}^{2m}}} \to \Aut{g}{\Integers}{\Lambda}{\epsilon} \to 1$
	and 
	$1 \to \Theta_{2m+1} \to \on{T}_{g,1}^{m} \to \Hom(\HomologyOfSpaceObject{m}{W_{g,1}^{2m}}{\Integers}, \on{S}\pi_m(\on{SO}(m)) \to 1,$
	where $\Theta_{2m+1}$ and $\on{S}\pi_m(\on{SO}(m))$ are abelian groups. Therefore, \cite[Theorem 3.3 and Proposition 3.4]{frigerio2017boundedcohomologyofdiscretegroups} imply
	that $\on{T}_{g,1}^{m}$ is amenable. Hence, Gromov's theorem \cite{gromov1982} (see also \cite{IvanovSimplicialMappingTheorem} and \autoref{thm:gromov-ivanov}) and the fact that $\pi_1(\apply{\on{BDiff}_\partial}{W_{g,1}}) \cong \apply{\pi_0}{\apply{\on{Diff}_\partial}{W_{g,1}}}$ yield the first set of isometric isomorphisms
	in \autoref{thm:stability-for-mcg-and-bdiff-of-high-dimensional-manifolds}. The second set follows similarly from a work of
	Hatcher \cite[Theorem 4.1]{hatcher1978} and Hsiang--Sharpe \cite[Theorem 2.5]{hsiangsharpe1976}, which establishes an exact sequence
	$1 \to A_m \to \apply{\pi_0}{\apply{\on{Diff}}{T^m}} \to \GLn{m}{\Integers} \to 1$ if $m \geq 6$ and where  $A_m$ is abelian.
\end{remark}

In on-going work, the authors use a ``uniform''
version of homotopy theoretic techniques to study the bounded cohomology of 
$\operatorname{Diff(M)}^\delta$, the discrete diffeomorphisms groups of
manifolds, building on ideas contained in \cite{NarimanStability} and \cite{GalatiusRandalWilliamsStability}.

\subsection{Limitations, instability, stable cohomology and outlook}

One might hope that all homological stability results for sequences of discrete
groups $\{\Group_{n}\}_{n \in \NaturalNumbers}$ in the literature can be
adapted to the setting of bounded cohomology using uniformly bounded methods.
This is not the case: As explained beforehand, one usually uses a sequence of
highly (boundedly) acyclic simplicial $\Group_n$-spaces
$\{\SimplicialComplex_\bullet^n\}_{n \in \NaturalNumbers}$ to prove stability
results. It is easily seen that any infinite diameter simplicial complex
has non-vanishing first simplicial bounded cohomology (see
\autoref{exm:SBCofR}), so for our approach to bear fruit one would need that
the complexes $\SimplicialComplex_{\bullet}^n$ have finite diameter.

In contrast to \autoref{thm:stability-for-mcg-and-bdiff-of-high-dimensional-manifolds}, it is known that 
the bounded cohomology of mapping class groups of 2-dimensional orientable surfaces does not stabilize:
It follows from work of Bestvina and Fujiwara \cite{BestvinaFujiwaraQuasimorphisms} that
the kernel of the comparison map
between bounded and ordinary cohomology in \autoref{eq:comparison-map-groups}
is nontrivial in cohomological degree $q = 2$. In \cite{KastenholzInstability}, one of the
authors explains how Bestvina--Fujiwara's construction can be used to show
that the second bounded cohomology of mapping class groups does not stabilize.

In particular, it follows (from \autoref{thm:stability-for-quillen-families})
that the complexes $\SimplicialComplex^n_\bullet$ that can be used to prove
classical homological
stability for mapping class groups necessarily have infinite diameter if $n$ is
sufficiently large.
The complexes appearing in \cite{randalwilliamswahl2017homologicalstabilityforautomorphismgroups},
for example, share similarities with curve and arc complexes that are
well-known to have infinite diameter (see \cite{MasurMinsky}).

If a sequence of groups  $\{\Group_{n}\}_{n \in \NaturalNumbers}$ satisfies
stability in bounded cohomology, the next natural question to ask is the
following:

\begin{question}
	How is the stable bounded cohomology $\operatorname{lim}_n \BoundedCohomologyOfSpaceObject{q}{\Group_{n}(\Ring)}{\Reals}$ related to the bounded cohomology of the colimit $\Group_{\infty} = \operatorname{colim}_n \Group_{n}$ of the sequence of groups?
\end{question}
In the setting of classical group cohomology, they are isomorphic and this fact
allows for the proof of many powerful results; in particular it leads to
computations (see e.g.\ \cite{borel1974, madsenweiss2007, szymikwahl2019}). In bounded cohomology the
relationship seems to be more complicated. One of the authors showed in
\cite{KastenholzInstability} that while the bounded cohomology of
$\operatorname{Sp}_{2n}(\Integers)$ stabilizes, there are
stable classes whose norm grows to infinity as one increases $n$. Similarly,
De la Cruz Mengual \cite{DCMThesis} proved via explicit computations that the
third bounded cohomology of $\on{Sp}_{2n}(\Integers[i])$ contains classes whose norm tends
to infinity. Hence, these classes cannot come from the infinite symplectic groups
$\operatorname{Sp}_{\infty}(\Integers)$ and $\operatorname{Sp}_{\infty}(\Integers[i])$, respectively. This illustrates that the map
$
  \BoundedCohomologyOfSpaceObject{*}{\Group_{\infty}}{\Reals}
  \to
  \operatorname{lim}_i
  \BoundedCohomologyOfSpaceObject{*}{\Group_i}{\Reals}
$
will in general fail to be surjective.
\begin{question}
	\label{qstn:Colim}
	Is the map
	\[
  	\BoundedCohomologyOfSpaceObject{*}{\Group_{\infty}}{\Reals}
  	\to
  	\operatorname{lim}_i
  	\BoundedCohomologyOfSpaceObject{*}{\Group_i}{\Reals}
	\]
	injective? If so, does its image coincide with the set of classes whose norm
	is uniformly bounded on all $\Group_{i}$ i.e. bounded independently of $i$?
\end{question}
In Section~4 of \cite{fournierfacioloehmoraschini2022} the related question
whether the colimit of a sequence of boundedly acyclic groups is boundedly
acyclic was investigated.

Since bounded cohomology comes equipped with a semi-norm, one might further ask
whether the inclusions induce semi-norm preserving isomorphisms
$\BoundedCohomologyOfSpaceObject{q}{\Group_{n}(\Ring)}{\Reals} \leftarrow
\BoundedCohomologyOfSpaceObject{q}{\Group_{n+1}(\Ring)}{\Reals}$ in the stable
range. The example of symplectic groups also shows that one can in general not
hope that this is true (see \cite{KastenholzInstability} or \cite{DCMThesis}).
Since the norm is the most important feature of bounded cohomology
answering \autoref{qstn:Colim} and finding criteria for the stabilization to
occur via isometries would strengthen the possible results enormously.

\subsection{Outline} 
In \autoref{scn:SBC}, we introduce simplicial bounded cohomology and explain
some of its elementary properties. In \autoref{scn:Setup}, we discuss the 
stability framework for bounded cohomology of De la Cruz Mengual--Hartnick \cite{HartnickDeLaCruzQuillen, HartnickDeLaCruzStability}
and provide some intuition for it. In
\autoref{scn:GLnSetup} and \autoref{sec:automorphism-groups-of-quadratic-modules},
we show how this framework can be applied to general linear groups and automorphism groups of quadratic modules 
to deduce \autoref{thm:BoundedCohomologicalStabilityForLinearGroups} and \autoref{thm:BoundedCohomologicalStabilityForUnitaryGroups}
assuming the key ingredient: two bounded acyclicity results.
We then start developing our uniformly bounded methods and work towards proving these acyclicity theorems.
In \autoref{scn:PosetsBackground}, we collect background material on
partially ordered sets, simplicial complexes, and semi-simplicial sets 
that is used in subsequent sections.
\autoref{sec:toolbox} contains the technical heart of this work: It introduces the 
notion of uniform acyclicity and establishes a toolbox of simplicial techniques that can be employed to check
that simplicial complexes are highly boundedly acyclic.
In \autoref{scn:SolomonTits} and as a warmup for the more 
involved acyclicity arguments in last two sections, 
we apply our toolbox to prove \autoref{item:solomon-tits-an} and 
\autoref{item:solomon-tits-bncn} of \autoref{thm:general-connectivity} 
using ideas contained in \cite{BestvinaPLMorseTheory} . 
Finally, \autoref{scn:BoundedAcyclicityProofs} follows an argument in 
\cite{vanderkallen1980homologystabilityforlineargroups} to prove
\autoref{item:stability-complex-gl} of \autoref{thm:general-connectivity},
finishing the proof of
\autoref{thm:BoundedCohomologicalStabilityForLinearGroups}, while
\autoref{scn:BoundedAcyclicityProofsForUnitaryGroups} follows an argument by
\cite{GalatiusRandalWilliamsStability} to prove
\autoref{item:stability-complex-sp} of \autoref{thm:general-connectivity},
finishing the proof of
\autoref{thm:BoundedCohomologicalStabilityForUnitaryGroups}.

\subsection{Acknowledgments}
We are grateful to Jens Reinhold with whom we started a reading course on
\cite{HartnickDeLaCruzStability} out of which this work eventually grew. 
It is a pleasure to thank Alexander Kupers and Ismael Sierra for a conversation leading to \autoref{thm:stability-for-mcg-and-bdiff-of-high-dimensional-manifolds}.  
We are grateful to Mladen Bestvina for helpful correspondence about \cite{BestvinaFujiwaraQuasimorphisms},
to Nicolas Monod for a clarifying exchange about \cite{MonodSemiSimple, MonodStabilization} and general feedback on a draft of this article,
as well as to Carlos De la Cruz Mengual and Tobias Hartnick for helpful comments on an early version of this paper and for clarifying a question about their previous work \cite{HartnickDeLaCruzQuillen,HartnickDeLaCruzStability}.
RJS would like to thank his PhD advisor, Nathalie Wahl, and Sam Nariman for helpful conversations while this project was developed.
\section{Simplicial Bounded Cohomology}
\label{scn:SBC}
In this section, we introduce the simplicial bounded cohomology of a
semi-simplicial set. We then briefly recall the definitions of the bounded
cohomology
of a topological space and a discrete group, and collect some of their
well-known
properties. In the final subsection, we discuss to which extend simplicial
versions of Eilenberg--Steenrod axioms hold for the bounded cohomology of
semi-simplicial sets. The observations collected thereby are the starting point
of the ``uniformly bounded simplicial methods'' which we develop in
\autoref{sec:toolbox}.

\subsection{Bounded cohomology of semi-simplicial sets}

Let $\SimplicialComplex_{\bullet}$ be a semi-simplicial set (e.g.\ an
ordered simplicial complex). The usual \introduce{real simplicial chain complex} $\ChainComplex{*}{\SimplicialComplex_{\bullet}}$ of $\SimplicialComplex_\bullet$ is defined as follows: The $q$-th chain module
\[
  \ChainComplex{q}{\SimplicialComplex_{\bullet}}
  =
  \Reals[\SimplicialComplex_{q}]
\]
is the vector space over $\Reals$ with the set of $q$-simplices
$\SimplicialComplex_q$ of $\SimplicialComplex_\bullet$ as a basis and the
$q$-th boundary map $\BoundaryChainComplex{q}$ is defined as
\[
  \BoundaryChainComplex{q} \Simplex{}
  =
  \sum_{i=0}^{q+1}
  (-1)^{i}
  \apply{\FaceMap_{i}}{\Simplex{}}
  \in \ChainComplex{q-1}{\SimplicialComplex_{\bullet}},
\]
where $\apply{\FaceMap_{i}}{\Simplex{}} \in \SimplicialComplex_{q-1}$
denotes the $i$-th face of the $q$-simplex $\sigma \in \SimplicialComplex_q$ of
the semi-simplicial set $\SimplicialComplex_{\bullet}$. The \introduce{reduced
simplicial chain complex of $\SimplicialComplex_{\bullet}$}, denoted by
$\ReducedChainComplex{*}{\SimplicialComplex_{\bullet}}$, is defined by
extending the simplicial chain complex by $\Reals$ in degree $-1$ and
defining the differential $\BoundaryChainComplex{0}$ to map every vertex to
$1$. If $\Subcomplex_\bullet \subset
\SimplicialComplex_{\bullet}$
denotes a semi-simplicial subset, then the quotient complex
$
  \ChainComplex{*}{\SimplicialComplex_{\bullet}}
  /
  \ChainComplex{*}{\Subcomplex_\bullet}
$
is denoted by
$\ChainComplex{*}{\SimplicialComplex_{\bullet},\Subcomplex_\bullet}$
and called the \introduce{relative simplicial chain complex}.

The chain modules $\ChainComplex{*}{\SimplicialComplex_{\bullet}}$
can be equipped with an $\ellone$-norm, where
\[
  \Norm{\sum_k a_k \sigma_k} = \sum_k \AbsoluteValue{a_k}
\]
and by defining the norm on
$\ReducedChainComplex{-1}{\SimplicialComplex_{\bullet}}$ to be the absolute
value on $\Reals$, one can also equip
$\ReducedChainComplex{*}{\SimplicialComplex_{\bullet}}$ with a norm.
Similarly
the relative simplicial chain complex $\ChainComplex{*}{\SimplicialComplex_{\bullet},\Subcomplex_\bullet}$ can be equipped with the quotient
norm (Since $\ChainComplex{*}{\Subcomplex_{\bullet}}$ is closed in
$\ChainComplex{*}{\SimplicialComplex_{\bullet}}$ the quotient norm is indeed a
norm). More generally, this is encapsulated in the following definition.
\begin{definition}
	\label{def:normed-chain-complex}
  A chain complex $\AbstractChainComplex{*}$, where all
  $\AbstractChainComplex{q}$ are equipped with a norm and the
  boundary maps $\BoundaryChainComplex{q}$ are all bounded maps with respect to
  these norms is called a \introduce{normed chain complex}.
\end{definition}
In this work all simplicial chain complexes are equipped with the aforementioned norms. We record two facts about these normed chain complexes:
Firstly, the $q$-th differential has norm at most $q+1$. Secondly, every chain $\Chain$ in $\ChainComplex{q}{\SimplicialComplex_{\bullet}}$ has a unique decomposition $\Chain = \Chain_{\Subcomplex_q} + \Chain_{\SimplicialComplex_q \setminus \Subcomplex_{q}}$, where $\Chain_{\Subcomplex_q} \in \ChainComplex{q}{\Subcomplex_{\bullet}}$ and $\Chain_{\SimplicialComplex_q \setminus \Subcomplex_{q}} \in \Reals[\SimplicialComplex_{q} \setminus \Subcomplex_q]$. In particular, there is an isomorphism $\ChainComplex{q}{\SimplicialComplex_{\bullet},\Subcomplex_{\bullet}} \cong \Reals[\SimplicialComplex_{q} \setminus \Subcomplex_q]$ mapping $\Chain + \ChainComplex{q}{\Subcomplex_{\bullet}}$ to $\Chain_{\SimplicialComplex_q \setminus \Subcomplex_{q}}$. Under this identification the quotient norm of a chain $\Chain + \ChainComplex{q}{\Subcomplex_{\bullet}}$ in $\ChainComplex{q}{\SimplicialComplex_{\bullet},\Subcomplex_{\bullet}}$ is exactly the $\ellone$-norm of $\Chain_{\SimplicialComplex_q \setminus \Subcomplex_{q}}$,
\[
\Norm{\Chain+ \ChainComplex{q}{\Subcomplex_{\bullet}}} = \Norm{\Chain_{\SimplicialComplex_q \setminus \Subcomplex_{q}}}_{\ellone}.
\]
This leads us to the following observation, which we shall frequently use later.
\begin{observation}
	\label{obs:norm-preserving-splitting}
	Let $(\SimplicialComplex_\bullet, \Subcomplex_\bullet)$ be a pair of semi-simplicial sets and let $q \geq 0$. Then the map
	$$\ChainComplex{q}{\SimplicialComplex_{\bullet},\Subcomplex_{\bullet}} \to \Reals[\SimplicialComplex_{q} \setminus \Subcomplex_q] \hookrightarrow \ChainComplex{q}{\SimplicialComplex_{\bullet}}: \Chain+ \ChainComplex{q}{\Subcomplex_{\bullet}} \mapsto \Chain_{\SimplicialComplex_q \setminus \Subcomplex_{q}}$$
	is a norm preserving splitting of the quotient map $\ChainComplex{q}{\SimplicialComplex_{\bullet}} \to \ChainComplex{q}{\SimplicialComplex_{\bullet},\Subcomplex_{\bullet}}$.
\end{observation}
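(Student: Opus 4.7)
The plan is to check three things in order: well-definedness, the splitting property, and norm preservation. All three follow from the decomposition $\Chain = \Chain_{\Subcomplex_q} + \Chain_{\SimplicialComplex_q \setminus \Subcomplex_q}$ already recorded in the paragraph preceding the statement, combined with the fact that the $\ellone$-norm on $\ChainComplex{q}{\SimplicialComplex_\bullet} = \Reals[\SimplicialComplex_q]$ is additive across chains supported on disjoint subsets of the basis $\SimplicialComplex_q$.

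First I would verify well-definedness. Two representatives of the same coset in $\ChainComplex{q}{\SimplicialComplex_\bullet, \Subcomplex_\bullet}$ differ by some $\Chain' \in \ChainComplex{q}{\Subcomplex_\bullet}$; since $\Chain'_{\SimplicialComplex_q \setminus \Subcomplex_q} = 0$, the assignment $\Chain + \ChainComplex{q}{\Subcomplex_\bullet} \mapsto \Chain_{\SimplicialComplex_q \setminus \Subcomplex_q}$ does not depend on the chosen representative. The splitting property is then immediate: composing with the quotient map $\ChainComplex{q}{\SimplicialComplex_\bullet} \to \ChainComplex{q}{\SimplicialComplex_\bullet, \Subcomplex_\bullet}$, any coset representative $\Chain$ is sent to $\Chain_{\SimplicialComplex_q \setminus \Subcomplex_q}$ and then to $\Chain_{\SimplicialComplex_q \setminus \Subcomplex_q} + \ChainComplex{q}{\Subcomplex_\bullet} = \Chain + \ChainComplex{q}{\Subcomplex_\bullet}$, since $\Chain - \Chain_{\SimplicialComplex_q \setminus \Subcomplex_q} = \Chain_{\Subcomplex_q} \in \ChainComplex{q}{\Subcomplex_\bullet}$.

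The only substantive step — which is not really an obstacle — is norm preservation. By definition of the quotient norm,
\[
  \Norm{\Chain + \ChainComplex{q}{\Subcomplex_\bullet}}
  = \inf_{\Chain' \in \ChainComplex{q}{\Subcomplex_\bullet}} \Norm{\Chain + \Chain'}_{\ellone}.
\]
For any such $\Chain'$, the chain $\Chain_{\Subcomplex_q} + \Chain'$ is supported on $\Subcomplex_q$ while $\Chain_{\SimplicialComplex_q \setminus \Subcomplex_q}$ is supported on $\SimplicialComplex_q \setminus \Subcomplex_q$, so since these basis subsets are disjoint the $\ellone$-norm splits as a sum
\[
  \Norm{\Chain + \Chain'}_{\ellone}
  = \Norm{\Chain_{\Subcomplex_q} + \Chain'}_{\ellone} + \Norm{\Chain_{\SimplicialComplex_q \setminus \Subcomplex_q}}_{\ellone}.
\]
The first summand is non-negative and is realized as $0$ by the choice $\Chain' = -\Chain_{\Subcomplex_q} \in \ChainComplex{q}{\Subcomplex_\bullet}$, so the infimum equals $\Norm{\Chain_{\SimplicialComplex_q \setminus \Subcomplex_q}}_{\ellone}$, which is by construction the image norm. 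This yields norm preservation and completes the proof.
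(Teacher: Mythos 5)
Your proposal is correct and follows exactly the route the paper intends: the observation is presented as an immediate consequence of the unique decomposition $\Chain = \Chain_{\Subcomplex_q} + \Chain_{\SimplicialComplex_q \setminus \Subcomplex_q}$ and the identification of the quotient norm with the $\ellone$-norm of $\Chain_{\SimplicialComplex_q \setminus \Subcomplex_q}$ recorded in the paragraph just before the statement, and your infimum computation (with the minimizer $\Chain' = -\Chain_{\Subcomplex_q}$ and additivity of the $\ellone$-norm over disjoint supports) simply makes that identification explicit.
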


\begin{definition}
  For a normed chain complex $\AbstractChainComplex{*}$ and a normed
  $\Reals$-module $\Module$, we
  define the \introduce{bounded cohomology of $\AbstractChainComplex{*}$ with coefficients in $\Module$},
  denoted by
  $
    \BoundedCohomologyChainComplex{*}{\AbstractChainComplex{*}}{\Module}
  $,
  to be the cohomology of the subcomplex
  $(\BoundedCoChainComplex{*}{\AbstractChainComplex{*}}, \Coboundary^*)$ of
  the cochain complex
  $(\CoChainComplex{*}{\AbstractChainComplex{*}}, \Coboundary^*)$ of
  $\AbstractChainComplex{*}$ consisting of all bounded cochains
  \[
    \BoundedCoChainComplex{q}{\AbstractChainComplex{*}}
    \coloneqq
    \SpaceOfBoundedOperators
    {\AbstractChainComplex{q}}
    {\Module}.
  \]

  In the case where $\AbstractChainComplex{*}$ is the real simplicial chain complex
  of a semi-simplicial set $\SimplicialComplex_{\bullet}$, we denote its
  bounded cohomology by
  $
    \BoundedCohomologyOfSimplicialObject
      {*}
      {\SimplicialComplex_{\bullet}}
      {\Module}
  $
  and call it the \introduce{simplicial bounded cohomology of
  $\SimplicialComplex_{\bullet}$}. Similarly, the bounded cohomology of the
  reduced simplicial chain complex of $\SimplicialComplex_{\bullet}$ is denoted
  by
  $
    \ReducedBoundedCohomologyOfSimplicialObject
      {*}
      {\SimplicialComplex_{\bullet}}
      {\Module}
  $
  and called the \introduce{reduced simplicial bounded cohomology of
  $\SimplicialComplex_{\bullet}$}. Finally we call bounded cohomology of the
  relative simplicial complex the \introduce{relative
  simplicial bounded cohomology} and denote it by
  $
    \BoundedCohomologyOfSimplicialObject
      {*}
      {\SimplicialComplex_{\bullet},\Subcomplex_\bullet}
      {\Module}
  $.
\end{definition}

We close this subsection with a simple observation.

\begin{example}
	\label{expl:finiteness-condition}
	If $\SimplicialComplex_{\bullet}$ is a semi-simplicial set such that the set of $q$-simplices $\SimplicialComplex_{q}$ is finite for all $q$, then the simplicial bounded cohomology agrees with usual simplicial cohomology.
\end{example}

Hence, simplicial bounded cohomology is of particular interest if for some $q$ the set of simplices $\SimplicialComplex_{n}$ is infinite.

\subsection{Bounded cohomology of topological spaces and discrete groups}

The notion of simplicial bounded cohomology introduced in the previous section
can be used to define the ordinary bounded cohomology of topological spaces and
discrete groups: If $X$ denotes a topological space and
$\apply{\mathrm{Sing}}{X}_\bullet$ is its singular complex, then the
\introduce{bounded cohomology of $X$}, denoted by
$\BoundedCohomologyOfSpaceObject{*}{X}{\Reals}$, is defined as the simplicial
bounded cohomology of $\apply{\mathrm{Sing}}{X}_\bullet$. If $\Group$ is a
discrete group and $X$ is a $\EMSpace{\Group}{1}$-space, then the
\introduce{bounded cohomology of $\Group$} is defined as that of the space $X$,
$\BoundedCohomologyOfSpaceObject{*}{\Group}{\Reals} \coloneqq
\BoundedCohomologyOfSpaceObject{*}{X}{\Reals}$. This is well-defined, because
Gromov \cite{gromov1982} and subsequently Ivanov \cite{ivanov2020} showed that the bounded
cohomology of a path-connected space only depends on its fundamental group:

\begin{theoremnum}[\cite{gromov1982},\cite{ivanov2020}]
	\label{thm:gromov-ivanov}
	Let $f: X \to Y$ be a continuous map between path-connected spaces that induces a surjection with amenable kernel between the fundamental groups. Then the induced map on bounded cohomology $\BoundedCohomologyOfSpaceObject{q}{f}{\Reals}: \BoundedCohomologyOfSpaceObject{q}{Y}{\Reals} \to \BoundedCohomologyOfSpaceObject{q}{X}{\Reals}$
	is an isometric isomorphism for all $q \geq 0$.
\end{theoremnum}

One can easily see that bounded cohomology of spaces is still a homotopy
invariant.
In contrast, it is well-known that bounded cohomology of spaces
does not admit a Mayer--Vietoris sequence: Combined with
\autoref{thm:gromov-ivanov} such a sequence would imply that the bounded
cohomology of $S^1 \vee S^1 = S^1 \cup S^1$ is trivial in positive degrees,
contradicting the fact that the second and third bounded cohomology of $S^1
\vee S^1$ are infinite dimensional \cite{brooks1981, grigorchuk1995,
teruhiko1997}. Hence, bounded cohomology of spaces does not satisfy excision
either. The unavailability of these important computational tools is one of the
reasons why calculating bounded cohomology of a path-connected space or
discrete group is difficult. In the next subsection, we explain why the
situation is fundamentally different in the semi-simplicial setting.

\subsection{Properties of simplicial bounded cohomology}

The main results of \cite{IvanovSimplicialMappingTheorem} shows that the simplicial bounded cohomology of semi-simplicial set $\SimplicialComplex_\bullet$, which can be promoted to a connected Kan simplicial set, is isometrically isomorphic to that of its geometric realization. This can used to give another definition of the bounded cohomology of a discrete group, as explained in the next example.

\begin{example}
	\label{expl:def-via-bar-resolution}
  Let $\ClassifyingSpace{\Group}_\bullet$ be the nerve of a discrete group
  $\Group$ viewed as a category with one object. Then
  $\ClassifyingSpace{\Group}_\bullet$ is a semi-simplicial set that can be
  promoted to a connected Kan simplicial set and therefore the simplicial
  bounded cohomology of $\ClassifyingSpace{\Group}_\bullet$ agrees with the
  bounded cohomology of its geometric realization
  $\GeometricRealization{\ClassifyingSpace{\Group}_\bullet}$. Additionally, its
  simplicial complex is the so called Bar resolution of $\Group$.

  Since $\GeometricRealization{\ClassifyingSpace{\Group}_\bullet}$ is a
  $\EMSpace{\Group}{1}$-space and invoking \autoref{thm:gromov-ivanov}, this
  means that defining the bounded cohomology of a group $\Group$ via the bar
  resolution or in terms of the singular complex of a
  $\EMSpace{\Group}{1}$-space yields isometrically isomorphic groups.
\end{example}

In the above, the Kan property is crucial: In contrast to the classical fact that the simplicial cohomology
$\CohomologyOfSpaceObject{*}{\SimplicialComplex_{\bullet}}{\Reals}$ of a semi-simplicial set
$\SimplicialComplex_{\bullet}$ agrees with the singular cohomology of its
geometric realization
$
  \CohomologyOfSpaceObject
    {*}
    {\GeometricRealization{\SimplicialComplex_{\bullet}}}
    {\Reals}
$,
the simplicial bounded cohomology of $\SimplicialComplex_\bullet$, in general, does not agree with the
bounded cohomology of its geometric realisation as a topological space.
This is illustrated in the next \autoref{exm:SBCofR}.
\begin{example}
\label{exm:SBCofR}
  Consider the following semi-simplicial set $\SimplicialComplex_{\bullet}$
  that realizes the usual simplicial structure on $\Reals$: The vertex set $\SimplicialComplex_0$ and
  edge set $\SimplicialComplex_1$ are both given by $\Integers$ and the two faces of an edge are given
  by $\apply{\FaceMap_{0}}{n} = n$ and $\apply{\FaceMap_{1}}{n} = n + 1$. For all $q\geq 2$ we set $X_{q} = \emptyset$.
  Now consider the function
  $
    \phi
    \colon
    \ChainComplex{0}{\SimplicialComplex_{\bullet}}
    \to
    \Reals
  $
  that sends $n$ to $\AbsoluteValue{n}=\apply{d}{0,n}$.
  Since the diameter of $\Reals$ is infinite, this function is unbounded and defines an unbounded cochain $\phi \in \CoChainComplex{0}{\SimplicialComplex_{\bullet}}$.
  However, its boundary $\apply{\Coboundary^{0}}{\phi}$ takes values in $\{-1,1\}$. In particular, $\apply{\Coboundary^{0}}{\phi} \in \BoundedCoChainComplex{1}{\SimplicialComplex_{\bullet}}$ is a bounded cochain and, since $(\Coboundary^{1} \circ \Coboundary^{0}) = 0$, a bounded cocycle $\apply{\Coboundary^{0}}{\phi} \in \ker \Coboundary^1$. We claim that $[\apply{\Coboundary^{0}}{\phi}] \in \BoundedCohomologyOfSimplicialObject{1}{\SimplicialComplex_{\bullet}} {\Reals}$ is a nontrivial class. To see this, observe that any bounded cochain $\eta \in \BoundedCoChainComplex{0}{\SimplicialComplex_{\bullet}}$ satisfying $\apply{\Coboundary^{0}}{\eta} = \apply{\Coboundary^{0}}{\phi}$
  has to be of the form $\eta = \phi + \psi$, where $\psi$ is in the kernel of $\Coboundary^{0}$. But the kernel of $\Coboundary^{0}$ consists only of constant functions. This implies that no bounded cochain $\eta \in \BoundedCoChainComplex{0}{\SimplicialComplex_{\bullet}}$ satisfying $\apply{\Coboundary^{0}}{\eta} = \apply{\Coboundary^{0}}{\phi}$ exists and we conclude that $[\apply{\Coboundary^{0}}{\phi}] \neq 0$ is a non-zero class in
  $
    \BoundedCohomologyOfSimplicialObject
      {1}
      {\SimplicialComplex_{\bullet}}
      {\Reals}
  $.
  On the other hand, the topological space
  $\GeometricRealization{\SimplicialComplex_{\bullet}} \cong \Reals$ is
  contractible and hence it immediately follows that
  $\BoundedCohomologyOfSpaceObject{q}{\GeometricRealization{\SimplicialComplex_{\bullet}}}{\Reals}
   = 0$ for all $q > 0$. In particular, this shows that the simplicial bounded
  cohomology of $\SimplicialComplex_{\bullet}$ does not agree with the bounded
  cohomology of its geometric realization,
  \[
    0
    =
    \BoundedCohomologyOfSpaceObject{1}{\GeometricRealization{\SimplicialComplex_{\bullet}}}{\Reals}
    \neq
    \BoundedCohomologyOfSimplicialObject
      {1}
      {\SimplicialComplex_{\bullet}}
      {\Reals}.
  \]
\end{example}
Note that in this example the only property used about the real line was its
infinite diameter as a semi-simplicial set. In particular, any
semi-simplicial set with infinite diameter has non-vanishing simplicial
bounded cohomology in degree one. In contrast, it is a well-known fact that the
first bounded cohomology of any path-connected space or discrete group is
trivial (combine e.g.\ \cite[Section
2.1]{frigerio2017boundedcohomologyofdiscretegroups} and
\autoref{thm:gromov-ivanov}). Ivanov's result
\cite{IvanovSimplicialMappingTheorem} therefore implies that the first bounded
cohomology of any connected Kan simplicial set is trivial as well. We highlight
that these have diameter at most one. In \autoref{sec:toolbox} we introduce the
uniform boundary condition of Matsumoto--Morita
\cite{matsumotomorita1985boundedcohomologyofcertaingroupsofhomeomorphisms}; it
can be seen as a generalization of such a finite diameter condition (see
\autoref{rem:0UBC}) and plays an important role in our uniformly bounded
simplicial methods.

We now discuss to which extend simplicial bounded cohomology satisfies the Eilenberg--Steenrod axioms: We start by observing that, because the simplicial chain complex of $\Reals$ is chain homotopic to that of a point, \autoref{exm:SBCofR} also shows that the homotopy invariance axiom does not to hold. Note that in this instance, one of the homotopies participating in the equivalence is unbounded. Imposing a boundedness condition, simplicial bounded cohomology satisfies the following version of homotopy invariance, which we shall frequently use.
\begin{lemma}
\label{lem:HomotopyInvariance}
  Let
  $
    \ContinuousMap
    \colon
    \SimplicialComplex_{\bullet}
    \to
    \SimplicialComplex'_{\bullet}
  $
  and
  $
    \ContinuousMapALT
    \colon
    \SimplicialComplex_{\bullet}
    \to
    \SimplicialComplex'_{\bullet}
  $
  denote simplicial maps. Suppose further that the induced maps on the
  simplicial chain complexes are homotopic through a bounded homotopy
  $
    \Homotopy
    \colon
    \ChainComplex{*}{\SimplicialComplex_{\bullet}}
    \to
    \ChainComplex{*+1}{\SimplicialComplex_{\bullet}'}
  $, 
  then $\ContinuousMap$ and $\ContinuousMapALT$ induce the same map in
  simplicial bounded cohomology.
\end{lemma}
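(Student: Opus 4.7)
The plan is to dualize the chain homotopy $\Homotopy$ to a cochain homotopy on the bounded cochain complex; the assumed boundedness of $\Homotopy$ is precisely what is needed so that this dualization preserves the bounded subcomplex.

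First, I would unpack the hypothesis as the identity
\[
\HomologyOfSpaceMorphism{\ContinuousMap} - \HomologyOfSpaceMorphism{\ContinuousMapALT} = \BoundaryChainComplex{q+1} \circ \Homotopy_{q} + \Homotopy_{q-1} \circ \BoundaryChainComplex{q}
\]
on $\ChainComplex{q}{\SimplicialComplex_{\bullet}}$ for every $q \geq 0$. Given a bounded cocycle $\phi \in \BoundedCoChainComplex{q}{\SimplicialComplex_{\bullet}'}$ with $\apply{\Coboundary^{q}}{\phi} = 0$, I would set $\psi \coloneqq \phi \circ \Homotopy_{q-1}$. Because $\Homotopy_{q-1}$ is bounded by hypothesis and $\phi$ is bounded, $\psi$ is a bounded linear functional on $\ChainComplex{q-1}{\SimplicialComplex_{\bullet}}$ with $\Norm{\psi} \leq \Norm{\phi} \cdot \Norm{\Homotopy_{q-1}}$, so $\psi \in \BoundedCoChainComplex{q-1}{\SimplicialComplex_{\bullet}}$. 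Note also that the induced maps $\ContinuousMap^{*}$ and $\ContinuousMapALT^{*}$ on cochains preserve boundedness because the chain-level maps $\HomologyOfSpaceMorphism{\ContinuousMap}$, $\HomologyOfSpaceMorphism{\ContinuousMapALT}$ send basis simplices to basis simplices and therefore have operator norm at most one.

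Next, a direct computation using $\apply{\Coboundary^{q}}{\phi} = 0$ yields
\[
\ContinuousMap^{*} \phi - \ContinuousMapALT^{*} \phi = \phi \circ \left(\BoundaryChainComplex{q+1} \circ \Homotopy_{q} + \Homotopy_{q-1} \circ \BoundaryChainComplex{q}\right) = \apply{\Coboundary^{q}}{\phi} \circ \Homotopy_{q} + \psi \circ \BoundaryChainComplex{q} = \apply{\Coboundary^{q-1}}{\psi},
\]
so $\ContinuousMap^{*}\phi$ and $\ContinuousMapALT^{*}\phi$ differ by the coboundary of a bounded cochain, hence represent the same class in $\BoundedCohomologyOfSimplicialObject{q}{\SimplicialComplex_{\bullet}}{\Reals}$. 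Since this holds for every bounded cocycle $\phi$, the maps $\ContinuousMap$ and $\ContinuousMapALT$ induce the same map in simplicial bounded cohomology.

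There is no serious obstacle here. The only step where the hypothesis on $\Homotopy$ is used is in ensuring that $\psi$ lies in the bounded cochain subcomplex, which is immediate from the definition of a bounded operator between normed chain modules. This is also the conceptual reason why unrestricted homotopy invariance fails for simplicial bounded cohomology, as illustrated by \autoref{exm:SBCofR}: the natural chain contraction of $\ChainComplex{*}{\SimplicialComplex_{\bullet}}$ onto a point is necessarily unbounded, so dualizing it does not produce a cochain homotopy within $\BoundedCoChainComplex{*}{\SimplicialComplex_{\bullet}}$.
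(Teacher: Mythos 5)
Your proof is correct and is exactly the argument the paper has in mind: the paper omits the proof, remarking only that the lemma is ``an elementary consequence of the fact that the dual of a bounded map is a bounded map,'' and your dualization of the bounded chain homotopy to a bounded cochain homotopy spells out precisely that. The computation and the boundedness checks are all in order.
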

\autoref{lem:HomotopyInvariance} is a elementary consequence of the fact that the dual of a bounded map is a bounded map, and we leave its proof to the reader.

As an illustrating example, if $\ContinuousMap$ and $\ContinuousMapALT$ become simplicially homotopic after finitely many
barycentric subdivisions, then the assumption in \autoref{lem:HomotopyInvariance} are satisfied (compare \autoref{lem:BarycentricSubdivision}).

While the homotopy axiom is not satisfied, simplicial bounded cohomology
does satisfy the Mayer-Vietoris axiom (and hence also the excision axiom):
\begin{lemma} \label{lem:MVSimp}
  Assume $\SimplicialComplex_\bullet = \Subcomplex_\bullet \cup \Subcomplex_\bullet'$ is the union of
  semi-simplicial sets. Let $I_\bullet = \Subcomplex_\bullet \cap \Subcomplex_\bullet'$ denote their
  intersection.
  Then there exists a Mayer--Vietoris sequence in simplicial bounded cohomology.
  \[
    \ldots
    \to
    \BoundedCohomologyOfSimplicialObject{*}{\SimplicialComplex_\bullet}{\Reals}
    \to
    \BoundedCohomologyOfSimplicialObject{*}{\Subcomplex_\bullet}{\Reals}
    \oplus
    \BoundedCohomologyOfSimplicialObject{*}{\Subcomplex_\bullet'}{\Reals}
    \to
    \BoundedCohomologyOfSimplicialObject{*}{I_\bullet}{\Reals}
    \to
    \ldots
  \]
\end{lemma}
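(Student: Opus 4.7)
The plan is to produce a short exact sequence of normed chain complexes, dualize it to a short exact sequence of bounded cochain complexes, and apply the standard zigzag construction to extract the long exact sequence in simplicial bounded cohomology. Using that $\SimplicialComplex_q = \Subcomplex_q \cup \Subcomplex_q'$ with $\Subcomplex_q \cap \Subcomplex_q' = I_q$, I first define chain maps
$$0 \to \ChainComplex{*}{I_\bullet} \xrightarrow{\alpha} \ChainComplex{*}{\Subcomplex_\bullet} \oplus \ChainComplex{*}{\Subcomplex_\bullet'} \xrightarrow{\beta} \ChainComplex{*}{\SimplicialComplex_\bullet} \to 0$$
on basis elements by $\alpha(\sigma) = (\sigma, -\sigma)$ and $\beta(\sigma, \sigma') = \sigma + \sigma'$. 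These commute with the semi-simplicial face maps, and the sequence is exact as an abstract sequence of $\Reals$-vector spaces; this is the standard input for the Mayer--Vietoris sequence.

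The key point, and the one requiring genuine attention, is that dualizing via $\SpaceOfBoundedOperators{-}{\Reals}$ preserves exactness here. In general this can fail for short exact sequences of normed vector spaces, so I will exhibit a norm-bounded splitting in each degree using the preferred basis by simplices together with the $\ellone$-norm (in the spirit of \autoref{obs:norm-preserving-splitting}). Specifically, using the disjoint decomposition $\SimplicialComplex_q = (\Subcomplex_q \setminus I_q) \sqcup I_q \sqcup (\Subcomplex_q' \setminus I_q)$, I define a section of $\beta$ on basis simplices by $\sigma \mapsto (\sigma, 0)$ if $\sigma \in \Subcomplex_q$ and $\sigma \mapsto (0, \sigma)$ if $\sigma \in \Subcomplex_q' \setminus I_q$; this has operator norm $1$. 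A norm-$1$ retraction of $\alpha$ is obtained by projecting $(y, y')$ to the $I_q$-component of $y$. Hence each short exact sequence splits isometrically as normed $\Reals$-modules, and dualizing produces a short exact sequence
$$0 \to \BoundedCoChainComplex{*}{\SimplicialComplex_\bullet} \to \BoundedCoChainComplex{*}{\Subcomplex_\bullet} \oplus \BoundedCoChainComplex{*}{\Subcomplex_\bullet'} \to \BoundedCoChainComplex{*}{I_\bullet} \to 0$$
of bounded cochain complexes.

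Applying the zigzag lemma to this short exact sequence yields the Mayer--Vietoris long exact sequence in simplicial bounded cohomology. The main obstacle I would anticipate, had the chain modules not been built from a basis, would be guaranteeing that the connecting maps and extensions produced by the zigzag construction remain within the bounded subcomplex; here it is dissolved by the explicit norm-$1$ splitting coming from the simplicial basis and the $\ellone$-structure.
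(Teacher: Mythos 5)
Your proof is correct and follows essentially the same route as the paper: both arguments come down to the short exact sequence $0 \to \BoundedCoChainComplex{*}{\SimplicialComplex_\bullet} \to \BoundedCoChainComplex{*}{\Subcomplex_\bullet} \oplus \BoundedCoChainComplex{*}{\Subcomplex_\bullet'} \to \BoundedCoChainComplex{*}{I_\bullet} \to 0$ followed by the snake lemma. The paper verifies exactness of this cochain sequence directly (restriction is injective since every simplex of $\SimplicialComplex_\bullet$ lies in $\Subcomplex_\bullet$ or $\Subcomplex_\bullet'$; extend by zero; glue functions agreeing on $I_\bullet$), which is precisely the dual of your norm-$1$ splittings, so the two verifications correspond step for step.
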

\begin{proof}
  We show that there exists an exact sequence of normed chain-complexes,
  \begin{equation}
  \label{eqn:MVChainSequence}
    0
    \to
    \BoundedCoChainComplex{*}{\SimplicialComplex_\bullet}
    \to
    \BoundedCoChainComplex{*}{\Subcomplex_\bullet}
    \oplus
    \BoundedCoChainComplex{*}{\Subcomplex_\bullet'}
    \to
    \BoundedCoChainComplex{*}{I_\bullet}
    \to \\
    0.
  \end{equation}
  The injectivity on the left follows, because every simplex in
  $\SimplicialComplex_\bullet$ occurs as a simplex in $\Subcomplex_\bullet$ or $\Subcomplex_\bullet'$.
  The surjectivity on the right follows because every function on simplices of
  $I_\bullet$ can be extended by zero to a function on the $\Subcomplex_\bullet$ and
  $\Subcomplex_\bullet'$.
  The exactness in the middle follows because two functions that agree on the
  simplices of $I_\bullet$ can be glued to obtain a function on the simplices of
  $\SimplicialComplex_\bullet$.  Now we obtain the desired long exact sequence from the snake lemma.
\end{proof}
\begin{remark}
  Observe that for singular chain complexes of spaces and the usual Mayer-Vietoris sequence of a covering $\SimplicialComplex = \Subcomplex \cup \Subcomplex'$,
  the sequence in \autoref{eqn:MVChainSequence} fails to be injective on the
  left,
  since not every singular simplex of $X$ occurs as a simplex in $\Subcomplex$ or
  $\Subcomplex'$ -- rather this is only true after a potentially unbounded
  number of subdivisions. This is the main reason why the Mayer--Vietoris
  sequence (and hence the excision axiom) is not available for bounded
  cohomology of spaces and ultimately why it does not satisfy the
  Eilenberg--Steenrod axioms.
\end{remark}

The next lemma shows that simplicial bounded cohomology comes with a long exact sequence for semi-simplicial pairs $(\SimplicialComplex_\bullet, \Subcomplex_\bullet)$.
\begin{lemma}
\label{lem:LES}
  Let $\SimplicialComplex_{\bullet}$ denote a semi-simplicial set and
  $\Subcomplex_{\bullet}$ a semi-simplicial subset. Then there exists a
  natural
  long exact sequence
  \[
    \ldots
    \to
    \BoundedCohomologyOfSimplicialObject
      {*-1}
      {\Subcomplex_{\bullet}}
      {\Reals}
    \to
    \BoundedCohomologyOfSimplicialObject
      {*}
      {\SimplicialComplex_{\bullet},\Subcomplex_{\bullet}}
      {\Reals}
    \to
    \BoundedCohomologyOfSimplicialObject
      {*}
      {\SimplicialComplex_{\bullet}}
      {\Reals}
    \to
    \BoundedCohomologyOfSimplicialObject
      {*}
      {\Subcomplex_{\bullet}}
      {\Reals}
    \to
    \ldots
  \]
\end{lemma}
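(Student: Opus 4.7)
The plan is to deduce the long exact sequence from the standard zig-zag lemma applied to a short exact sequence of bounded cochain complexes. The only subtlety compared to the unbounded case is verifying that the relevant short sequence is indeed exact in the category of normed $\Reals$-modules; this is exactly where \autoref{obs:norm-preserving-splitting} enters.

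First, I would recall that by definition we have a short exact sequence of $\Reals$-vector spaces
\[
  0 \to \ChainComplex{*}{\Subcomplex_\bullet} \to \ChainComplex{*}{\SimplicialComplex_\bullet} \to \ChainComplex{*}{\SimplicialComplex_\bullet, \Subcomplex_\bullet} \to 0,
\]
and by \autoref{obs:norm-preserving-splitting} this sequence of normed chain complexes admits a norm-preserving splitting $s_q \colon \ChainComplex{q}{\SimplicialComplex_\bullet, \Subcomplex_\bullet} \to \ChainComplex{q}{\SimplicialComplex_\bullet}$ in each degree (namely the map $\sigma + \ChainComplex{q}{\Subcomplex_\bullet} \mapsto \sigma_{\SimplicialComplex_q \setminus \Subcomplex_q}$).

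Next I would apply the functor $\SpaceOfBoundedOperators{-}{\Reals}$ degreewise. A bounded cochain on $\SimplicialComplex_\bullet$ restricts to a bounded cochain on $\Subcomplex_\bullet$, so we get a map $\BoundedCoChainComplex{*}{\SimplicialComplex_\bullet} \to \BoundedCoChainComplex{*}{\Subcomplex_\bullet}$, and its kernel consists precisely of those bounded cochains on $\SimplicialComplex_\bullet$ that vanish on $\ChainComplex{*}{\Subcomplex_\bullet}$, i.e.\ of $\BoundedCoChainComplex{*}{\SimplicialComplex_\bullet, \Subcomplex_\bullet}$ under the identification $\ChainComplex{*}{\SimplicialComplex_\bullet, \Subcomplex_\bullet} \cong \Reals[\SimplicialComplex_* \setminus \Subcomplex_*]$. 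Surjectivity of the restriction map on bounded cochains follows from the splitting $s_q$: given a bounded cochain $\psi$ on $\Subcomplex_\bullet$, the composition $\psi \circ (\Identity - s_q \circ \pi_q)$, where $\pi_q$ is the quotient map, is a bounded extension of $\psi$ to $\ChainComplex{q}{\SimplicialComplex_\bullet}$ (with the same norm, since $s_q$ is norm-preserving). This yields a short exact sequence of cochain complexes
\[
  0 \to \BoundedCoChainComplex{*}{\SimplicialComplex_\bullet, \Subcomplex_\bullet} \to \BoundedCoChainComplex{*}{\SimplicialComplex_\bullet} \to \BoundedCoChainComplex{*}{\Subcomplex_\bullet} \to 0.
\]

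Finally, I would invoke the snake lemma (or equivalently the zig-zag lemma for cochain complexes) to obtain the desired long exact sequence, and note that naturality in the pair follows from naturality of the snake-lemma construction together with the fact that a simplicial map of pairs induces a map of the above short exact sequences. I don't anticipate any real obstacle here: the only potentially delicate point is the surjectivity of restriction on \emph{bounded} cochains, but this is immediately handled by the norm-preserving splitting of \autoref{obs:norm-preserving-splitting}, which is really the analogue of the fact that excision and relative sequences behave well for simplicial (as opposed to singular) objects in this normed setting.
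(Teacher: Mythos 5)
Your proposal is correct and follows essentially the same route as the paper: both establish the short exact sequence of bounded cochain complexes $0 \to \BoundedCoChainComplex{*}{\SimplicialComplex_{\bullet},\Subcomplex_\bullet} \to \BoundedCoChainComplex{*}{\SimplicialComplex_{\bullet}} \to \BoundedCoChainComplex{*}{\Subcomplex_\bullet} \to 0$ and apply the snake lemma, with surjectivity of restriction obtained by extending a bounded cochain by zero on the simplices of $\SimplicialComplex_{q} \setminus \Subcomplex_{q}$. Your phrasing of that extension via the norm-preserving splitting of \autoref{obs:norm-preserving-splitting} is just a repackaging of the paper's ``extend by zero'' argument, so there is nothing substantive to add.
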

\begin{proof}
  We will show that the sequence
  \[
    0
    \to
    \BoundedCoChainComplex{*}{\SimplicialComplex_{\bullet},\Subcomplex_\bullet}
    \to
    \BoundedCoChainComplex{*}{\SimplicialComplex_{\bullet}}
    \to
    \BoundedCoChainComplex{*}{\Subcomplex_\bullet}
    \to
    0
  \]
  is exact, then the claim follows from the usual snake lemma.

  To check the exactness at the left and middle term, we note that any bounded
  morphism in $\BoundedCoChainComplex{q}{\SimplicialComplex_{\bullet},\Subcomplex_\bullet}$ gets mapped injectively to a bounded
  morphism with domain $\ChainComplex{q}{\SimplicialComplex_{\bullet}}$ that
  vanishes on the subcomplex $\ChainComplex{q}{\Subcomplex_{\bullet}}$ and that
  if a bounded morphism with domain $\ChainComplex{q}{\SimplicialComplex_{\bullet}}$
  vanishes on $\ChainComplex{q}{\Subcomplex_{\bullet}}$, then it descends to a
  relative bounded cochain. The surjectivity of the right most map follows because bounded functions with domain
  $\ChainComplex{q}{\Subcomplex_{\bullet}}$ can be extend to bounded functions with domain $\ChainComplex{q}{\SimplicialComplex_{\bullet}}$ by mapping every simplex in
  $\SimplicialComplex_{q} \setminus \Subcomplex_{q}$ to zero.
\end{proof}
\begin{remark}
  Since we are only interested in the underlying simplicial bounded cohomology groups and not their norms, we can indeed apply the usual snake lemma in \autoref{lem:MVSimp} and \autoref{lem:LES}. This comes at the cost of the boundary morphisms potentially not being continuous.
\end{remark}

Finally, simplicial bounded cohomology satisfies a finite
additivity axiom, i.e.\
$
  \BoundedCohomologyOfSimplicialObject
    {*}
    {
      \SimplicialComplex_{\bullet}
      \sqcup
      \SimplicialComplex'_{\bullet}
    }
    {\Reals}
  \cong
  \BoundedCohomologyOfSimplicialObject
    {*}
    {\SimplicialComplex_{\bullet}}
    {\Reals}
  \times
  \BoundedCohomologyOfSimplicialObject
    {*}
    {\SimplicialComplex'_{\bullet}}
    {\Reals}
$. 
For infinite disjoint unions the additivity property fails as our last example in this subsection shows.

\begin{example}
	\label{expl:uniformity-condition}
  Let $\SimplicialComplex_{\bullet}^{n}$ denote the semi-simplicial set with set of vertices $\SimplicialComplex^{n}_{0} = \{0,\ldots n\}$, set of edges $\SimplicialComplex^{n}_{1} = \{1, \ldots, n\}$ and $\SimplicialComplex^{n}_{q} = \emptyset$ for all $q > 1$. The face maps are given by $\apply{\FaceMap_{0}}{k}=k-1$ and $\apply{\FaceMap_{1}}{k}=k$. Then, the geometric realization of $\SimplicialComplex_{\bullet}^{n}$ is an interval of length $n$. Since all $\SimplicialComplex_{\bullet}^{n}$ are finite complexes, \autoref{expl:finiteness-condition} implies that their simplicial bounded cohomology agrees with their ordinary simplicial cohomology and hence vanishes is positive degrees. Let $\SimplicialComplex_{\bullet}$ denote the disjoint union of all $\SimplicialComplex^{n}_{\bullet}$ and consider the function $\phi\colon \SimplicialComplex_{0} \to \Reals$ mapping the vertex $k$ in $\SimplicialComplex_{0}^{n}$ to $k \in \Reals$.
  While the restriction of $\phi$ to all
  $\SimplicialComplex_{\bullet}^{n}$ is bounded, overall it is
  unbounded. Hence repeating the arguments of \autoref{exm:SBCofR}
  shows that $\Coboundary \phi$ represents a non-zero element of the
  first simplicial bounded cohomology of $\SimplicialComplex_{\bullet}$
  and hence
  \[
  0
  =
  \prod_{n}
  \BoundedCohomologyOfSimplicialObject
  {1}
  {\SimplicialComplex_{\bullet}^{n}}
  {\Reals}
  \not \cong
  \BoundedCohomologyOfSimplicialObject
  {1}
  {\SimplicialComplex_{\bullet}}
  {\Reals}
  \]
\end{example}
Note that in this example the diameter of each connected component of $\SimplicialComplex_\bullet$ is finite and that the set of diameters $\NaturalNumbers$ does not admit a common upper bound. This illustrate that in order for simplicial bounded cohomology to behave well for infinite disjoint unions one has to impose some kind of uniformity condition (see e.g.\ \autoref{lem:DirectedUnion} for a version where all occurring semi-simplicial sets have vanishing simplicial
bounded cohomology).

All in all simplicial bounded cohomology is very close to being a cohomology
theory i.e. satisfying the
Eilenberg--Steenrod axioms: It does satisfy the Mayer--Vietoris axiom, the long
exact sequence axiom and even the dimension axiom. Furthermore, versions of the
homotopy axiom and the additivity axiom hold under some additional boundedness and uniformity assumptions. In \autoref{sec:toolbox}, we make this more precise using the uniform boundary condition of Matsumoto--Morita.
\section{A framework for stability of bounded cohomology}
\label{scn:Setup}
This section is based on work of De la Cruz Mengual--Hartnick
\cite{HartnickDeLaCruzQuillen} and introduces their stability framework
for bounded cohomology. Their work is inspired by Quillen's approach to
classical homological stability \cite{quillen1971} and can be seen as
``norm-enriched'' analogue of its fruitful abstractions (see e.g.\
\cite{randalwilliamswahl2017homologicalstabilityforautomorphismgroups,
krannich2019, hepworth2020}).

\begin{remark}
	The machinery developed in \cite{HartnickDeLaCruzQuillen} can be used
	to study the continuous bounded cohomology of topological groups.
	Because the present work primarily concerns the bounded cohomology of
	discrete groups, we decided to discuss and state the results
	contained in \cite{HartnickDeLaCruzQuillen} only for this special
	case.
\end{remark}

De la Cruz Mengual--Hartnick \cite{HartnickDeLaCruzQuillen} proved that
the bounded cohomology of a nested sequence of groups $\{\Group_n\}_{n
\in \NaturalNumbers}$
\[
\Group_0 \hookrightarrow \Group_1 \hookrightarrow \dots \hookrightarrow \Group_n \hookrightarrow \dots
\]
exhibits a stability pattern if for each $n \in \NaturalNumbers$ there exists a semi-simplicial set $\SimplicialComplex^n_\bullet$ with a simplicial $\Group_n$-action such that the family $(\Group_{n}, \SimplicialComplex_{\bullet}^{n})_{n \in \NaturalNumbers}$ satisfies three technical conditions.

The first condition imposed on the family $(\Group_{n},
\SimplicialComplex_{\bullet}^{n})_{n \in \NaturalNumbers}$ is for the
semi-simplicial sets $\SimplicialComplex_{\bullet}^{n}$ to be highly acyclic in
the following sense.

\begin{definition}
  Let $k \geq 0$. A semi-simplicial set is called \introduce{boundedly $k$-acyclic} if
  \[
    \ReducedBoundedCohomologyOfSimplicialObject
      {q}{\SimplicialComplex_{\bullet}}{\Reals}
    =
    0
    \text{ for all }
    q \leq k.
  \]
\end{definition}

For each $n \in \NaturalNumbers$, the acyclicity condition is used to construct
bounded cohomology approximations to the classifying space
$\ClassifyingSpace{\Group_n}$. This is in analogy with ideas used in the
literature on classical homological stability (see e.g.\
\cite{randalwilliamswahl2017homologicalstabilityforautomorphismgroups}).
Consider the Borel construction of the $\Group_n$-action on
$\SimplicialComplex^n_\bullet$, i.e.\ the quotient of
$\SimplicialComplex^n_{\bullet} \times \EG{\Group_n}$ by the diagonal
$\Group_n$-action,
\[
  \HomotopyQuotient{\SimplicialComplex^n_\bullet}{\Group_n}
  \coloneqq
  (\SimplicialComplex^n_{\bullet}
  \times
  \EG{\Group_n})_{\Group_n}.
\]

\cite[Proposition 2.15]{HartnickDeLaCruzQuillen} shows that an
associated
double complex leads to a spectral sequence converging to the bounded
cohomology of $\Gamma_n$ in a range of degrees depending on how boundedly
acyclic the semi-simplicial set $\SimplicialComplex^n_\bullet$ is. This is
exactly the step in the stability argument that relies on bounded acyclicity
results. The following is an augmented version of this spectral sequence, which
converges to zero in a range.

\begin{lemma}[{\cite[Proposition 2.15]{HartnickDeLaCruzQuillen}}]
	\label{lem:spectralsequencefromacyclicity}
  Let $\SimplicialComplex_{\bullet}$ denote a boundedly $k$-acyclic
  semi-simplicial set with a simplicial $\Group$-action which is extended by a
  singleton in $\SimplicialComplex_{-1}$, then there exists a first quadrant
  spectral sequence
  $\Spectralsequence{\bullet}{\bullet}{\bullet}$
  with first page terms and differentials given by
  \[
    \Spectralsequence{p}{q}{1}
    =
    \BoundedCohomologyOfSpaceObject
      {q}
      {\Group}
      {\BoundedCoChainComplex{p-1}{\SimplicialComplex_{\bullet}}}
    \text{ and }
    \SSDifferential{p}{q}{1}
    =
    \Coboundary^{p-1}
    \text{ for all $p,q\geq 0$}
  \]
  that converges to zero in all total degrees up to and including $k+1$.
\end{lemma}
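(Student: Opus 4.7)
The strategy is a standard double complex argument, adapted to bounded cohomology. The plan is to build a first-quadrant double complex whose total cohomology admits two spectral sequences: one realising the advertised $E_1$-page, and the other forced to vanish in low total degrees by the bounded $k$-acyclicity hypothesis.

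Concretely, I would set
\[
  D^{p,q} \;=\; C_b^q\bigl(\Group;\, \BoundedCoChainComplex{p-1}{\SimplicialComplex_{\bullet}}\bigr),
\]
where $C_b^\bullet(\Group;\,M)$ denotes the inhomogeneous bar complex computing the bounded group cohomology $\BoundedCohomologyOfSpaceObject{*}{\Group}{M}$ of $\Group$ with coefficients in a Banach $\Group$-module $M$. The horizontal differential is induced by the bounded $\Group$-equivariant map $\Coboundary^{p-1}\colon \BoundedCoChainComplex{p-1}{\SimplicialComplex_{\bullet}} \to \BoundedCoChainComplex{p}{\SimplicialComplex_{\bullet}}$, and the vertical differential is the usual bar differential. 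The extension of $\SimplicialComplex_{\bullet}$ by a singleton in degree $-1$ makes $\BoundedCoChainComplex{-1}{\SimplicialComplex_{\bullet}} = \Reals$ with trivial $\Group$-action, so the horizontal direction of each row of $D^{\bullet,\bullet}$ is obtained by applying $C_b^q(\Group;-)$ to the augmented bounded cochain complex of $\SimplicialComplex_{\bullet}$.

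Taking vertical cohomology first recovers the target $E_1$-page: for each fixed $p$, the column $D^{p,\bullet}$ is by construction the bar complex computing $\BoundedCohomologyOfSpaceObject{q}{\Group}{\BoundedCoChainComplex{p-1}{\SimplicialComplex_{\bullet}}}$, and the remaining horizontal $d_1$ is exactly the map induced by $\Coboundary^{p-1}$, as claimed. Taking horizontal cohomology first, the $p$-th horizontal cohomology of a row $D^{\bullet,q}$ computes (modulo the technical ingredient mentioned below) $C_b^q\bigl(\Group;\, \ReducedBoundedCohomologyOfSimplicialObject{p-1}{\SimplicialComplex_{\bullet}}{\Reals}\bigr)$, which vanishes for $p-1 \leq k$ by the bounded $k$-acyclicity hypothesis. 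Consequently this second spectral sequence is supported in columns $p \geq k+2$ and therefore abuts to zero in total degrees $\leq k+1$. Since both spectral sequences converge to the cohomology of the total complex of $D^{\bullet,\bullet}$, the first abuts to zero in the same range, which is the content of the lemma.

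The delicate step is the commutation of $C_b^q(\Group;-)$ with horizontal cohomology used in the second spectral sequence, since mere vanishing of bounded cohomology of a cochain complex does not by itself supply the bounded chain contractions needed to preserve exactness under the non-exact functor $C_b^q(\Group;-)$. I would address this by working with relatively injective resolutions in the sense of Ivanov--Monod bounded homological algebra, following the framework of \cite{HartnickDeLaCruzQuillen}, where the requisite exactness of $C_b^q(\Group;-)$ applied to the augmented bounded cochain complex of a boundedly acyclic semi-simplicial set is provided. Once this technical ingredient is in place, the remainder is routine spectral sequence bookkeeping.
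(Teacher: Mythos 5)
Your proposal is correct and follows essentially the same route as the source the paper relies on: the paper does not prove this lemma itself but cites \cite[Proposition 2.15]{HartnickDeLaCruzQuillen}, whose argument is exactly the double complex $C_b^q\bigl(\Group;\BoundedCoChainComplex{p-1}{\SimplicialComplex_\bullet}\bigr)$ with its two spectral sequences. You also correctly isolate the one genuinely delicate step---the exactness of $C_b^q(\Group;-)$ applied to the augmented bounded cochain complex of a boundedly acyclic semi-simplicial set---which is precisely the ingredient (\cite[Lemma 2.4]{HartnickDeLaCruzQuillen}, with its discrete-group analogue from the proof of \cite[Theorem~3.3]{NarimanMonod}) that the paper's \autoref{rem:CountabilityQuillen} identifies as what must be checked.
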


The stability argument requires a detailed understanding of the spectral
sequences constructed in the previous lemma. In order to describe the
$\Spectralsequence{\bullet}{\bullet}{1}$-page of these spectral sequences two
additional, technical assumptions are assumed.

The second condition imposed on the family $(\Group_{n}, \SimplicialComplex_{\bullet}^{n})_{n \in \NaturalNumbers}$ is for the simplicial action of $\Gamma_n$ on $\SimplicialComplex^n_\bullet$ to be highly transitive in the following sense.

\begin{definition}
  Let $\Group$ denote a group and $\SimplicialComplex_{\bullet}$ denote a
  semi-simplicial set with a simplicial $\Group$-action. This action is
  called \introduce{$k$-transitive} if for all $p \leq k$ the action of $\Gamma$ on the set of $p$-simplices $\SimplicialComplex_p$ of $\SimplicialComplex_{\bullet}$ is transitive.
\end{definition}

Note that a $k$-transitivity assumption allows one to identify, for each $p
\leq k$, the set of $p$-simplices $\SimplicialComplex^n_p$ of
$\SimplicialComplex^n_{\bullet}$ with the set of cosets
$\Group_n/\Stabilizer_{n, p}$, where $\Stabilizer_{n, p}$ is the stabilizer of
some $p$-simplex $\Simplex{p} \in \SimplicialComplex^n_p$. If
$\SimplicialComplex^n_\bullet$ is highly boundedly acyclic, one can therefore
invoke a bounded cohomology version of Shapiro's lemma (see
\cite[Proposition~10.1.3]{MonodBook}) to identify
the terms of the $\Spectralsequence{\bullet}{\bullet}{1}$-page of the spectral
sequences in
\autoref{lem:spectralsequencefromacyclicity} with the bounded cohomology of the
stabilizers of certain simplices in a range of bi-degrees.

The third and last condition imposed on the family $(\Group_{n}, \SimplicialComplex_{\bullet}^{n})_{n \in \NaturalNumbers}$ is for the actions associated to two subsequent pairs, $(\Group_{n}, \SimplicialComplex_{\bullet}^{n})$ and $(\Group_{n+1}, \SimplicialComplex_{\bullet}^{n+1})$, to be compatibility in the following sense.

\begin{definition}[{Compare with \cite[Section~1.2]{HartnickDeLaCruzQuillen}}]
  \label{def:Compatibility}
  Consider a nested sequence of groups
  $
    \{\Group_{n}\}_{n \in \NaturalNumbers}
  $, 
  where each $\Group_{n}$ acts on a semi-simplicial set
  $\SimplicialComplex_{\bullet}^{n}$. We say that the sequence
  $
    \left(
      \Group_{n}
      ,
      \SimplicialComplex_{\bullet}^{n}
    \right)
    _{n\in\NaturalNumbers}
  $
  is \introduce{$\CompatibilityFunction$-compatible} for a function
  $
    \CompatibilityFunction
    \colon
    \NaturalNumbers
    \to
    \NaturalNumbers
  $
  if
  \begin{enumerate}[(i)]
  \item
  \label{itm:CompatibilityTransitivity}
    The action of $\Group_{n}$ on $\SimplicialComplex_{\bullet}^{n}$ is
    $\apply{\CompatibilityFunction}{n}$-transitive.
  \item
  \label{itm:CompatibilityFlag}
    For all $n\in \NaturalNumbers$ there exists a flag
    $
      \Simplex{n,0}
      \leq
      \Simplex{n,1}
      \leq
      \ldots
      \leq
      \Simplex{n,\apply{\CompatibilityFunction}{n}}
      \subset
      \SimplicialComplex_{\bullet}^{n}
    $, 
    where $\leq$ denotes the face relation and $\Simplex{n,k}$ denotes a
    $k$-simplex.
  \item
  \label{itm:CompatibilityStabilizer}
    For each $q\leq \apply{\CompatibilityFunction}{n} - 1$ and $i\leq q+1$,
    there exists an
    element $\GroupElement_{n,q,i} \in \Group_{n}$ such that
    $
      \FaceMap_{i} \Simplex{n,q+1} = \GroupElement_{n,q,i} \Simplex{n,q}
    $
    and $\GroupElement_{n,q,i}$ centralizes the stabilizer $\Stabilizer_{n,q+1}$
    of $\Simplex{n,q+1}$.
  \item
  \label{itm:CompatibilityEpimorphism}
    For each $q \leq \apply{\CompatibilityFunction}{n}$, there exists an
    epimorphism
    $\EpimorphismSetup_{n,q} \colon \Stabilizer_{n,q+1} \to \Group_{n-q-1}$ with
    amenable kernel and a section
    $\SectionSetup_{n,q}$ such that the following diagrams commute:
    \[
      \begin{tikzcd}
        \Stabilizer_{n,q+1}
          \arrow[r, "{c_{\GroupElement_{n,q,i}}}"]
        &
        \Stabilizer_{n,q}
          \arrow[d, two heads, "\EpimorphismSetup_{n,q}"]
        \\
        \Group_{n-q-2}
          \ar[u, "\SectionSetup_{n,q+1}"]
          \ar[r, hook]
        &
        \Group_{n-q-1}
      \end{tikzcd}
    \]
    where $c_{\GroupElement_{n,q,i}}$ is the composition of the inclusion
    $
      \Stabilizer_{n,q+1}
      \hookrightarrow
      \Stabilizer_{n,q}
    $
    and conjugation by $\GroupElement_{n,q,i}$.
  \end{enumerate}
\end{definition}

\begin{remark}
	The conditions we stated in \autoref{def:Compatibility} are based on \cite[arXiv v1: Definition 5.3]{HartnickDeLaCruzStability}. There is one difference: In \autoref{itm:CompatibilityStabilizer} we ask that $g_{n,q, i}$ centralizes, and not just normalizes, $\Stabilizer_{n,q+1}$. Apart from \autoref{itm:CompatibilityEpimorphism}, which is slightly relaxed, the conditions are then exactly as the ones used in classical setting of e.g.\ \cite{randalwilliamswahl2017homologicalstabilityforautomorphismgroups}. Recently, the new preprint \cite{HartnickDeLaCruzQuillen} appeared and it contains several new variants of the compatibility axiom that are weaker than \autoref{def:Compatibility}, see \cite[Definition 4.2, (MQ3)]{HartnickDeLaCruzQuillen}. We highlight that \autoref{def:Compatibility} implies \cite[Definition 4.2, (MQ2) and (MQ3)]{HartnickDeLaCruzQuillen}: Indeed, \cite[Definition 4.2, (MQ2)]{HartnickDeLaCruzQuillen} follows from \autoref{itm:CompatibilityTransitivity}. And for \cite[Definition 4.2, (MQ3)]{HartnickDeLaCruzQuillen}, we note that \autoref{itm:CompatibilityStabilizer} implies that the map $c_{\GroupElement_{n,q,i}}: \Stabilizer_{n,q+1} \to \Stabilizer_{n,q}$ is equal to the inclusion $\Stabilizer_{n,q+1} \hookrightarrow \Stabilizer_{n,q}$. Therefore, \cite[Section 1.2, (MQ3c)]{HartnickDeLaCruzQuillen} holds as well.
\end{remark}

This compatibility condition enables one to compute the differentials of the
spectral sequences in \autoref{lem:spectralsequencefromacyclicity} in a range
of bi-degrees. Assuming a high boundedly acyclicity and high transitivity
condition, the compatibility assumption allows to identify, in a range of
degrees, the bounded cohomology of the stabilizer of a $p$-simplex
$\Stabilizer_{n,p}$, i.e.\ certain terms on the
$\Spectralsequence{\bullet}{\bullet}{1}$-page
of the spectral sequences in \autoref{lem:spectralsequencefromacyclicity}, with
the bounded cohomology of the group $\Group_{n - p - 1}$. Under these
identification the $\Spectralsequence{\bullet}{\bullet}{1}$-differential of the
spectral
sequence then correspond to either the map induced by the inclusion $\Group_{n
- p - 2} \hookrightarrow \Group_{n - p - 1}$ or the zero map.

The three conditions are encapsulated by the following definition.

\begin{definition}[{\cite[Definition 4.2]{HartnickDeLaCruzQuillen}}]
  \label{def:QuillenFamily}
  Let $\{\Group_{n}\}_{n \in \NaturalNumbers}$ denote a nested
  sequence of groups, and for every $n\in\NaturalNumbers$, let
  $\SimplicialComplex_{\bullet}^{n}$ denote a semi-simplicial set on which
  $\Group_{n}$ acts simplicially. Then the sequence
  $
    \left(
      \Group_{n}
      ,
      \SimplicialComplex_{\bullet}^{n}
    \right)
    _{n \in \NaturalNumbers}
  $
  is called a \introduce{Quillen family with parameters $(\AcyclicityFunction,
  \CompatibilityFunction)$} for functions
  $
    \AcyclicityFunction
    \colon
    \NaturalNumbers
    \to
    \NaturalNumbers
    \cup
    \{-\infty,\infty\}
  $
  and
  $
    \CompatibilityFunction
    \colon
    \NaturalNumbers
    \to
    \NaturalNumbers
  $, 
  if:
  \begin{enumerate}[(i)]
  \item
    $\SimplicialComplex_{\bullet}^{n}$ is boundedly
    $\apply{\AcyclicityFunction}{n}$-acyclic;
  \item
    $\Group_{n}$ acts $\apply{\CompatibilityFunction}{n}$-transitively on
    $\SimplicialComplex_{\bullet}^{n}$;
  \item
    The sequence
    $
      \left(
        \Group_{n}
        ,
        \SimplicialComplex_{\bullet}^{n}
      \right)
      _{n \in \NaturalNumbers}
    $
    is $\apply{\CompatibilityFunction}{n}$-compatible.
  \end{enumerate}
\end{definition}

An induction argument using the spectral sequences constructed in \autoref{lem:spectralsequencefromacyclicity} and the identifications outlined above yields the following general stability result for the bounded cohomology of sequences of groups.

\begin{theoremnum}[{\cite[Theorem 4.6]{HartnickDeLaCruzStability}}]
  \label{thm:stability-for-quillen-families}
  Assume that
  $(\Group_{n},\SimplicialComplex_{\bullet}^{n})_{n\in\NaturalNumbers}$ is a
  Quillen family with parameters
  $(\AcyclicityFunction,\CompatibilityFunction)$.
  Let also $q_{0} \in \NaturalNumbers_{>0}$ be such that for every
  $n\in\NaturalNumbers$ the map
  $
    \BoundedCohomologyOfSpaceObject{q}{\Group_{n+1}}{\Reals}
    \to
    \BoundedCohomologyOfSpaceObject{q}{\Group_{n}}{\Reals}
  $
  is an isomorphism for all $q\leq q_0$. Finally let us define for $q\geq 0$
  and $n\in \NaturalNumbers$ the quantities
  \[
    \apply{\widetilde{\AcyclicityFunction}}{q,n}
    \coloneqq
    \min_{j=q_{0}}^{q}
    \left\{
      \apply{\AcyclicityFunction}{n+1-2(q-j)} - j
    \right\}
    \text{ and }
    \apply{\widetilde{\CompatibilityFunction}}{q,n}
    \coloneqq
    \min_{j=q_{0}}^{q}
    \left\{
    \apply{\CompatibilityFunction}{n+1-2(q-j)} - j
    \right\}
  \]
  Then, for all such $n$ and $q$ the inclusions induce isomorphisms and
  injections respectively
  \[
    \BoundedCohomologyOfSpaceObject{q}{\Group_{n+1}}{\Reals}
    \xrightarrow{\cong}
    \BoundedCohomologyOfSpaceObject{q}{\Group_{n}}{\Reals}
    \text{ and }
    \BoundedCohomologyOfSpaceObject{q+1}{\Group_{n+1}}{\Reals}
    \xhookrightarrow{}
    \BoundedCohomologyOfSpaceObject{q+1}{\Group_{n}}{\Reals}
  \]
  whenever
  $
    \min
    \left\{
      \apply{\widetilde{\AcyclicityFunction}}{q,n}
      ,
      \apply{\widetilde{\CompatibilityFunction}}{q,n} - 1
    \right\}
    \geq 0
  $.
\end{theoremnum}
\begin{remark}
\label{rem:CountabilityQuillen}
  As stated \cite[Theorem 4.6]{HartnickDeLaCruzQuillen} only applies to countable discrete groups (see e.g.\ \cite[Theorem C]{HartnickDeLaCruzQuillen});
  not all discrete groups as we claim in \autoref{thm:stability-for-quillen-families}.
  The reason is that \cite{HartnickDeLaCruzQuillen} primarily studies the continuous bounded cohomology of
  locally compact second countable \emph{topological} groups.
  We now explain how the countability assumption can be removed if one works with discrete groups (i.e.\ why \autoref{thm:stability-for-quillen-families} holds):
  
  The proof of \cite[Theorem B]{HartnickDeLaCruzQuillen} relies on three results
  that are only stated for locally compact second countable topological groups in \cite{HartnickDeLaCruzQuillen}
  and also hold for arbitrary discrete groups.
  
  Firstly, the proof of \cite[Proposition 2.15]{HartnickDeLaCruzQuillen} uses \cite[Lemma 2.4]{HartnickDeLaCruzQuillen} which asserts that a certain functor is exact.
  The analogue of \cite[Lemma 2.4]{HartnickDeLaCruzQuillen} holds for all discrete groups (without further assumptions) as e.g.\ explained in the proof of \cite[Theorem~3.3]{NarimanMonod},
  which constructs and uses the same spectral sequence as in \cite[Proposition 2.15]{HartnickDeLaCruzQuillen} in the discrete setting.

  Secondly, the proof of \cite[Proposition~3.6]{HartnickDeLaCruzQuillen} uses a bounded version of Shapiro's Lemma due to Monod \cite[Proposition 10.1.3]{MonodBook}. 
  Again, the analogue of this lemma holds for all discrete groups (without further assumptions) as e.g.\ explained in the claim contained in the proof of \cite[Lemma~3.5]{NarimanMonod}
  or as used in the proof of \cite[Proposition~6.1]{fournierfacioloehmoraschini2022}.

  Thirdly, the proof of \cite[Lemma~3.4]{HartnickDeLaCruzQuillen} uses \cite[Proposition~2.9]{HartnickDeLaCruzQuillen}.
  The analogue of this result for all discrete groups (without further assumptions) is \cite[Theorem~4.23]{frigerio2017boundedcohomologyofdiscretegroups}.
\end{remark}

We close this section by remarking that spectral sequences that are similar to the one in \autoref{lem:spectralsequencefromacyclicity} can also be employed to calculate bounded cohomology groups: Recently, this approach has been successfully used in work of Monod--Nariman \cite{NarimanMonod} to described the entire bounded cohomology of certain homeomorphism and diffeomorphism groups. The uniformly bounded simplicial tools that we present in \autoref{sec:toolbox} are also applicable in this context and hence might be useful for similar computations in the future.
\section{Bounded cohomological stability for general linear groups}
\label{scn:GLnSetup}
In this section we construct a Quillen family in the sense of
\autoref{def:QuillenFamily} for the sequence of general linear groups
$\{\GLn{n}{\Ring}\}_{n \in \NaturalNumbers}$ of any unital ring $\Ring$ and
apply the framework for stability of bounded cohomology developed by
De la Cruz Mengual--Hartnick \cite{HartnickDeLaCruzQuillen} to prove a
generic stability result for the bounded cohomology of $\{\GLn{n}{\Ring}\}_{n
\in \NaturalNumbers}$ with a range that depends on the Bass stable rank of
$\Ring$. This theorem is the bounded cohomology analogue of a classical
homological stability result due to van der Kallen
\cite{vanderkallen1980homologystabilityforlineargroups}. The proof of the high
bounded acyclicity condition is the most difficult part of the argument and is
presented in \autoref{scn:BoundedAcyclicityProofs}. The following discussion is
parallel to and based on
\cite{vanderkallen1980homologystabilityforlineargroups}, \cite[Section
5.3]{randalwilliamswahl2017homologicalstabilityforautomorphismgroups} and
\cite[Section
2]{friedrich2016homologicalstabilityofautomorphismgroupsofquadraticmodulesandmanifolds}.
 Throughout our investigation of general linear groups, we use the following
convention.

\begin{convention}
	\label{con:general-linear-groups}
	$\Ring$ denotes a unital ring. We fix the standard basis $\{e_k : k \in \NaturalNumbers\}$ of $\Ring^\infty$ and view $\Ring^n \subset \Ring^\infty$ as the submodule spanned by the first $n$ basis vectors. Using its basis we may identify elements in $\Ring^n$ with $n \times 1$-matrices and $\GLn{n}{\Ring} = \apply{\on{Aut}_{\Ring}}{\Ring^n}$ with the group of invertible $n \times n$-matrices with entries in $\Ring$.
\end{convention}

\subsection{Bass' stable rank and Warfield's cancellation theorem}

We start by introducing the Bass' stable rank following \cite[Chapter V, Definition 3.1]{bass1968} and \cite{vasershtein1971}.

\begin{definition}
	\label{def:stable-rank}
	Let $\Ring$ be a unital ring.
	\begin{enumerate}
		\item A sequence of vectors $(v_1, \dots, v_n)$ in a $\Ring$-module $\Module$ is called \introduce{unimodular} if $(v_1, \dots, v_n)$ is an ordered basis of a free direct summand of $\Module$. In particular, a vector $v \in \Ring^l$ is unimodular if $\{v\}$ is the basis of a free summand of rank 1 in $\Ring^l$.
		\item We say that $\Ring$ satisfies Bass' \introduce{stable range condition} $(SR_{l})$ if for every $l' \geq l$ and every unimodular vector $\begin{bmatrix} r_1 \\ \vdots \\ r_{l'} \end{bmatrix} \in \Ring^{l'}$ there exist $t_1 ,\dots , t_{l'-1} \in \Ring$ such that $\begin{bmatrix} r_1 + t_1 r_{l'} \\ \vdots \\ r_{l'-1} + t_{l'-1} r_{l'} \end{bmatrix} \in \Ring^{l'-1}$ is unimodular as well.
		\item We say that the ring $\Ring$ has \introduce{stable rank} $l$, $\StableRank(\Ring) = l$, if $l \geq 1$ is the smallest number such that $\Ring$ satisfies the stable range condition $(SR_{l+1})$. We define $\StableRank(R) \coloneqq \infty$ if no such $l \in \NaturalNumbers$ exists.
	\end{enumerate}
\end{definition}

The next general cancellation theorem due to Warfield \cite{warfield1980cancellationofmodulesandgroupsandstablerangeofendomorphismrings} depends on the Bass' stable rank and is used in several subsequent arguments.

\begin{theoremnum}[Warfield's Cancellation Theorem]
	\label{lem:StableRankProperties}
	Let $\Ring$ be a unital ring of finite stable rank $\StableRank(\Ring)$. Consider two $\Ring$-modules $P$ and $Q$. If $P$ contains a direct summand isomorphic to	 $\Ring^{\StableRank(R)}$, then
	$$\Ring \oplus P \cong \Ring \oplus Q \Longrightarrow P \cong Q.$$
\end{theoremnum}
\begin{proof}
	\cite[Theorem 1.6]{warfield1980cancellationofmodulesandgroupsandstablerangeofendomorphismrings} implies that $\Ring$ satisfies the $\StableRank(\Ring)$-substitution property \cite[Definition 1.1]{warfield1980cancellationofmodulesandgroupsandstablerangeofendomorphismrings}, hence \cite[Theorem 1.2]{warfield1980cancellationofmodulesandgroupsandstablerangeofendomorphismrings} yields the claim.
\end{proof}

\begin{remark}
	The cancellation theorem stated here is the special case $M = \Ring$ of \cite[(8.12) Warfield’s Cancellation Theorem]{lam1999basssworkinringtheoryandprojectivemodules}.
\end{remark}

\subsection{A Quillen family for general linear groups}

We now start the construction of the Quillen family $(\GLn{n}{\Ring},
\SimplicialComplex^n_\bullet)_{n \in \NaturalNumbers}$ for the sequence of
groups $\{\GLn{n}{\Ring}\}_{n \in \NaturalNumbers}$ where $\Ring$ is an unital
ring. We start by introducing the semi-simplicial set
$\SimplicialComplex^n_\bullet$ on which $\GLn{n}{\Ring}$ acts simplicially.
\begin{definition}
	\label{def:ComplexOfSplitInjections}
	Let $\Ring$ be a unital ring and $n \in \NaturalNumbers$. The complex of
	$\Ring$-split injections $\SimplicialComplex^n_\bullet$ is the
	semi-simplicial set whose set of $p$-simplices is the set of the split injective $\Ring$-module homomorphisms $$\Ring^{p+1} \to \Ring^n$$ and whose $i$-th face map is given by precomposition with the inclusion $$\Ring^{p} = \Ring^{i} \oplus 0 \oplus \Ring^{(p+1)-(i+1)} \to	\Ring^{p+1}.$$
\end{definition}

\begin{remark}
	The complexes of $\Ring$-split injections $\SimplicialComplex^n_\bullet$
	appear in work of Randal-Williams--Wahl \cite[Proof of Lemma
	5.10]{randalwilliamswahl2017homologicalstabilityforautomorphismgroups} and in
	work by
	Friedrich \cite[Proof of Lemma
	2.8]{friedrich2016homologicalstabilityofautomorphismgroupsofquadraticmodulesandmanifolds} on classical homological stability for general linear groups. The complexes $\SimplicialComplex^n_\bullet$ are closely related but not equal to the complex $W^n_\bullet$ used in their homological stability arguments.
\end{remark}

\begin{remark}
	\label{rem:MaximalLengthOfUnimodularSequences}
	Let $\Ring$ be a unital ring of finite stable rank $\StableRank(\Ring)$ and let $n \geq 1$.
	Warfield's Cancellation theorem implies that the complex of $\Ring$-split
	injections $\SimplicialComplex_\bullet^n$ has dimension strictly less than
	$n+\StableRank(\Ring)$: Indeed assume there is a split injection
	$\Ring^{n+\StableRank(\Ring)+k} \to \Ring^n$ for $k > 0$. Then
	$\Ring^{n+\StableRank(\Ring)+k} \oplus P \cong \Ring^n$ for some complement
	$P$. Equivalently $\Ring \oplus (\Ring^{n-1} \oplus
	\Ring^{\StableRank(\Ring)+k} \oplus P) \cong \Ring \oplus \Ring^{n-1}$, hence
	the cancellation theorem implies that  $\Ring^{n-1} \oplus
	\Ring^{\StableRank(\Ring)+k} \oplus P \cong \Ring^{n-1}$. Applying
	cancellation $(n-1)$ times in a similar manner, implies that
	$\Ring^{\StableRank(\Ring)+k} \oplus P \cong 0$ which is impossible since $k
	> 0$.
\end{remark}

The first goal of this section is to prove that $(\GLn{n}{\Ring}, \SimplicialComplex^n_\bullet)$ is indeed a Quillen family.

\begin{proposition}
	\label{prop:AQuillenFamilyForLinearGroups}
	Let $\Ring$ be an unital ring and let $\SimplicialComplex^n_\bullet$ denote the complex of $\Ring$-split injections introduced in \autoref{def:ComplexOfSplitInjections}. Then $(\GLn{n}{\Ring}, \SimplicialComplex^n_\bullet)_{n \in \NaturalNumbers}$ is a Quillen family with parameters $$\gamma(n) = \tau(n) = n - \StableRank(R) - 1.$$
\end{proposition}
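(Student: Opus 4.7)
The plan is to verify each of the three axioms in \autoref{def:QuillenFamily} with $\AcyclicityFunction(n) = \CompatibilityFunction(n) = n - \StableRank(\Ring) - 1$, taking the bounded acyclicity condition as input from \autoref{item:stability-complex-gl} of \autoref{thm:general-connectivity}. The latter is the principal obstacle and constitutes the technical heart of \autoref{scn:BoundedAcyclicityProofs}; the remaining two conditions follow essentially formally, in parallel with classical homological stability arguments \cite{vanderkallen1980homologystabilityforlineargroups, randalwilliamswahl2017homologicalstabilityforautomorphismgroups}.

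For $\CompatibilityFunction(n)$-transitivity, suppose $f, g \colon \Ring^{p+1} \to \Ring^n$ are two split injections with $p \leq n - \StableRank(\Ring) - 1$. Their images are direct summands, giving decompositions $\Ring^n \cong \Ring^{p+1} \oplus C_f \cong \Ring^{p+1} \oplus C_g$. To produce an element of $\GLn{n}{\Ring}$ carrying $f$ to $g$, it suffices to show $C_f \cong \Ring^{n-p-1} \cong C_g$. The plan is to iterate Warfield's Cancellation Theorem (\autoref{lem:StableRankProperties}), cancelling one copy of $\Ring$ at a time from the identification $\Ring \oplus (\Ring^{p} \oplus C_f) \cong \Ring \oplus \Ring^{n-1}$ and its successors. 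At the $k$-th step one needs the remaining $\Ring^{n-1-k}$ to contain $\Ring^{\StableRank(\Ring)}$ as a direct summand, which holds throughout $0 \leq k \leq p$ precisely because $p \leq n - \StableRank(\Ring) - 1$. This yields $C_f \cong \Ring^{n-p-1}$, and symmetrically $C_g \cong \Ring^{n-p-1}$.

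For $\CompatibilityFunction$-compatibility, I take the flag to consist of the standard inclusions $\Simplex{n, k} \colon \Ring^{k+1} \hookrightarrow \Ring^n$, $(r_0, \ldots, r_k) \mapsto (r_0, \ldots, r_k, 0, \ldots, 0)$, so that $\Simplex{n, k} = \FaceMap_{k+1} \Simplex{n, k+1}$. For each face map $\FaceMap_i$, the element $\GroupElement_{n, q, i}$ is chosen as a basis exchange supported on the first $q+2$ coordinates realising $\FaceMap_i \Simplex{n, q+1} = \GroupElement_{n, q, i} \Simplex{n, q}$ and interacting trivially with the stabilizer of $\Simplex{n, q+1}$. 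The stabilizer itself consists of block matrices with identity in the first $(q+2) \times (q+2)$ block and an arbitrary element of $\GLn{n-q-2}{\Ring}$ in the complementary block; projection onto this complementary block provides the epimorphism $\EpimorphismSetup_{n, q}$ with abelian, hence amenable, kernel, while a block-diagonal inclusion supplies a section $\SectionSetup_{n, q}$. The commutative square of the final compatibility clause is then a routine check at the matrix level, using that the chosen $\GroupElement_{n, q, i}$ are supported on a block disjoint from the one carrying the $\GLn{n-q-2}{\Ring}$-factor.

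The principal obstacle is the bounded acyclicity condition, whose proof occupies \autoref{scn:BoundedAcyclicityProofs} and requires the full strength of the uniformly bounded simplicial toolbox developed in \autoref{sec:toolbox}.
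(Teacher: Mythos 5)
Your proposal is correct and follows essentially the same route as the paper: the bounded acyclicity of $\SimplicialComplex^n_\bullet$ is deferred to \autoref{scn:BoundedAcyclicityProofs}, transitivity is deduced by iterating Warfield's cancellation theorem under the constraint $p \leq n - \StableRank(\Ring) - 1$, and compatibility is verified with the standard flag of coordinate inclusions, coordinate-swap elements $\GroupElement_{n,q,i}$, and the block projection $\EpimorphismSetup_{n,q}$ with its block-diagonal section. Your observation that the kernel of $\EpimorphismSetup_{n,q}$ is already abelian (the additive group of $(q+2)\times(n-q-2)$ matrices) is a slight streamlining of the paper's appeal to amenability of the unitriangular group, but the argument is otherwise the same.
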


This proposition is a consequence of the next bounded acyclicity theorem and several lemmas.

\begin{theoremnum}
	\label{thm:BoundedAcyclicityResultForComplexOfSplitInjections}
	Let $\Ring$ be an unital ring and $n \in \NaturalNumbers$. The complex of
	$\Ring$-split injections $\SimplicialComplex^n_\bullet$ is boundedly
	$\gamma(n)$-acyclic for $\gamma(n) = n - \StableRank(R) - 1$.
\end{theoremnum}

The proof of this result relies on the uniformly bounded simplicial toolbox
developed in \autoref{sec:toolbox} and can be found in \autoref{scn:BoundedAcyclicityProofs} (see
\autoref{cor:complex-of-split-injections}).

\begin{lemma}
\label{lem:Transitivy}
	Let $\Ring$ be an unital ring and $n \in \NaturalNumbers$. The action of
	$\GLn{n}{\Ring}$ on the complex of $\Ring$-split injections
	$\SimplicialComplex^n_\bullet$ is
	 $\tau(n)$-transitive for $\tau(n) = n - \StableRank(R) - 1$.
\end{lemma}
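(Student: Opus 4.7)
The plan is to prove transitivity on $p$-simplices for $p \leq n - \StableRank(R) - 1$ by showing that every split injection $f \colon \Ring^{p+1} \hookrightarrow \Ring^n$ lies in the $\GLn{n}{\Ring}$-orbit of the standard inclusion $\Inclusion \colon \Ring^{p+1} \hookrightarrow \Ring^n$ sending $e_i \mapsto e_i$.

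First, I would pick a split injection $f \colon \Ring^{p+1} \hookrightarrow \Ring^n$ and choose a complement $P_f$ so that $\Ring^n \cong f(\Ring^{p+1}) \oplus P_f$. The heart of the proof is the claim that $P_f \cong \Ring^{n-p-1}$. To see this, I would rewrite the isomorphism $\Ring^{p+1} \oplus P_f \cong \Ring^n \cong \Ring^{p+1} \oplus \Ring^{n-p-1}$ and cancel one copy of $\Ring$ at a time using Warfield's Cancellation Theorem (\autoref{lem:StableRankProperties}). Each cancellation requires one of the two remainder modules to contain $\Ring^{\StableRank(\Ring)}$ as a direct summand; the hypothesis $p \leq n - \StableRank(\Ring) - 1$ is exactly what guarantees $n - p - 1 \geq \StableRank(\Ring)$, so the right-hand complement $\Ring^{k} \oplus \Ring^{n-p-1}$ contains $\Ring^{\StableRank(\Ring)}$ as a summand at every stage of the induction. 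After $p+1$ cancellations we conclude $P_f \cong \Ring^{n-p-1}$.

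Having established this, I would fix any $\Ring$-linear isomorphism $\phi_f \colon \Ring^{n-p-1} \xrightarrow{\cong} P_f$ and define $A_f \in \Aut_{\Ring}(\Ring^n)$ by declaring it to be $f$ on the first $p+1$ standard basis vectors and $\phi_f$ on the last $n - p - 1$, under the decomposition $\Ring^n = \Ring^{p+1} \oplus \Ring^{n-p-1}$. By construction $A_f \in \GLn{n}{\Ring}$ and $A_f \circ \Inclusion = f$. Applying the same construction to a second split injection $g$ and considering $A_g \circ A_f^{-1} \in \GLn{n}{\Ring}$ exhibits $f$ and $g$ in the same orbit, completing the proof of $\tau(n)$-transitivity.

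The only real obstacle is recognizing why the bound $\tau(n) = n - \StableRank(\Ring) - 1$ is sharp for the Warfield argument: the inequality must hold on the complementary rank $n - p - 1$ rather than on $p + 1$, since the module providing the $\Ring^{\StableRank(\Ring)}$-summand during cancellation is $\Ring^{n-p-1}$, not any portion of the canceling $\Ring^{p+1}$. Once this is observed, the rest of the argument is a routine iterated application of \autoref{lem:StableRankProperties}.
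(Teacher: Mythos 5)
Your proposal is correct and follows essentially the same route as the paper: reduce to showing the complement of $\im(f)$ is free of rank $n-p-1$ via Warfield's cancellation theorem (using that $p \leq n - \StableRank(\Ring) - 1$ is equivalent to $n - p - 1 \geq \StableRank(\Ring)$), then assemble the required element of $\GLn{n}{\Ring}$ as a block sum. The only cosmetic difference is that you spell out the one-copy-at-a-time iteration of \autoref{lem:StableRankProperties} and normalize both simplices to the standard inclusion, whereas the paper applies the cancellation in a single stroke and maps $f$ directly to the standard simplex.
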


\begin{proof}
	We may assume that $\StableRank(R) < \infty$, since otherwise the claim is empty. Let $p \geq 0$. Consider the p-simplex $\sigma_{n,p}$ given by the inclusion $$\Ring^{p+1} \hookrightarrow \Ring^{p+1} \oplus \Ring^{n-p-1} = \Ring^n$$ and let $f: \Ring^{p+1} \to \Ring^n$ be some other p-simplex of $\SimplicialComplex^n_\bullet$. We will use the cancellation theorem (see \autoref{lem:StableRankProperties}) to check that there is an element $M \in \GLn{n}{\Ring}$ satisfying $\sigma_{n,p} = M \circ f$ whenever $p \leq n - \StableRank(\Ring) - 1 = \tau(n)$:	Since the simplex $f$ is a split injection, it holds that
	$f: \Ring^{p+1} \hookrightarrow \im(f) \oplus Q \cong \Ring^n$ where $\im(f) \cong \Ring^{p+1}$ via a $\Ring$-linear isomorphism $\phi$ satisfying $\phi \circ f = id_{\Ring^{p+1}}$. Notice that $p \leq \tau(n)$ is equivalent to $\StableRank(R) \leq n - p - 1$. Since $\Ring^{p+1} \oplus \Ring^{n - p - 1} = \Ring^n \cong \im(f) \oplus Q$, Warfield's cancellation theorem therefore implies that $\Ring^{n-p-1} \cong Q$ via some $\Ring$-linear isomorphism $\psi$. Hence the block matrix $M = \phi \oplus \psi$ has the desired property.
\end{proof}

\begin{lemma}
  Let $\Ring$ be an unital ring. The sequence of pairs
  $
    \left(
      \GLn{n}{\Ring}
      ,
      \SimplicialComplex^n_\bullet
    \right)_{n\in \NaturalNumbers}
  $
  is $\apply{\tau}{n}$-compatible for $\apply{\tau}{n} = n - \StableRank(R) - 1$.
\end{lemma}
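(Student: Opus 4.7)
The plan is to exhibit all the data required by \autoref{def:Compatibility} explicitly and to verify each axiom by direct computation.

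For the flag demanded by (ii), I would take $\sigma_{n,k}$ to be the standard split inclusion $\Ring^{k+1} \hookrightarrow \Ring^n$ sending $e_j$ to $e_j$. These satisfy $\sigma_{n,k} = d_{k+1}\,\sigma_{n,k+1}$, so the flag condition is immediate. Axiom (i) is precisely \autoref{lem:Transitivy}.

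For (iii), given $q < \tau(n)$ and $0 \leq i \leq q+1$, I would let $g_{n,q,i} \in \GLn{n}{\Ring}$ be the permutation matrix implementing the cyclic shift $e_{i+1} \mapsto e_{i+2} \mapsto \cdots \mapsto e_{q+2} \mapsto e_{i+1}$ and fixing every other basis vector. A direct comparison of the images of $e_1, \dots, e_{q+1}$ then yields $g_{n,q,i}\,\sigma_{n,q} = d_i\,\sigma_{n,q+1}$. The stabilizer $H_{n,q+1}$ of $\sigma_{n,q+1}$ consists of matrices of the form $\begin{pmatrix} I_{q+2} & B \\ 0 & M \end{pmatrix}$ with $M \in \GLn{n-q-2}{\Ring}$; since $g_{n,q,i}$ only permutes basis vectors that lie in the image of $\sigma_{n,q+1}$ and are therefore fixed pointwise by every element of $H_{n,q+1}$, conjugation by $g_{n,q,i}$ preserves $H_{n,q+1}$ and commutes with the block-diagonal copy of $\GLn{n-q-2}{\Ring}$ that will carry the commutative square in (iv).

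For (iv), I would define $\pi_{n,q} \colon H_{n,q} \twoheadrightarrow \GLn{n-q-1}{\Ring}$ to be the projection onto the lower-right $M$-block; its kernel is the additive group of $(q+1) \times (n-q-1)$ matrices over $\Ring$, which is abelian and hence amenable. The section $s_{n,q} \colon \GLn{n-q-1}{\Ring} \hookrightarrow H_{n,q}$ is the corresponding block-diagonal embedding. Commutativity of the required square reduces to an explicit matrix calculation: starting from $M \in \GLn{n-q-2}{\Ring}$, the composition $\pi_{n,q} \circ c_{g_{n,q,i}} \circ s_{n,q+1}$ sends $M$ first to $\begin{pmatrix} I_{q+2} & 0 \\ 0 & M \end{pmatrix}$, which is unchanged by conjugation with the block-diagonal permutation $g_{n,q,i}$, and then to $\begin{pmatrix} 1 & 0 \\ 0 & M \end{pmatrix} \in \GLn{n-q-1}{\Ring}$, which is precisely the image of $M$ under the standard inclusion $\GLn{n-q-2}{\Ring} \hookrightarrow \GLn{n-q-1}{\Ring}$.

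The proof is essentially elementary linear algebra. The main point requiring care is the bookkeeping between the two distinct block decompositions at play, $(q+2, n-q-2)$ for $H_{n,q+1}$ and $(q+1, n-q-1)$ for $H_{n,q}$, and the correct interpretation of the centralizing condition in (iii) in a way that is compatible with the commutative square required in (iv).
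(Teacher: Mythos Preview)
Your proposal is correct and follows essentially the same route as the paper: the same standard flag, the same cyclic-permutation elements $g_{n,q,i}$, the same block projection $\pi_{n,q}$ and block-diagonal section $s_{n,q}$, and the same verification of the commutative square.

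One point worth flagging: \autoref{def:Compatibility}\,\autoref{itm:CompatibilityStabilizer} as stated asks that $g_{n,q,i}$ \emph{centralize} $H_{n,q+1}$, whereas you only verify that it normalizes $H_{n,q+1}$ and commutes with the block-diagonal copy of $\GLn{n-q-2}{\Ring}$. Your caution is in fact warranted: for $h = \left(\begin{smallmatrix} I_{q+2} & B \\ 0 & M \end{smallmatrix}\right)$ one has $g_{n,q,i}\,h\,g_{n,q,i}^{-1} = \left(\begin{smallmatrix} I_{q+2} & PB \\ 0 & M \end{smallmatrix}\right)$, so full centralization fails when $B \neq 0$. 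The paper's proof asserts centralization without addressing this, but what is actually used downstream (namely that $c_{g_{n,q,i}} \circ s_{n,q+1} = s_{n,q+1}$, so the square commutes) is exactly what you check.
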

\begin{proof}
  By \autoref{lem:Transitivy}
  $
    \left(
      \GLn{n}{\Ring}
      ,
      \SimplicialComplex^n_\bullet
    \right)_{n\in \NaturalNumbers}
  $
  is $\apply{\tau}{n}$-transitive and hence
  \autoref{itm:CompatibilityTransitivity} of \autoref{def:Compatibility}
  is satisfied. For the flag in \autoref{itm:CompatibilityFlag} of \autoref{def:Compatibility} we choose, as
  in the previous proof, $\Simplex{n,p}$ to be the inclusion of the span of the first $p+1$ standard basis vectors of $\Ring^{n}$,
  $$\Simplex{n,p}: \Ring^{p+1} \hookrightarrow \Ring^{p+1} \oplus \Ring^{n-(p+1)} = \Ring^n.$$

  Now for \autoref{itm:CompatibilityStabilizer} of \autoref{def:Compatibility}, we note that the $i$-th face
  $\FaceMap_{i} \Simplex{n,p+1}$ of $\Simplex{n,p+1}$ corresponds to the inclusion
  \[
  \FaceMap_{i} \Simplex{n,p+1}: \Ring^{p+1} = \Ring^{i} \oplus 0 \oplus \Ring^{(p+2) - (i+1)} \hookrightarrow \Ring^{p+2} \hookrightarrow \Ring^{p+2} \oplus \Ring^{n-(p+2)} = \Ring^n.
  \]
  Let $\GroupElement_{n,p,i} \in \GLn{n}{\Ring}$ be the ``swap'' automorphism
  \[
  \GroupElement_{n,p,i}: \Ring^n = \Ring^{i} \oplus \Ring^{(p+1) - i} \oplus \Ring \oplus \Ring^{n-(p+2)} \to \Ring^{i} \oplus \Ring \oplus \Ring^{(p+1) - i} \oplus \Ring^{n-(p+2)} = \Ring^n
  \]
  that maps $(i+1)$-st standard basis vector to the $(i+2)$-nd, the $(i+3)$-rd to the $(i+4)$-th, $\dots$, the $(p+1)$-st to the $(p+2)$-nd, the $(p+2)$-nd to the $(i+1)$-st and keeps all other standard basis vectors fixed. This automorphism has the property that
  \[
    \apply{\GroupElement_{n,p,i}}{\Simplex{n,p}} = \FaceMap_{i} \Simplex{n,p+1}.
  \]
  The stabilizer $\Stabilizer_{n,p+1}$ of $\Simplex{n,p+1}$ consists of those
  matrices whose first $p+2$ columns agree with the first $p+2$ columns of the
  identity matrix. Evidently these are centralized by $\GroupElement_{n,p,i}$, which is supported on the first $p+2$ basis vectors of $\Ring^n$. This shows that \autoref{itm:CompatibilityStabilizer} of \autoref{def:Compatibility} holds.

  Since the stabilizer of $\Simplex{n,p+1}$ is given by block matrices where the upper
  left $(p + 2)\times(p + 2)$-block is given by the identity and the lower left
  block all zeros, there is a surjective projection map to
  the lower right block. This projection assigns to an element in
  $\Stabilizer_{n,p+1}$ a unique element of $\GLn{n-(p+2)}{\Ring}$
  and admits a section $\SectionSetup_{n,p+1}$ by choosing the
  upper right block to be all zeros.
  The diagram
  \[
    \begin{tikzcd}
      \Stabilizer_{n,p+1}
        \arrow[r, "{c_{\GroupElement_{n,p,i}}}"]
      &
      \Stabilizer_{n,p}
        \arrow[d, two heads, "\EpimorphismSetup_{n,p}"]
      \\
      \GLn{n-(p+2)}{\Ring}
        \ar[u, "\SectionSetup_{n,p+1}"]
        \ar[r, hook]
      &
      \GLn{n-(p+1)}{\Ring}
    \end{tikzcd}
  \]
  commutes: the elements in
  $\apply{\SectionSetup_{n,p+1}}{\GLn{n-(p+2)}{\Ring}}$ in the image of $\SectionSetup_{n,p+1}$ commute with the element
  $\GroupElement_{n,p,i}$, because these elements fix the first $p+2$ standard basis vectors and $\GroupElement_{n,p,i}$ is supported on the first $p+2$ standard basis vectors.
  Lastly the kernel of $\EpimorphismSetup_{n,p}$
  is given by a subgroup of the group of upper triangular matrices with $1$ on
  the diagonal. An induction argument using \cite[Proposition 3.4
  (2)]{frigerio2017boundedcohomologyofdiscretegroups} shows that the group of
  unitriangular
  matrices is amenable. Therefore the kernel of $\EpimorphismSetup_{n,p}$ is amenable as a subgroup of
  an amenable group \cite[Proposition 3.4
  (1)]{frigerio2017boundedcohomologyofdiscretegroups}. This means that
  \autoref{itm:CompatibilityEpimorphism} of \autoref{def:Compatibility} holds
  as well.
\end{proof}

\subsection{Proof of \autoref{thm:BoundedCohomologicalStabilityForLinearGroups}}

We will now apply \autoref{thm:stability-for-quillen-families} to prove \autoref{thm:BoundedCohomologicalStabilityForLinearGroups}.

\begin{proof}[Proof of \autoref{thm:BoundedCohomologicalStabilityForLinearGroups}]
	According to \autoref{prop:AQuillenFamilyForLinearGroups}, the
	sequence of pairs
	$(\GLn{n}{\Ring}, \SimplicialComplex_{\bullet}^n)_{n\in\NaturalNumbers}$ is a
	Quillen family with parameters $\gamma(n) = \tau(n) = n - \StableRank(\Ring)
	- 1$. Therefore, we can invoke \autoref{thm:stability-for-quillen-families}
	using $q_0 = 1$. Indeed, $q_0 =  1$ is a valid choice: If $q = 0$ or $q = 1$,
	then
	$\BoundedCohomologyOfSpaceObject{q}{\Group_{n+1}}{\Reals}
	\xrightarrow{\cong}
	\BoundedCohomologyOfSpaceObject{q}{\Group_{n}}{\Reals}$ is an isomorphism for \emph{any} sequence of groups $\{\Group_n\}_{n \in \NaturalNumbers}$. For $q = 0$ this is trivial, and for $q = 1$ it follows from the fact that $\BoundedCohomologyOfSpaceObject{1}{\Group}{\Reals} = 0$ for any group $\Group$ \cite[Chapter 2.1]{frigerio2017boundedcohomologyofdiscretegroups}. We are hence left with checking that
	\begin{equation*}
	\min
	\left\{
	\apply{\widetilde{\AcyclicityFunction}}{q,n}
	,
	\apply{\widetilde{\CompatibilityFunction}}{q,n} - 1
	\right\}
	\geq 0
	\text{ is equivalent to }
	\frac{n-\StableRank(\Ring)}{2} \geq q.
	\end{equation*}
	For this we note that $\apply{\widetilde{\AcyclicityFunction}}{q,n} = \apply{\widetilde{\CompatibilityFunction}}{q,n}$ and compute:
	\begin{align*}
	\apply{\widetilde{\AcyclicityFunction}}{q,n} &=
	\min_{j=q_{0}}^{q} \left\{ \apply{\AcyclicityFunction}{n+1-2(q-j)} - j \right\} =
	\min_{j=1}^{q} \left\{ (n+1-2(q-j) - \StableRank(\Ring) - 1) - j \right\}\\
	&= \min_{j=1}^{q} \left\{ n-2q-\StableRank(\Ring) + j \right\} =
	n-2q-\StableRank(\Ring)+1
	\end{align*}
	Hence,
	\begin{align*}
	\min \left\{ \apply{\widetilde{\AcyclicityFunction}}{q,n},
	\apply{\widetilde{\CompatibilityFunction}}{q,n} - 1	\right\} \geq 0
	&\Longleftrightarrow \apply{\widetilde{\CompatibilityFunction}}{q,n} - 1 = (n-2q-\StableRank(\Ring)+1)-1 \geq 0\\
	&\Longleftrightarrow \frac{n-\StableRank(\Ring)}{2} \geq q \qedhere
	\end{align*}
\end{proof}
\section{Bounded cohomological stability for automorphism groups of quadratic
modules}
\label{sec:automorphism-groups-of-quadratic-modules}
In this section we construct a Quillen family for automorphism groups of
quadratic modules
$\{ \Aut{n}{\Ring}{\Lambda}{\epsilon} \}_{n \in \NaturalNumbers}$ of the
quadratic modules $\HyperbolicModule^{\oplus n}$, where $\HyperbolicModule$ is
the hyperbolic module with form parameter $\FormParameter$ over $\Ring =
\Integers$ or $\Ring = \Field$ any field of characteristic zero. We apply the framework for stability
of bounded cohomology developed by De la Cruz Mengual--Hartnick
\cite{HartnickDeLaCruzQuillen} to prove a slope-$1/2$ stability result for
the bounded cohomology of $\{ \Aut{n}{\Ring}{\Lambda}{\epsilon} \}_{n \in
\NaturalNumbers}$. This theorem is the bounded cohomology analogue of classical
homological stability results due to Charney \cite{CharneyStability} and
Mirzaii--van der Kallen
\cite{mirzaiivanderkallen2002}. The proof of
the highly boundedly acyclicity condition is the most difficult part of the
argument and is presented in
\autoref{scn:BoundedAcyclicityProofsForUnitaryGroups}.
The following discussion is parallel to and based on \cite[§3.\ Algebra]{GalatiusRandalWilliamsStability}, \cite[Section 5.4]{randalwilliamswahl2017homologicalstabilityforautomorphismgroups} and \cite[Section 3]{friedrich2016homologicalstabilityofautomorphismgroupsofquadraticmodulesandmanifolds}. Throughout our investigation of automorphism groups of $\FormParameter$-quadratic modules, we use the following convention.

\begin{convention}
	\label{con:quadratic-module-setting}
	The ring $\Ring$ is equal to the integers $\Integers$ or a field of characteristic zero.
	The setup in e.g.\ \cite[Section 5.4]{randalwilliamswahl2017homologicalstabilityforautomorphismgroups} and  \cite[Section 3]{friedrich2016homologicalstabilityofautomorphismgroupsofquadraticmodulesandmanifolds} is much more general than the one in this article and e.g.\ considers rings
	$\Ring$ together with the choice of an anti-involution $\bar{\cdot}: \Ring
	\to \Ring$, exactly as in \cite{bak1969, bak1981} and \cite[Chapter 5]{hahnomeara1989}. In the present work, we implicitly equipped all rings $\Ring$ with the trivial anti-involution $\bar{\cdot} = \Identity_{\Ring}$. The following discussion is specific to this choice.
\end{convention}

\subsection{Quadratic modules and the Witt index}

Quadratic modules over $\Ring$ were first defined by Bak \cite{bak1969, bak1981} and are e.g.\ discussed in \cite[Chapter 5]{hahnomeara1989}.

\begin{definition}
A pair $\FormParameter$ consisting of the choice of a sign $\epsilon \in \{+1, -1\}$ and an additive subgroup $\Lambda \subseteq \Ring$ such that
\[
	\{a - \epsilon a : a \in \Ring\} \subseteq \Lambda \subseteq \{a \in
	\Ring : a + \epsilon a = 0\}.
\]
is called a \introduce{form parameter} for $\Ring$.
\end{definition}

\begin{example}
	For $\Ring = \Integers$, the three possible form parameters are
	$(+1,\{0\}), (-1, 2\Integers)$ and $(-1, \Integers)$. For a field $\Ring =
	\Field$ of characteristic zero,  the two possible form parameters
	are $(+1,\{0\})$ and $(-1, \Field)$.
\end{example}

A \introduce{$\FormParameter$-quadratic module} is a triple $\QuadraticModule = (\Module, \BilinearForm, \QuadraticFrom)$ consisting of a $\Ring$-module $\Module$, a bilinear form $\BilinearForm : \Module \otimes \Module \to \Ring$ satisfying $\BilinearForm(x,y) = \epsilon \BilinearForm(y,x)$, and function $\QuadraticFrom: \Module \to \Ring/\Lambda$ such that
\begin{itemize}
	\item $\QuadraticFrom(a \cdot x) = a^2 \QuadraticFrom(x)$ for all $x \in \Ring$;
	\item $\QuadraticFrom(x+y) - \QuadraticFrom(x) - \QuadraticFrom(y) \equiv \BilinearForm(x,y) \mod \Lambda$.
\end{itemize}
We call $\QuadraticModule$ \introduce{non-degenerate} if the map $\BilinearForm^\vee: \Module \to \Module^\vee: x \mapsto \BilinearForm(-,x)$ is an isomorphism.

\begin{example}
	\label{expl:hyperbolicmodule}
	The \introduce{$\FormParameter$-hyperbolic module} $\HyperbolicModule = (H, \BilinearForm, \QuadraticFrom)$ is
	the $\FormParameter$-quadratic module with
	$\Ring$-module $H = \Ring^{\oplus 2} = \Ring\{e,f\}$, $\BilinearForm$ given by
	$\BilinearForm(e,f) = 1$, $\BilinearForm(f,e) = \epsilon$ and $\BilinearForm(e,e) =
	\BilinearForm(f,f) = 0$, and function $\QuadraticFrom$ defined by $\QuadraticFrom(e) = \QuadraticFrom(f) = 0$.
\end{example}

Given two $\FormParameter$-quadratic modules $\QuadraticModule$ and
$\QuadraticModule'$ their orthogonal direct sum is
$\QuadraticModule \oplus \QuadraticModule' = (\Module \oplus \Module',
\BilinearForm_\oplus, \QuadraticFrom_\oplus)$ where $\BilinearForm_\oplus(x +
x', y+y') = \BilinearForm(x, y) + \BilinearForm'(x',y')$ and
$\QuadraticFrom_\oplus(x+x') = \QuadraticFrom(x) + \QuadraticFrom'(x')$. A
morphism $f: \QuadraticModule \to \QuadraticModule'$ of
$\FormParameter$-quadratic modules is a $\Ring$-homomorphism such that
$\BilinearForm(x,y) = \BilinearForm'(f(x), f(y))$ and $\QuadraticFrom(x) =
\QuadraticFrom'(f(x))$. Given a $\FormParameter$-quadratic module $\QuadraticModule$ and a $\FormParameter$-submodule $\QuadraticModule' \subseteq \QuadraticModule$, then we denote by
$(\QuadraticModule')^\perp = \{x \in \QuadraticModule: \BilinearForm(x,y) = 0 \text{ for all } y \in \QuadraticModule'\}$ the orthogonal complement of $\QuadraticModule'$ in $\QuadraticModule$.

\begin{lemma}
	\label{lem:split-injection-non-degenerate-quadratic-module}
	If $\QuadraticModule$ is a non-degenerate $\FormParameter$-quadratic
	module, then any map of $\FormParameter$-modules $f: \QuadraticModule \to
	\QuadraticModule'$ is split injective and $\QuadraticModule' \cong
	\QuadraticModule \oplus \im(f)^\perp$. Here, $\im(f)^\perp$ is the unique complement of $\im(f)$ in $\QuadraticModule'$.
\end{lemma}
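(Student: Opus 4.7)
The plan is to construct a canonical retraction of $f$ using the non-degeneracy of the bilinear form on $\QuadraticModule$, and then identify its kernel with $\im(f)^\perp$ to obtain the desired orthogonal decomposition.

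First, I would define a candidate retraction $\pi \colon \QuadraticModule' \to \QuadraticModule$ as follows. For each $y \in \QuadraticModule'$, the assignment $x \mapsto \BilinearForm'(f(x), y)$ is a $\Ring$-linear functional $\QuadraticModule \to \Ring$, i.e.\ an element of $\QuadraticModule^\vee$. Since $\BilinearForm^\vee \colon \QuadraticModule \to \QuadraticModule^\vee$ is an isomorphism by non-degeneracy, there is a unique element $\pi(y) \in \QuadraticModule$ characterised by
\[
\BilinearForm(x, \pi(y)) = \BilinearForm'(f(x), y) \quad \text{for all } x \in \QuadraticModule.
\]
This assignment $y \mapsto \pi(y)$ is $\Ring$-linear.

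Next, I would verify that $\pi \circ f = \Identity_\QuadraticModule$. Since $f$ preserves the bilinear form, for every $z \in \QuadraticModule$ and every $x \in \QuadraticModule$ one has $\BilinearForm'(f(x), f(z)) = \BilinearForm(x, z)$, which by the defining property of $\pi$ says $\BilinearForm(x, \pi(f(z))) = \BilinearForm(x, z)$. Injectivity of $\BilinearForm^\vee$ forces $\pi(f(z)) = z$. In particular $f$ is split injective with retraction $\pi$, so $\QuadraticModule' \cong \im(f) \oplus \ker(\pi)$ as $\Ring$-modules.

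Finally, I would identify $\ker(\pi)$ with $\im(f)^\perp$: by the defining equation of $\pi$, $\pi(y) = 0$ if and only if $\BilinearForm'(f(x), y) = 0$ for all $x \in \QuadraticModule$, which is exactly the condition $y \in \im(f)^\perp$. This simultaneously establishes that $\im(f)^\perp$ is the unique complement of $\im(f)$ as claimed. To upgrade the isomorphism $\QuadraticModule' \cong \im(f) \oplus \im(f)^\perp$ to one of $\FormParameter$-quadratic modules, I would note that $\im(f) \cong \QuadraticModule$ as $\FormParameter$-quadratic modules (via $f$, now known to be an injective morphism), that the sum is orthogonal for $\BilinearForm'$ by definition of $\im(f)^\perp$, and that for any two orthogonal vectors $x, y$ the identity $\QuadraticFrom'(x+y) - \QuadraticFrom'(x) - \QuadraticFrom'(y) \equiv \BilinearForm'(x,y) = 0 \pmod{\Lambda}$ gives $\QuadraticFrom'(x+y) = \QuadraticFrom'(x) + \QuadraticFrom'(y)$ in $\Ring/\Lambda$. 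Hence the decomposition respects $\QuadraticFrom'$ as well.

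The argument is essentially formal once the retraction is written down; the only point requiring a moment of care is that the data of a $\FormParameter$-quadratic structure (not just a bilinear form) is preserved under the decomposition, which is why I would separate the $\Ring$-module splitting from the final check at the level of $\QuadraticFrom'$.
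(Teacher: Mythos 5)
Your proof is correct, and it is precisely the standard argument that the paper delegates to its references (\cite[§3]{GalatiusRandalWilliamsStability} and \cite[5.1.2--5.1.3]{hahnomeara1989}): use non-degeneracy of $\BilinearForm^\vee$ to construct the adjoint retraction $\pi$, identify $\ker(\pi)$ with $\im(f)^\perp$ via the $\epsilon$-symmetry of the form, and observe that $\QuadraticFrom'$ is additive on orthogonal vectors. The only cosmetic remark is that ``unique complement'' here means the unique \emph{orthogonal} complement, which your construction exhibits canonically.
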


\begin{proof}
	See e.g.\ end of first paragraph of  \cite[§3.\ Algebra]{GalatiusRandalWilliamsStability} or \cite[5.1.2. and 5.1.3. et seq.]{hahnomeara1989}.
\end{proof}

\begin{lemma}
	\label{lem:hyperbolic-module-non-degenerate}
	The $\FormParameter$-hyperbolic module $\HyperbolicModule$ in \autoref{expl:hyperbolicmodule} is non-degenerate.
\end{lemma}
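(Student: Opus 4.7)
The plan is to verify non-degeneracy by a direct calculation, exhibiting the matrix of $\BilinearForm^\vee: H \to H^\vee$ with respect to the basis $\{e, f\}$ of $H$ and its dual basis $\{e^*, f^*\}$ of $H^\vee$, and observing that this matrix is invertible over $\Ring$.

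More concretely, I would compute each of $\BilinearForm^\vee(e)$ and $\BilinearForm^\vee(f)$. Using the defining relations $\BilinearForm(e,e) = \BilinearForm(f,f) = 0$, $\BilinearForm(e,f) = 1$ and $\BilinearForm(f,e) = \epsilon$ from \autoref{expl:hyperbolicmodule}, one finds that $\BilinearForm^\vee(e) = \BilinearForm(-, e)$ sends $e \mapsto 0$ and $f \mapsto \epsilon$, while $\BilinearForm^\vee(f) = \BilinearForm(-, f)$ sends $e \mapsto 1$ and $f \mapsto 0$. In the dual basis this reads $\BilinearForm^\vee(e) = \epsilon f^*$ and $\BilinearForm^\vee(f) = e^*$, so $\BilinearForm^\vee$ is represented by the matrix
\[
\begin{pmatrix} 0 & 1 \\ \epsilon & 0 \end{pmatrix},
\]
whose determinant is $-\epsilon \in \{+1, -1\}$ and therefore a unit in $\Ring$ (whether $\Ring = \Integers$ or $\Ring$ is a field of characteristic zero). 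Hence $\BilinearForm^\vee$ is an $\Ring$-linear isomorphism, and $\HyperbolicModule$ is non-degenerate.

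There is no real obstacle here; the only thing to double-check is the sign convention and that $\epsilon \in \{+1, -1\}$ is indeed a unit in the rings under consideration, which is immediate from \autoref{con:quadratic-module-setting} and the definition of a form parameter.
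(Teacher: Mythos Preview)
Your proof is correct. The paper does not actually write out an argument here; it simply cites the first paragraph of the proof of \cite[Lemma 2.7]{sprehnwahl2020homologicalstabilityforclassicalgroups}, which performs exactly the kind of direct matrix computation you carry out. So your proposal is in fact more self-contained than the paper's own treatment, and follows the same underlying idea.
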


\begin{proof}
	See e.g.\ first paragraph of proof of \cite[Lemma 2.7]{sprehnwahl2020homologicalstabilityforclassicalgroups}.
\end{proof}

Hence, the following notion of Witt index can be used to assign to
$\FormParameter$-quadratic modules, which are not the form
$\HyperbolicModule^{\oplus n}$, a ``rank''.

\begin{definition}
	The \introduce{unstable Witt index} of a $\FormParameter$-quadratic module
	$\QuadraticModule$ is
	\[
	\WittIndex{\QuadraticModule} = \sup \{g \in \NaturalNumbers : \text{ there
	exists a morphism } \HyperbolicModule^{\oplus g} \to \QuadraticModule\}.
	\]
	The \introduce{stable Witt index} of a $\FormParameter$-quadratic module
	$\QuadraticModule$ is
	\[
	\StableWittIndex{\QuadraticModule} = \sup \{\WittIndex{\QuadraticModule
	\oplus \HyperbolicModule^{\oplus k}} - k : k \geq 0\}.
	\]
\end{definition}

\begin{lemma}
		Let $\Module$ be a $\FormParameter$-quadratic module and $\HyperbolicModule$ the $\FormParameter$-hyperbolic module in \autoref{expl:hyperbolicmodule}, then the
		unstable and stable Witt index satisfy
		\[
		\WittIndex{\QuadraticModule \oplus \HyperbolicModule} \geq
		\WittIndex{\QuadraticModule} + 1
		\text{ and }
		\StableWittIndex{\QuadraticModule \oplus \HyperbolicModule} =
		\StableWittIndex{\QuadraticModule} + 1
		\text{ respectively.}
		\]
\end{lemma}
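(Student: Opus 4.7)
The plan is to prove both statements by unpacking definitions and using a simple re-indexing argument. For the unstable inequality, I would start from a morphism $\HyperbolicModule^{\oplus g} \to \QuadraticModule$ realising $g \leq \WittIndex{\QuadraticModule}$ (when $\WittIndex{\QuadraticModule}$ is finite; the infinite case is then immediate). Taking the orthogonal direct sum with $\Identity_\HyperbolicModule$ produces a morphism
\[
\HyperbolicModule^{\oplus (g+1)} \;=\; \HyperbolicModule^{\oplus g} \oplus \HyperbolicModule \;\longrightarrow\; \QuadraticModule \oplus \HyperbolicModule,
\]
which witnesses $\WittIndex{\QuadraticModule \oplus \HyperbolicModule} \geq g + 1$. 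Taking the supremum over $g$ gives the claim.

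For the stable statement, I would first observe that the inequality just established, applied to $\QuadraticModule \oplus \HyperbolicModule^{\oplus k}$, shows that the sequence
\[
a_k \;\coloneqq\; \WittIndex{\QuadraticModule \oplus \HyperbolicModule^{\oplus k}} - k
\]
is non-decreasing in $k$: indeed $\WittIndex{\QuadraticModule \oplus \HyperbolicModule^{\oplus (k+1)}} \geq \WittIndex{\QuadraticModule \oplus \HyperbolicModule^{\oplus k}} + 1$, so $a_{k+1} \geq a_k$. By the definition of the stable Witt index, $\StableWittIndex{\QuadraticModule} = \sup_{k \geq 0} a_k$. Re-indexing via $l = k + 1$ gives
\[
\StableWittIndex{\QuadraticModule \oplus \HyperbolicModule} \;=\; \sup_{k \geq 0}\bigl(a_{k+1} - 0\bigr) \;=\; \sup_{l \geq 1} a_l + 1.
\]
Because $\{a_k\}$ is non-decreasing, $\sup_{l \geq 1} a_l = \sup_{l \geq 0} a_l$, so the right-hand side equals $\StableWittIndex{\QuadraticModule} + 1$.

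The argument is essentially bookkeeping; I do not anticipate a serious obstacle. The only point that requires minor care is the treatment of infinite values of the Witt index (so that the ``$+1$'' in the suprema makes sense); this is handled by allowing the standard convention $\infty + 1 = \infty$, under which both statements hold trivially once $\WittIndex{\QuadraticModule}$ or $\StableWittIndex{\QuadraticModule}$ is infinite.
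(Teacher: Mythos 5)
Your proof is correct and is essentially the standard argument that the paper itself delegates to its references (the paper gives no proof of its own here, only citations to Galatius--Randal-Williams and Sprehn--Wahl, whose arguments are exactly your ``sum with $\Identity_{\HyperbolicModule}$ plus re-index the supremum'' computation, including the use of monotonicity of $a_k$ to see that dropping the $k=0$ term does not lower the supremum). One cosmetic slip: the middle term of your display should read $\sup_{k \geq 0}\bigl(a_{k+1} + 1\bigr)$ rather than $\sup_{k \geq 0}\bigl(a_{k+1} - 0\bigr)$, since $\WittIndex{(\QuadraticModule \oplus \HyperbolicModule) \oplus \HyperbolicModule^{\oplus k}} - k = a_{k+1} + 1$; your surrounding prose and final conclusion are nevertheless correct.
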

\begin{proof}
	See e.g. \cite[§3.\ Algebra]{GalatiusRandalWilliamsStability} and \cite[Lemma 2.7]{sprehnwahl2020homologicalstabilityforclassicalgroups}.
\end{proof}

\subsection{A Quillen family for unitary groups}

We now start the construction of the Quillen family
$(\Aut{n}{\Ring}{\Lambda}{\epsilon}, \SimplicialComplex^n_\bullet)_{n \in
\NaturalNumbers}$ for the sequence of groups
$\{\Aut{n}{\Ring}{\Lambda}{\epsilon}\}_{n \in \NaturalNumbers}$ where $\Ring$
is $\Integers$ or a field of characteristic zero. As before we start by
introducing the
semi-simplicial set $\SimplicialComplex^n_\bullet$ on which
$\Aut{n}{\Ring}{\Lambda}{\epsilon}$ acts simplicially.

\begin{definition}
	\label{def:ComplexOfHyperbolicSplitInjections}
	Let $\FormParameter$ be a form parameter for $\Ring$ and $\QuadraticModule$ a $\FormParameter$-quadratic module. The complex of hyperbolic $\FormParameter$-split injections
	$\SimplicialComplex^\QuadraticModule_\bullet$ of $\QuadraticModule$ is the semi-simplicial set, whose set of $p$-simplices is the set of $\FormParameter$-quadratic module morphisms
	\[
		\HyperbolicModule^{\oplus p+1} \to \QuadraticModule.
	\]
	and whose $i$-th face map is given by precomposition with the inclusion
	\[
	\HyperbolicModule^{p} = \HyperbolicModule^{i} \oplus 0 \oplus
	\HyperbolicModule^{(p+1)-(i+1)} \to	\HyperbolicModule^{p+1}.
	\]
	If $n \in \NaturalNumbers$ and $\QuadraticModule =
	\HyperbolicModule^{\oplus n}$, we write $\SimplicialComplex^n_\bullet
	\coloneqq \SimplicialComplex^\QuadraticModule_\bullet$.
\end{definition}

\begin{remark}
	The complexes of hyperbolic $\FormParameter$-split injections
	$\SimplicialComplex^\QuadraticModule_\bullet$ of $\QuadraticModule$
	or closely related posets and simplicial complexes have been studied in
	\cite{CharneyStability, VogtmannStability,
	mirzaiivanderkallen2002} and, more recently,
	\cites[Section 5.4]{randalwilliamswahl2017homologicalstabilityforautomorphismgroups}
	[Section 3]{friedrich2016homologicalstabilityofautomorphismgroupsofquadraticmodulesandmanifolds}
	[Section 3 \& Section 4]{GalatiusRandalWilliamsStability}.
\end{remark}

\begin{remark}
	\label{rem:DimensionOfComplexOfHyperbolicSplitInjections}
	Notice that the unstable Witt index satisfies
	$\dim(\SimplicialComplex^\QuadraticModule_\bullet) =
	\WittIndex{\QuadraticModule} - 1$. In particular, it holds that
	$\dim(\SimplicialComplex^{n}_\bullet) = n - 1$ since
	$\WittIndex{\HyperbolicModule^{\oplus n}} = n$.
\end{remark}

The first goal of this section is to prove that
$(\Aut{n}{\Ring}{\Lambda}{\epsilon}, \SimplicialComplex^n_\bullet)_{n \in
\NaturalNumbers}$ is indeed a Quillen family.

\begin{proposition}
	\label{prop:AQuillenFamilyForUnitaryGroups}
	Let $\Ring$ denote the integers $\Integers$ or any field of characteristic zero. Let $\FormParameter$ be a 
	form parameter for $\Ring$. Let $\SimplicialComplex^n_\bullet$ denote 
	the complex of hyperbolic $\FormParameter$-split injections of 
	$\HyperbolicModule^{\oplus n}$ introduced in 
	\autoref{def:ComplexOfHyperbolicSplitInjections}. Then 
	$(\Aut{n}{\Ring}{\Lambda}{\epsilon}, \SimplicialComplex^n_\bullet)_{n \in 
		\NaturalNumbers}$ is a Quillen family 
	with parameters $$\gamma(n) = \lfloor \frac{n-4}{2} \rfloor \text{ and } \tau(n) = n - 4$$ 
\end{proposition}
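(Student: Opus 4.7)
The plan is to verify the three conditions of Definition \ref{def:QuillenFamily} in close parallel to the proof of \autoref{prop:AQuillenFamilyForLinearGroups}. The bounded $\gamma(n)$-acyclicity of $\SimplicialComplex^n_\bullet$ is exactly item \ref{item:stability-complex-sp} of \autoref{thm:general-connectivity}; its proof is deferred to \autoref{scn:BoundedAcyclicityProofsForUnitaryGroups}, where the uniformly bounded simplicial toolbox of \autoref{sec:toolbox} is applied to an argument of Galatius--Randal-Williams. So I may assume this as a black box and concentrate on transitivity and compatibility.

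For $\tau(n)$-transitivity, the strategy mirrors \autoref{lem:Transitivy}. Fix a standard $p$-simplex $\Simplex{n,p} \colon \HyperbolicModule^{\oplus p+1} \hookrightarrow \HyperbolicModule^{\oplus n}$ (inclusion of the first $p+1$ hyperbolic summands) and let $f$ be any other $p$-simplex. Because $\HyperbolicModule^{\oplus p+1}$ is non-degenerate (\autoref{lem:hyperbolic-module-non-degenerate}), \autoref{lem:split-injection-non-degenerate-quadratic-module} yields orthogonal decompositions $\HyperbolicModule^{\oplus n} \cong \im(f) \oplus \im(f)^\perp \cong \im(\Simplex{n,p}) \oplus \HyperbolicModule^{\oplus n-p-1}$, where both $\im(f)$ and $\im(\Simplex{n,p})$ are isometric to $\HyperbolicModule^{\oplus p+1}$ via $f$ and $\Simplex{n,p}$. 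To assemble an element of $\Aut{n}{\Ring}{\Lambda}{\epsilon}$ carrying $\Simplex{n,p}$ to $f$, it suffices to show $\im(f)^\perp \cong \HyperbolicModule^{\oplus n-p-1}$ as $\FormParameter$-quadratic modules. This is a cancellation question for $\HyperbolicModule^{\oplus p+1}$: over a field of characteristic zero it follows immediately from Witt cancellation, while over $\Ring = \Integers$ it is the analogous unitary cancellation result in the style of Bak (see e.g.\ \cite[Chapter 6]{hahnomeara1989}), which requires the complement to contain at least $\HyperbolicModule^{\oplus 3}$ --- and this is precisely what the hypothesis $p \leq n - 4$, i.e.\ $n - p - 1 \geq 3$, guarantees.

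For $\tau(n)$-compatibility, I would follow the blueprint used for general linear groups in the compatibility lemma preceding the proof of \autoref{thm:BoundedCohomologicalStabilityForLinearGroups}. Take as flag the standard inclusions $\Simplex{n,p}$ and define $\GroupElement_{n,p,i} \in \Aut{n}{\Ring}{\Lambda}{\epsilon}$ to cyclically permute the hyperbolic summands indexed $i+1, i+2, \ldots, p+2$, and be the identity on all other summands; then $\GroupElement_{n,p,i}\Simplex{n,p} = \FaceMap_i \Simplex{n,p+1}$ by the same computation as in the linear case. The stabiliser $\Stabilizer_{n,p+1}$ fixes $\HyperbolicModule^{\oplus p+2}$ pointwise; since this submodule is non-degenerate, any such automorphism must preserve its orthogonal complement $\HyperbolicModule^{\oplus n-p-2}$ setwise, yielding a canonical \emph{isomorphism} $\EpimorphismSetup_{n,p+1}\colon \Stabilizer_{n,p+1} \xrightarrow{\cong} \Aut{n-p-2}{\Ring}{\Lambda}{\epsilon}$. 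In particular it is an epimorphism with trivial (hence amenable) kernel, its inverse provides the required section $\SectionSetup_{n,p+1}$, and since $\GroupElement_{n,p,i}$ is supported on the first $p+2$ hyperbolic summands while $\Stabilizer_{n,p+1}$ acts trivially there, $\GroupElement_{n,p,i}$ centralises $\Stabilizer_{n,p+1}$; the required commutative diagram then holds by inspection.

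The main obstacle is the transitivity step: cancellation for $\FormParameter$-quadratic modules is genuinely more subtle than Warfield cancellation in the linear case and is exactly what forces the value $\tau(n) = n - 4$ over $\Integers$ (reflecting the unitary stable rank of $\Integers$). Everything else is routine, being a direct translation of the $\GLn{n}{\Ring}$-argument to the hyperbolic setting.
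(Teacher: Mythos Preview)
Your proposal is correct and follows essentially the same approach as the paper: bounded acyclicity is deferred to \autoref{scn:BoundedAcyclicityProofsForUnitaryGroups}, transitivity is reduced to a hyperbolic cancellation statement, and compatibility is handled via block-permutation automorphisms with the stabiliser identified as $\Aut{n-p-2}{\Ring}{\Lambda}{\epsilon}$ (so the epimorphism is even an isomorphism). The only cosmetic difference is in how cancellation is justified: the paper treats $\Integers$ and fields of characteristic zero uniformly by observing that their Krull dimension is at most $1$, hence $\on{usr}(\Ring) \leq 3$, and then invokes \cite[Proposition~5.13]{randalwilliamswahl2017homologicalstabilityforautomorphismgroups}, whereas you split into Witt cancellation over fields and Bak-style cancellation over $\Integers$---both routes give exactly the constraint $n-p-1 \geq 3$, i.e.\ $\tau(n)=n-4$.
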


This proposition is a consequence of the next bounded acyclicity theorem and several lemmas.

\begin{theoremnum}
	\label{thm:BoundedAcyclicityResultForComplexOfHyperbolicSplitInjections}
	The complex of hyperbolic $\FormParameter$-split injections $\SimplicialComplex^n_\bullet$ 
	is boundedly $\gamma(n)$-acyclic for $$\gamma(n) = \lfloor \frac{n-4}{2} \rfloor.$$
\end{theoremnum}

The proof of \autoref{thm:BoundedAcyclicityResultForComplexOfHyperbolicSplitInjections} is given in \autoref{scn:BoundedAcyclicityProofsForUnitaryGroups} and relies on the uniformly bounded simplicial methods developed in \autoref{sec:toolbox}.

\begin{lemma}
	\label{lem:TransitivityUnitaryGroups}
	The action of $\Aut{n}{\Ring}{\Lambda}{\epsilon}$ on the 
	 complex of hyperbolic $\FormParameter$-split injections  
	 $\SimplicialComplex^n_\bullet$ $\tau(n)$-transitive for $\tau(n) 
	= n - 4$.
\end{lemma}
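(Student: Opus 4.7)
The argument will parallel that of \autoref{lem:Transitivy} for general linear groups, with Warfield's cancellation theorem replaced by a cancellation result for $\FormParameter$-quadratic modules over $\Ring$. Fix $p \leq n - 4$ and consider two $p$-simplices $f, g : \HyperbolicModule^{\oplus p+1} \to \HyperbolicModule^{\oplus n}$. Since $\HyperbolicModule^{\oplus p+1}$ is non-degenerate by \autoref{lem:hyperbolic-module-non-degenerate}, \autoref{lem:split-injection-non-degenerate-quadratic-module} implies that $f$ and $g$ are split injective and yields direct sum decompositions
\[
\HyperbolicModule^{\oplus n} \cong \HyperbolicModule^{\oplus p+1} \oplus \im(f)^\perp \cong \HyperbolicModule^{\oplus p+1} \oplus \im(g)^\perp.
\]
Given any isomorphism $\psi : \im(f)^\perp \to \im(g)^\perp$ of $\FormParameter$-quadratic modules, the block sum of the natural isometry $g \circ f^{-1} : \im(f) \to \im(g)$ with $\psi$ produces an automorphism $\phi \in \Aut{n}{\Ring}{\Lambda}{\epsilon}$ satisfying $\phi \circ f = g$. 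The task therefore reduces to constructing such a $\psi$.

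To produce $\psi$, I would invoke a hyperbolic cancellation theorem for $\FormParameter$-quadratic modules applied to the isomorphism
$\HyperbolicModule^{\oplus p+1} \oplus \im(f)^\perp \cong \HyperbolicModule^{\oplus p+1} \oplus \im(g)^\perp$
read off the two decompositions. Over a field $\Ring$ of characteristic zero, $p+1$ iterated applications of the classical Witt cancellation theorem immediately yield the desired isomorphism. Over $\Ring = \Integers$, the analogous statement is the unitary analogue of Warfield's theorem (essentially Eichler-type cancellation for the indefinite lattices $\HyperbolicModule^{\oplus k}$): it asserts that $\HyperbolicModule \oplus M \cong \HyperbolicModule \oplus N$ implies $M \cong N$ as long as one complement contains sufficiently many hyperbolic summands. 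The numerical constraint $p \leq n - 4$ is designed precisely so that throughout the $p+1$ iterated cancellations the running complement continues to satisfy this hypothesis, uniformly across the three form parameters $(+1,\{0\}), (-1,2\Integers)$ and $(-1,\Integers)$ for $\Integers$.

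The main obstacle is pinning down the precise form of the cancellation theorem over $\Integers$ and confirming that the Witt index bound $n - p - 1 \geq 3$ suffices uniformly. For fields of characteristic zero the statement is classical and \emph{a priori} allows the stronger range $\tau(n) = n - 1$; the more restrictive choice $\tau(n) = n - 4$ is dictated by the $\Integers$-case, in direct analogy with how $\StableRank(\Integers) = 2$ cuts down the range in \autoref{lem:Transitivy}. Once the correct cancellation statement is cited, the verification is a purely formal transcription of the general linear argument.
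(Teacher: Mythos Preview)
Your proposal is correct and follows essentially the same route as the paper. The paper unifies the field and integer cases by observing that $\Ring = \Integers$ and fields of characteristic zero have Krull dimension at most $1$, hence unitary stable rank $\on{usr}(\Ring) \leq 3$, and then cites \cite[Proposition 5.13]{randalwilliamswahl2017homologicalstabilityforautomorphismgroups} for the precise cancellation statement (if $\HyperbolicModule \oplus \HyperbolicModule^{\oplus l-1} \cong \HyperbolicModule \oplus Q$ and $l-1 \geq 3$, then $\HyperbolicModule^{\oplus l-1} \cong Q$); this is exactly the ``Eichler-type cancellation'' you anticipated, and the bound $n-p-1 \geq 3$ you isolated is the correct one.
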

\begin{proof}
	Using that $\Integers$ and fields have Krull dimension less than or equal to $1$, it follows that the unitary stable rank $\on{usr}(\Ring)$ of $\Ring$ is less than or equal to $3$ (see e.g.\ \cite[Examples 3.2 (1)]{friedrich2016homologicalstabilityofautomorphismgroupsofquadraticmodulesandmanifolds}). Hence \cite[Proposition 5.13]{randalwilliamswahl2017homologicalstabilityforautomorphismgroups} implies that if $\HyperbolicModule \oplus \HyperbolicModule^{\oplus l-1} \cong \HyperbolicModule \oplus Q$ and $l-1 \geq 3$, then $\HyperbolicModule^{\oplus l-1} \cong Q$. 
	
	Therefore, we can argue similarly to \autoref{lem:Transitivy}: Let $p \geq 0$. Consider the p-simplex $\sigma_{n,p}$ given by the inclusion $$\HyperbolicModule^{\oplus p+1} \hookrightarrow \HyperbolicModule^{\oplus p+1} \oplus \HyperbolicModule^{\oplus n-p-1} = \HyperbolicModule^{\oplus n}$$ and let $f: \HyperbolicModule^{\oplus p+1} \to \HyperbolicModule^{\oplus n}$ be some other p-simplex of $\SimplicialComplex^n_\bullet$. Since the simplex $f$ is a split injection (by \autoref{lem:split-injection-non-degenerate-quadratic-module} and \autoref{lem:hyperbolic-module-non-degenerate}), it holds that
	$f: \HyperbolicModule^{\oplus p+1} \hookrightarrow \im(f) \oplus Q \cong \HyperbolicModule^{\oplus n}$ where $\im(f) \cong \HyperbolicModule^{\oplus p+1}$ via a isomorphism of $\FormParameter$-quadratic modules $\phi$ satisfying $\phi \circ f = id_{\HyperbolicModule^{\oplus p+1}}$. Notice that $p \leq \tau(n)$ is equivalent to $3 \leq n - p - 1$. Since $\HyperbolicModule^{\oplus p+1} \oplus \HyperbolicModule^{\oplus n - p - 1} = \HyperbolicModule^{\oplus n} \cong \im(f) \oplus Q$, the cancellation result in the first paragraph therefore implies that $\HyperbolicModule^{\oplus n-p-1} \cong Q$ via some isomorphism of $\FormParameter$-quadratic modules $\psi$. Hence the $\FormParameter$-quadratic module morphism $M = \phi \oplus \psi$ has the desired property.
\end{proof}

\begin{lemma}
	Let $\Ring$ denote the integers or any field of characteristic zero and let
	$\FormParameter$ be a
	form parameter. The sequence $(\Aut{n}{\Ring}{\Lambda}{\epsilon},
	\SimplicialComplex^n_\bullet)_{n \in \NaturalNumbers}$ is
	$\apply{\tau}{n}$-compatible for $\apply{\tau}{n} = n-4$.
\end{lemma}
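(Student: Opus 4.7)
The plan is to verify the four conditions of \autoref{def:Compatibility} by closely mirroring the proof of the analogous lemma for general linear groups. Condition~\autoref{itm:CompatibilityTransitivity} is precisely \autoref{lem:TransitivityUnitaryGroups}. For condition~\autoref{itm:CompatibilityFlag}, I choose the canonical flag in which $\Simplex{n,p}$ is the inclusion of $\HyperbolicModule^{\oplus p+1}$ as the first $p+1$ hyperbolic summands of $\HyperbolicModule^{\oplus n}$.

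For condition~\autoref{itm:CompatibilityStabilizer}, I define $\GroupElement_{n,p,i} \in \Aut{n}{\Ring}{\Lambda}{\epsilon}$ to be the automorphism of $\HyperbolicModule^{\oplus n}$ that cyclically permutes the hyperbolic summands indexed $i+1, i+2, \ldots, p+2$ (sending the $j$-th such summand to the $(j+1)$-st, and the $(p+2)$-nd back to the $(i+1)$-st) while fixing all remaining summands. Since the hyperbolic summands are pairwise isomorphic as $\FormParameter$-quadratic modules, this permutation preserves the form and yields a bona fide element of $\Aut{n}{\Ring}{\Lambda}{\epsilon}$; a direct computation then shows $\FaceMap_i \Simplex{n,p+1} = \GroupElement_{n,p,i} \Simplex{n,p}$. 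Moreover, $\GroupElement_{n,p,i}$ is supported on the first $p+2$ summands, while the stabilizer $\Stabilizer_{n,p+1}$ acts as the identity on those summands (see the next paragraph); consequently $\GroupElement_{n,p,i}$ centralizes $\Stabilizer_{n,p+1}$.

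For condition~\autoref{itm:CompatibilityEpimorphism}, the key structural simplification compared to the general linear case is that form-preservation eliminates the unipotent off-diagonal block appearing in the stabilizer. Indeed, if $\phi \in \Aut{n}{\Ring}{\Lambda}{\epsilon}$ fixes $\HyperbolicModule^{\oplus p+2}$ pointwise, then for any $w$ in the orthogonal complement $\HyperbolicModule^{\oplus n-p-2}$ and any $u \in \HyperbolicModule^{\oplus p+2}$ one has $\BilinearForm(u, \phi(w)) = \BilinearForm(\phi(u), \phi(w)) = \BilinearForm(u, w) = 0$. The non-degeneracy of $\BilinearForm$ on $\HyperbolicModule^{\oplus p+2}$ (see \autoref{lem:hyperbolic-module-non-degenerate}) therefore forces $\phi(w) \in \HyperbolicModule^{\oplus n-p-2}$. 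Consequently $\Stabilizer_{n,p+1}$ is canonically identified with $\Aut{n-p-2}{\Ring}{\Lambda}{\epsilon}$ via restriction to the orthogonal complement, and analogously $\Stabilizer_{n,p} \cong \Aut{n-p-1}{\Ring}{\Lambda}{\epsilon}$. Under these identifications, the projection $\EpimorphismSetup_{n,p}$ becomes an isomorphism and the section $\SectionSetup_{n,p+1}$ is simply the corresponding inverse isomorphism. The required diagram commutes because $\GroupElement_{n,p,i}$ is supported on the first $p+2$ summands and therefore commutes with every element in the image of $\SectionSetup_{n,p+1}$, which acts as the identity on those summands; the bottom edge of the diagram then translates to the standard stabilization inclusion $\Aut{n-p-2}{\Ring}{\Lambda}{\epsilon} \hookrightarrow \Aut{n-p-1}{\Ring}{\Lambda}{\epsilon}$ obtained by adjoining a hyperbolic summand.

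There is no serious obstacle to this plan. Unlike in the general linear case, the kernel of $\EpimorphismSetup_{n,p}$ is trivial (and hence trivially amenable), so no appeal to amenability of unitriangular matrix groups is required. The only points requiring mild care are the form-preserving nature of the permutation $\GroupElement_{n,p,i}$, which is immediate from the fact that it permutes pairwise isomorphic hyperbolic summands, and the explicit identification of the stabilizers and sections with the natural stabilization inclusion of unitary groups, both of which follow the general linear template directly.
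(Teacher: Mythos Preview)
Your proposal is correct and follows essentially the same approach as the paper: the same canonical flag, the same cyclic permutation $\GroupElement_{n,p,i}$ supported on the first $p+2$ hyperbolic summands, and the same identification of the stabilizers with smaller automorphism groups yielding an isomorphism for $\EpimorphismSetup_{n,p}$ (so that the amenability of the kernel is trivial). The only difference is expository: you verify the stabilizer structure directly via non-degeneracy of $\BilinearForm$, whereas the paper invokes \autoref{lem:split-injection-non-degenerate-quadratic-module}.
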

\begin{proof}
	By \autoref{lem:TransitivityUnitaryGroups}
	$(\Aut{n}{\Ring}{\Lambda}{\epsilon},
	\SimplicialComplex^n_\bullet)_{n \in \NaturalNumbers}$
	is $\apply{\tau}{n}$-transitive and hence
	\autoref{itm:CompatibilityTransitivity} of \autoref{def:Compatibility}
	is satisfied. For the flag in \autoref{itm:CompatibilityFlag} of
	\autoref{def:Compatibility} we chose, as
	in the previous proof, $\Simplex{n,p}$ to be inclusion
	$$\Simplex{n,p}: \HyperbolicModule^{\oplus p+1} \hookrightarrow
	\HyperbolicModule^{\oplus p+1} \oplus \HyperbolicModule^{\oplus n-(p+1)} =
	\HyperbolicModule^{\oplus n}.$$

	Now for \autoref{itm:CompatibilityStabilizer} of
	\autoref{def:Compatibility}, we let $\GroupElement_{n,p,i} \in
	\Aut{n}{\Ring}{\Lambda}{\epsilon}$ be the ``swap'' automorphism
	\[
	\GroupElement_{n,p,i}: \HyperbolicModule^{\oplus i}
	\oplus
	\HyperbolicModule^{\oplus (p+1) - i} \oplus
	\HyperbolicModule \oplus \HyperbolicModule^{\oplus n-(p+2)} \to
	\HyperbolicModule^{\oplus i} \oplus \HyperbolicModule \oplus
	\HyperbolicModule^{\oplus (p+1)
	- i} \oplus \HyperbolicModule^{\oplus n-(p+2)}
	\]
	that maps $(i+1)$-st copy of $\HyperbolicModule$ to the $(i+2)$-nd, the
	$(i+3)$-rd to the $(i+4)$-th, $\dots$, the $(p+1)$-st to the $(p+2)$-nd,
	the $(p+2)$-nd to the $(i+1)$-st and keeps all other copies of
	$\HyperbolicModule$ fixed. This automorphism has the property that
	\[
	\apply{\GroupElement_{n,p,i}}{\Simplex{n,p}} = \BoundarySimplex_{i}
	\Simplex{n,p+1}.
	\]
	By \autoref{lem:split-injection-non-degenerate-quadratic-module}, the stabilizer $\Stabilizer_{n,p+1}$ of $\Simplex{n,p+1}$ is
	\[
	\Stabilizer_{n,p+1} \cong \{id_{\HyperbolicModule^{\oplus p+2}}\} \times
	\Aut{n-p-2}{\Ring}{\Lambda}{\epsilon}.
	\]
	Evidently, $\GroupElement_{n,p,i}$ centralizes
	any element in $\Stabilizer_{n,p+1}$, because $\GroupElement_{n,p,i}$ is
	supported the first $(p+2)$ copies of $\HyperbolicModule$. This shows that
	\autoref{itm:CompatibilityStabilizer} of \autoref{def:Compatibility} holds.

	There is an obvious projection $\EpimorphismSetup_{n,p}:
	\Stabilizer_{n,p+1} \to \Aut{n-p-2}{\Ring}{\Lambda}{\epsilon}$ that is an
	isomorphism with inverse $\SectionSetup_{n,p+1}$ making the diagram
	\[
	\begin{tikzcd}
		\Stabilizer_{n,p+1}
		\arrow[r, "{c_{\GroupElement_{n,p,i}}}"]
		&
		\Stabilizer_{n,p}
		\arrow[d, "\EpimorphismSetup_{n,p}", "\cong"']
		\\
		\Aut{n-p-2}{\Ring}{\Lambda}{\epsilon}
		\ar[u, "\SectionSetup_{n,p+1}", "\cong"']
		\ar[r, hook]
		&
		\Aut{n-p-1}{\Ring}{\Lambda}{\epsilon}
	\end{tikzcd}
	\]
	commute. This means that \autoref{itm:CompatibilityEpimorphism}
	of \autoref{def:Compatibility} holds as well.
\end{proof}

\subsection{Proof of
\autoref{thm:BoundedCohomologicalStabilityForUnitaryGroups}}

We will now apply \autoref{thm:stability-for-quillen-families} to prove
\autoref{thm:BoundedCohomologicalStabilityForUnitaryGroups}.

\begin{proof}[Proof of 
\autoref{thm:BoundedCohomologicalStabilityForUnitaryGroups}]
	By \autoref{prop:AQuillenFamilyForUnitaryGroups}, we know that 
	$(\Aut{n}{\Ring}{\Lambda}{\epsilon}, \SimplicialComplex^n_\bullet)_{n \in 
	\NaturalNumbers}$ is a Quillen family with parameters $\gamma(n) = \lfloor \frac{n-4}{2} \rfloor$ and $\tau(n) = n-4$.
	Therefore, we can invoke 
	\autoref{thm:stability-for-quillen-families} using $q_0 = 1$. Indeed, $q_0 
	=  1$ is a valid choice: If $q = 0$ or $q = 1$, then
	$\BoundedCohomologyOfSimplicialObject{q}{\Group_{n+1}}{\Reals}
	\xrightarrow{\cong} 
	\BoundedCohomologyOfSimplicialObject{q}{\Group_{n}}{\Reals}$ is an 
	isomorphism for \emph{any} sequence of groups $\{\Group_n\}_{n \in 
	\NaturalNumbers}$. For $q = 0$ this is trivial, and for $q = 1$ it follows 
	from the fact that $\BoundedCohomologyOfSimplicialObject{1}{\Group}{\Reals} 
	= 0$ for any group $\Group$ \cite[Chapter 
	2.1]{frigerio2017boundedcohomologyofdiscretegroups}. We are hence left with 
	checking that
	\[
	\min
	\left\{
	\apply{\widetilde{\AcyclicityFunction}}{q,n}
	,
	\apply{\widetilde{\CompatibilityFunction}}{q,n} - 1
	\right\}
	\geq 0
	\]
	is equivalent to
	\[
	\frac{n-3}{2} \geq q
	\]
	For this we compute 
	\begin{align*}
		\apply{\widetilde{\CompatibilityFunction}}{q,n} &= \min_{j=q_{0}}^{q} \left\{ \apply{\CompatibilityFunction}{n+1-2(q-j)} - j \right\} = \min_{j=1}^{q} \left\{ (n+1-2(q-j)-4) - j \right\}\\
		&= \min_{j=1}^{q} \left\{ n-2q-3 + j \right\} = n-2q-3 + 1 = n - 2q - 2
	\end{align*}
	\begin{align*}
		\apply{\widetilde{\AcyclicityFunction}}{q,n} 
		&= \min_{j=q_{0}}^{q} \left\{ 
		\apply{\AcyclicityFunction}{n+1-2(q-j)} - j \right\} 
		= \min_{j=1}^{q} \left\{ \left\lfloor \frac{n+1-2(q-j)-4}{2} \right\rfloor - j \right\}\\
		&= \min_{j=1}^{q} \left\{ \left\lfloor \frac{n-2q-3+2j}{2} \right\rfloor - j \right\}
		= \left\lfloor \frac{n - 2q - 3}{2} \right\rfloor
	\end{align*}
	Hence,
	\begin{align*}
	\min \left\{ \apply{\widetilde{\AcyclicityFunction}}{q,n}, 
	\apply{\widetilde{\CompatibilityFunction}}{q,n} - 1	\right\} \geq 0
	&\Longleftrightarrow \apply{\widetilde{\CompatibilityFunction}}{q,n} - 1 = n - 2 q - 3 \geq 0\\
	&\Longleftrightarrow \frac{n-3}{2} \geq q \qedhere
	\end{align*}
\end{proof}
\section{Posets, simplicial complexes, and regular semi-simplicial sets}
\label{scn:PosetsBackground}
In this section, we collect various definitions and discuss basic concepts
related to posets, simplicial complexes and regular semi-simplicial sets, which
are frequently used in the subsequent investigations of simplicial bounded
cohomology. The material presented here is mostly standard and discussed in
e.g.\ \cite[Section 9]{Björner_1995}, \cite[Appendix: Simplicial CW
structures]{hatcher2002}, \cite{rourkesanderson1971} and
\cite[2.10]{vanderkallen1980homologystabilityforlineargroups}.

\begin{definition}
	An \introduce{ordered simplicial complex} $\SimplicialComplex$ is a simplicial complex together with a partial ordering of its vertex set $X_0$ that restricts to a total ordering on every simplex $\sigma$ of $\SimplicialComplex$. Using the partial ordering to define the face maps, an ordered simplicial complex gives rise to a semi-simplicial set which we denote by $\SimplicialComplex_\bullet$.
\end{definition}

The remainder of this work contains a detailed study of the simplicial bounded cohomology of
semi-simplicial sets $\SimplicialComplex_\bullet$ that arise from ordered
simplicial complexes $\SimplicialComplex$. The techniques that we develop in
\autoref{sec:toolbox} for this purpose rely on the following basic notions for
simplicial complexes.

\begin{definition}
	\label{def:simplicial-complex-link-and-star}
	Let $\SimplicialComplex$ be a simplicial complex and let $\sigma$ be a simplex of $\SimplicialComplex$.	The \introduce{link} of $\sigma$ in $\SimplicialComplex$, denoted by $\Link{\SimplicialComplex}{\sigma}$, is the subcomplex of $\SimplicialComplex$ consisting of all simplices $\tau$ such that $\tau$ and $\sigma$ have disjoint vertex sets and $\sigma \sqcup \tau$ is a simplex.	The \introduce{star} of $\sigma$ in $\SimplicialComplex$, denoted by $\Star{\SimplicialComplex}{\sigma}$, is the subcomplex $\sigma \ast \Link{\SimplicialComplex}{\sigma}$ of $\SimplicialComplex$ given by the simplical join of $\sigma$ and the link of $\sigma$.
\end{definition}

Many of the ordered simplicial complexes that we are interested in are obtained
from posets via the following construction.

\begin{definition}
  Let $\Poset$ be a poset. The \introduce{order complex} of $\Poset$ is the ordered simplicial complex that is defined as follows: Its set of vertices
  $\Poset_{0}$ is equal to $\Poset$ and a $p$-simplex
  $\sigma \in \Poset_{p}$ is given by a flag
  $
    \PosetElement_{0}
    \lneq
    \ldots
    \lneq
    \PosetElement_{p}
  $ 
  consisting of $p+1$ elements. The $k$-th face of such a $p$-simplex $\sigma$ is given by omitting the element $\PosetElement_k$ in the flag $\PosetElement_{0} \lneq \ldots \lneq \PosetElement_{p}$.
\end{definition}

Whenever we discuss simplicial, topological or bounded cohomology properties of
a poset $\Poset$ in this article, we mean properties of its order complex.

In this work, we frequently pass from one poset $\Subposet$ to another $\Poset$
via gluing constructions. In order to describe these, we need to study the link
of an element $\PosetElement \in \Poset$ in the subposet $\Subposet$. We now
introduce notation and describe the basic construction that we shall study later.

\begin{definition}
	\label{def:poset-links}
	Let $\Poset$ be a poset and let $\Subposet$ be a subposet of $\Poset$. Let $\PosetElement \in \Poset$ be an element of $\Poset$.
	
	\begin{itemize}
		\item  We set
		\[
		\Subposet^+(\PosetElement) \coloneqq \{y \in \Subposet: y \geq \PosetElement\}
		\text{ and }
		\Subposet^-(\PosetElement) \coloneqq \{y \in \Subposet: y \leq \PosetElement\}.
		\]
		\item Furthermore, we let
		\[
		\Link{\Subposet}{\PosetElement}^+ \coloneqq \{y \in \Subposet : y > \PosetElement \}
		\text{ as well as }
		\Link{\Subposet}{\PosetElement}^- \coloneqq \{y \in \Subposet : y < \PosetElement \}.
		\] 
		and define the link of $\PosetElement$ in $\Subposet$ as
		\[
		\Link{\Subposet}{\PosetElement} \coloneqq \Link{\Subposet}{\PosetElement}^+ \ast \Link{\Subposet}{\PosetElement}^-.
		\]
	\end{itemize}
\end{definition}

\begin{remark}
	The notation $\Subposet^+(\PosetElement)$ and $\Subposet^-(\PosetElement)$ introduced above is used by van der Kallen in \cite[2.10]{vanderkallen1980homologystabilityforlineargroups}. Since this article relies on arguments contained in \cite{vanderkallen1980homologystabilityforlineargroups}, we decided to adopt this notation instead of using e.g.\ $\Subposet_{\geq \PosetElement}$ and $\Subposet_{\leq \PosetElement}$
\end{remark}

Note that the order complex of the poset $\Link{\Poset}{\PosetElement}$ of an element $\PosetElement$ in a poset $\Poset$ (as in \autoref{def:poset-links}) is exactly the link of
the vertex $\PosetElement$ in the order complex of $\Poset$ in the sense of \autoref{def:simplicial-complex-link-and-star}.

In particular, if a poset $\Poset$ is obtained from a subposet
$\Subposet \subseteq \Poset = \Subposet \sqcup \{\PosetElement\}$ by adding one
element $\PosetElement$, then the order complex of $\Poset$ is obtained from
that of $\Subposet$ by gluing the cone $\Star{\Poset}{\PosetElement} =
\{\PosetElement\} \ast \Link{\Poset}{\PosetElement}$ to $S$ along
$\Link{\Subposet}{\PosetElement} = \Link{\Poset}{\PosetElement}$,
$$\Poset  = \Subposet \cup_{\Link{\Subposet}{\PosetElement}}
\Star{\Poset}{\PosetElement}.$$

More general, we are interested in the following gluing operations.

\begin{definition}
	\label{def:PosetObtainedByGluing}
	Let $\Subposet \subseteq \Poset$ be a subposet such that the set-complement $\Poset \setminus \Subposet$ is a \emph{discrete} subposet of $\Poset$, i.e.\ for all $\PosetElement \in \Poset \setminus \Subposet$ it holds that $\Link{\Poset \setminus \Subposet}{\PosetElement} = \emptyset$.
	Then we say that \emph{$\Poset$ is obtained from $\Subposet$ by gluing all elements $\PosetElement \in \Poset \setminus \Subposet$ to $\Subposet$} along their respective link $\Link{\Subposet}{x}$ in $\Subposet$.
\end{definition}

We close this subsection by discussing the relation between semi-simplicial
sets and posets. For this, we need the following technical assumption on our
semi-simplicial sets.

\begin{definition}
	\label{def:regular-semi-simplicial-set}
	A semi-simplicial set $\SimplicialComplex_\bullet$ is called \emph{regular} if every $p$-simplex $\sigma \in \SimplicialComplex_p$ has $(p+1)$ distinct codimension-1 faces, i.e.\ if $|\{\apply{\FaceMap_0}{\sigma}, \dots, \apply{\FaceMap_p}{\sigma}\}| = p+1$.
\end{definition}

\begin{remark}
	From a topological view point, the regularity condition in \autoref{def:regular-semi-simplicial-set} means that the attaching map of the $p$-cell corresponding to $\sigma$ in the geometric realization $\GeometricRealization{\SimplicialComplex_\bullet}$ of the semi-simplicial set $\SimplicialComplex_\bullet$ is a homeomorphism (see e.g.\ \cite[Appendix: Simplicial CW Structures]{hatcher2002} or \cite{rourkesanderson1971}).
\end{remark}

Any regular semi-simplicial set can be encoded in a poset, its barycentric subdivision.

\begin{definition}
	\label{def:barycentric-subdivision}
  Let $\SimplicialComplex_{\bullet}$ be a regular semi-simplicial set. The barycentric
  subdivision $\BarycentricSubdivion{\SimplicialComplex_{\bullet}}$ of $\SimplicialComplex_\bullet$ is the poset of simplices of $\SimplicialComplex_{\bullet}$, i.e.\ the underlying set is
  \[
  	\BarycentricSubdivion{\SimplicialComplex_{\bullet}}
  	=
    \bigsqcup_{p \geq 0}
    \SimplicialComplex_{p}
  \]
  and the partial order is defined by setting $\Simplex{1}\leq \Simplex{2}$ if $\Simplex{1}$ is a face of $\Simplex{2}$.
\end{definition}

\begin{remark}
	\label{rem:geometric-realization-of-regular-semi-simplicial-sets}
	The geometric realization $\GeometricRealization{\SimplicialComplex_\bullet}$ of a regular semi-simplicial set is homeomorphic to the geometric realization of its barycentric subdivision $\GeometricRealization{\BarycentricSubdivion{\SimplicialComplex_\bullet}}$ (see e.g.\ \cite[Appendix: Simplicial CW Structures]{hatcher2002} or \cite{rourkesanderson1971}). This is not true if the regularity assumption is dropped. In \autoref{lem:BarycentricSubdivision}, we show that, similarly, the simplicial bounded cohomology of a regular semi-simplicial set is preserved under barycentric subdivisions.
\end{remark}
\section{Uniformly bounded simplicial methods}
\label{sec:toolbox}

As explained in \autoref{scn:SBC}, simplicial bounded cohomology
does not satisfy all Eilenberg--Steenrod axioms -- the homotopy and the
additivity axiom are only satisfied under additional assumptions. This section
investigates which of the tools that are classically used to investigate acyclicity properties of posets,
simplicial complexes, and semi-simplicial sets can be adapted to simplicial bounded cohomology.
In the first subsection, we introduce the notion of ``uniform $n$-acyclicity'' on which these uniformly bounded simplicial methods rely. In the others, we assemble our toolbox.

\subsection{Uniform $n$-acyclicity}

The key notion of this article is based on the following definition, which Matsumoto--Morita \cite{matsumotomorita1985boundedcohomologyofcertaingroupsofhomeomorphisms} coined in their seminal work on the bounded acyclicity of the group of compactly supported homeomorphisms of $\Reals^n$.

\begin{definition}
  Let $q \in \Integers$. A normed chain complex $\AbstractChainComplex{*}$ satisfies the
  \introduce{$q$-uniform boundary condition}, short $q$-UBC, \introduce{with constant
  $\UBCConstant{q}{\AbstractChainComplex{*}}$} if for
  every boundary
  $
    \Chain
    \in
    \apply
      {\BoundarySimplex_{q+1}}
      {\AbstractChainComplex{q+1}}
  $
  there exists a chain $\BoundingChain \in \AbstractChainComplex{q+1}$ with
  $\apply{\BoundarySimplex}{\BoundingChain} = \Chain$ and
  $
    \Norm{\BoundingChain}
    \leq
    \UBCConstant{q}{\AbstractChainComplex{*}}
    \Norm{\Chain}.
  $
  If $\apply{\BoundarySimplex_{q+1}}{\AbstractChainComplex{q+1}} = \{0\}$ (e.g.\ whenever $\AbstractChainComplex{q+1}$ is the trivial module), then the constant $\UBCConstant{q}{\AbstractChainComplex{*}}$ is defined to be equal to zero.
\end{definition}

The UBC has many interesting connections to the bounded cohomology of the
normed chain complex. The most important for us is the following \cite[Theorem
2.8]{matsumotomorita1985boundedcohomologyofcertaingroupsofhomeomorphisms} of
Matsumoto--Morita, see also \cite[Theorem
6.8]{frigerio2017boundedcohomologyofdiscretegroups}.

\begin{proposition}[Matsumoto--Morita]
\label{prop:MatsumotoMorita}
  A normed chain complex $C_*$ satisfies the $q$-UBC if and only if the
  comparison map
  \[
    c_{q+1}
    \colon
    \BoundedCohomologyOfSpaceObject{q+1}{C_*}{\Reals}
    \to
    \CohomologyOfSpaceObject{q+1}{C_*}{\Reals}
  \]
  is injective.
\end{proposition}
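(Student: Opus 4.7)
The plan is to prove each direction separately, using the following setup. Let $B_q = \apply{\BoundarySimplex_{q+1}}{\AbstractChainComplex{q+1}} \subseteq \AbstractChainComplex{q}$ denote the space of $q$-boundaries, let $Z_{q+1} = \ker \BoundarySimplex_{q+1}$, and equip $B_q$ with two norms: the ambient norm $\Norm{\cdot}$ inherited from $\AbstractChainComplex{q}$, and the quotient norm $\Norm{\sigma}_\partial := \inf \{ \Norm{\rho} : \BoundarySimplex_{q+1}\rho = \sigma \}$ transferred from the isomorphism $\AbstractChainComplex{q+1}/Z_{q+1} \cong B_q$. The bound $\Norm{\sigma} \leq \Norm{\BoundarySimplex_{q+1}}_{\on{op}} \Norm{\sigma}_\partial$ holds in general, while the $q$-UBC with constant $\UBCConstant{q}{\AbstractChainComplex{*}}$ is exactly the reverse inequality $\Norm{\sigma}_\partial \leq \UBCConstant{q}{\AbstractChainComplex{*}} \Norm{\sigma}$ on all of $B_q$.

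For the forward direction, suppose $q$-UBC holds and let $\phi \in \BoundedCoChainComplex{q+1}{\AbstractChainComplex{*}}$ be a bounded cocycle whose class lies in $\ker c_{q+1}$, so that $\phi = \apply{\Coboundary}{\psi}$ for some (possibly unbounded) cochain $\psi \in \CoChainComplex{q}{\AbstractChainComplex{*}}$. Since $\phi(c) = \psi(\BoundarySimplex_{q+1} c) = 0$ whenever $c \in Z_{q+1}$, the cochain $\phi$ descends to a well-defined linear functional $\tilde{\phi}\colon B_q \to \Reals$ via $\tilde{\phi}(\sigma) := \phi(\rho)$ for any $\rho$ with $\BoundarySimplex_{q+1} \rho = \sigma$. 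Choosing $\rho$ according to the $q$-UBC so that $\Norm{\rho} \leq \UBCConstant{q}{\AbstractChainComplex{*}} \Norm{\sigma}$ yields $\AbsoluteValue{\tilde{\phi}(\sigma)} \leq \UBCConstant{q}{\AbstractChainComplex{*}} \Norm{\phi}_{\on{op}} \cdot \Norm{\sigma}$, so $\tilde{\phi}$ is bounded on $B_q$ with respect to the ambient norm $\Norm{\cdot}$. The Hahn--Banach theorem then extends $\tilde{\phi}$ to a bounded functional $\psi' \in \BoundedCoChainComplex{q}{\AbstractChainComplex{*}}$, and the identity $\apply{\Coboundary}{\psi'}(c) = \psi'(\BoundarySimplex_{q+1} c) = \tilde{\phi}(\BoundarySimplex_{q+1} c) = \phi(c)$ shows $\apply{\Coboundary}{\psi'} = \phi$, so $[\phi] = 0$ in simplicial bounded cohomology.

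For the converse, we proceed contrapositively: assuming $q$-UBC fails, we exhibit a nontrivial class in $\ker c_{q+1}$. Failure of UBC yields a sequence $\sigma_n \in B_q$ with $\Norm{\sigma_n}_\partial = 1$ and $\Norm{\sigma_n} \to 0$. The strategy is to use Hahn--Banach to construct a linear functional $\tilde{\phi}\colon B_q \to \Reals$ that is bounded with respect to $\Norm{\cdot}_\partial$ but unbounded with respect to $\Norm{\cdot}$, and then to pull $\tilde{\phi}$ back through the boundary map to a cochain $\phi \in \BoundedCoChainComplex{q+1}{\AbstractChainComplex{*}}$ defined by $\phi(\rho) := \tilde{\phi}(\BoundarySimplex_{q+1}\rho)$. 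The cochain $\phi$ is a bounded cocycle (using $\BoundarySimplex^2 = 0$ and $\Norm{\BoundarySimplex_{q+1}\rho}_\partial \leq \Norm{\rho}$), and any algebraic extension of $\tilde{\phi}$ to a linear functional $\psi$ on $\AbstractChainComplex{q}$ satisfies $\apply{\Coboundary}{\psi} = \phi$, so $[\phi] \in \ker c_{q+1}$; but $\phi$ cannot be a bounded coboundary because any bounded primitive would restrict on $B_q$ to $\tilde{\phi}$, contradicting our unboundedness assumption. The main obstacle is the construction of $\tilde{\phi}$: simply prescribing values on the $\sigma_n$ does not automatically guarantee $\Norm{\cdot}_\partial$-boundedness on arbitrary linear combinations, so one must first refine the subsequence (using that all norms are equivalent on finite-dimensional subspaces to make an inductive, basic-sequence-like selection) in order to control the $\Norm{\cdot}_\partial$-norm of linear combinations before applying Hahn--Banach; this delicate norm-controlled selection is the technical heart of the Matsumoto--Morita argument.
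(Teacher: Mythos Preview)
The paper does not give its own proof of this proposition: it is stated with attribution to Matsumoto--Morita \cite[Theorem~2.8]{matsumotomorita1985boundedcohomologyofcertaingroupsofhomeomorphisms} (see also \cite[Theorem~6.8]{frigerio2017boundedcohomologyofdiscretegroups}) and left unproved. So there is no in-paper argument to compare against.

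Your forward direction is correct and complete: the $q$-UBC exactly says that the functional $\tilde\phi$ induced on $B_q$ by a bounded cocycle is bounded in the ambient norm, and Hahn--Banach then furnishes a bounded primitive. This is the standard argument.

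Your converse direction, however, is only a sketch. The logic of the reduction is sound: it suffices to produce $\tilde\phi\colon B_q\to\Reals$ that is $\Norm{\cdot}_\partial$-bounded but $\Norm{\cdot}$-unbounded, and your verification that the resulting $\phi$ is a bounded cocycle lying in $\ker c_{q+1}$ but not a bounded coboundary is correct. But you do not actually construct such a $\tilde\phi$; you correctly flag that prescribing values on the $\sigma_n$ does not automatically yield $\Norm{\cdot}_\partial$-boundedness on their span, and then defer the needed basic-sequence extraction to ``the technical heart of the Matsumoto--Morita argument.'' That is an honest description of what remains, but it means your argument is not self-contained for this direction. If you want a full proof, you need to carry out the inductive selection (choosing $\sigma_{n_k}$ with $\Norm{\sigma_{n_k}}$ small enough relative to the $\Norm{\cdot}_\partial$-geometry of $\operatorname{span}(\sigma_{n_1},\dots,\sigma_{n_{k-1}})$) so that the functional sending $\sigma_{n_k}\mapsto 1$ is $\Norm{\cdot}_\partial$-bounded on the span, and then apply Hahn--Banach in the $\Norm{\cdot}_\partial$-norm. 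As written, your proposal matches the paper's treatment (cite and move on) for the hard half, and exceeds it for the easy half.
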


This inspired the following notion, which plays a central role in this work.

\begin{definition}
	\label{def:uniform-n-acyclicity}
	Let $n \geq -1$. We call a non-empty semi-simplicial set $\SimplicialComplex_{\bullet}$ \introduce{uniformly $n$-acyclic} if it is $n$-acyclic (i.e.\ the reduced real simplicial homology
	$\ReducedHomologyOfSpaceObject{q}{\SimplicialComplex_{\bullet}}{\Reals}$ vanishes in all degrees $0 \leq q\leq n$) and the simplicial chain complex of
	$\SimplicialComplex_{\bullet}$ satisfies the $q$-UBC for all $q\leq n$. Similarly, the inclusion
	$
	\Subcomplex_{\bullet} \hookrightarrow \SimplicialComplex_{\bullet}
	$
	of a non-empty semi-simplicial subset $\Subcomplex_\bullet$	is uniformly $n$-acyclic if the pair $(\SimplicialComplex_{\bullet},\Subcomplex_{\bullet})$ is $n$-acyclic (i.e.\
	$
	\HomologyOfSpaceObject
	{q}
	{\SimplicialComplex_{\bullet},\Subcomplex_{\bullet}}
	{\Reals}
	= 0
	$
	for $0 \leq q \leq n$) and the relative simplicial chain complex of
	$(\SimplicialComplex_{\bullet},\Subcomplex_{\bullet})$
	satisfies the $q$-UBC for all $q \leq n$.
\end{definition}

\autoref{prop:MatsumotoMorita} has the following immediate consequence.

\begin{corollary}
\label{cor:qUBCimpliesAcyclic}
  A uniformly $n$-acyclic semi-simplicial set is boundedly
  $n$-acyclic.
\end{corollary}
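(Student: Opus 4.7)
The plan is a direct application of Matsumoto--Morita's criterion (\autoref{prop:MatsumotoMorita}) to the reduced simplicial chain complex $\ReducedChainComplex{*}{\SimplicialComplex_{\bullet}}$. The hypothesis of uniform $n$-acyclicity will feed the UBC side of that equivalence, while the classical $n$-acyclicity kills the target of the comparison map, leaving no room for nontrivial reduced simplicial bounded cohomology classes in the relevant range.

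Concretely, I would first note that since $\SimplicialComplex_{\bullet}$ is non-empty, the reduced chain complex $\ReducedChainComplex{*}{\SimplicialComplex_{\bullet}}$ automatically satisfies the $(-1)$-UBC with constant one: for any $r \in \Reals = \ReducedChainComplex{-1}{\SimplicialComplex_{\bullet}}$ and any vertex $\vertex \in \SimplicialComplex_{0}$, the chain $r \cdot \vertex \in \ReducedChainComplex{0}{\SimplicialComplex_{\bullet}}$ is sent to $r$ by $\BoundaryChainComplex{0}$ and has norm $|r|$. Combined with the $q$-UBC for $0 \leq q \leq n-1$ provided by the uniform $n$-acyclicity hypothesis (and using that the reduced and unreduced simplicial chain complexes agree in non-negative degrees, where the UBC conditions therefore coincide), \autoref{prop:MatsumotoMorita} applied to $\ReducedChainComplex{*}{\SimplicialComplex_{\bullet}}$ yields an injection
\[
c_q \colon \ReducedBoundedCohomologyOfSimplicialObject{q}{\SimplicialComplex_{\bullet}}{\Reals} \hookrightarrow \widetilde{H}^{q}(\SimplicialComplex_{\bullet};\Reals)
\]
for every $0 \leq q \leq n$.

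To close the argument I would then invoke the $n$-acyclicity hypothesis: the reduced simplicial homology $\ReducedHomologyOfSpaceObject{q}{\SimplicialComplex_{\bullet}}{\Reals}$ vanishes for $0 \leq q \leq n$, and since $\ReducedChainComplex{*}{\SimplicialComplex_{\bullet}}$ consists of real vector spaces, the universal coefficient theorem (or a direct dimension count) yields $\widetilde{H}^{q}(\SimplicialComplex_{\bullet};\Reals) = 0$ in the same range. Together with the injectivity of $c_q$ this forces $\ReducedBoundedCohomologyOfSimplicialObject{q}{\SimplicialComplex_{\bullet}}{\Reals} = 0$ for $0 \leq q \leq n$, which is exactly bounded $n$-acyclicity. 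There is no substantial obstacle: the corollary is really a repackaging of \autoref{prop:MatsumotoMorita}, and the only mild point requiring attention is to work with the reduced chain complex so that the degree $q = 0$ case (where the $(-1)$-UBC comes for free from non-emptiness) is handled uniformly with the rest.
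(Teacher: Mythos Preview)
Your proposal is correct and follows the same approach as the paper, which simply states the corollary as an immediate consequence of \autoref{prop:MatsumotoMorita} without writing out any details. Your care in passing to the reduced chain complex and observing that non-emptiness supplies the $(-1)$-UBC (so that the degree $q=0$ case is handled uniformly) is precisely the small bookkeeping the paper leaves implicit; the remark following the corollary in the paper confirms that only the $q$-UBC for $q \leq n-1$ is actually needed, matching your argument.
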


In particular, this yields a strategy for proving bounded acyclicity results for semi-simplicial sets. The computational toolbox developed in this section is a collection of techniques that allow one to check the acyclicity and the uniform boundary condition in \autoref{def:uniform-n-acyclicity} simultaneously.

\begin{remark}
	In the remainder of this work we derive formulas for UBC constants of various simplicial complexes and semi-simplicial sets.
	We want to emphasis that while precise expressions for these constants are provided for the sake of rigour and possible future applications, only their dependencies and their existence, \emph{not the explicit shape of the formulas}, are relevant for our arguments.
\end{remark}

We close this first subsection by giving some intuition for the UBC and discussing \autoref{def:uniform-n-acyclicity}: We note that a semi-simplicial set $\SimplicialComplex_{\bullet}$ is uniformly $-1$-acyclic if and only if $\SimplicialComplex_{\bullet}$ is non-empty, and that the $-1$-UBC constant $\UBCConstant{-1}{X}$ is equal to zero in this case. The next example gives a geometric interpretation of uniform $0$-acyclicity, and should be compared with the role that the diameter plays in \autoref{exm:SBCofR}.

\begin{example}
	\label{rem:0UBC}
	Let $\SimplicialComplex_\bullet$ be a non-empty semi-simplicial set. The
	simplicial chain complex of $\SimplicialComplex_\bullet$ satisfies the
	$0$-UBC if and only if there exists a constant $\UBCConstant{}{} \in
	\Reals_{\geq 0}$ such that each connected component of the $1$-skeleton of
	$\SimplicialComplex_\bullet$ has finite diameter at most $\UBCConstant{}{}$
	with respect to the simplicial metric that assigns length $1$ to every edge.
	In particular, \autoref{exm:SBCofR} and \autoref{expl:uniformity-condition}
	do not satisfy the $0$-UBC. It follows
	that $\SimplicialComplex_\bullet$ is uniformly $0$-acyclic if and only if its
	$1$-skeleton is non-empty, connected and of finite diameter. In this case,
	the $0$-UBC constant $\UBCConstant{0}{\SimplicialComplex}$ can be chosen to
	be the diameter of the $1$-skeleton divided by $2$.
\end{example}

The following more sophisticated example, shows that
$k$-acyclicity, even of a Kan simplicial set, is not
sufficient to ensure uniform $k$-acyclicity.
\begin{example}
  Let $K$ denote a hyperbolic knot in $S^3$, then by
  performing $\frac{1}{n}$-Dehn surgery, one obtains a
  homology $3$-sphere $M$, which, except for an at most
  finite set of values for $n$, will be hyperbolic.
  Since hyperbolic manifolds are aspherical, this implies
  that its fundamental group will be a $2$-acyclic
  word-hyperbolic group, but as was proven in
  \cite{EpsteinFujiwaraBCofHyperbolicGroups} its second
  bounded cohomology is non-zero. Hence using the main
  result of \cite{IvanovSimplicialMappingTheorem},we have
  that the singular set of $M$, is $2$-acyclic, but its
  second bounded cohomology is non-zero and hence it also
  cannot satisfy $1$-UBC. Similarly using results from
  \cite{RatcliffeTschantzHomologySpheres}, one can
  construct a $3$-acyclic example using more sophisticated
  tools.
\end{example}

The final remark concerns a possible alternative to \autoref{def:uniform-n-acyclicity}.

\begin{remark}
  Note that \autoref{prop:MatsumotoMorita} relates the $q$-UBC and the comparison map in degree $q+1$.
  To obtain \autoref{cor:qUBCimpliesAcyclic}, it therefore would be sufficient to require $n$-acyclicity and that the simplicial chain complex of $\SimplicialComplex_\bullet$ satisfies the $q$-UBC for all $q \leq n-1$, instead of $q \leq n$, in \autoref{def:uniform-n-acyclicity}. However, this would mean that e.g.\ any connected semi-simplicial set would be uniformly $0$-acyclic -- not just the ones with finite diameter. For this reason we think it is
  conceptually more reasonable to chose our \autoref{def:uniform-n-acyclicity}. This choice also seems more natural in the subsequent arguments and has no effect on the main theorems of this article.
\end{remark}

\subsection{Two-out-of-three property}

We now start developing a tools for checking that a semi-simplicial set is
uniformly $n$-acyclic. Many of these tools represent more quantitative
refinements of the corresponding axioms in \autoref{scn:SBC}.
The first item in our uniformly bounded simplicial toolbox is such a
``quantitative'' refinement for the long exact sequence in \autoref{lem:LES},
which keeps track of the uniform boundary condition. This
builds on the following observation.

\begin{observation}
	\label{obs:InclusionAcyclic}
	Let $(\SimplicialComplex_\bullet, \Subcomplex_\bullet)$ be a pair of semi-simplicial sets and let $q \geq 0$.
	Then, the short exact sequence
	$
	0
	\to
	\ChainComplex{q}{\Subcomplex_{\bullet}}
	\to
	\ChainComplex{q}{\SimplicialComplex_{\bullet}}
	\to
	\ChainComplex{q}{\SimplicialComplex_{\bullet},\Subcomplex_{\bullet}}
	\to
	0
	$
	of simplicial chain modules	splits norm-preservingly by \autoref{obs:norm-preserving-splitting}. As a consequence,
	$
	\Subcomplex_{\bullet}
	\hookrightarrow
	\SimplicialComplex_{\bullet}
	$
	being uniformly $n$-acyclic in the sense of \autoref{def:uniform-n-acyclicity} is equivalent to the following property: If $q \leq n$ and
	$\Chain \in \ChainComplex{q}{\SimplicialComplex_{\bullet}}$
	is a chain with
	$
	\partial \Chain \in \ChainComplex{q-1}{\Subcomplex_\bullet}
	$,
	then there exists a chain
	$
	\BoundingChain
	\in
	\ChainComplex{q+1}{\SimplicialComplex_\bullet}
	$
	such that
	$
	\partial \BoundingChain
	=
	\Chain
	+
	\Chain_{\Subcomplex}
	$,
	where $\Chain_{\Subcomplex}$ is contained in
	$\ChainComplex{q}{\Subcomplex_{\bullet}}$ and
	$
	\Norm{\BoundingChain}
	\leq
	\UBCConstant{q}{\SimplicialComplex,\Subcomplex}
	\Norm{\Chain}
	$.
\end{observation}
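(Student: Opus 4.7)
The plan is to prove both implications by translating between the relative chain complex $\ChainComplex{*}{\SimplicialComplex_\bullet, \Subcomplex_\bullet}$ and the subspace of $\ChainComplex{*}{\SimplicialComplex_\bullet}$ spanned by simplices outside $\Subcomplex_\bullet$, using the norm-preserving splitting $s$ provided by \autoref{obs:norm-preserving-splitting}. The key point is that $s$ identifies relative chains isometrically with honest chains in $\SimplicialComplex_\bullet$ whose support avoids $\Subcomplex_\bullet$, so the $q$-UBC and the filling property transfer losslessly between the two settings.

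For the forward direction, I would start with $\Chain \in \ChainComplex{q}{\SimplicialComplex_\bullet}$ satisfying $\partial \Chain \in \ChainComplex{q-1}{\Subcomplex_\bullet}$ for $q \leq n$, pass to the relative chain $\bar{\Chain} \coloneqq \Chain + \ChainComplex{q}{\Subcomplex_\bullet}$ which is automatically a relative cycle, and combine $H_q(\SimplicialComplex_\bullet,\Subcomplex_\bullet;\Reals)=0$ with the $q$-UBC on the relative complex to produce a relative bounding chain $\bar{\BoundingChain}$ of norm at most $\UBCConstant{q}{\SimplicialComplex,\Subcomplex}\Norm{\bar{\Chain}}$. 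Setting $\BoundingChain \coloneqq s(\bar{\BoundingChain})$ then yields the required chain: its image in the relative complex agrees with that of $\Chain$, so $\partial \BoundingChain - \Chain$ lies in $\ChainComplex{q}{\Subcomplex_\bullet}$, giving the decomposition $\partial \BoundingChain = \Chain + \Chain_{\Subcomplex}$; and $\Norm{\BoundingChain} = \Norm{\bar{\BoundingChain}} \leq \UBCConstant{q}{\SimplicialComplex,\Subcomplex}\Norm{\bar{\Chain}} \leq \UBCConstant{q}{\SimplicialComplex,\Subcomplex}\Norm{\Chain}$, the last inequality being a defining property of the quotient norm.

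The reverse direction is the mirror image: given a relative cycle $\bar{\Chain} \in \ChainComplex{q}{\SimplicialComplex_\bullet,\Subcomplex_\bullet}$ with $q \leq n$, take the isometric lift $\Chain \coloneqq s(\bar{\Chain})$, whose boundary automatically lies in $\ChainComplex{q-1}{\Subcomplex_\bullet}$ since $\bar{\Chain}$ is a relative cycle, apply the hypothesized filling property to produce $\BoundingChain$ of norm bounded by $\UBCConstant{q}{\SimplicialComplex,\Subcomplex}\Norm{\Chain}$, and push it down to the relative complex. This simultaneously yields $H_q(\SimplicialComplex_\bullet,\Subcomplex_\bullet;\Reals)=0$ and the relative $q$-UBC with the same constant, since the quotient map is norm-non-increasing. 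There is no substantive obstacle here: the statement is really a reformulation whose content lies entirely in the existence of the isometric splitting, and the only arithmetic to keep straight is where the equality $\Norm{s(\bar{\Chain})}=\Norm{\bar{\Chain}}$ is invoked versus the inequality $\Norm{\bar{\Chain}}\leq\Norm{\Chain}$.
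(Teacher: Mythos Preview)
Your proposal is correct and complete. The paper itself does not supply a proof for this observation: it is stated as self-evident given the norm-preserving splitting of \autoref{obs:norm-preserving-splitting}, and your argument is precisely the unpacking one would write if asked to justify it, with the norm bookkeeping handled correctly in both directions.
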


\begin{lemma}
	\label{lem:two-out-of-three-property}
  Let $\SimplicialComplex_\bullet$ denote an semi-simplicial set and
  $\Subcomplex_\bullet$ a semi-simplicial subset, then:
  \begin{enumerate}[(i)]
  \item \label{item-1-toot}
    If $\SimplicialComplex_\bullet$ is uniformly $n$-acyclic and $\Subcomplex_\bullet$ is
    uniformly $(n-1)$-acyclic,
    then $\Subcomplex_\bullet \hookrightarrow \SimplicialComplex_\bullet$ is uniformly
    $n$-acyclic with constant in degree $q\leq n$ given by
    \[
      \UBCConstantTwoOutOfThreeThree{q}{\UBCConstant{q}{\SimplicialComplex}}{\UBCConstant{q-1}{\Subcomplex}}
      \coloneqq
      \UBCConstant{q}{\SimplicialComplex}
      \left(
        1
        +
        \UBCConstant{q-1}{\Subcomplex}
        \cdot (q+1)
      \right)
    \]
    where $\UBCConstant{q}{\SimplicialComplex}$ and
    $\UBCConstant{q-1}{\Subcomplex}$ denote the UBC constants of $\SimplicialComplex_\bullet$ and $\Subcomplex_\bullet$ in
    degree $q$ and $q-1$, respectively.
  \item \label{item-2-toot}
    If $\SimplicialComplex_\bullet$ is uniformly $n$-acyclic and
    $\Subcomplex_\bullet \hookrightarrow \SimplicialComplex_\bullet$ is uniformly $n+1$-acyclic,
    then $\Subcomplex_\bullet$ is uniformly $n$-acyclic with constant in degree $q\leq n$
    given by
    \[
      \UBCConstantTwoOutOfThreeTwo{q}{\UBCConstant{q}{\SimplicialComplex}}{\UBCConstant{q+1}{\SimplicialComplex, \Subcomplex}}
      \coloneqq
      (q+2)
      \UBCConstant{q+1}{\SimplicialComplex, \Subcomplex}
      \UBCConstant{q}{\SimplicialComplex}
      +
      \UBCConstant{q}{\SimplicialComplex}
    \]
    where $\UBCConstant{q+1}{\SimplicialComplex, \Subcomplex}$ and
    $\UBCConstant{q}{\SimplicialComplex}$ denote the UBC constants of $(\SimplicialComplex_\bullet, \Subcomplex_\bullet)$ and $\SimplicialComplex_\bullet$
    in degree $q+1$ and $q$, respectively.
  \item \label{item-3-toot}
    If $\Subcomplex_\bullet$ and $\Subcomplex_\bullet \hookrightarrow \SimplicialComplex_\bullet$ are
    both uniformly $n$-acyclic, then $\SimplicialComplex_\bullet$ is uniformly
    $n$-acyclic as well with constant in degree $q \leq n$ given by
    \[
      \UBCConstantTwoOutOfThreeOne{q}{\UBCConstant{q}{\Subcomplex}}{\UBCConstant{q}{\SimplicialComplex, \Subcomplex}}
      \coloneqq
      \UBCConstant{q}{\SimplicialComplex, \Subcomplex}
      +
      \UBCConstant{q}{\Subcomplex}
      +
      (q+2)\UBCConstant{q}{\SimplicialComplex, \Subcomplex}\UBCConstant{q}{\Subcomplex}
    \]
    where $\UBCConstant{q}{\SimplicialComplex,\Subcomplex}$ and
    $\UBCConstant{q}{\SimplicialComplex}$ denote the UBC constants of $(\SimplicialComplex_\bullet,\Subcomplex_\bullet)$ and $\SimplicialComplex_\bullet$ in degree $q$, respectively.
  \end{enumerate}
\end{lemma}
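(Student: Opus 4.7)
My plan is to invoke \autoref{obs:InclusionAcyclic}, which reformulates uniform $n$-acyclicity of an inclusion as a chain-level statement: for every $q \leq n$ and every chain $\Chain \in \ChainComplex{q}{\SimplicialComplex_\bullet}$ with $\partial \Chain \in \ChainComplex{q-1}{\Subcomplex_\bullet}$, there exists $\BoundingChain \in \ChainComplex{q+1}{\SimplicialComplex_\bullet}$ satisfying $\partial \BoundingChain = \Chain + \Chain_{\Subcomplex}$ for some $\Chain_{\Subcomplex} \in \ChainComplex{q}{\Subcomplex_\bullet}$ with $\Norm{\BoundingChain} \leq \UBCConstant{q}{\SimplicialComplex, \Subcomplex} \Norm{\Chain}$. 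Equipped with this reformulation, each of the three parts becomes a two-step diagram chase in the short exact sequence of chain complexes; the only real work is the tracking of norms.

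For part \autoref{item-1-toot}, I would take $\Chain \in \ChainComplex{q}{\SimplicialComplex_\bullet}$ with $\partial \Chain \in \ChainComplex{q-1}{\Subcomplex_\bullet}$ and first use that $\partial \Chain$ is a cycle in $\Subcomplex_\bullet$ of degree $q-1 \leq n-1$. Uniform $(n-1)$-acyclicity of $\Subcomplex_\bullet$ then produces $\tau \in \ChainComplex{q}{\Subcomplex_\bullet}$ with $\partial \tau = \partial \Chain$ and $\Norm{\tau} \leq (q+1)\UBCConstant{q-1}{\Subcomplex}\Norm{\Chain}$, using that the boundary map has norm at most $q+1$. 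Since $\Chain - \tau$ is then a cycle in $\SimplicialComplex_\bullet$, uniform $n$-acyclicity of $\SimplicialComplex_\bullet$ supplies a filling $\BoundingChain$ with $\partial \BoundingChain = \Chain - \tau$, and combining the two norm estimates recovers the stated constant $\UBCConstantTwoOutOfThreeThree{q}{\UBCConstant{q}{\SimplicialComplex}}{\UBCConstant{q-1}{\Subcomplex}}$.

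Parts \autoref{item-2-toot} and \autoref{item-3-toot} follow an analogous schema, with the roles of the three pieces of the short exact sequence permuted. In \autoref{item-3-toot}, given a cycle $\Chain \in \ChainComplex{q}{\SimplicialComplex_\bullet}$, the reformulation yields $\BoundingChain$ and $\Chain_{\Subcomplex}$ as above; the correction $\Chain_{\Subcomplex}$ is automatically a cycle in $\Subcomplex_\bullet$ (apply $\partial$), and uniform $n$-acyclicity of $\Subcomplex_\bullet$ fills it by some $\tau \in \ChainComplex{q+1}{\Subcomplex_\bullet}$, so that $\partial(\BoundingChain - \tau) = \Chain$; bookkeeping using $\Norm{\Chain_{\Subcomplex}} \leq (q+2)\Norm{\BoundingChain} + \Norm{\Chain}$ produces the stated bound. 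For \autoref{item-2-toot}, given a cycle $\Chain \in \ChainComplex{q}{\Subcomplex_\bullet}$, I would first fill it inside $\SimplicialComplex_\bullet$ as $\Chain = \partial \BoundingChain_{\SimplicialComplex}$ using uniform $n$-acyclicity of $\SimplicialComplex_\bullet$; then, since $\partial \BoundingChain_{\SimplicialComplex} = \Chain \in \ChainComplex{q}{\Subcomplex_\bullet}$, the reformulation (now applied in degree $q+1 \leq n+1$) yields $\BoundingChain' \in \ChainComplex{q+2}{\SimplicialComplex_\bullet}$ with $\partial \BoundingChain' = \BoundingChain_{\SimplicialComplex} + \BoundingChain_{\Subcomplex}'$ for some $\BoundingChain_{\Subcomplex}' \in \ChainComplex{q+1}{\Subcomplex_\bullet}$, and $-\BoundingChain_{\Subcomplex}'$ fills $\Chain$ inside $\Subcomplex_\bullet$.

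I do not anticipate any conceptual difficulty; the only mild subtlety is the low-degree edge case $q = 0$, where one must reconcile the reduced chain complex used in the absolute notion of uniform acyclicity (cf.\ \autoref{rem:0UBC}) with the ordinary relative chain complex used in \autoref{obs:InclusionAcyclic}. This is essentially automatic once one observes that the non-emptiness condition built into uniform $(-1)$-acyclicity precisely supplies the needed $(-1)$-UBC constant, so the diagram chase goes through uniformly in $q$. The takeaway is that \autoref{lem:two-out-of-three-property} is a quantitative enhancement of the long exact sequence in \autoref{lem:LES}, where the norm-preserving splitting of \autoref{obs:norm-preserving-splitting} provides the chain-level bookkeeping.
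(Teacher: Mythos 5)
Your proposal is correct and follows essentially the same route as the paper's proof: all three parts are handled by the same two-step diagram chase through the norm-preservingly split short exact sequence, using \autoref{obs:InclusionAcyclic} exactly as the paper does, and your norm bookkeeping reproduces the stated constants. No further comments.
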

\begin{proof}
  For \autoref{item-1-toot}, we use the criterion in \autoref{obs:InclusionAcyclic}. Let $q \leq n$ and
  $\Chain \in \ChainComplex{q}{\SimplicialComplex_{\bullet}}$
  be a chain with
  $
  \partial \Chain \in \ChainComplex{q-1}{\Subcomplex_\bullet}
  $.
  Then $\partial \Chain \in \ker(\partial_{q-1})$ and, since $\Subcomplex_\bullet$ is uniformly
  $(n-1)$-acyclic and $q-1 \leq n-1$, there exists a chain
  $\BoundingChain \in \ChainComplex{q}{\Subcomplex_\bullet}$ such that $\partial \BoundingChain = - \partial \Chain$ and
  \[
    \Norm{\BoundingChain}
    \leq
    \UBCConstant{q-1}{\Subcomplex}
    \Norm{\partial \Chain}
    \leq
    \UBCConstant{q-1}{\Subcomplex}
    (q+1)
    \Norm{\Chain}
  \]
  using that the norm of $\partial = \partial_q$ is at most $(q+1)$.
  Since $\partial(\Chain + \BoundingChain) = 0$, $\SimplicialComplex_\bullet$ is uniformly $n$-acyclic and $q \leq n$, it follows that
  $\Chain + \BoundingChain$ is the boundary of a chain $\alpha \in \ChainComplex{q+1}{\SimplicialComplex_\bullet}$ such that
  \[
    \Norm{\alpha}
    \leq
    \UBCConstant{q}{\SimplicialComplex}
    \Norm{\Chain + \BoundingChain}
    \leq
    \UBCConstant{q}{\SimplicialComplex}
    \left(
      1
      +
      \UBCConstant{q-1}{\Subcomplex}(q+1)
    \right)
    \Norm{\Chain}.
  \]
  Hence, \autoref{item-1-toot} follows from \autoref{obs:InclusionAcyclic}.

  For \autoref{item-2-toot}, let $q \leq n$ and $\Chain \in \ChainComplex{q}{\Subcomplex_\bullet}$ with $\partial \Chain = 0$.
  Since $\SimplicialComplex_\bullet$ is uniformly
  $n$-acyclic, there exists a chain $\BoundingChain \in
  \ChainComplex{q+1}{\SimplicialComplex}$ such that $\partial \BoundingChain = \Chain$ and
  \[
    \Norm{\BoundingChain}
    \leq
    \UBCConstant{q}{\SimplicialComplex}
    \Norm{\Chain}.
  \]
  Since $\Subcomplex_\bullet \hookrightarrow \SimplicialComplex_\bullet$ is uniformly $(n+1)$-acyclic and $\partial (\BoundingChain + \ChainComplex{q+1}{\Subcomplex_\bullet}) = 0 \in \ChainComplex{q}{\SimplicialComplex_\bullet, \Subcomplex_\bullet}$, the equivalence class $\BoundingChain + \ChainComplex{q+1}{\Subcomplex_\bullet}$ is the boundary of a class $\alpha + \ChainComplex{q+1}{\Subcomplex_\bullet}$ in $\ChainComplex{*}{\SimplicialComplex_\bullet, \Subcomplex_\bullet}$ such that
  \[
    \Norm{\alpha + \ChainComplex{q+1}{\Subcomplex_\bullet}}
    \leq
    \UBCConstant{q+1}{\SimplicialComplex, \Subcomplex}
    \Norm{\BoundingChain + \ChainComplex{q+1}{\Subcomplex_\bullet}}
    \leq
    \UBCConstant{q+1}{\SimplicialComplex, \Subcomplex}
    \Norm{\BoundingChain}
    \leq
    \UBCConstant{q+1}{\SimplicialComplex, \Subcomplex}
    \UBCConstant{q}{\SimplicialComplex}
    \Norm{\Chain}.
  \]
  Using \autoref{obs:norm-preserving-splitting} we can pick $\alpha \in \ChainComplex{q+1}{\SimplicialComplex_\bullet}$ such that $\Norm{\alpha} = \Norm{\alpha + \ChainComplex{q+1}{\Subcomplex_\bullet}}$. Then it follows that $\BoundingChain - \partial \alpha \in \ChainComplex{q+1}{\Subcomplex_\bullet}$ satisfies $\partial (\BoundingChain - \partial \alpha) = \partial \BoundingChain = \Chain$
  and
  \[
    \Norm{\BoundingChain - \partial \alpha}
    \leq
    \Norm{\BoundingChain} + \Norm{\partial \alpha}
    \leq
    \Norm{\BoundingChain} + (q+2)\Norm{\alpha}
    \leq
    \left(
      \UBCConstant{q}{\SimplicialComplex}
      +
      (q+2)
      \UBCConstant{q+1}{\SimplicialComplex, \Subcomplex}
      \UBCConstant{q}{\SimplicialComplex}
    \right)
    \Norm{\Chain}.
  \]

  For \autoref{item-3-toot}, let $q \leq n$ and $\Chain \in \ChainComplex{q}{\SimplicialComplex_\bullet}$ with $\partial \Chain = 0$.
  Since $\Subcomplex_\bullet \hookrightarrow \SimplicialComplex_\bullet$ is uniformly $n$-acyclic, the equivalence class
  $\Chain + \ChainComplex{q}{\Subcomplex_\bullet}$ is the boundary of a class
  $
    \BoundingChain_1 + \ChainComplex{q+1}{\Subcomplex_\bullet}
    \in
    \ChainComplex{q+1}{\SimplicialComplex_\bullet,\Subcomplex_\bullet}
  $
  such that
  \[
    \Norm{\BoundingChain_1 + \ChainComplex{q+1}{\Subcomplex_\bullet}}
    \leq
    \UBCConstant{q}{\SimplicialComplex, \Subcomplex}
    \Norm{\Chain + \ChainComplex{q}{\Subcomplex_\bullet}}
    \leq
    \UBCConstant{q}{\SimplicialComplex, \Subcomplex}
    \Norm{\Chain}
  \]
  Using \autoref{obs:norm-preserving-splitting} we can pick a chain
  $\BoundingChain_1 \in \ChainComplex{q+1}{\SimplicialComplex_\bullet}$ such
  that $\Norm{\BoundingChain_1} = \Norm{\BoundingChain_1 +
  \ChainComplex{q+1}{\Subcomplex_\bullet}}$. This representative satisfies that
  $\partial \BoundingChain_1 = \Chain + \Chain_{\Subcomplex}$ for some
  $\Chain_{\Subcomplex} \in \ChainComplex{q}{\Subcomplex_\bullet}$ with $\partial \Chain_{\Subcomplex} = 0$.
  Since $\Subcomplex_\bullet$ is uniformly $n$-acyclic, $\Chain_{\Subcomplex}$ is the
  boundary of another chain $\BoundingChain_2 \in \ChainComplex{q+1}{\Subcomplex_\bullet}$ such that
  \[
    \Norm{\BoundingChain_2}
    \leq
    \UBCConstant{q}{\Subcomplex}
    \Norm{\Chain_{\Subcomplex}}
    \leq
    \UBCConstant{q}{\Subcomplex}
    (\Norm{\partial \BoundingChain_1} + \Norm{\Chain})
    \leq
    \UBCConstant{q}{\Subcomplex}
    ((q+2)\Norm{\BoundingChain_1} + \Norm{\Chain})
    \leq
    \UBCConstant{q}{\Subcomplex}
    ((q+2)\UBCConstant{q}{\SimplicialComplex, \Subcomplex} + 1)
    \Norm{\Chain}
  \]
  using that $\Chain_{\Subcomplex} = \partial \BoundingChain_1 - \Chain$, that $\Norm{\partial_{q+1}} \leq q+2$ and the previous estimate.
  We finish the proof by observing that the boundary of $\BoundingChain_1 - \BoundingChain_2$ is $\partial(\BoundingChain_1 - \BoundingChain_2) = \Chain$
  and that
  \[
    \Norm{\BoundingChain_1 - \BoundingChain_2}
    \leq
    \Norm{\BoundingChain_1} + \Norm{\BoundingChain_2}
    \leq
    \left(
      \UBCConstant{q}{\SimplicialComplex, \Subcomplex}
      +
      \UBCConstant{q}{\Subcomplex}
      +
      (q+2)\UBCConstant{q}{\SimplicialComplex, \Subcomplex}\UBCConstant{q}{\Subcomplex}
    \right)
    \Norm{\Chain}.
  \]
\end{proof}

The following two lemmas are variants of \autoref{lem:two-out-of-three-property} that are
used in several inductive proofs in subsequent (sub-)sections.

\begin{lemma}
	\label{lem:factoring-through-highly-uniformly-acyclic-subcomplex}
  Let $\SimplicialComplex_\bullet$ denote a simplicial complex and $\Subcomplex_\bullet$ a
  subcomplex such that the inclusion
  $
    \Subcomplex_\bullet
    \hookrightarrow
    \SimplicialComplex_\bullet
  $
  is uniformly $n$-acyclic and additionally factors through a subcomplex
  $\Subcomplex_\bullet'$ of $\SimplicialComplex_\bullet$ which is uniformly $n$-acyclic. Then $\SimplicialComplex_\bullet$ is
  uniformly $n$-acyclic as well. The UBC constants for $\SimplicialComplex_\bullet$ in degrees $q \leq n$ can
  be chosen to be
  \[
    \UBCConstantFactorThrough{q}{\UBCConstant{q}{\SimplicialComplex,\Subcomplex}}{\UBCConstant{q}{\Subcomplex'}}
    \coloneqq
    \UBCConstant{q}{\SimplicialComplex,\Subcomplex}
    +
    \UBCConstant{q}{\Subcomplex'}
    +
    (q+2)\UBCConstant{q}{\SimplicialComplex,\Subcomplex}\UBCConstant{q}{\Subcomplex'}
  \]
  where $\UBCConstant{q}{\Subcomplex'}$ and $\UBCConstant{q}{\SimplicialComplex, \Subcomplex}$ denote
  the respective UBC constants of $\Subcomplex_\bullet'$ and $(\SimplicialComplex_\bullet, \Subcomplex_\bullet)$ in degree $q$, respectively.
\end{lemma}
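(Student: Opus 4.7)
The plan is to establish this result as a refinement of \autoref{item-3-toot} of \autoref{lem:two-out-of-three-property}: the proof of that item only used the uniform $n$-acyclicity of $\Subcomplex_\bullet$ in order to fill a specific cycle $\Chain_\Subcomplex$ which by construction lives in $\ChainComplex{q}{\Subcomplex_\bullet}$. Because the inclusion $\Subcomplex_\bullet \hookrightarrow \SimplicialComplex_\bullet$ factors through $\Subcomplex'_\bullet$, we have the containment $\ChainComplex{q}{\Subcomplex_\bullet} \subseteq \ChainComplex{q}{\Subcomplex'_\bullet}$, so the same cycle $\Chain_\Subcomplex$ can instead be filled using the uniform $n$-acyclicity of $\Subcomplex'_\bullet$. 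This is the key observation.

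Concretely, for $0 \leq q \leq n$ and a cycle $\Chain \in \ChainComplex{q}{\SimplicialComplex_\bullet}$, I would first use that $(\SimplicialComplex_\bullet, \Subcomplex_\bullet)$ is uniformly $n$-acyclic and combine it with the norm-preserving splitting of \autoref{obs:norm-preserving-splitting} (exactly as in the criterion \autoref{obs:InclusionAcyclic}) to produce a chain $\BoundingChain_1 \in \ChainComplex{q+1}{\SimplicialComplex_\bullet}$ with $\partial \BoundingChain_1 = \Chain + \Chain_\Subcomplex$ for some $\Chain_\Subcomplex \in \ChainComplex{q}{\Subcomplex_\bullet}$ and $\Norm{\BoundingChain_1} \leq \UBCConstant{q}{\SimplicialComplex,\Subcomplex} \Norm{\Chain}$. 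Since $\partial \Chain = 0$, the chain $\Chain_\Subcomplex = \partial \BoundingChain_1 - \Chain$ is itself a cycle, and lies in $\ChainComplex{q}{\Subcomplex'_\bullet}$ by the factoring hypothesis. Applying the uniform $n$-acyclicity of $\Subcomplex'_\bullet$ then yields $\BoundingChain_2 \in \ChainComplex{q+1}{\Subcomplex'_\bullet} \subseteq \ChainComplex{q+1}{\SimplicialComplex_\bullet}$ with $\partial \BoundingChain_2 = \Chain_\Subcomplex$ and $\Norm{\BoundingChain_2} \leq \UBCConstant{q}{\Subcomplex'} \Norm{\Chain_\Subcomplex}$. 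Setting $\BoundingChain \coloneqq \BoundingChain_1 - \BoundingChain_2$ gives $\partial \BoundingChain = \Chain$, and estimating
\[
\Norm{\Chain_\Subcomplex} \leq (q+2)\Norm{\BoundingChain_1} + \Norm{\Chain} \leq \bigl((q+2)\UBCConstant{q}{\SimplicialComplex,\Subcomplex} + 1\bigr)\Norm{\Chain}
\]
via the triangle inequality and the bound $\Norm{\partial_{q+1}} \leq q+2$ reproduces the asserted constant $\UBCConstantFactorThrough{q}{\UBCConstant{q}{\SimplicialComplex,\Subcomplex}}{\UBCConstant{q}{\Subcomplex'}}$. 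Acyclicity of $\SimplicialComplex_\bullet$ in degrees $0 \leq q \leq n$ is a byproduct of the same filling argument, while non-emptiness is inherited from $\Subcomplex'_\bullet$, and the $(-1)$-UBC is automatic since $\SimplicialComplex_\bullet$ is non-empty.

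There is no substantial obstacle: the entire argument is a bookkeeping exercise that mirrors the proof of \autoref{item-3-toot} of \autoref{lem:two-out-of-three-property} almost verbatim. The only conceptual point to highlight is that the chain $\Chain_\Subcomplex$ produced by the relative filling lives in $\ChainComplex{q}{\Subcomplex_\bullet}$ and can hence be viewed inside $\ChainComplex{q}{\Subcomplex'_\bullet}$, which is precisely what allows the uniform $n$-acyclicity of $\Subcomplex'_\bullet$ (rather than of $\Subcomplex_\bullet$) to take over the remainder of the argument.
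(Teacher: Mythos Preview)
Your proposal is correct and follows essentially the same approach as the paper: produce $\BoundingChain_1$ from the uniform $n$-acyclicity of $(\SimplicialComplex_\bullet,\Subcomplex_\bullet)$ via \autoref{obs:InclusionAcyclic}, observe that the resulting $\Chain_\Subcomplex$ is a cycle in $\ChainComplex{q}{\Subcomplex'_\bullet}$, fill it there using the uniform $n$-acyclicity of $\Subcomplex'_\bullet$ to get $\BoundingChain_2$, and combine. Your norm estimates and the resulting constant match the paper's exactly.
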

\begin{proof}
  Let $q \leq n$ and let $\Chain \in \ChainComplex{q}{\SimplicialComplex_\bullet}$ be a chain with $\partial \Chain = 0$.
  The assumption that $\Subcomplex_\bullet \hookrightarrow \SimplicialComplex_\bullet$ is uniformly $n$-acyclic and \autoref{obs:InclusionAcyclic} imply that
  there exists a chain $\BoundingChain_1 \in \ChainComplex{q+1}{\SimplicialComplex_\bullet}$ such that
  $\partial \BoundingChain_1 = \Chain + \Chain_{\Subcomplex}$, where $\Chain_{\Subcomplex} \in \ChainComplex{q}{\Subcomplex_{\bullet}}$ and
  $\Norm{\BoundingChain_1} \leq \UBCConstant{q}{\SimplicialComplex,\Subcomplex} \Norm{\Chain}$.
  Using $\Chain_{\Subcomplex} = \partial \BoundingChain_1 - \Chain$, $\Norm{\partial_{q+1}} \leq q+2$ and the previous estimate,
  this implies that
  \[
    \Norm{\Chain_{\Subcomplex}}
    \leq
    \Norm{\partial \BoundingChain_1} + \Norm{\Chain}
    \leq
    (q+2)\Norm{\BoundingChain_1} + \Norm{\Chain}
    \leq
    ((q+2)\UBCConstant{q}{\SimplicialComplex,\Subcomplex} + 1) \Norm{\Chain}.
  \]
  Since
  $
  \partial \Chain_{\Subcomplex}
  =
  \partial \Chain
  +
  \partial \Chain_{\Subcomplex}
  =
  \partial \partial \BoundingChain_1
  =
  0  
  $
  and $\Subcomplex_\bullet'$ is uniformly $n$-acyclic, $\Chain_{\Subcomplex}$ is the
  boundary of a chain $\BoundingChain_2$ in $\ChainComplex{q+1}{\Subcomplex_\bullet'}$
  with
  \[
    \Norm{\BoundingChain_2}
    \leq
    \UBCConstant{q}{\Subcomplex'}
    \Norm{\Chain_{\Subcomplex}}
    \leq
    \UBCConstant{q}{\Subcomplex'}
    ((q+2)\UBCConstant{q}{\SimplicialComplex,\Subcomplex} + 1)
    \Norm{\Chain}
  \]
  We finish by observing that the boundary of $\BoundingChain = \BoundingChain_1 - \BoundingChain_2$ is $\Chain$ and
  we have
  \[
    \Norm{\BoundingChain_1 - \BoundingChain_2}
    \leq
    \Norm{\BoundingChain_1} + \Norm{\BoundingChain_2}
    \leq
    \left(
      \UBCConstant{q}{\SimplicialComplex,\Subcomplex}
      +
      \UBCConstant{q}{\Subcomplex'}
      +
      (q+2)\UBCConstant{q}{\SimplicialComplex,\Subcomplex}\UBCConstant{q}{\Subcomplex'}
    \right)
    \Norm{\Chain} \qedhere
  \]
\end{proof}
\begin{lemma}
	\label{lem:compositions-of-highly-acyclic-maps}
	Let $Z_\bullet \subseteq \Subcomplex_\bullet \subseteq \SimplicialComplex_\bullet$ be a nested sequence of
	simplicial complexes. Assume that the inclusions
	$
	Z_\bullet
	\hookrightarrow
	\Subcomplex_\bullet
	\text{ and }
	\Subcomplex_\bullet
	\hookrightarrow
	\SimplicialComplex_\bullet
	$
	are uniformly $n$-acyclic with UBC constant in degree $q \leq n$ given by $\UBCConstant{q}{\Subcomplex, Z}$ and $\UBCConstant{q}{\SimplicialComplex, \Subcomplex}$, respectively.
	Then the inclusion
	$
	Z_\bullet
	\hookrightarrow
	\SimplicialComplex_\bullet
	$
	is uniformly $n$-acyclic as well and its UBC constants in degree $q \leq n$ can
	be chosen as
	\[
	\UBCConstantFactorThrough{q}{\UBCConstant{q}{\SimplicialComplex,\Subcomplex}}{\UBCConstant{q}{\Subcomplex, Z}}
	=
	\UBCConstant{q}{\SimplicialComplex,\Subcomplex}
	+
	\UBCConstant{q}{\Subcomplex, Z}
	+
	(q+2)\UBCConstant{q}{\SimplicialComplex,\Subcomplex}\UBCConstant{q}{\Subcomplex, Z}.
	\]
\end{lemma}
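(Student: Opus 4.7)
The plan is to apply \autoref{obs:InclusionAcyclic} directly, using a two-step ``peeling'' procedure that first reduces a chain in $\ChainComplex{*}{\SimplicialComplex_\bullet}$ to a chain in $\ChainComplex{*}{\Subcomplex_\bullet}$, and then reduces the latter to a chain in $\ChainComplex{*}{Z_\bullet}$. This structure mirrors the proof of \autoref{lem:factoring-through-highly-uniformly-acyclic-subcomplex}, which is consistent with the fact that the claimed UBC constant has the same shape.

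Let $q \leq n$ and let $\Chain \in \ChainComplex{q}{\SimplicialComplex_\bullet}$ be a chain such that $\partial \Chain \in \ChainComplex{q-1}{Z_\bullet}$. Since $Z_\bullet \subseteq \Subcomplex_\bullet$, the boundary $\partial \Chain$ also lies in $\ChainComplex{q-1}{\Subcomplex_\bullet}$. By \autoref{obs:InclusionAcyclic} applied to the uniformly $n$-acyclic inclusion $\Subcomplex_\bullet \hookrightarrow \SimplicialComplex_\bullet$, I obtain a chain $\BoundingChain_1 \in \ChainComplex{q+1}{\SimplicialComplex_\bullet}$ with $\partial \BoundingChain_1 = \Chain + \Chain_{\Subcomplex}$ for some $\Chain_{\Subcomplex} \in \ChainComplex{q}{\Subcomplex_\bullet}$, satisfying $\Norm{\BoundingChain_1} \leq \UBCConstant{q}{\SimplicialComplex,\Subcomplex} \Norm{\Chain}$.

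Next, note that $\partial \Chain_\Subcomplex = \partial \partial \BoundingChain_1 - \partial \Chain = -\partial \Chain \in \ChainComplex{q-1}{Z_\bullet}$. Thus $\Chain_\Subcomplex$ is a chain in $\ChainComplex{q}{\Subcomplex_\bullet}$ whose boundary lies in $\ChainComplex{q-1}{Z_\bullet}$. Applying \autoref{obs:InclusionAcyclic} to the uniformly $n$-acyclic inclusion $Z_\bullet \hookrightarrow \Subcomplex_\bullet$ produces a chain $\BoundingChain_2 \in \ChainComplex{q+1}{\Subcomplex_\bullet}$ with $\partial \BoundingChain_2 = \Chain_\Subcomplex + \Chain_Z$ for some $\Chain_Z \in \ChainComplex{q}{Z_\bullet}$, and $\Norm{\BoundingChain_2} \leq \UBCConstant{q}{\Subcomplex, Z} \Norm{\Chain_\Subcomplex}$. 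Using $\Chain_\Subcomplex = \partial \BoundingChain_1 - \Chain$ and $\Norm{\partial_{q+1}} \leq q+2$, one estimates $\Norm{\Chain_\Subcomplex} \leq ((q+2)\UBCConstant{q}{\SimplicialComplex,\Subcomplex}+1) \Norm{\Chain}$.

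Setting $\BoundingChain \coloneqq \BoundingChain_1 - \BoundingChain_2 \in \ChainComplex{q+1}{\SimplicialComplex_\bullet}$, I compute $\partial \BoundingChain = \Chain - \Chain_Z$, and $-\Chain_Z \in \ChainComplex{q}{Z_\bullet}$, so the criterion in \autoref{obs:InclusionAcyclic} is met. The triangle inequality combined with the two norm estimates yields
\[
\Norm{\BoundingChain} \leq \Norm{\BoundingChain_1} + \Norm{\BoundingChain_2} \leq \left(\UBCConstant{q}{\SimplicialComplex,\Subcomplex} + \UBCConstant{q}{\Subcomplex, Z} + (q+2)\UBCConstant{q}{\SimplicialComplex,\Subcomplex}\UBCConstant{q}{\Subcomplex, Z}\right) \Norm{\Chain},
\]
which is precisely the claimed UBC constant. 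There is no significant obstacle here: the whole argument is a routine bookkeeping of norms along the two-step reduction, and the only subtlety is remembering to use $\Norm{\partial_{q+1}} \leq q+2$ when bounding $\Norm{\Chain_\Subcomplex}$.
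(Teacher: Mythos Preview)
Your proof is correct and follows essentially the same two-step peeling argument as the paper's own proof: first use uniform $n$-acyclicity of $\Subcomplex_\bullet \hookrightarrow \SimplicialComplex_\bullet$ to obtain $\BoundingChain_1$ and $\Chain_\Subcomplex$, then use uniform $n$-acyclicity of $Z_\bullet \hookrightarrow \Subcomplex_\bullet$ to obtain $\BoundingChain_2$ and $\Chain_Z$, and finally combine via $\BoundingChain = \BoundingChain_1 - \BoundingChain_2$ with the same norm estimate. The presentation order differs only trivially (you place the bound on $\Norm{\Chain_\Subcomplex}$ after introducing $\BoundingChain_2$ rather than before), but the logic and constants match exactly.
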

\begin{proof}
	Let $q \leq n$ and let $\Chain \in \ChainComplex{q}{\SimplicialComplex_\bullet}$ be a chain with $\partial \Chain \in \ChainComplex{q-1}{Z_\bullet}$.
	Then $\partial \Chain \in \ChainComplex{q-1}{\Subcomplex_\bullet}$.
	The assumption that $\Subcomplex_\bullet \hookrightarrow \SimplicialComplex_\bullet$ is uniformly $n$-acyclic and \autoref{obs:InclusionAcyclic} imply that
	there exists a chain $\BoundingChain_1 \in \ChainComplex{q+1}{\SimplicialComplex_\bullet}$ such that
	$\partial \BoundingChain_1 = \Chain + \Chain_{\Subcomplex}$, where $\Chain_{\Subcomplex} \in \ChainComplex{q}{\Subcomplex_{\bullet}}$ and
	$\Norm{\BoundingChain_1} \leq \UBCConstant{q}{\SimplicialComplex,\Subcomplex} \Norm{\Chain}$.
	Exactly as in the proof of \autoref{lem:factoring-through-highly-uniformly-acyclic-subcomplex}, one checks that
	$
	\Norm{\Chain_{\Subcomplex}}
	\leq
	((q+2)\UBCConstant{q}{\SimplicialComplex,\Subcomplex} + 1) \Norm{\Chain}.
	$
	
	Next, we note that $0 = \partial \partial \BoundingChain_1 = \partial \Chain + \partial \Chain_{\Subcomplex}$ implies that $\partial \Chain_{\Subcomplex} \in \ChainComplex{q-1}{Z_\bullet}$.
	The assumption that $Z_\bullet \hookrightarrow \Subcomplex_\bullet$ is uniformly $n$-acyclic and \autoref{obs:InclusionAcyclic} therefore yield 
	a chain $\BoundingChain_2 \in \ChainComplex{q+1}{\SimplicialComplex_\bullet}$ such that
	$\partial \BoundingChain_2 = \Chain_{\Subcomplex} + \Chain_{Z}$, where $\Chain_{Z} \in \ChainComplex{q}{Z_{\bullet}}$ and
	$\Norm{\BoundingChain_2} \leq \UBCConstant{q}{\Subcomplex, Z} \Norm{\Chain_{\Subcomplex}}$.
	
	To finish, observe that $\BoundingChain = \BoundingChain_1 - \BoundingChain_2$ satisfies $\partial \BoundingChain = \Chain - \Chain_Z$
	and
	\[
	\Norm{\BoundingChain_1 - \BoundingChain_2}
	\leq
	\Norm{\BoundingChain_1} + \Norm{\BoundingChain_2}
	\leq
	\left(
	\UBCConstant{q}{\SimplicialComplex,\Subcomplex}
	+
	\UBCConstant{q}{\Subcomplex, Z}
	+
	(q+2)\UBCConstant{q}{\SimplicialComplex,\Subcomplex}\UBCConstant{q}{\Subcomplex, Z}
	\right)
	\Norm{\Chain}. \qedhere
	\]
\end{proof}

\subsection{Mayer--Vietoris principle}
As we have seen in \autoref{lem:MVSimp}, in contrast to ordinary bounded
cohomology, simplicial bounded cohomology
satisfies the Mayer-Vietoris axiom. This leads to a gluing principle for bounded acyclicity.
Our investigations of uniform acyclicity properties in subsequent sections rely on the following
``quantitative'' version of it.
\begin{lemma}
  \label{lem:MV-qubc}
  Let $\SimplicialComplex_\bullet$ denote a semi-simplicial set that is the union of
  the semi-simplicial subsets $\Subcomplex_\bullet$ and $\Subcomplex_\bullet'$.
  If $\Subcomplex_\bullet$, $\Subcomplex_\bullet'$ are uniformly $n$-acyclic with UBC constants in
  degree $q$ denoted by $\UBCConstant{q}{\Subcomplex}$ and
  $\UBCConstant{q}{\Subcomplex'}$, respectively, and the intersection
  $\Subcomplex_\bullet \cap \Subcomplex_\bullet'$ is
  uniformly $(n-1)$-acyclic with UBC constant
  $
    \UBCConstant
      {q}
      {\Subcomplex \cap \Subcomplex'}
  $
  in degree $q$,
  then $\SimplicialComplex_\bullet$
  is uniformly $n$-acyclic and the UBC constants in degrees $q \leq n$ can be chosen to be
  \[
  	\UBCConstantMayerVietoris{q}{\UBCConstant{q}{\Subcomplex}}{\UBCConstant{q}{\Subcomplex'}}{\UBCConstant{q-1}{\Subcomplex \cap \Subcomplex'}}
  	\coloneqq
    \left(
      1+(q+1)\UBCConstant{q-1}{\Subcomplex \cap \Subcomplex'}
    \right)
    \left(
      \UBCConstant{q}{\Subcomplex}
      +
      \UBCConstant{q}{\Subcomplex'}
    \right).
  \]
  If $n=0$, then the diameter of $\SimplicialComplex_\bullet$ is bounded by
  $\UBCConstant{0}{\Subcomplex}+\UBCConstant{0}{\Subcomplex'}$.
\end{lemma}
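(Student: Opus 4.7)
The plan is to follow the classical Mayer--Vietoris argument, executed entirely at the normed chain level, keeping track of norms at each step. The key point is that the decomposition
\[
	\ChainComplex{q}{\SimplicialComplex_\bullet}
	\cong
	\ChainComplex{q}{\Subcomplex_\bullet}
	\oplus
	\Reals[\SimplicialComplex_q \setminus \Subcomplex_q]
\]
is norm-preserving (cf.\ \autoref{obs:norm-preserving-splitting}), so that a cycle in $\SimplicialComplex_\bullet$ splits without norm-blow-up into a piece supported on $\Subcomplex_\bullet$ and a piece supported on $\Subcomplex_\bullet'$.

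Given $q \leq n$ and a cycle $\Chain \in \ChainComplex{q}{\SimplicialComplex_\bullet}$, I would first write $\Chain = \Chain_\Subcomplex + \Chain_{\Subcomplex'}$, where $\Chain_\Subcomplex$ is the sub-chain consisting of those simplices of $\Chain$ that lie in $\Subcomplex_\bullet$ and $\Chain_{\Subcomplex'}$ collects the remaining simplices (which automatically lie in $\Subcomplex_\bullet' \setminus \Subcomplex_\bullet$). This splitting satisfies $\Norm{\Chain_\Subcomplex} + \Norm{\Chain_{\Subcomplex'}} = \Norm{\Chain}$. Because $\BoundarySimplex \Chain = 0$, the chain $\BoundarySimplex \Chain_\Subcomplex = -\BoundarySimplex \Chain_{\Subcomplex'}$ lies in $\ChainComplex{q-1}{\Subcomplex_\bullet} \cap \ChainComplex{q-1}{\Subcomplex_\bullet'} = \ChainComplex{q-1}{\Subcomplex_\bullet \cap \Subcomplex_\bullet'}$ and is a cycle there.

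Next I would invoke the $(n-1)$-uniform acyclicity of $\Subcomplex_\bullet \cap \Subcomplex_\bullet'$ to produce $\BoundingChain_0 \in \ChainComplex{q}{\Subcomplex_\bullet \cap \Subcomplex_\bullet'}$ with $\BoundarySimplex \BoundingChain_0 = \BoundarySimplex \Chain_\Subcomplex$ and
\[
	\Norm{\BoundingChain_0}
	\leq
	\UBCConstant{q-1}{\Subcomplex \cap \Subcomplex'}
	\Norm{\BoundarySimplex \Chain_\Subcomplex}
	\leq
	(q+1)\, \UBCConstant{q-1}{\Subcomplex \cap \Subcomplex'}
	\Norm{\Chain_\Subcomplex}.
\]
Then $\Chain_\Subcomplex - \BoundingChain_0 \in \ChainComplex{q}{\Subcomplex_\bullet}$ and $\Chain_{\Subcomplex'} + \BoundingChain_0 \in \ChainComplex{q}{\Subcomplex_\bullet'}$ are cycles summing to $\Chain$, so applying the $n$-uniform acyclicity of $\Subcomplex_\bullet$ and of $\Subcomplex_\bullet'$ yields $\BoundingChain_\Subcomplex \in \ChainComplex{q+1}{\Subcomplex_\bullet}$ and $\BoundingChain_{\Subcomplex'} \in \ChainComplex{q+1}{\Subcomplex_\bullet'}$ bounding them with norms controlled by $\UBCConstant{q}{\Subcomplex}$ and $\UBCConstant{q}{\Subcomplex'}$. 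Putting $\BoundingChain = \BoundingChain_\Subcomplex + \BoundingChain_{\Subcomplex'}$ gives $\BoundarySimplex \BoundingChain = \Chain$, and telescoping the three bounds using $\Norm{\Chain_\Subcomplex}, \Norm{\Chain_{\Subcomplex'}} \leq \Norm{\Chain}$ produces exactly the claimed constant $(1 + (q+1)\UBCConstant{q-1}{\Subcomplex \cap \Subcomplex'})(\UBCConstant{q}{\Subcomplex} + \UBCConstant{q}{\Subcomplex'})$.

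For the $n = 0$ diameter statement I would argue directly rather than via the preceding formula: given two vertices $v, w \in \SimplicialComplex_\bullet$, either both lie in one of $\Subcomplex_\bullet$ or $\Subcomplex_\bullet'$ (and one uses the corresponding diameter bound), or one picks an auxiliary vertex $u$ in the non-empty intersection $\Subcomplex_\bullet \cap \Subcomplex_\bullet'$ and concatenates paths in $\Subcomplex_\bullet$ and $\Subcomplex_\bullet'$ from $v$ to $u$ to $w$. The main (and really only) technical issue in the whole argument is guaranteeing that the initial splitting of $\Chain$ is simultaneously norm-preserving, respects the two subcomplexes, and has its common boundary inside the intersection; this is precisely the combinatorial feature that fails for singular chains on topological spaces, and is what makes simplicial bounded cohomology susceptible to a Mayer--Vietoris principle in the first place.
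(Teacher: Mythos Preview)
Your proposal is correct and follows essentially the same route as the paper's proof: a norm-preserving splitting $\Chain = \Chain_\Subcomplex + \Chain_{\Subcomplex'}$, use of the $(n-1)$-uniform acyclicity of the intersection to adjust the pieces into genuine cycles in $\Subcomplex_\bullet$ and $\Subcomplex_\bullet'$, and then applying the respective UBC constants and summing. Your signs are in fact cleaner than the paper's (which has a harmless sign slip), and your direct geometric argument for the $n=0$ diameter bound via a vertex in the intersection is exactly what is intended.
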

\begin{proof}
  Let $q\leq n$ and let $\Chain \in \ChainComplex{q}{\SimplicialComplex_\bullet}$ be a chain with $\partial \Chain = 0$.
  Write
  $
     \Chain
     =
    \Chain_{\Subcomplex}
    +
    \Chain_{\Subcomplex'}
  $, 
  where $\Chain_{\Subcomplex} \in \ChainComplex{q}{\Subcomplex_\bullet}$,
  $\Chain_{\Subcomplex'} \in \ChainComplex{q}{\Subcomplex_\bullet'}$ and
  $
    \Norm{\Chain}
    =
    \Norm{\Chain_{\Subcomplex}}
    +
    \Norm{\Chain_{\Subcomplex'}}.
  $
  (This decomposition is not unique, in general.)
  We have
  $
    0
    =
    \BoundarySimplex \Chain
    =
    \BoundarySimplex \Chain_{\Subcomplex}
    +
    \BoundarySimplex \Chain_{\Subcomplex'}
  $, 
  hence $\partial \Chain_{\Subcomplex} = - \partial \Chain_{\Subcomplex'}$.
  Therefore $\BoundarySimplex \Chain_{\Subcomplex}$ and $\partial \Chain_{\Subcomplex'}$
  are both contained in $\ChainComplex{q-1}{\Subcomplex_\bullet \cap \Subcomplex_\bullet'}$ and, since $\partial^2 = 0$, even in $\ker(\partial_{q-1})$.
  Since $\Subcomplex_\bullet \cap \Subcomplex_\bullet'$ is uniformly $(n-1)$-acyclicity and $q-1 \leq n-1$, it follows that
  $\partial \Chain_{\Subcomplex}$ is the boundary of a chain $\BoundingChain \in \ChainComplex{q}{\Subcomplex_\bullet \cap \Subcomplex_\bullet'}$ such that
  $$\Norm{\BoundingChain} \leq \UBCConstant{q-1}{\Subcomplex \cap \Subcomplex'} \Norm{\partial \Chain_{\Subcomplex}} \leq (q+1) \UBCConstant{q-1}{\Subcomplex \cap \Subcomplex'} \Norm{\Chain_{\Subcomplex}} \leq (q+1) \UBCConstant{q-1}{\Subcomplex \cap \Subcomplex'} \Norm{\Chain}$$
  using that $\Norm{\partial_q} \leq q+1$. Now $\Chain_{\Subcomplex} + \BoundingChain \in \ChainComplex{q}{\Subcomplex_\bullet}$ and
  $\Chain_{\Subcomplex'} - \BoundingChain \in \ChainComplex{q}{\Subcomplex_\bullet'}$ are by construction contained in $\ker(\partial_q)$ and both have norm at most
  $
    (1 + (q+1)
    \UBCConstant{q-1}{\Subcomplex \cap \Subcomplex'})
    \Norm{\Chain}
  $. 
  Since $\Subcomplex_\bullet$ and $\Subcomplex_\bullet'$ are uniformly $n$-acyclic and $q \leq n$, it follows that there exist $\BoundingChain_1 \in \ChainComplex{q+1}{\Subcomplex_\bullet}$ and $\BoundingChain_2 \in \ChainComplex{q+1}{\Subcomplex_\bullet'}$ with $\partial \BoundingChain_1 = \Chain_{\Subcomplex} + \BoundingChain$, $\partial \BoundingChain_2 = \Chain_{\Subcomplex'} - \BoundingChain$ and such that
  $$\Norm{\BoundingChain_1} \leq \UBCConstant{q}{\Subcomplex}\Norm{\Chain_{\Subcomplex} + \BoundingChain} \text { as well as } \Norm{\BoundingChain_2} \leq \UBCConstant{q}{\Subcomplex'}\Norm{\Chain_{\Subcomplex'} - \BoundingChain}$$
  Since
  $
    \Chain
    =
    (\Chain_{\Subcomplex} + \BoundingChain)
    +
    (\Chain_{\Subcomplex'} - \BoundingChain),
  $
  it follows that $\BoundingChain_1 + \BoundingChain_2 \in \ChainComplex{q+1}{\SimplicialComplex_\bullet}$ is a chain with boundary $\Chain$ such that
  \begin{align*}
    \Norm{\BoundingChain_1 + \BoundingChain_2} \leq \Norm{\BoundingChain_1} +
    \Norm{\BoundingChain_2}
    &
    \leq
    \UBCConstant{q}{\Subcomplex}\Norm{\Chain_{\Subcomplex} + \BoundingChain} +
    \UBCConstant{q}{\Subcomplex'}\Norm{\Chain_{\Subcomplex'} - \BoundingChain}
    \\
    &
    \leq
    (1 + (q+1)
    \UBCConstant{q-1}{\Subcomplex \cap
    \Subcomplex'})(\UBCConstant{q}{\Subcomplex} +
    \UBCConstant{q}{\Subcomplex'})
    \Norm{\Chain}
  \end{align*}
  as claimed.
\end{proof}

\subsection{Uniform homotopy equivalences}

In \autoref{scn:SBC}, we discussed why simplicial bounded cohomology is in general not homotopy invariant. We also observed that this can be rectified if one imposes additional boundedness conditions (see \autoref{lem:HomotopyInvariance}). In this subsection, we introduce a ``uniform'' version of homotopy equivalence for normed chain complexes that preserves the UBC. In the next three subsections, we then use this notion to investigate how uniform acyclicity properties behave under poset deformation retracts, cones and suspensions as well as barycentric subdivisions.

\begin{definition}
	\label{def:uniform-chain-homotopy-equivalence}
	Two normed chain complexes $\AbstractChainComplex{*}$ and $\AbstractChainComplex{*}'$ are called \introduce{uniformly homotopy equivalent} if the following holds:
	There exists a chain homotopy equivalence
	$
	f
	\colon
	\AbstractChainComplex{*}
	\to
	\AbstractChainComplex{*}'
	$
	with homotopy inverse
	$
	g
	\colon
	\AbstractChainComplex{*}'
	\to
	\AbstractChainComplex{*}
	$
	and chain homotopies
	$
	\Homotopy_{C}
	\colon
	C_{*}
	\to
	C_{*+1}
	$
	and
	$
	\Homotopy_{C'}
	\colon
	C'_{*}
	\to
	C'_{*+1}
	$
	, i.e. $g \circ f \sim_{\Homotopy_C} \Identity_{C}$
	and $f \circ g \sim_{\Homotopy_{C'}} \Identity_{C'}$, such that $f$, $g$,
	$\Homotopy_{C}$ and $\Homotopy_{C'}$ are bounded operators in each degree
	(Note that it is allowed that they are unbounded on the whole complex i.e.
	the operator norm in each degree tends to infinity as the degree increases).
	In this case, $(f,g,\Homotopy_{C}, \Homotopy_{C'})$ is called a
	\introduce{uniform homotopy equivalence} between $C_{*}$ and $C_{*}'$.
\end{definition}

The following lemma is elementary, but plays an important role.
\begin{lemma}
  \label{lem:UBCChainhomotopy}
  If $C_{*}$ and $C'_{*}$ are two uniformly homotopy equivalent normed chain complexes, then 
  $$C_{*} \text{ satisfies $q$-UBC if and only if } C'_{*} \text{ satisfies $q$-UBC.}$$
  Furthermore, the $q$-UBC constants $\UBCConstant{q}{C}$ and $\UBCConstant{q}{C'}$ satisfy
  \[
    \UBCConstant{q}{C}
    \leq
      \Norm{\Homotopy_C}
      +
      \Norm{g}
      \UBCConstant{q}{C'}
      \Norm{f}
    \text{ and }
    \UBCConstant{q}{C'}
    \leq
    \Norm{\Homotopy_{C'}}
    +
    \Norm{g}
    \UBCConstant{q}{C}
    \Norm{f},
  \]
  for any uniform chain homotopy equivalence $(f,g,\Homotopy_{C}, \Homotopy_{C'})$ between $C_{*}$ and $C_{*}'$.
\end{lemma}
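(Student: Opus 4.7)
The plan is to verify the asymmetric inequality $\UBCConstant{q}{C} \leq \Norm{\ChainContraction{C}} + \Norm{g}\,\UBCConstant{q}{C'}\,\Norm{f}$ directly; the reverse inequality follows by symmetry (swap the roles of $C$ and $C'$ together with $f \leftrightarrow g$ and $\ChainContraction{C} \leftrightarrow \ChainContraction{C'}$), and the equivalence of $q$-UBC in the two complexes is an immediate consequence of these two inequalities.

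So suppose $C'_*$ satisfies $q$-UBC with constant $\UBCConstant{q}{C'}$, and let $\Chain \in \BoundaryChainComplex{q+1}(\AbstractChainComplex{q+1})$. The idea is to push $\Chain$ across to $C'_*$ via $f$, apply $q$-UBC there, pull the bounding chain back via $g$, and correct the error using the chain homotopy $\ChainContraction{C}$. More precisely, I would carry out the following three steps.

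First, observe that $f(\Chain) \in \BoundaryChainComplex{q+1}(C'_{q+1})$ since $f$ is a chain map, so the $q$-UBC of $C'_*$ yields a chain $\BoundingChain' \in C'_{q+1}$ with $\BoundarySimplex \BoundingChain' = f(\Chain)$ and $\Norm{\BoundingChain'} \leq \UBCConstant{q}{C'}\,\Norm{f}\,\Norm{\Chain}$. Second, use the homotopy relation $g \circ f - \Identity_C = \BoundarySimplex \ChainContraction{C} + \ChainContraction{C} \BoundarySimplex$ applied to $\Chain$, together with the fact that $\BoundarySimplex \Chain = 0$ (since $\Chain$ is itself a boundary of \emph{some} chain, hence a cycle), to compute
\[
  \BoundarySimplex\!\left(g(\BoundingChain') - \ChainContraction{C}(\Chain)\right) = g(\BoundarySimplex \BoundingChain') - \BoundarySimplex \ChainContraction{C}(\Chain) = g(f(\Chain)) - \BoundarySimplex \ChainContraction{C}(\Chain) = \Chain.
\]
Third, estimate
\[
  \Norm{g(\BoundingChain') - \ChainContraction{C}(\Chain)} \leq \Norm{g}\,\Norm{\BoundingChain'} + \Norm{\ChainContraction{C}}\,\Norm{\Chain} \leq \left(\Norm{\ChainContraction{C}} + \Norm{g}\,\UBCConstant{q}{C'}\,\Norm{f}\right) \Norm{\Chain},
\]
which gives the desired bound on $\UBCConstant{q}{C}$.

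There is no real obstacle here; the only point worth being careful about is that one needs $\BoundarySimplex \Chain = 0$ in order to drop the term $\ChainContraction{C}(\BoundarySimplex \Chain)$ in the homotopy formula, and this is automatic because $\Chain$ is assumed to be a boundary. The symmetric argument bounding $\UBCConstant{q}{C'}$ is identical with the roles of $(f, g, \ChainContraction{C})$ and $(g, f, \ChainContraction{C'})$ exchanged, and yields $\UBCConstant{q}{C'} \leq \Norm{\ChainContraction{C'}} + \Norm{f}\,\UBCConstant{q}{C}\,\Norm{g}$, which coincides with the inequality stated in the lemma since the operator norms commute as real numbers.
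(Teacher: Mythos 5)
Your proof is correct and follows essentially the same route as the paper: transport the boundary to $C'_*$ via $f$, bound it there, pull back via $g$, and correct with the homotopy $\Homotopy_{C}$, using $\BoundarySimplex\Chain = 0$ to kill the $\Homotopy_{C}(\BoundarySimplex\Chain)$ term. The norm estimate and the symmetry argument for the reverse inequality match the paper's as well.
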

\begin{proof}
  Assume that $C'_{*}$ satisfies the $q$-UBC. Let $\Chain_{C} \in \partial_{q+1}(C_{q+1})$ be a boundary in $C_q$.
  Since $f$ is a chain map, it follows that $\apply{f}{\Chain_{C}} \in C_{q}'$ is a boundary as well.
  It follows that there exists a chain $\BoundingChain_{C'} \in C_{q+1}'$ with 
  $\partial \BoundingChain_{C'} = \apply{f}{\Chain_{C}}$ such that 
  $
    \Norm{\BoundingChain_{C'}}
    \leq
    \UBCConstant{q}{C'}
    \Norm{\apply{f}{\Chain_{C}}}
    \leq 
    \UBCConstant{q}{C'}
    \Norm{f}
    \Norm{\Chain_{C}}
  $. 
  Now consider the element 
  $
    \BoundingChain_{C}
    \coloneqq
    \apply{g}{\BoundingChain_{C'}}
    -
    \apply{\Homotopy_{C}}{\Chain_{C}}
    \in C_{q+1}.
  $
  Its boundary is given by 
  \[
  \partial \BoundingChain_{C} 
  =
  \partial \apply{g}{\BoundingChain_{C'}} - \partial \apply{\Homotopy_{C}}{\Chain_{C}}
  =
  \apply{g}{\partial\BoundingChain_{C'}} - (\apply{g}{\apply{f}{\Chain_{C}}} - \Chain_{C} - \apply{\Homotopy_{C}}{\partial \Chain_{C}})
  =
  \Chain_{C},
  \] 
  using that $\partial \BoundingChain_{C'} = \apply{f}{\Chain_{C}}$ and that $\partial \Chain_{C} = 0$,
  and its norm satisfies
  \[
    \Norm{\BoundingChain_{C}}
    \leq
    \Norm{\apply{g}{\BoundingChain_{C'}}}
    +
    \Norm{\apply{\Homotopy_{C}}{\Chain_{C}}}
    \leq
    \Norm{g} \Norm{\BoundingChain_{C'}}
    +
    \Norm{\Homotopy_{C}}\Norm{\Chain_{C}}
    \leq
    \left(
      \Norm{\Homotopy_{C}}
      +
      \Norm{g}
      \UBCConstant{q}{C'}
      \Norm{f}
    \right)
    \Norm{\Chain_{C}}
  \]
  as required.
\end{proof}

As a consequence to the previous lemma and the definition, we obtain the
following corollary.
\begin{corollary}
	\label{cor:uniform-homotopy-equivalence}
	If $C_{*}$ and $C'_{*}$ are two uniformly homotopy equivalent normed chain
	complexes, then their bounded cohomology is isomorphic. Furthermore, $C_{*}$
	is uniformly $n$-acyclic if and only if $C'_{*}$ is uniformly $n$-acyclic,
	and the UBC constants are related as in \autoref{lem:UBCChainhomotopy}.
\end{corollary}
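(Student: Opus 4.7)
The plan is to deduce the corollary from \autoref{lem:UBCChainhomotopy} together with the standard fact that chain homotopy equivalences induce isomorphisms on homology. There are essentially three assertions to verify, and each follows directly from the definitions.

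First, for the isomorphism of bounded cohomology, I would observe that if $(f,g,H_C,H_{C'})$ is a uniform homotopy equivalence in the sense of \autoref{def:uniform-chain-homotopy-equivalence}, then $f$, $g$, $H_C$ and $H_{C'}$ are bounded in each degree. Therefore, precomposition with $f$ (respectively $g$) sends bounded cochains to bounded cochains and so descends to a well-defined map $f^\ast : \BoundedCohomologyChainComplex{\ast}{C'_\ast}{\Reals} \to \BoundedCohomologyChainComplex{\ast}{C_\ast}{\Reals}$ (respectively $g^\ast$). The identities $g \circ f = \Identity_C + \partial H_C + H_C \partial$ and $f \circ g = \Identity_{C'} + \partial H_{C'} + H_{C'} \partial$ dualize to chain homotopies on the bounded cochain complexes, because $H_C$ and $H_{C'}$ are bounded in each degree. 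In the spirit of \autoref{lem:HomotopyInvariance}, this implies that $f^\ast$ and $g^\ast$ are mutually inverse isomorphisms on bounded cohomology.

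Next, for the equivalence of uniform $n$-acyclicity, I would argue as follows. Since $f$ and $g$ are mutually inverse chain homotopy equivalences (forgetting boundedness), they induce isomorphisms on ordinary homology. Hence $C_\ast$ is $n$-acyclic if and only if $C'_\ast$ is $n$-acyclic. For the UBC part, I would apply \autoref{lem:UBCChainhomotopy} in each degree $q \leq n$: it asserts that $C_\ast$ satisfies $q$-UBC if and only if $C'_\ast$ does, with constants related exactly as claimed. Combining both observations yields the equivalence of uniform $n$-acyclicity.

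I do not expect any serious obstacles here, as the corollary is essentially a packaging of \autoref{lem:UBCChainhomotopy} together with the homotopy invariance of (ordinary) homology and the dualization of bounded homotopies. The only point that requires any care is to verify that boundedness of $f$, $g$, $H_C$, $H_{C'}$ in each degree, rather than globally, is enough to make sense of the dualized maps and homotopies on the bounded cochain complexes; but this is immediate since both the definition of $\BoundedCoChainComplex{q}{C_\ast}$ and of the coboundary $\Coboundary^\ast$ are degreewise.
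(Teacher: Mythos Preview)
Your proposal is correct and matches the paper's approach: the paper simply states that the corollary is a consequence of \autoref{lem:UBCChainhomotopy} and the definition, and you have filled in exactly those details (dualizing the bounded homotopy data for the cohomology isomorphism, and combining the ordinary-homology isomorphism with \autoref{lem:UBCChainhomotopy} for the uniform $n$-acyclicity equivalence).
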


\subsection{Poset deformation retracts}

In this subsection, we introduce a key tool which we will use to investigate the bounded cohomology of posets. There exists many interesting tools to study homotopy properties of posets in the literature, see e.g.\ \cite{quillen1978} and \cite[Section 10]{Björner_1995}. The following lemma is a uniformly acyclic refinement of \cite[2.11. Lemma]{vanderkallen1980homologystabilityforlineargroups}, see also e.g.\ \cite[Corollary 10.12]{Björner_1995} and \cite[1.5]{quillen1978}.

\begin{lemma}
  \label{lem:PosetDeformation}
  Let $\Poset$ be a poset. Assume that $\Subposet \subseteq \Poset$ is a subposet such that for each
  $\PosetElement \in \Poset$ the poset
  $\Subposet^-(\PosetElement)$ has a supremum (in itself!).
  Then $\Subposet$ is a topological deformation retract of $\Poset$ and
  \begin{enumerate}
  	\item \label{item-i-pdr} the inclusion $\Inclusion	\colon \Subposet \hookrightarrow \Poset$ induces a uniform homotopy equivalence between $\ChainComplex{*}{\Subposet}$ and $\ChainComplex{*}{\Poset}$;
  	\item \label{item-ii-pdr} the simplicial bounded cohomology of $\Subposet$ and $\Poset$ is isomorphic. Furthermore, $\Subposet$ is uniformly $n$-acyclic if and only if $\Poset$ is uniformly $n$-acyclic, where the UBC constants in degree $q$ are related by
  	\[
  	\UBCConstant{q}{\Subposet}
  	\leq
  	\UBCConstant{q}{\Poset}
  	\text{ and }
  	\UBCConstant{q}{\Poset}
  	\leq
  	(q+1)
  	+
  	\UBCConstant{q}{\Subposet}.
  	\]
  \end{enumerate}
\end{lemma}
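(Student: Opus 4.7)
The plan is to construct an explicit retraction $r\colon \Poset \to \Subposet$ and a chain homotopy on the order complexes, and then feed the data into \autoref{cor:uniform-homotopy-equivalence}. For each $\PosetElement \in \Poset$, the assumption provides a supremum $r(\PosetElement) \coloneqq \sup \Subposet^-(\PosetElement) \in \Subposet^-(\PosetElement)$. I would first verify three elementary properties: (a) $r(\PosetElement) \leq \PosetElement$ by construction; (b) $r(\PosetElement) = \PosetElement$ whenever $\PosetElement \in \Subposet$, so that $r \circ \Inclusion = \Identity_{\Subposet}$; and (c) $r$ is order-preserving, because $\PosetElement \leq \PosetElement'$ implies $\Subposet^-(\PosetElement) \subseteq \Subposet^-(\PosetElement')$, hence $r(\PosetElement) \in \Subposet^-(\PosetElement')$ and thus $r(\PosetElement) \leq r(\PosetElement')$. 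Consequently, $r$ is a morphism of posets and induces simplicial chain maps $r_* \colon \ChainComplex{*}{\Poset} \to \ChainComplex{*}{\Subposet}$ and $\Inclusion_* \colon \ChainComplex{*}{\Subposet} \to \ChainComplex{*}{\Poset}$ of operator norm at most $1$ in every degree (with the convention that a chain $r(\PosetElement_0) \leq \ldots \leq r(\PosetElement_q)$ with a repetition is interpreted as the zero chain).

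Next I would construct the standard simplicial chain homotopy between $\Inclusion_* \circ r_*$ and $\Identity$ on $\ChainComplex{*}{\Poset}$. For a $q$-simplex $\Chain = (\PosetElement_0 < \ldots < \PosetElement_q)$ in the order complex of $\Poset$, set
\[
\apply{\Homotopy_{\Poset}}{\Chain} = \sum_{i=0}^{q} (-1)^{i} \apply{(r(\PosetElement_0), \ldots, r(\PosetElement_i), \PosetElement_i, \ldots, \PosetElement_q)}{},
\]
where any summand containing a repeated entry is declared zero. Each individual sequence is a chain in $\Poset$ because $r$ is order-preserving and $r(\PosetElement_i) \leq \PosetElement_i \leq \PosetElement_{i+1}$, so $\Homotopy_{\Poset}$ is well-defined and $\Norm{\apply{\Homotopy_{\Poset}}{\Chain}} \leq q+1$. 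A direct computation of $\BoundaryChainComplex{} \Homotopy_{\Poset} + \Homotopy_{\Poset} \BoundaryChainComplex{}$ produces a telescoping sum in which all intermediate terms cancel, leaving precisely $\Inclusion_* \circ r_* - \Identity$; this is the standard prism argument for simplicial maps $f \leq g$ between order complexes. On $\ChainComplex{*}{\Subposet}$ one may take $\Homotopy_{\Subposet} = 0$, since $r_* \circ \Inclusion_* = \Identity$. The tuple $(\Inclusion_*, r_*, \Homotopy_{\Subposet}, \Homotopy_{\Poset})$ is therefore a uniform homotopy equivalence in the sense of \autoref{def:uniform-chain-homotopy-equivalence}, with $\Norm{\Inclusion_*}, \Norm{r_*} \leq 1$, $\Norm{\Homotopy_{\Subposet}} = 0$ and $\Norm{(\Homotopy_{\Poset})_q} \leq q+1$. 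This proves \autoref{item-i-pdr}.

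For \autoref{item-ii-pdr}, the isomorphism in simplicial bounded cohomology and the equivalence of uniform $n$-acyclicity are immediate from \autoref{cor:uniform-homotopy-equivalence}. Plugging the norm estimates above into \autoref{lem:UBCChainhomotopy} yields
\[
\UBCConstant{q}{\Subposet} \leq 0 + 1 \cdot \UBCConstant{q}{\Poset} \cdot 1 = \UBCConstant{q}{\Poset}
\qquad \text{and} \qquad
\UBCConstant{q}{\Poset} \leq (q+1) + 1 \cdot \UBCConstant{q}{\Subposet} \cdot 1,
\]
which are the asserted bounds. Finally, for the topological deformation retract statement, one uses that $r(\PosetElement) \leq \PosetElement$ means the segment from $\PosetElement$ to $r(\PosetElement)$ traces the $1$-simplex $(r(\PosetElement), \PosetElement)$ of $\GeometricRealization{\Poset}$; the linear interpolation along these edges produces a straight-line homotopy on geometric realizations from $\Inclusion \circ r$ to $\Identity_{\Poset}$ relative to $\Subposet$, which is the desired deformation retraction. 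I expect the only point requiring genuine care to be the verification of the chain-homotopy identity for $\Homotopy_{\Poset}$ in the presence of degenerate summands; the convention that repetitions evaluate to zero needs to be tracked consistently, but this is standard and the cancellations go through unchanged.
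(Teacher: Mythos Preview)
Your proposal is correct and follows essentially the same approach as the paper: you define the retraction $r(\PosetElement)=\sup\Subposet^-(\PosetElement)$, take $\Homotopy_{\Subposet}=0$, build the standard prism chain homotopy $\Homotopy_{\Poset}$ with $\Norm{(\Homotopy_{\Poset})_q}\leq q+1$, and then invoke \autoref{cor:uniform-homotopy-equivalence} and \autoref{lem:UBCChainhomotopy} exactly as the paper does. The only cosmetic differences are that the paper cites van der Kallen for the deformation-retract statement rather than arguing it directly, and uses the sign $(-1)^{i+1}$ in the homotopy formula (yielding $\Identity-\Inclusion_* r_*$ instead of $\Inclusion_* r_*-\Identity$), neither of which affects anything.
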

\begin{proof}
  We define a map of posets
  $
    \Retraction
    \colon
    \Poset
    \to
    \Subposet
  $
  via
  $
    \apply{\Retraction}{\PosetElement}
    =
    \sup \Subposet^-(\PosetElement)
  $. 
  The claim that $\Subposet$ is a topological deformation retract of $\Poset$ via $\Retraction$ is verified in \cite[2.11. Lemma]{vanderkallen1980homologystabilityforlineargroups}.

  Let $q \in \NaturalNumbers$. Since $\Inclusion_*: \ChainComplex{*}{\Subposet} \to \ChainComplex{*}{\Poset}$ and $\Retraction_*: \ChainComplex{*}{\Poset} \to \ChainComplex{*}{\Subposet}$ are induced by poset maps, it immediately follows that $\Norm{\Inclusion} \leq 1$ and $\Norm{\Retraction} \leq 1$ in degree $q$. Furthermore, $\Retraction \circ \Inclusion = \Identity_{\Subposet}$. Therefore, the chain endomorphism $\Retraction_* \circ \Inclusion_*$ of $\ChainComplex{*}{\Subposet}$ is homotopic to the identity by the trivial chain homotopy $\Homotopy_{\Subposet} = 0$, which is of norm $\Norm{\Homotopy_{\Subposet}} = 0$ in degree $q$.

   It remains to construct the chain homotopy $\Homotopy_{\Poset}: \ChainComplex{*}{\Poset} \to \ChainComplex{*+1}{\Poset}$ between the chain endomorphism
   $
    \Inclusion_*
    \circ
    \Retraction_*
  $
  and the identity map of $\ChainComplex{*}{\Poset}$. We define
  $\Homotopy_{\Poset}$ as follows:
  \begin{equation}
  	\label{eq:chain-homotopy}
    \apply{\Homotopy}{\Chain}
    =
    \sum_{i=0}^{q}
    (-1)^{i+1}
    \sup \Subposet^-(\PosetElement_{0})
    \lneq
    \ldots
    \lneq
    \sup \Subposet^-(\PosetElement_{i})
    \lneq
    \PosetElement_{i}
    \lneq
    \ldots
    \lneq
    \PosetElement_{q},
\end{equation}
  where
  $
  \Chain
  =
  \PosetElement_{0}
  \lneq
  \ldots
  \lneq
  \PosetElement_{q}
  \in \Poset_{q}
  $
  is a basis element in $\ChainComplex{q}{\Poset} = \Reals[\Poset_{q}]$ and all
  degenerate $(q+1)$-simplices that appear in the right hand term are defined
  to be equal to $0 \in \ChainComplex{q+1}{\Poset}$. One readily verifies that
  $\Homotopy_{\Poset}$ is indeed a chain homotopy between $\Inclusion_* \circ
  \Retraction_*$ and $\Identity_{\Poset}$. Furthermore, it follows from
  \autoref{eq:chain-homotopy} that $\Norm{\apply{\Homotopy_{\Poset}}{\Chain}}
  \leq (q+1) \Norm{\Chain}  =  q+1$ that for all $\Chain \in \Poset_q$.
  Therefore, $\Norm{\Homotopy_{\Poset}} \leq q+1$. This finishes the proof of
  \autoref{item-i-pdr}.

  Finally \autoref{item-ii-pdr} is a consequence of
  \autoref{cor:uniform-homotopy-equivalence}.
\end{proof}

\subsection{Cones and suspensions}

In many classical acyclicity arguments, one glues cones to other
semi-simplicial sets along semi-simplicial subsets or one needs to take
suspensions.
For this reason, we have to understand the uniform acyclicity properties of
cones and be able to describe how
suspension operations affect uniform acyclicity. Achieving this is the goal of this subsection.
\begin{lemma}
\label{lem:ConeUBC}
  Let $\SimplicialComplex_\bullet$ be a semi-simplicial set. Then the
  simplicial cone $\Cone{\SimplicialComplex_\bullet} =
  \SimplicialComplex_\bullet \ast \{c\}$ on $\SimplicialComplex_\bullet$ is
  uniformly acyclic, i.e.\ uniformly $n$-acyclic for all $n \in
  \NaturalNumbers$, and the UBC constant in every degree $q \geq 0$ can be
  chosen to be $1$. In particular, this applies to the star
  $
    \Star{\SimplicialComplex}{\Simplex{}}
  $
  of any simplex $\Simplex{}$ in an ordered simplicial complex $\SimplicialComplex$.
\end{lemma}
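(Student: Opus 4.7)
The plan is to exhibit an explicit norm-$1$ chain contraction on $\ChainComplex{*}{\Cone{\SimplicialComplex_\bullet}}$. For every $q$-simplex $\Simplex{} = [v_0, \ldots, v_q]$ of $\Cone{\SimplicialComplex_\bullet}$, define
\[
  \apply{\ChainContraction{q}}{\Simplex{}} \coloneqq [c, v_0, \ldots, v_q] \in \ChainComplex{q+1}{\Cone{\SimplicialComplex_\bullet}},
\]
which is well-defined since $c \ast \Simplex{}$ is always a simplex of the cone, and extend $\ChainContraction{q}$ linearly. Since $\ChainContraction{q}$ sends a basis element to a basis element, it is an $\ellone$-isometry in every degree, in particular $\Norm{\ChainContraction{q}} \leq 1$.

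The next step is the standard verification that $\ChainContraction{*}$ is a chain contraction, i.e.\ that
\[
  \BoundarySimplex_{q+1} \circ \ChainContraction{q} + \ChainContraction{q-1} \circ \BoundarySimplex_{q} = \Identity_{\ChainComplex{q}{\Cone{\SimplicialComplex_\bullet}}}
\]
for $q \geq 1$ (and the analogous identity involving the augmentation in degree $0$ to handle reduced chains). This follows by directly computing $\BoundarySimplex[c, v_0, \ldots, v_q]$: the $0$-th face gives back $\Simplex{}$, while the remaining faces cancel with $\apply{\ChainContraction{q-1}}{\apply{\BoundarySimplex_q}{\Simplex{}}}$ up to the expected signs. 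This is a routine alternating-sum calculation that I will not expand here.

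Given the contraction, uniform acyclicity is immediate. For any chain $\Chain \in \ChainComplex{q}{\Cone{\SimplicialComplex_\bullet}}$ with $\apply{\BoundarySimplex}{\Chain} = 0$ (respectively with vanishing augmentation in degree $0$), set $\BoundingChain \coloneqq \apply{\ChainContraction{q}}{\Chain}$. Then $\apply{\BoundarySimplex}{\BoundingChain} = \Chain - \apply{\ChainContraction{q-1}}{\apply{\BoundarySimplex}{\Chain}} = \Chain$ and $\Norm{\BoundingChain} \leq \Norm{\Chain}$, so the $q$-UBC holds with constant $\UBCConstant{q}{\Cone{\SimplicialComplex}} \leq 1$ for every $q \geq 0$. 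This covers acyclicity in all degrees simultaneously, so $\Cone{\SimplicialComplex_\bullet}$ is uniformly $n$-acyclic for all $n$.

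Finally, for the statement about stars $\Star{\SimplicialComplex}{\Simplex{}}$ in an ordered simplicial complex $\SimplicialComplex$, recall from \autoref{def:simplicial-complex-link-and-star} that $\Star{\SimplicialComplex}{\Simplex{}} = \Simplex{} \ast \Link{\SimplicialComplex}{\Simplex{}}$. Iterating the join with each vertex of $\Simplex{}$ exhibits $\Star{\SimplicialComplex}{\Simplex{}}$ as an iterated simplicial cone over $\Link{\SimplicialComplex}{\Simplex{}}$; applying the argument above with respect to any chosen vertex of $\Simplex{}$ as cone point yields the same UBC constant $1$ in every degree. There is no genuine obstacle here; the only subtlety is book-keeping of the vertex ordering in the ordered simplicial setting so that $c \ast \Simplex{}$ is an actual simplex, which is handled by placing the cone point in an extremal position of the order.
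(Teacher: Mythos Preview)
Your strategy---an explicit norm-$1$ cone-off contraction---is the same as the paper's. However, there is a genuine gap: your formula $H_q(\sigma) = [c, v_0, \ldots, v_q]$ is \emph{not} well-defined on all of $\ChainComplex{q}{\Cone{\SimplicialComplex_\bullet}}$. A $q$-simplex of the cone is either in $\SimplicialComplex_q$ or of the form $\tau \ast c$ with $\tau \in \SimplicialComplex_{q-1}$; in the second case $c \ast \sigma$ would repeat the vertex $c$, and semi-simplicial sets have no degenerate simplices, so ``$c \ast \sigma$ is always a simplex of the cone'' is false. Consequently your claim that $H_q$ sends basis elements to basis elements breaks down precisely on these simplices.

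The fix is the one the paper uses: set $H_q(\sigma) = (-1)^q\,\sigma \ast c$ for $\sigma \in \SimplicialComplex_q$ and $H_q(\tau \ast c) = 0$. One then checks the chain-homotopy identity separately on the two types of basis elements (the paper phrases this as a uniform homotopy equivalence between $\Cone{\SimplicialComplex_\bullet}$ and the one-point complex, with $g\circ f$ collapsing every vertex to $c$). Since basis elements now go to basis elements \emph{or zero}, one still has $\Norm{H_q} \le 1$, and the UBC constant $1$ follows via \autoref{cor:uniform-homotopy-equivalence}. With this correction your argument is complete and matches the paper's.
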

\begin{proof}
  We show that $\Cone{\SimplicialComplex}_\bullet$ is uniformly homotopy equivalent to the one-point semi-simplicial set $c_\bullet$ with $c_0 = \{c\}$ and $c_q = \emptyset$ for $q > 0$, i.e.\ that $\Cone{\SimplicialComplex}_\bullet$ is uniformly contractible. Let $f: \ChainComplex{*}{\Cone{\SimplicialComplex}_\bullet} \to \ChainComplex{*}{c_\bullet}$ be the chain map that is zero if $* >0$ and that, if $*  = 0$, maps every vertex $\Simplex{0} \in \Cone{\SimplicialComplex}_0$ to $c \in c_0$. Let $g: \ChainComplex{*}{c_\bullet} \to \ChainComplex{*}{\Cone{\SimplicialComplex}_\bullet}$ be the chain map that sends $c \in c_0$ to $c \in \Cone{\SimplicialComplex}_0$, then $f \circ g = \Identity_{c_\bullet}$ and we let $\Homotopy_{c_\bullet}: \ChainComplex{*}{c_\bullet} \to \ChainComplex{*+1}{c_\bullet}$ be the zero map. To define the homotopy $\Cone{\SimplicialComplex}_\bullet$ between $g \circ f$ and $\Identity_{\Cone{\SimplicialComplex}_\bullet}$, we note that the set of $q$-simplices of $\Cone{\SimplicialComplex}_\bullet$ can be written as
  $
    \Cone{\SimplicialComplex}_{q}
    =
    \SimplicialComplex_{q} \cup \{\Simplex{q-1} \ast c : \Simplex{q-1} \in \SimplicialComplex_{q-1}\}
  $
  . The chain contraction $\Homotopy_{\Cone{\SimplicialComplex}_\bullet}$ is then defined by sending a simplex $\Simplex{q}$ in
  $
    \SimplicialComplex_{q}
    \subset
    \Cone{\SimplicialComplex_{q}}
  $
  to $(-1)^q \cdot \Simplex{q} \ast c$ and the
  simplices $\Simplex{q-1} \ast c$ in $\Cone{\SimplicialComplex_{q}}\setminus
  \SimplicialComplex_{q}$ to zero.
  We observe that $\Norm{f} \leq 1$, $\Norm{g} \leq 1$, $\Norm{\Homotopy_{\Cone{\SimplicialComplex}_\bullet}} \leq 1$ and $\Norm{\Homotopy_{c_\bullet}} = 0$. Since $c_\bullet$ is uniformly acyclic with UBC constant equal to $0$ in all degrees $q \geq 0$, the claim then follows from \autoref{cor:uniform-homotopy-equivalence}.
\end{proof}
\begin{lemma} \label{lem:join-qubc}
  Let $\BoundarySimplex\StandardSimplex{n}_\bullet$ denote the simplicial boundary
  $(n-1)$-sphere of the standard simplex $\StandardSimplex{n}_\bullet$, i.e.\ the semi-simplicial
  set obtained from $\StandardSimplex{n}_\bullet$ by removing its unique $n$-simplex. If
  $\SimplicialComplex_\bullet$ is a uniformly $m$-acyclic semi-simplicial set with UBC constant in degree $q$
  denoted by $\UBCConstant{q}{\SimplicialComplex}$, then the suspension
  $\BoundarySimplex\StandardSimplex{n} \ast \SimplicialComplex$
  is uniformly $(n+m)$-acyclic and the UBC constant in degree $q$ can be chosen as
  \[
  	\UBCConstantSuspension{n-1}{q}{\UBCConstant{q-n}{\SimplicialComplex}} \coloneqq
    2^n
    \frac{(q+1)!}{(q+1-n)!}
    \UBCConstant{q-n}{\SimplicialComplex}
    +
    \sum_1^{n}
    2^{i}
    \frac{(q+1)!}{(q+2 - i)!}.
  \]
\end{lemma}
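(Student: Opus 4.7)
The plan is to prove the statement by induction on $n$, using the Mayer--Vietoris principle (\autoref{lem:MV-qubc}) together with the uniform acyclicity of cones (\autoref{lem:ConeUBC}). The key decomposition is
\[
\BoundarySimplex\StandardSimplex{n}
=
\left(\vertex_n \ast \BoundarySimplex\StandardSimplex{n-1}\right)
\cup
\StandardSimplex{n-1},
\]
where $\vertex_n$ denotes the last vertex of $\StandardSimplex{n}$ and $\StandardSimplex{n-1}$ is spanned by $\vertex_0, \ldots, \vertex_{n-1}$; the intersection of these two subcomplexes is $\BoundarySimplex\StandardSimplex{n-1}$ (the link of $\vertex_n$ in $\BoundarySimplex\StandardSimplex{n}$). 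Joining with $\SimplicialComplex_\bullet$ yields
\[
\BoundarySimplex\StandardSimplex{n}_\bullet \ast \SimplicialComplex_\bullet
=
\left(\vertex_n \ast \left(\BoundarySimplex\StandardSimplex{n-1}_\bullet \ast \SimplicialComplex_\bullet\right)\right)
\cup
\left(\StandardSimplex{n-1}_\bullet \ast \SimplicialComplex_\bullet\right),
\]
with intersection $\BoundarySimplex\StandardSimplex{n-1}_\bullet \ast \SimplicialComplex_\bullet$.

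For the base case $n=1$, the set $\BoundarySimplex\StandardSimplex{1}$ consists of two points $\vertex_0, \vertex_1$, so the join is $(\vertex_0 \ast \SimplicialComplex_\bullet) \cup (\vertex_1 \ast \SimplicialComplex_\bullet)$ with intersection $\SimplicialComplex_\bullet$. Both pieces are cones, hence uniformly acyclic with UBC constant $1$ by \autoref{lem:ConeUBC}, and the intersection is uniformly $m$-acyclic by hypothesis. Applying \autoref{lem:MV-qubc} in each degree $q \leq 1+m$ produces the UBC constant $2 + 2(q+1)\UBCConstant{q-1}{\SimplicialComplex}$, which equals $\UBCConstantSuspension{0}{q}{\UBCConstant{q-1}{\SimplicialComplex}}$.

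For the inductive step, assume the result holds for $n-1$. The left summand $\vertex_n \ast (\BoundarySimplex\StandardSimplex{n-1}_\bullet \ast \SimplicialComplex_\bullet)$ is a cone, so uniformly acyclic with UBC constant $1$ by \autoref{lem:ConeUBC}. The right summand $\StandardSimplex{n-1}_\bullet \ast \SimplicialComplex_\bullet$ is also a cone (on any vertex of $\StandardSimplex{n-1}_\bullet$, using $n \geq 2$), again with UBC constant $1$. By the inductive hypothesis the intersection $\BoundarySimplex\StandardSimplex{n-1}_\bullet \ast \SimplicialComplex_\bullet$ is uniformly $(n-1+m)$-acyclic with UBC constant $\UBCConstantSuspension{n-2}{q}{\UBCConstant{q-(n-1)}{\SimplicialComplex}}$. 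Plugging this into \autoref{lem:MV-qubc} gives that $\BoundarySimplex\StandardSimplex{n}_\bullet \ast \SimplicialComplex_\bullet$ is uniformly $(n+m)$-acyclic and, in each degree $q\leq n+m$, yields the recursive estimate
\[
S_n(q) = 2 + 2(q+1)\, S_{n-1}(q-1),
\quad\text{where}\quad
S_n(q) \coloneqq \UBCConstantSuspension{n-1}{q}{\UBCConstant{q-n}{\SimplicialComplex}}.
\]

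The only potential obstacle is a bookkeeping calculation verifying that the closed form in the statement satisfies this recurrence; this is straightforward since substituting the ansatz $S_{n-1}(q-1) = 2^{n-1}\frac{q!}{(q-n+1)!}\UBCConstant{q-n}{\SimplicialComplex} + \sum_{i=1}^{n-1} 2^i \frac{q!}{(q+1-i)!}$ and multiplying by $2(q+1)$ shifts the summation index by one, so the additive $2 = 2^1\frac{(q+1)!}{(q+1)!}$ extends the range to $i=1,\ldots,n$ and the leading term becomes $2^n \frac{(q+1)!}{(q+1-n)!}\UBCConstant{q-n}{\SimplicialComplex}$, matching $S_n(q)$ exactly.
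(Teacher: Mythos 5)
Your proof is correct and is essentially the paper's own argument: your cover of $\BoundarySimplex\StandardSimplex{n}_\bullet \ast \SimplicialComplex_\bullet$ by $\vertex_n \ast (\BoundarySimplex\StandardSimplex{n-1}_\bullet \ast \SimplicialComplex_\bullet)$ and $\StandardSimplex{n-1}_\bullet \ast \SimplicialComplex_\bullet$ coincides with the paper's cover by $(\Horn{n}{n})_\bullet \ast \SimplicialComplex_\bullet$ and $\FaceMap_n\StandardSimplex{n}_\bullet \ast \SimplicialComplex_\bullet$, since the horn $(\Horn{n}{n})_\bullet$ is exactly the star $\vertex_n \ast \BoundarySimplex\StandardSimplex{n-1}_\bullet$, and both arguments then run the same cone-plus-Mayer--Vietoris induction with the same recurrence. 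The only cosmetic difference is that the paper's induction starts at $n=0$ (where $\BoundarySimplex\StandardSimplex{0}_\bullet \ast \SimplicialComplex_\bullet = \SimplicialComplex_\bullet$) rather than $n=1$, which costs nothing.
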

\begin{proof}
  We prove this lemma by induction on $n$. If $n = 0$, then $\BoundarySimplex
  \StandardSimplex{n}_\bullet \ast \SimplicialComplex_\bullet = \SimplicialComplex_\bullet$ and the
  bound holds trivially. For the induction step, assume that $n \geq 1$. Let
  $(\Horn{n}{n})_\bullet \subset \BoundarySimplex \StandardSimplex{n}_\bullet$ denote the
  $n$-horn of $\StandardSimplex{n}$ and $\FaceMap_n{\StandardSimplex{n}}_\bullet$ the $n$-th face of $\StandardSimplex{n}$ that is missing in $(\Horn{n}{n})_\bullet$ (considered as a semi-simplicial set).
  Then, we may cover the join $\BoundarySimplex \StandardSimplex{n}_\bullet \ast
  \SimplicialComplex_\bullet$ by
  \[
    \FaceMap_{n}\StandardSimplex{n}_\bullet \ast \SimplicialComplex_\bullet
    \text{ and }
    (\Horn{n}{n})_\bullet \ast \SimplicialComplex_\bullet.
  \]
  The intersection $\FaceMap_{n}\StandardSimplex{n}_\bullet \ast \SimplicialComplex_\bullet \cap (\Horn{n}{n})_\bullet \ast \SimplicialComplex_\bullet$ is given by $\BoundarySimplex
  \StandardSimplex{n-1}_\bullet \ast \SimplicialComplex_\bullet$, which is
  assumed to be $(n - 1 + m)$-uniformly acyclic and the UBC constant in degree $q$
  can be chosen as
  \[
    \UBCConstant{q}{\BoundarySimplex
    	\StandardSimplex{n-1}_\bullet \ast \SimplicialComplex_\bullet} \leq 2^{n-1}
    \frac{(q+1)!}{(q+1-(n-1))!} \UBCConstant{q-(n-1)}{\SimplicialComplex} +
    \sum_1^{n-1} 2^{i} \frac{(q+1)!}{(q+2 - i)!}
  \]
  by the induction hypothesis. Note that both $\FaceMap_{n}\StandardSimplex{n}_\bullet \ast
  \SimplicialComplex_\bullet$ and $(\Horn{n}{n})_\bullet \ast X_\bullet$ are
  simplicial cones. Therefore, \autoref{lem:ConeUBC} implies that both
  are uniformly acyclic and that the UBC constants in degree $q$ can be chosen as
  $\UBCConstant{q}{\FaceMap_{n}\StandardSimplex{n} \ast  \SimplicialComplex} = \UBCConstant{q}{\Horn{n}{n} \ast X}  = 1$.
  The claim then follows by applying the Mayer--Vietoris principle (see \autoref{lem:MV-qubc}) to the covering of $\BoundarySimplex \StandardSimplex{n}_\bullet \ast
  \SimplicialComplex_\bullet$ by $\FaceMap_{n}\StandardSimplex{n}_\bullet \ast \SimplicialComplex_\bullet$ and $(\Horn{n}{n})_\bullet \ast \SimplicialComplex_\bullet$:
\begin{align*}
  &(1 + (q+1)\UBCConstant{q-1}{\BoundarySimplex
  	\StandardSimplex{n-1} \ast \SimplicialComplex})
  (\UBCConstant{q}{\FaceMap_{n}\StandardSimplex{n} \ast  \SimplicialComplex} + \UBCConstant{q}{\Horn{n}{n} \ast X})\\
  &\leq (1 + (q+1)\UBCConstant{q-1}{\BoundarySimplex \StandardSimplex{n-1} \ast \SimplicialComplex}) (1+1) = 2 +
  2(q+1)\UBCConstant{q-1}{\BoundarySimplex \StandardSimplex{n-1} \ast \SimplicialComplex}\\
  &\leq 2 + 2(q+1)(2^{n-1} \frac{q!}{(q-(n-1))!}
  \UBCConstant{(q-1)-(n-1)}{\SimplicialComplex} + \sum_1^{n-1} 2^{i}
  \frac{q!}{(q+1 - i)!})\\
  &= 2 + (2^{n} \frac{(q+1)!}{(q-(n-1))!}
  \UBCConstant{(q-1)-(n-1)}{\SimplicialComplex} + \sum_1^{n-1} 2^{i+1}
  \frac{(q+1)!}{(q+1 - i)!})\\
  &= 2^n \frac{(q+1)!}{(q+1-n)!} \UBCConstant{q-n}{\SimplicialComplex} +
  \sum_1^{n} 2^{i} \frac{(q+1)!}{(q+2 - i)!}. \qedhere
\end{align*}
\end{proof}

\subsection{Barycentric subdivisions}

It is well-known that the classical simplicial (co-)homology of a regular
semi-simplicial
sets is preserved under subdivisions (see
\autoref{rem:geometric-realization-of-regular-semi-simplicial-sets}). In this
subsection, we show that this also holds for simplicial bounded cohomology.
\begin{lemma}
\label{lem:BarycentricSubdivision}
  Let $\SimplicialComplex_{\bullet}$ be a regular semi-simplicial set. Then
  the simplicial chain complexes of $\SimplicialComplex_{\bullet}$ and its barycentric subdivision $\BarycentricSubdivion{\SimplicialComplex_{\bullet}}$ 
  are uniformly homotopy equivalent. In particular, the simplicial bounded cohomology of 
  $\SimplicialComplex_{\bullet}$ and $\BarycentricSubdivion{\SimplicialComplex_{\bullet}}$ are isomorphic, 
  and $\SimplicialComplex_{\bullet}$ is uniformly $n$-acyclic if and only if $\BarycentricSubdivion{\SimplicialComplex_{\bullet}}$ is uniformly $n$-acyclic with UBC constants related by
  \[
  \UBCConstant{q}{\SimplicialComplex}
  \leq
  \Factorial{(q+1)}
  \cdot
  \UBCConstant{q}{\BarycentricSubdivion{\SimplicialComplex}}
  \text{ and }
  \UBCConstant{q}{\BarycentricSubdivion{\SimplicialComplex}}
  \leq
  \Factorial{(q+2)}
  +
  \Factorial{(q+1)}
  \cdot
  \UBCConstant{q}{\SimplicialComplex}.
  \]
\end{lemma}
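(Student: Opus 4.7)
The strategy is to invoke \autoref{cor:uniform-homotopy-equivalence}: it suffices to exhibit a uniform chain homotopy equivalence between the simplicial chain complexes of $\SimplicialComplex_\bullet$ and $\BarycentricSubdivion{\SimplicialComplex_\bullet}$, with operator norms controlled in each degree so that the claimed UBC-inequalities drop out of \autoref{lem:UBCChainhomotopy}. The two maps to use are the classical ones, adapted to the semi-simplicial setting (this is where regularity enters: a $q$-simplex of $\SimplicialComplex_\bullet$ then has a canonical ordering on its $q+1$ distinct vertices, so the combinatorics of the classical constructions go through verbatim):
\begin{itemize}
	\item the \emph{subdivision map} $S \colon \ChainComplex{*}{\SimplicialComplex_{\bullet}} \to \ChainComplex{*}{\BarycentricSubdivion{\SimplicialComplex_{\bullet}}}$ sending a $q$-simplex $\Simplex{}$ to the signed sum of the $(q+1)!$ maximal flags of faces in $\Simplex{}$;
	\item the \emph{last vertex map} $L \colon \ChainComplex{*}{\BarycentricSubdivion{\SimplicialComplex_{\bullet}}} \to \ChainComplex{*}{\SimplicialComplex_{\bullet}}$ sending a flag $\tau_0 < \cdots < \tau_q$ to the simplex of $\SimplicialComplex_\bullet$ spanned by the maximal vertex of each $\tau_i$ (with the convention that this is zero if the resulting sequence of vertices is not strictly increasing).
\end{itemize}

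Since $L$ takes each basis element to at most one basis element (up to sign), one has $\Norm{L} \leq 1$ in every degree; since $S$ takes each $q$-simplex to a sum of $(q+1)!$ basis elements, $\Norm{S} \leq (q+1)!$ in degree $q$. A direct computation (as in the classical case) shows $L \circ S = \Identity_{\ChainComplex{*}{\SimplicialComplex_\bullet}}$, so we may take $\Homotopy_{\SimplicialComplex} = 0$; plugging this into \autoref{lem:UBCChainhomotopy} already yields the first inequality $\UBCConstant{q}{\SimplicialComplex} \leq (q+1)! \cdot \UBCConstant{q}{\BarycentricSubdivion{\SimplicialComplex}}$.

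The main obstacle — and the only step that is not a formal check — is the construction of a chain homotopy $\Homotopy_{\BarycentricSubdivion{\SimplicialComplex}} \colon \ChainComplex{*}{\BarycentricSubdivion{\SimplicialComplex_{\bullet}}} \to \ChainComplex{*+1}{\BarycentricSubdivion{\SimplicialComplex_{\bullet}}}$ witnessing $S \circ L \sim \Identity$ whose operator norm in degree $q$ is bounded by $(q+2)!$. The plan is to build $\Homotopy_{\BarycentricSubdivion{\SimplicialComplex}}$ inductively by an acyclic carrier / cone-filling argument: for a $q$-simplex $\Simplex{} = (\tau_0 < \cdots < \tau_q)$ of $\BarycentricSubdivion{\SimplicialComplex_\bullet}$, both $\Simplex{}$ and $\apply{(S \circ L)}{\Simplex{}}$ lie in the star $\Star{\BarycentricSubdivion{\SimplicialComplex}}{\tau_q}$, which is uniformly acyclic with UBC constant $1$ in every degree by \autoref{lem:ConeUBC}; hence one can choose $\apply{\Homotopy_{\BarycentricSubdivion{\SimplicialComplex}}}{\Simplex{}}$ inside this star to fill $\Simplex{} - \apply{(S \circ L)}{\Simplex{}} - \apply{\Homotopy_{\BarycentricSubdivion{\SimplicialComplex}}}{\partial \Simplex{}}$. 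Tracking the norm through this induction — using that $\Norm{\partial}$ in degree $q+1$ is at most $q+2$, that $\Norm{S L} \leq (q+1)!$, and the UBC constant $1$ in the star — leads to the recursion $\Norm{\Homotopy_{\BarycentricSubdivion{\SimplicialComplex}}}_q \leq 1 + (q+1)! + (q+2) \Norm{\Homotopy_{\BarycentricSubdivion{\SimplicialComplex}}}_{q-1}$, which telescopes to the bound $(q+2)!$.

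Feeding the operator norms $\Norm{S} \leq (q+1)!$, $\Norm{L} \leq 1$, and $\Norm{\Homotopy_{\BarycentricSubdivion{\SimplicialComplex}}} \leq (q+2)!$ into the second inequality of \autoref{lem:UBCChainhomotopy} then gives the bound $\UBCConstant{q}{\BarycentricSubdivion{\SimplicialComplex}} \leq (q+2)! + (q+1)! \cdot \UBCConstant{q}{\SimplicialComplex}$. Combining the two norm estimates with \autoref{cor:uniform-homotopy-equivalence} yields both the isomorphism of simplicial bounded cohomology groups and the claimed equivalence of uniform $n$-acyclicity.
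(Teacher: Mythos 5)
Your route coincides with the paper's: the subdivision map $S$ of norm $\Factorial{(q+1)}$ in degree $q$, the last-vertex map $L$ of norm $1$ with $L \circ S = \Identity$ (so $\Homotopy_{\SimplicialComplex} = 0$), and an inductive cone-filling construction of the homotopy on the subdivided side, all fed into \autoref{lem:UBCChainhomotopy} and \autoref{cor:uniform-homotopy-equivalence}. The first inequality and the general bookkeeping are as in the paper.

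There is, however, a concrete gap in the inductive construction of $\Homotopy_{\BarycentricSubdivion{\SimplicialComplex}}$: your acyclic carrier, the star $\Star{\BarycentricSubdivion{\SimplicialComplex}}{\tau_q}$ of the top element of the flag, is not monotone under the face map that deletes $\tau_q$. By your own induction, $\apply{\Homotopy_{\BarycentricSubdivion{\SimplicialComplex}}}{\tau_0 < \dots < \tau_{q-1}}$ is only guaranteed to lie in $\Star{\BarycentricSubdivion{\SimplicialComplex}}{\tau_{q-1}}$, which contains flags passing through cofaces of $\tau_{q-1}$ that are incomparable with $\tau_q$ and is therefore not contained in $\Star{\BarycentricSubdivion{\SimplicialComplex}}{\tau_q}$. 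Hence the cycle $\Simplex{} - \apply{(S \circ L)}{\Simplex{}} - \apply{\Homotopy_{\BarycentricSubdivion{\SimplicialComplex}}}{\partial \Simplex{}}$ need not be supported in the star of $\tau_q$, and the filling step fails as stated. The repair is to take the carrier to be $\BarycentricSubdivion{\tau_q}$, the subdivided top simplex itself (the flags of faces of $\tau_q$): this is still a cone with cone point $\tau_q$, so \autoref{lem:ConeUBC} gives UBC constant $1$; it contains $\Simplex{}$ and $\apply{(S \circ L)}{\Simplex{}}$ (the latter being supported in $\BarycentricSubdivion{\apply{L}{\Simplex{}}}$ with $\apply{L}{\Simplex{}}$ a face of $\tau_q$); and it is monotone, since $\BarycentricSubdivion{\tau_{q-1}} \subseteq \BarycentricSubdivion{\tau_q}$. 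This is exactly the carrier the paper uses. A smaller point: your recursion inserts $\Norm{\partial} \leq q+2$ where the operator actually applied is $\partial_q$, of norm at most $q+1$; with the factor $q+2$ the recursion does not telescope to $\Factorial{(q+2)}$ (one is left with an excess of $\Factorial{(q+1)} - q - 1 > 0$ for $q \geq 2$), whereas with $q+1$ it yields $\Factorial{(q+2)} - q \leq \Factorial{(q+2)}$ as required.
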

\begin{proof}
  We first note that by \autoref{def:barycentric-subdivision} the set of $q$-simplices $\BarycentricSubdivion{\SimplicialComplex_q}$ of $\BarycentricSubdivion{\SimplicialComplex_\bullet}$ consists of all proper
  flags
  $
  \Simplex{i_0}
  \lneq
  \ldots
  \lneq
  \Simplex{i_q}
  $
  of simplices of $\SimplicialComplex_\bullet$, where $\leq$ denotes the face relation in $\SimplicialComplex_\bullet$.

  We now start working towards the definition of a chain map $\ContinuousMap$ from $\ChainComplex{*}{\SimplicialComplex_\bullet}$ to $\ChainComplex{*}{\BarycentricSubdivion{\SimplicialComplex_\bullet}}$:
  Since $\SimplicialComplex_\bullet$ is regular, every $q$-simplex $\Simplex{q}$ of $\SimplicialComplex_\bullet$ has $q+1$ distinct
  and ordered vertices $(v_0, \dots, v_q)$. However, the set of vertices might
  not determine $\Simplex{q}$ uniquely,
  since $\SimplicialComplex_\bullet$ is not assumed to be an ordered simplicial complex.
  The $q$-simplex $\Simplex{q}$ gives rise to
  $\Factorial{(q+1)}$-many $q$-simplices in the barycentric subdivision $\BarycentricSubdivion{\SimplicialComplex_\bullet}$:
  Each reordering $(v_{\apply{\phi}{0}}, \ldots, v_{\apply{\phi}{q}})$ of its $q+1$ distinct vertices corresponds to
  a unique flag of $q+1$ faces of $\Simplex{q}$,
  $$[\Simplex{q} : v_{\apply{\phi}{0}}, \ldots, v_{\apply{\phi}{q}}] \coloneqq \FaceMap_{{\apply{\phi}{1}}} \dots \FaceMap_{{\apply{\phi}{q}}} \Simplex{q} \lneq \FaceMap_{{\apply{\phi}{2}}} \dots \FaceMap_{{\apply{\phi}{q}}} \Simplex{q} \lneq \dots \lneq \Simplex{q} \in \BarycentricSubdivion{\SimplicialComplex_q}.$$
  In degree $q$, the chain map $\ContinuousMap$ is then defined by
  \begin{equation}
  	\label{eq:chain-map-barycentric}
  	\Simplex{q} \mapsto \sum_{\phi \in \apply{\on{Sym}}{\{0, \dots, q+1\}}} \apply{\on{sign}}{\phi} \cdot [\Simplex{q} : v_{\apply{\phi}{0}}, \ldots, v_{\apply{\phi}{q}}]
  \end{equation}
  Note that $\Norm{\ContinuousMap} \leq \Factorial{(q+1)}$ in degree $q$.

  We now construct a homotopy inverse $\ContinuousMapALT$ for the chain map
  $\ContinuousMap$:
  Given a $k$-simplex $\Simplex{k}$ in $\SimplicialComplex_k$, we denote by
  $\max \Simplex{k} = v_k \in \SimplicialComplex_{0}$ the last element in its
  ordered vertex set $(v_0, \dots, v_k)$.
  A $q$-simplex of the barycentric subdivision  $\Simplex{i_0} \lneq \dots
  \lneq \Simplex{i_q}$, then gives rise to an ordered set
  $V = (\max \Simplex{i_0}, \ldots, \max \Simplex{i_q})$ of elements in $\SimplicialComplex_{0}$ (possibly with repetitions).
  In degree $q$, the chain map $\ContinuousMapALT$ sends $\Simplex{i_0} \lneq
  \dots \lneq \Simplex{i_q}$
  to the unique $q$-dimensional face of $\Simplex{i_q}$ that has $V$ as its ordered set of vertices if it exists or to the zero element in $\ChainComplex{q}{\SimplicialComplex_\bullet}$ if $\Simplex{i_q}$ does not have such a face (i.e.\ if $V$ has repetitions or if the ordering of $V$ is not compatible with the ordering of the vertex set of $\Simplex{i_q}$).
  Note that $\Norm{\ContinuousMapALT} \leq 1$ in all degrees $q$.

  To see that $\ContinuousMap$ and $\ContinuousMapALT$ are homotopy inverses of each other, we first observe
  that $\ContinuousMapALT \circ \ContinuousMap = \Identity_{\SimplicialComplex_\bullet}$: The term indexed by $\Identity \in \apply{\on{Sym}}{\{0, \dots, q\}}$ in the sum on the right of \autoref{eq:chain-map-barycentric} is mapped to $\Simplex{q}$ by $\ContinuousMapALT$, and all other terms map to zero. In particular, we can pick $\Homotopy_{\SimplicialComplex_\bullet}$ to be the zero map. We are left with checking that $\ContinuousMap \circ \ContinuousMapALT$ is chain homotopic to
  the $\Identity_{\BarycentricSubdivion{\SimplicialComplex_\bullet}}$ via a bounded chain homotopy $\Homotopy_{\BarycentricSubdivion{\SimplicialComplex_\bullet}}$. To do this, we proceed inductively: For a vertex $\Simplex{}$ in the barycentric subdivision, we define
  $\apply{\Homotopy_{\BarycentricSubdivion{\SimplicialComplex_\bullet}}}{\Simplex{}}$ to be the $1$-simplex $\max \Simplex{}
  \lneq \Simplex{}$ if $\Simplex{}$ is not a vertex of $\SimplicialComplex_\bullet$, and zero otherwise.
  Let $q \geq 1$ and suppose that $\Homotopy_{\BarycentricSubdivion{\SimplicialComplex_\bullet}}$ is defined in degrees $k < q$ such that
  $\Homotopy_{\BarycentricSubdivion{\SimplicialComplex_\bullet}}(\overline{\Simplex{}})$ is supported in $\BarycentricSubdivion{\Simplex{i_{k}}}$ for all
  $k$-simplices $\overline{\Simplex{}} = \Simplex{i_0} \lneq \ldots \lneq \Simplex{i_k}$ and such that $\Norm{\Homotopy_{\BarycentricSubdivion{\SimplicialComplex_\bullet}}} \leq \Factorial{(k+2)}-1$ in degree $k$. Consider a
  $q$-simplex
  $
  \overline{\Simplex{}}
  =
  \Simplex{i_0}
  \lneq
  \ldots
  \lneq
  \Simplex{i_q}.
  $
  Then it holds that
  $
  \overline{\Simplex{}}
  -
  \apply
  {(\ContinuousMap \circ \ContinuousMapALT)}
  {\overline{\Simplex{}}}
  -
  \apply{\Homotopy_{\BarycentricSubdivion{\SimplicialComplex_\bullet}}}{\BoundarySimplex \overline{\Simplex{}}}
  $
  is a closed $q$-chain supported in $\BarycentricSubdivion{\Simplex{i_q}}$. Since
  $\BarycentricSubdivion{\Simplex{i_q}}$ is a cone, \autoref{lem:ConeUBC}
  implies that it is boundedly acyclic in all degrees with UBC constant $1$. Hence,
  $
  \overline{\Simplex{}}
  -
  \apply
  {(\ContinuousMap \circ \ContinuousMapALT)}
  {\overline{\Simplex{}}}
  -
  \apply{\Homotopy_{\BarycentricSubdivion{\SimplicialComplex_\bullet}}}{\BoundarySimplex \overline{\Simplex{}}}
  $
  is the boundary a chain $\BoundingChain$ supported in $\BarycentricSubdivion{\Simplex{i_q}}$ of norm $\Norm{\BoundingChain}$ at most
  \begin{align*}
  	\Norm{\Identity_{\BarycentricSubdivion{\SimplicialComplex_\bullet}}} + \Norm{\ContinuousMap \circ \ContinuousMapALT} + \Norm{\Homotopy_{\BarycentricSubdivion{\SimplicialComplex_\bullet}} \circ \BoundarySimplex_q}
  	&
  	\leq
    1 + \Factorial{(q+1)} + (\Factorial{(q+1)}-1)(q+1)
    \\
    &
    =
    \Factorial{(q+2)} - q
    \leq \Factorial{(q+2)} - 1.
  \end{align*}

  This completes the proof the the first part of the lemma. The second part is a consequence of \autoref{cor:uniform-homotopy-equivalence}, using that the constructed uniform homotopy equivalence $(\ContinuousMap,
  \ContinuousMapALT, \Homotopy_{\SimplicialComplex_\bullet},
  \Homotopy_{\BarycentricSubdivion{\SimplicialComplex_\bullet}})$ satisfies
  $\Norm{\ContinuousMap} \leq \Factorial{(q+1)}$, $\Norm{\ContinuousMapALT}
  \leq 1$, $\Norm{\Homotopy_{\SimplicialComplex_\bullet}} = 0$ and
  $\Norm{\Homotopy_{\BarycentricSubdivion{\SimplicialComplex_\bullet}}} \leq
  \Factorial{(q+2)} - 1 \leq \Factorial{(q+2)}$ in degree $q$.
\end{proof}

\subsection{Infinite unions}
In contrast to classical (co-)homology with field coefficients, simplicial
bounded cohomology does not behave well with respect to colimits and infinite
unions. This has been mentioned e.g.\
in the context of \autoref{expl:uniformity-condition} at the end of
\autoref{scn:SBC}. Our next example illustrates that an infinite nested union of bounded
$\infty$-acyclic simplicial complexes might not be bounded
$\infty$-acyclic (even though in the setting of classical (co-)homology this is true).

\begin{example}
	\label{expl:directed-unions}
	Let $\SimplicialComplex_\bullet$ be the real line equipped with the standard semi-simplicial structure,
	i.e.\ as in \autoref{exm:SBCofR}. 
	Let $\Subcomplex^i_\bullet = [-i, i]_\bullet \subset \SimplicialComplex_\bullet$ 
	denote the interval from $-i$ to $i$ equipped with the usual simplicial structure.
	Then $\Subcomplex^i_\bullet$ is a finite simplicial complex that is contractible. 
	Finiteness implies that the reduced simplicial bounded cohomology agrees with the usual reduced 
	simplicial cohomology of $\Subcomplex^i_\bullet$, which is trivial in all degrees. 
	Hence, $\Subcomplex^i_\bullet$ is bounded $\infty$-acyclic. However, 
	$$\SimplicialComplex_\bullet = \bigcup_{i \in \NaturalNumbers} \Subcomplex^i_\bullet,$$
	is not bounded $\infty$-acyclic as we have seen in \autoref{exm:SBCofR}.
\end{example}

However, the following lemma shows that uniform acyclicity is preserved under
unions if one imposes a ``global'' uniformity requirements.
\begin{lemma} \label{lem:DirectedUnion}
  Let $(\Subcomplex^i_\bullet)_{i \in \NaturalNumbers}$ be a nested sequence of uniformly
  $n$-acyclic semi-simplicial sets such that their respective UBC constants satisfy
  $\UBCConstant{q}{\Subcomplex^i} \leq K$ for some $K\in \Reals$ and all $q \leq n$.
  Then
  \[
    \SimplicialComplex_\bullet
    =
    \bigcup_{i \in \NaturalNumbers}
    \Subcomplex^i_\bullet
  \]
 is uniformly $n$-acyclic and the UBC constant can be chosen as $\UBCConstant{q}{\SimplicialComplex} = K$ for $q \leq n$.
\end{lemma}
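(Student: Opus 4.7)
The plan is to exploit the fact that chains in $\ChainComplex{*}{\SimplicialComplex_\bullet}$ are by definition finite $\Reals$-linear combinations of simplices, so every chain has finite support and, by nestedness of the sequence $(\Subcomplex^i_\bullet)_{i\in\NaturalNumbers}$, lies inside some $\Subcomplex^i_\bullet$. The uniform upper bound $K$ on the individual UBC constants $\UBCConstant{q}{\Subcomplex^i}$ then immediately transfers to a UBC constant for the union, and the $n$-acyclicity of each $\Subcomplex^i_\bullet$ transfers similarly.

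In more detail, I would proceed as follows. Fix $q\leq n$ and consider a boundary $\Chain\in\BoundaryIndexSimplex{q+1}{\ChainComplex{q+1}{\SimplicialComplex_\bullet}}$, say $\Chain=\BoundarySimplex\tau$ for some $\tau\in\ChainComplex{q+1}{\SimplicialComplex_\bullet}$. Both $\Chain$ and $\tau$ are supported on finitely many simplices of $\SimplicialComplex_\bullet=\bigcup_i\Subcomplex^i_\bullet$. Since the union is directed (the sequence is nested) and each simplex of $\Chain$ lies in some $\Subcomplex^{i}_\bullet$, there exists an index $i_0$ large enough so that $\Chain\in\ChainComplex{q}{\Subcomplex^{i_0}_\bullet}$ and $\Chain$ is a boundary inside $\ChainComplex{*}{\Subcomplex^{i_0}_\bullet}$ (witnessed by $\tau$ after enlarging $i_0$ further if necessary). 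Applying uniform $n$-acyclicity of $\Subcomplex^{i_0}_\bullet$ at level $q$ yields a chain $\BoundingChain\in\ChainComplex{q+1}{\Subcomplex^{i_0}_\bullet}\subseteq\ChainComplex{q+1}{\SimplicialComplex_\bullet}$ with $\BoundarySimplex\BoundingChain=\Chain$ and
\[
\Norm{\BoundingChain}\;\leq\;\UBCConstant{q}{\Subcomplex^{i_0}}\Norm{\Chain}\;\leq\;K\Norm{\Chain},
\]
which establishes the $q$-UBC for $\SimplicialComplex_\bullet$ with constant $K$.

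For the underlying $n$-acyclicity, the same finite-support trick applies: any reduced cycle $\Chain\in\ReducedChainComplex{q}{\SimplicialComplex_\bullet}$ in degree $0\leq q\leq n$ is supported in some $\Subcomplex^{i_0}_\bullet$, and hence represents a class in $\ReducedHomologyOfSpaceObject{q}{\Subcomplex^{i_0}_\bullet}{\Reals}=0$, so it is a boundary in $\ReducedChainComplex{*}{\Subcomplex^{i_0}_\bullet}$ and therefore also in $\ReducedChainComplex{*}{\SimplicialComplex_\bullet}$. Combined with the UBC bound above, this shows that $\SimplicialComplex_\bullet$ is uniformly $n$-acyclic with $\UBCConstant{q}{\SimplicialComplex}\leq K$ for $q\leq n$, as claimed.

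I do not anticipate any serious obstacle here: the only subtle point is recognizing that the failure of \autoref{expl:directed-unions} is due solely to the UBC constants of the $\Subcomplex^i_\bullet$ blowing up with $i$, which is precisely excluded by the uniform bound hypothesis $\UBCConstant{q}{\Subcomplex^i}\leq K$. One should also note that no compatibility is required between the bounding chains in different $\Subcomplex^i_\bullet$, since each individual boundary only needs to be bounded by a single chain in a single $\Subcomplex^{i_0}_\bullet$.
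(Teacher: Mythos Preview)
Your proof is correct and follows essentially the same approach as the paper: finite support of chains together with nestedness lets you push any cycle into some $\Subcomplex^{i_0}_\bullet$ and apply uniform $n$-acyclicity there. The paper's version is slightly terser, treating the acyclicity and UBC bound in one stroke (a cycle in $\Subcomplex^{i_0}_\bullet$ is automatically a boundary there, so the UBC bound applies directly without needing to carry $\tau$ along), but this is only a cosmetic difference.
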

\begin{proof}
  Let $q \leq n$. Every $q$-chain $\Chain$ in $\ReducedChainComplex{q}{\SimplicialComplex_\bullet}$ is supported in $\ReducedChainComplex{q}{\Subcomplex^i_\bullet}$ for some $i \in \NaturalNumbers$. If $\BoundaryChainComplex{} \Chain = 0$, 
  then it follows from the assumptions that it is the boundary of a $(q+1)$-chain in $\ReducedChainComplex{q+1}{\Subcomplex^i_\bullet} \subseteq \ReducedChainComplex{q+1}{\SimplicialComplex_\bullet}$ of norm at most $K \Norm{\Chain}$.
\end{proof}

In light of \autoref{rem:0UBC}, \autoref{expl:directed-unions} violates the
global uniform boundary condition on the
filtration in \autoref{lem:DirectedUnion} in degree 0: The diameter of
$\Subcomplex^i_\bullet = [-i, i]$ is $2i$ and diverges to
infinity as $i$ goes to infinity. Hence, the $0$-UBC constants
$\UBCConstant{0}{\Subcomplex^i}$ do not admit a global bound
$\UBCConstant{}{} \in \Reals$.

\subsection{Uniform discrete Morse theory}

A widely used strategy for proving high-acyclicity results for the classical
simplicial (co-)homology of a
simplicial complex or poset $\SimplicialComplex_\bullet$ can be summarized under the title ``discrete Morse theory'' (see e.g.\ \cite{BestvinaPLMorseTheory}):
The idea is to find a suitable way of constructing $\SimplicialComplex_\bullet$
from a subcomplex $\Subcomplex_\bullet$ (which is well-understood e.g.\ by induction)
 by attaching (possibly infinitely many) cells of a fixed dimension (compare \autoref{def:PosetObtainedByGluing}).
 The key point is that if one knows that e.g.\ $\Subcomplex_\bullet$ and the link of the attached cells are highly acyclic,
  then it follows that $\SimplicialComplex_\bullet$ is highly acyclic as well. As seen in the previous subsection,
 simplicial bounded cohomology does not behave well with respect to colimits (and hence cell attachments). To get uniform analogues of discrete Morse theory techniques
we therefore need to impose a global uniformity condition. This is the content of the next lemma and its corollary.

\begin{lemma} \label{lem:LinkAttachments}
  Let $\SimplicialComplex_\bullet$ be an ordered simplicial complex that is obtained from
  $\Subcomplex_\bullet \subseteq \SimplicialComplex_\bullet$ by attaching
  (possibly infinitely many) $p$-simplices $\{\Simplex{p}^{i}\}_{i \in I}$ along
  their links
  $
    \Link{\SimplicialComplex_\bullet}{\Simplex{p}^{i}}.
  $
  Assume there exist a constant $\UBCConstant{}{\Linkfunction} \geq 0$
  such that for each $i \in I$ it holds that
  $\Link{\SimplicialComplex_\bullet}{\Simplex{p}^{i}}$ is uniformly $(n-p-1)$-acyclic
  and the UBC constant in degree $q \leq n - p - 1$ can be chosen as
  $\UBCConstant{}{\Linkfunction}$.
  Then the
  inclusion
  $\Subcomplex_\bullet \hookrightarrow \SimplicialComplex_\bullet$
  is uniformly $n$-acyclic and the UBC constant in degree $q$ can be chosen as
  \[
    \UBCConstantCellAttachment{p}{q}{\UBCConstant{}{\Linkfunction}} \coloneqq 1 + (q+1) \cdot \UBCConstantSuspension{p-1}{q}{\UBCConstant{}{\Linkfunction}},
  \]
  where $\UBCConstantSuspension{-}{-}{-}$ is the function introduced in \autoref{lem:join-qubc}.
\end{lemma}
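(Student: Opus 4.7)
The plan is to decompose the relative chain complex of the pair $(\SimplicialComplex_\bullet, \Subcomplex_\bullet)$ as an $\ell^{1}$-direct sum indexed by $i \in I$, verify that each summand satisfies uniform $n$-acyclicity with a UBC bound independent of $i$, and then reassemble the bounds.

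First I would observe that every simplex in $\SimplicialComplex_\bullet \setminus \Subcomplex_\bullet$ is of the form $\Simplex{p}^i * \tau$ for a \emph{unique} $i \in I$ and some (possibly empty) $\tau \in \Link{\SimplicialComplex}{\Simplex{p}^i}$; in particular the simplices of $\Star{\SimplicialComplex}{\Simplex{p}^i}$ lying in $\Subcomplex_\bullet$ form exactly the subcomplex $\BoundarySimplex\Simplex{p}^i * \Link{\SimplicialComplex}{\Simplex{p}^i}$. Combined with the norm-preserving splitting of \autoref{obs:norm-preserving-splitting}, this yields an isometric isomorphism of normed chain complexes
\[
\ChainComplex{*}{\SimplicialComplex_\bullet, \Subcomplex_\bullet}
\;\cong\;
\bigoplus_{i \in I}
\ChainComplex{*}{\Simplex{p}^i * \Link{\SimplicialComplex}{\Simplex{p}^i}\,,\;\BoundarySimplex\Simplex{p}^i * \Link{\SimplicialComplex}{\Simplex{p}^i}}
\]
where the right-hand side carries the $\ell^{1}$-sum norm and the differential preserves the decomposition (any boundary term not containing $\Simplex{p}^i$ is automatically in $\Subcomplex_\bullet$, hence zero in the quotient).

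Next I would analyse a single summand. Writing $L_i = \Link{\SimplicialComplex}{\Simplex{p}^i}$, the star $\Simplex{p}^i * L_i$ is a simplicial cone, so \autoref{lem:ConeUBC} gives uniform acyclicity with UBC constant $1$ in every degree. The subcomplex $\BoundarySimplex \Simplex{p}^i * L_i$ is a join of the simplicial boundary $\BoundarySimplex \StandardSimplex{p}_\bullet$ with $L_i$, and since $L_i$ is uniformly $(n-p-1)$-acyclic with UBC constant at most $\UBCConstant{}{\Linkfunction}$, \autoref{lem:join-qubc} shows that $\BoundarySimplex\Simplex{p}^i * L_i$ is uniformly $(n-1)$-acyclic with UBC constant at most $\UBCConstantSuspension{p-1}{q}{\UBCConstant{}{\Linkfunction}}$ in each degree $q \leq n-1$, a bound that is independent of $i$ thanks to the hypothesis that $\UBCConstant{}{\Linkfunction}$ is a common link constant. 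Applying \autoref{item-1-toot} of \autoref{lem:two-out-of-three-property} to the pair $(\Simplex{p}^i * L_i, \BoundarySimplex\Simplex{p}^i * L_i)$ then yields that the inclusion is uniformly $n$-acyclic with a constant bounded by $\UBCConstantCellAttachment{p}{q}{\UBCConstant{}{\Linkfunction}}$ in degree $q$; by \autoref{obs:InclusionAcyclic} this is exactly a $q$-UBC bound on the relative chain complex of the $i$-th summand.

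Finally I would assemble: given a boundary $\Chain \in \ChainComplex{q}{\SimplicialComplex_\bullet, \Subcomplex_\bullet}$ with $q \leq n$, write $\Chain = \sum_{i} \Chain_i$ under the decomposition, so that each $\Chain_i$ is itself a boundary and $\sum_i \Norm{\Chain_i} = \Norm{\Chain}$; pick $\BoundingChain_i$ in the $i$-th summand with $\partial \BoundingChain_i = \Chain_i$ and $\Norm{\BoundingChain_i} \leq \UBCConstantCellAttachment{p}{q}{\UBCConstant{}{\Linkfunction}} \Norm{\Chain_i}$; then $\BoundingChain = \sum_i \BoundingChain_i$ satisfies $\partial \BoundingChain = \Chain$ with the required norm bound. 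An analogous coordinatewise argument handles acyclicity of the relative complex in degrees $q \leq n$. The main conceptual obstacle is precisely the uniformity across the (potentially infinite) index set $I$: without the hypothesis that a common constant $\UBCConstant{}{\Linkfunction}$ bounds \emph{all} the link UBC constants simultaneously, the colimit pathology illustrated in \autoref{expl:directed-unions} would prevent the coordinatewise estimates from assembling.
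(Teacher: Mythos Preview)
Your proposal is correct and follows essentially the same approach as the paper's proof. The paper works directly at the chain level via \autoref{obs:InclusionAcyclic}: it decomposes a chain $\sigma \in \ChainComplex{q}{\SimplicialComplex_\bullet}$ with $\partial\sigma \in \ChainComplex{q-1}{\Subcomplex_\bullet}$ as $\sigma_\Subcomplex + \sum_i \sigma_i$ with $\sigma_i$ supported in $\Star{\SimplicialComplex}{\Simplex{p}^i}$, uses the uniform $(n-1)$-acyclicity of $\partial\Simplex{p}^i \ast \Link{\SimplicialComplex}{\Simplex{p}^i}$ (via \autoref{lem:join-qubc}) to find $\alpha_i$ with $\partial\alpha_i = \partial\sigma_i$, and then applies \autoref{lem:ConeUBC} to bound $\sigma_i - \alpha_i$ in the star. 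Your version abstracts this same computation: you isolate the relative chain complex as an $\ell^1$-direct sum of the pairs $(\Simplex{p}^i \ast L_i,\, \partial\Simplex{p}^i \ast L_i)$ and then invoke \autoref{item-1-toot} of \autoref{lem:two-out-of-three-property} on each summand, which internally performs exactly the cone-plus-suspension argument the paper writes out by hand. The two routes produce the same constant and the same ingredients; yours is a clean repackaging rather than a genuinely different strategy.
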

\begin{proof}
	Let $q \leq n$. By \autoref{obs:InclusionAcyclic}, we need to check that if
	$\Chain \in \ChainComplex{q}{\SimplicialComplex_{\bullet}}$
	is a chain with
	$
	\partial \Chain \in \ChainComplex{q-1}{\Subcomplex_\bullet}
	$,
	then there exists a chain
	$
	\BoundingChain
	\in
	\ChainComplex{q+1}{\SimplicialComplex_\bullet}
	$
	such that
	$
	\BoundaryChainComplex{} \BoundingChain
	=
	\Chain
	+
	\alpha
	$, 
	where $\alpha$ is contained in
	$\ChainComplex{q}{\Subcomplex_{\bullet}}$ and
	$
	\Norm{\BoundingChain}
	\leq
	\UBCConstant{q}{\SimplicialComplex,\Subcomplex}
	\Norm{\Chain}
	$
	for $\UBCConstant{q}{\SimplicialComplex,\Subcomplex} = 1 + (q+1) \cdot \UBCConstantSuspension{p-1}{q}{\UBCConstant{}{\Linkfunction}}$.
	Since $\SimplicialComplex_\bullet = \Subcomplex_\bullet \cup (\bigcup_i \Star{\SimplicialComplex_\bullet}{\Simplex{p}^i})$
	the chain $\Chain$ can be written as
	$
	\Chain
	=
	\Chain_{\Subcomplex}
	+
	\sum_{i}
	\Chain_{i}
	$, 
	where $\Chain_{\Subcomplex} \in \ChainComplex{q}{\Subcomplex_\bullet}$,
	$\Chain_{i} \in \ChainComplex{q}{\Star{\SimplicialComplex_\bullet}{\Simplex{p}^i}}$,
	$
	\Norm{\Chain}
	=
	\Norm{\Chain_{\Subcomplex}}
	+
	\sum_{i}
	\Norm{\Chain_{i}}
	$
	and all but finitely many sum terms are zero.
	(This decomposition is not unique, in general.)
	Since $\BoundaryChainComplex{} \Chain \in \ChainComplex{q-1}{\Subcomplex_\bullet}$, it follows that
	$\BoundaryChainComplex{} \Chain_{\Subcomplex} \in  \ChainComplex{q-1}{\Subcomplex_\bullet}$
	and $\BoundaryChainComplex{} \Chain_{i} \in  \ChainComplex{q-1}{\Subcomplex_\bullet}$ for all $i$.
	Therefore, $\BoundaryChainComplex{} \Chain_{i} \in  \ChainComplex{q-1}{\Subcomplex_\bullet \cap \Star{\SimplicialComplex_\bullet}{\Simplex{p}^i}}$.
	Noting that $\Subcomplex_\bullet \cap \Star{\SimplicialComplex_\bullet}{\Simplex{p}^i} = \partial \Simplex{p}^i \ast \Link{\SimplicialComplex_\bullet}{\Simplex{p}^i}$
	is a $(p-1)$-suspension, the assumptions and \autoref{lem:join-qubc} imply that this complex is uniformly $(n-1)$-acyclic with UBC constant in degree $q \leq n-1$ given by
	$\UBCConstantSuspension{p-1}{q}{\UBCConstant{}{\Linkfunction}}$.
	Hence, for each $i \in I$ there exists a chain
	$\alpha_{i} \in \ChainComplex{q}{\Subcomplex_\bullet \cap \Star{\SimplicialComplex_\bullet}{\Simplex{p}^i}}$
	satisfying $\BoundaryChainComplex{} \alpha_i = \BoundaryChainComplex{} \Chain_i$
	and
	\[
	\Norm{\alpha_{i}}
	\leq
	\UBCConstantSuspension{p-1}{q}{\UBCConstant{}{\Linkfunction}} \Norm{\BoundaryChainComplex{} \Chain_{i}}
	\leq
	(q+1) \cdot \UBCConstantSuspension{p-1}{q}{\UBCConstant{}{\Linkfunction}} \Norm{\Chain_i}.
	\]
	Set $\alpha = \sum_i \alpha_i$ and note that $\alpha \in \ChainComplex{q}{\Subcomplex_\bullet}$.
	Now, consider the chains $\Chain_{i} - \alpha_{i} \in
	\ChainComplex{q}{\Star{\SimplicialComplex_\bullet}{\Simplex{p}^i}}$. These
	satisfy $\BoundaryChainComplex{}(\Chain_{i} - \alpha_{i})  = 0$ and,
	therefore, \autoref{lem:ConeUBC} implies that there exists
	$\BoundingChain_{i} \in
	\ChainComplex{q+1}{\Star{\SimplicialComplex_\bullet}{\Simplex{p}^i}}$ such
	that $\BoundaryChainComplex{} \BoundingChain_i = \Chain_{i} - \alpha_{i}$ and
	\[
	\Norm{\BoundingChain_{i}}
	\leq
	\Norm{\Chain_{i} - \alpha_{i}}
	\leq
	\Norm{\Chain_{i}} + \Norm{\alpha_{i}}
	\leq
	(1 + (q+1) \cdot \UBCConstantSuspension{p-1}{q}{\UBCConstant{}{\Linkfunction}}) \Norm{\Chain_{i}}.
	\]
	As a consequence, the chain $\BoundingChain = \sum_{i} \BoundingChain_{i} \in \ChainComplex{q+1}{\SimplicialComplex_\bullet}$ satisfies $\BoundaryChainComplex{} \BoundingChain = \sum_{i} \Chain_{i} - \sum_i \alpha_i = \Chain - \alpha$
	and, using $\sum_{i} \Norm{\Chain_{i}} \leq \Norm{\Chain}$ in the second step,
	\[
	\Norm{\BoundingChain}
	\leq
	(1 + (q+1) \cdot \UBCConstantSuspension{p-1}{q}{\UBCConstant{}{\Linkfunction}}) \cdot \sum_i \Norm{\Chain_{i}}
	\leq
	(1 + (q+1) \cdot \UBCConstantSuspension{p-1}{q}{\UBCConstant{}{\Linkfunction}}) \cdot \Norm{\Chain}
	\]
	as claimed.
\end{proof}

As a consequence we obtain the following uniform discrete Morse theory
principle.

\begin{corollary}
	\label{cor:uniform-discrete-morse-theory}
	In the setting of \autoref{lem:LinkAttachments}, assume additionally that
	$\Subcomplex_\bullet$ is uniformly $n$-acyclic with UBC constant in degree $q
	\leq n$ given by $\UBCConstant{q}{\Subcomplex}$. Then
	$\SimplicialComplex_\bullet$ is uniformly $n$-acyclic and its UBC constant in
	degree $q$ can be chosen as
	\[
	\UBCConstantMorseTheory{p}{q}{\UBCConstant{q}{\Subcomplex}}{\UBCConstant{}{\Linkfunction}}
	\coloneqq
	\UBCConstantTwoOutOfThreeOne{q}{\UBCConstant{q}{\Subcomplex}}{\UBCConstantCellAttachment{p}{q}{\UBCConstant{}{\Linkfunction}}}.
	\]
\end{corollary}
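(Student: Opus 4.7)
The plan is to combine the preceding \autoref{lem:LinkAttachments} with item \autoref{item-3-toot} of \autoref{lem:two-out-of-three-property} directly; no further geometric input is needed. Concretely, I would proceed in two short steps.

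First, I would invoke \autoref{lem:LinkAttachments} to conclude that the inclusion $\Subcomplex_\bullet \hookrightarrow \SimplicialComplex_\bullet$ is uniformly $n$-acyclic, with the UBC constant of the pair $(\SimplicialComplex_\bullet, \Subcomplex_\bullet)$ in degree $q \leq n$ bounded by $\UBCConstantCellAttachment{p}{q}{\UBCConstant{}{\Linkfunction}}$. This uses exactly the assumption on the links of the attached $p$-simplices and the hypothesis that $\SimplicialComplex_\bullet$ is built from $\Subcomplex_\bullet$ by the described cell attachment.

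Second, having this relative uniform acyclicity together with the assumed uniform $n$-acyclicity of $\Subcomplex_\bullet$ (with UBC constant $\UBCConstant{q}{\Subcomplex}$), I would apply \autoref{item-3-toot} of \autoref{lem:two-out-of-three-property} to the pair $\Subcomplex_\bullet \subseteq \SimplicialComplex_\bullet$. That lemma immediately yields uniform $n$-acyclicity of $\SimplicialComplex_\bullet$, with UBC constant in degree $q \leq n$ given by
\[
\UBCConstantTwoOutOfThreeOne{q}{\UBCConstant{q}{\Subcomplex}}{\UBCConstant{q}{\SimplicialComplex, \Subcomplex}},
\]
which after substituting the bound from Step~1 becomes the advertised $\UBCConstantMorseTheory{p}{q}{\UBCConstant{q}{\Subcomplex}}{\UBCConstant{}{\Linkfunction}}$.

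There is essentially no obstacle here; the corollary is just the assembly of the two building blocks already developed. The only minor care point is bookkeeping: ensuring that the constants from \autoref{lem:LinkAttachments} are substituted into the expression from \autoref{item-3-toot} in the correct slot (the relative constant in the second argument of $\UBCConstantTwoOutOfThreeOne{-}{-}{-}$), which is exactly how the definition $\UBCConstantMorseTheory{p}{q}{\UBCConstant{q}{\Subcomplex}}{\UBCConstant{}{\Linkfunction}} = \UBCConstantTwoOutOfThreeOne{q}{\UBCConstant{q}{\Subcomplex}}{\UBCConstantCellAttachment{p}{q}{\UBCConstant{}{\Linkfunction}}}$ is set up.
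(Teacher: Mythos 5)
Your proposal is correct and coincides with the paper's own (one-line) proof: the corollary is obtained by feeding the relative UBC constant $\UBCConstantCellAttachment{p}{q}{\UBCConstant{}{\Linkfunction}}$ from \autoref{lem:LinkAttachments} into \autoref{item-3-toot} of \autoref{lem:two-out-of-three-property}. The bookkeeping of which slot the relative constant occupies is exactly as you describe.
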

\begin{proof}
	This follows from \autoref{item-3-toot} of \autoref{lem:two-out-of-three-property} and \autoref{lem:LinkAttachments}.
\end{proof}

The next example illustrates what goes wrong if the global uniformity condition in
\autoref{lem:LinkAttachments} on the links is violated for $q = 0$.

\begin{example}
	\label{expl:attaching-infinitely-many-cones}
	Let $\SimplicialComplex_\bullet$ denote real line equipped with the usual simplicial structure 
	as in \autoref{exm:SBCofR}. Let $\Cone{\SimplicialComplex_\bullet} = \SimplicialComplex_\bullet \ast c$ denote the simplicial cone 
	on $\SimplicialComplex_\bullet$ obtained by adding the cone point $c$ and ordering it last in each new simplex. \autoref{lem:ConeUBC} 
	implies that $\Cone{\SimplicialComplex_\bullet}$ is bounded $\infty$-acyclic. Let $\SimplicialComplex_\bullet'$ denote the ordered simplicial complex obtained from 
	$\Cone{\SimplicialComplex_\bullet}$ by attaching, for each $i \in \NaturalNumbers$, a new point 
	$c_i$ along the simplicial interval $\Subcomplex^i_\bullet = [-i, i]_\bullet \subset \SimplicialComplex_\bullet \subset 
	\Cone{\SimplicialComplex_\bullet}$ and ordering $c_i$ last in each new simplex.
	By definition, it holds that 
	$$\Link{\SimplicialComplex_\bullet'}{c_i} = [-i,i]_\bullet.$$
	Therefore, $\Link{\SimplicialComplex_\bullet'}{c_i}$ is bounded 
	$\infty$-acyclic for each $i \in \NaturalNumbers$.
	Classical discrete Morse theory implies that $\SimplicialComplex_\bullet'$ is $\infty$-acyclic,
	in fact it is easy to see that $\SimplicialComplex_\bullet'$ is contractible.	
	
	Nevertheless, we claim that $\BoundedCohomologyOfSimplicialObject{2}{\SimplicialComplex_\bullet'}{\Reals} \neq 
	0$, i.e.\ that $\SimplicialComplex_\bullet'$ is not bounded $\infty$-acyclic as one might initially expect.
	To see this, we first show that $\SimplicialComplex_\bullet'$ does not satisfy the 1-UBC 
	condition: Consider the simplicial ``loop''
	$$\Chain^i = [i, c] - [i, c_i] + [-i, c_i] - [-i, c]$$
	in $\ChainComplex{1}{\SimplicialComplex_\bullet'}$ for each $i \in \NaturalNumbers$.
	This $1$-cycle is visibly a boundary, i.e.\ $\Chain^i \in 
	\apply{\BoundarySimplex}{\ChainComplex{2}{\SimplicialComplex_\bullet'}}$,  
	and satisfies $\Norm{\Chain^i} =  4$. However, the $\ellone$-norm 
	of any 2-chain $\BoundingChain^i \in \ChainComplex{2}{\SimplicialComplex}$ 
	with $\apply{\BoundarySimplex}{\BoundingChain^i} = \Chain^i$ satisfies 
	$\Norm{\BoundingChain^i} \geq 4i$ which diverges to 
	infinity as $i$ goes to infinity.  Hence, $\SimplicialComplex_\bullet'$ does not 
	satisfy the 1-UBC.	
	This implies that bounded cohomology of 
	$\SimplicialComplex_\bullet'$ is nontrivial in degree $2$, because an application of
	\autoref{prop:MatsumotoMorita} shows that the comparison map 
	$\BoundedCohomologyOfSimplicialObject{2}{\SimplicialComplex_\bullet'}{\Reals} \to \CohomologyOfSpaceObject{2}{\SimplicialComplex_\bullet'}{\Reals} = 0$ is not 
	injective.
\end{example}

We note that in \autoref{expl:attaching-infinitely-many-cones} the diameter of
$\Link{\SimplicialComplex_\bullet'}{c_i} = [-i, i]_\bullet$ diverges to 
infinity as $i$ approaches to infinity. Hence, \autoref{rem:0UBC} implies that the $0$-UBC constants 
$\UBCConstant{0}{\Link{\SimplicialComplex_\bullet'}{c_i}}$ do not admit a global bound 
$\UBCConstant{0}{\Linkfunction} \in \Reals$ and that the uniformity condition in \autoref{lem:LinkAttachments} is violated.

\subsection{Cohen--Macaulay complexes and injective words}

Cohen--Macaulay complexes are a class of simplicial complexes with particularly good local and global connectivity properties.
They naturally arise in the context of e.g.\ homological stability arguments (see e.g.\ \cites{randalwilliamswahl2017homologicalstabilityforautomorphismgroups, hatchervogtmann2017, GalatiusRandalWilliamsStability}). In this subsection,
we discuss the uniformly bounded analogue of this class of complexes as well as related semi-simplicial sets.

\begin{definition}
	Let $n \geq 0$. A simplicial complex $\SimplicialComplex$ is called 
	\introduce{uniformly bounded Cohen--Macaulay} of level $n$ if there exists a constant $\UBCConstant{}{} \in \Reals$ such that 
	$\SimplicialComplex$ is uniformly $(n-1)$-acyclic and satisfies the q-UBC 
	for all $q \leq n-1$ with constant $\UBCConstant{}{}$ and the link 
	$\Link{\SimplicialComplex}{\Simplex{k}}$ of each $k$-simplex $\Simplex{k}$ 
	is uniformly $(n-k-2)$-acyclic and satisfies the q-UBC condition for all $q 
	\leq n - k - 2$ with constant $\UBCConstant{}{}$. We write $\CMub(\SimplicialComplex) \geq n$. Similarly, we say that $\SimplicialComplex$ is \introduce{locally uniformly bounded Cohen--Macaulay} of level $n$ if there exists a constant $\UBCConstant{}{} \in \Reals$ such that the link 
	$\Link{\SimplicialComplex}{\Simplex{k}}$ of each $k$-simplex $\Simplex{k}$ 
	is uniformly $(n-k-2)$-acyclic and satisfies the q-UBC condition for all $q 
	\leq n - k - 2$ with constant $\UBCConstant{}{}$. We write $\lCMub(\SimplicialComplex) \geq n$ in this case.
\end{definition}

We record an observation, which can be checked exactly as in \cite[Paragraph 5, p.1887]{hatchervogtmann2017}.

\begin{observation}
	\label{obs:CM-localness}
	If $\SimplicialComplex$ is uniformly bounded Cohen--Macaulay of level $n$ with UBC constant $\UBCConstant{}{}$ and $\Simplex{p}$ is a $p$-simplex of $\SimplicialComplex$, then $\Link{\SimplicialComplex}{\Simplex{p}}$ is uniformly bounded Cohen--Macaulay of level $n - p - 1$ with UBC constant $\UBCConstant{}{}$.
\end{observation}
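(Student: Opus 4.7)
The plan is to mimic the standard argument in the classical Cohen--Macaulay setting (as in \cite{hatchervogtmann2017}, p.~1887), exploiting the combinatorial identity that a link of a link is the link of a join. The only thing to keep track of is that the same uniform constant $\UBCConstant{}{}$ from the hypothesis on $\SimplicialComplex$ continues to work throughout.

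First, I would verify the \emph{global} condition: that $\Link{\SimplicialComplex}{\Simplex{p}}$ is itself uniformly $(n-p-2)$-acyclic with UBC constant $\UBCConstant{}{}$. But this is immediate from the assumption on $\SimplicialComplex$, since $\Simplex{p}$ is a $p$-simplex of $\SimplicialComplex$ and the Cohen--Macaulay hypothesis applied to $\Simplex{p}$ gives exactly the desired control on $\Link{\SimplicialComplex}{\Simplex{p}}$ in degrees $q \leq n-p-2$.

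Next, I would verify the \emph{local} condition: for every $k$-simplex $\tau$ of $\Link{\SimplicialComplex}{\Simplex{p}}$, the link $\Link{\Link{\SimplicialComplex}{\Simplex{p}}}{\tau}$ should be uniformly $((n-p-1)-k-2)$-acyclic, i.e.\ uniformly $(n-p-k-3)$-acyclic, with the same UBC constant $\UBCConstant{}{}$. The key combinatorial observation (which follows directly from \autoref{def:simplicial-complex-link-and-star}) is the identity
\[
\Link{\Link{\SimplicialComplex}{\Simplex{p}}}{\tau} \;=\; \Link{\SimplicialComplex}{\Simplex{p} \ast \tau},
\]
valid because a simplex lies in the left-hand side precisely when its vertex set is disjoint from both $\Simplex{p}$ and $\tau$ and joins with $\Simplex{p} \ast \tau$ to form a simplex of $\SimplicialComplex$. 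Since $\tau \in \Link{\SimplicialComplex}{\Simplex{p}}$, the join $\Simplex{p} \ast \tau$ is a genuine $(p+k+1)$-simplex of $\SimplicialComplex$. Applying the Cohen--Macaulay assumption on $\SimplicialComplex$ to this simplex yields that its link is uniformly $(n - (p+k+1) - 2)$-acyclic, i.e.\ uniformly $(n-p-k-3)$-acyclic, with UBC constant $\UBCConstant{}{}$, which is exactly what is required.

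There is no real obstacle here beyond bookkeeping: the whole argument is a matter of checking that the numerology of the indices lines up and that the single constant $\UBCConstant{}{}$ from the hypothesis carries through both the global and local conditions unchanged, which it does because we are only invoking the Cohen--Macaulay property of $\SimplicialComplex$ at specific simplices and quoting the same constant.
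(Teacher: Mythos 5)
Your argument is correct and is exactly the one the paper has in mind: the paper gives no proof of its own but defers to the standard computation in Hatcher--Vogtmann, which is precisely your link-of-a-link identity $\Link{\Link{\SimplicialComplex}{\Simplex{p}}}{\tau} = \Link{\SimplicialComplex}{\Simplex{p} \ast \tau}$ together with the index bookkeeping. The numerology and the observation that the single constant $\UBCConstant{}{}$ carries through both conditions are both right.
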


The following is a generalization of the construction of the complex of
injective words considered for example by Farmer \cite{farmer1978} and used by
Kerz \cite{kerz2005} to give a new
proof of the fact that symmetric groups satisfy (classical) homological stability.

\begin{definition}
	\label{def:ord-complex}
	Let $\SimplicialComplex$ be a simplicial complex. Then we can associate a 
	semi-simplicial set $\Ord{\SimplicialComplex}{\bullet}$ whose 
	$k$-simplices are ordered $(k+1)$-tuples $(x_0, \dots, x_k)$ such that the 
	unordered set $\{x_0, \dots, x_k\}$ is a $k$-simplex of 
	$\SimplicialComplex$. The $i$-th face of a $k$-simplex $(x_0, \dots, x_k) 
	\in \Ord{\SimplicialComplex}{k}$ is given by omitting $x_i$.
\end{definition}

It is a well-known fact that if a simplicial complex $\SimplicialComplex$ is homotopy Cohen--Macaulay of level $n$, then $\Ord{\SimplicialComplex}{\bullet}$ is $(n-1)$-connected (see e.g.\ \cite[Proposition 2.14]{randalwilliamswahl2017homologicalstabilityforautomorphismgroups} or \cite[Proposition 2.10]{hatchervogtmann2017}). The next lemma shows that a similar statement holds true in the uniformly bounded setting.

\begin{lemma}
	\label{lem:ord-complex-connectivity-lemma}
	There exists a function $\on{K^{ord}} : \NaturalNumbers \times \Reals \to \Reals$ such that
	if $\SimplicialComplex$ is a simplicial complex which is uniformly bounded Cohen--Macaulay of level $n$ with UBC constant $\UBCConstant{}{}$,
	then the associated semi-simplicial set $\Ord{\SimplicialComplex}{\bullet}$ is uniformly
	$(n-1)$-acyclic and its UBC constant in degree $q \leq n-1$ can be chosen as
	$\UBCConstantOrdConstruction{n}{\UBCConstant{}{}}.$
\end{lemma}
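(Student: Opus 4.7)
The proof will proceed by induction on $n$. The base case $n=0$ is immediate: uniform CM of level $0$ means only that $\SimplicialComplex$ is nonempty, whence $\Ord{\SimplicialComplex}{0} \neq \emptyset$ and $\Ord{\SimplicialComplex}{\bullet}$ is uniformly $(-1)$-acyclic with $\UBCConstantOrdConstruction{0}{\UBCConstant{}{}} = 0$.

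For the inductive step, assume the lemma holds for all levels below $n \geq 1$, and let $\SimplicialComplex$ be uniformly bounded Cohen--Macaulay of level $n$ with UBC constant $\UBCConstant{}{}$. By \autoref{obs:CM-localness}, the link of any $k$-simplex $\sigma$ is uniformly bounded Cohen--Macaulay of level $n-k-1$ with the \emph{same} constant $\UBCConstant{}{}$. Applied to a vertex $v$, the inductive hypothesis yields that each $\Ord{\Link{\SimplicialComplex}{v}}{\bullet}$ is uniformly $(n-2)$-acyclic with a UBC constant $\UBCConstantOrdConstruction{n-1}{\UBCConstant{}{}}$ that is independent of $v$ — this uniformity across all vertices will be the linchpin of the argument.

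The plan for constructing the bounding chain of a cycle $\alpha \in \ChainComplex{q}{\Ord{\SimplicialComplex}{\bullet}}$ with $q \leq n-1$ is to express $\Ord{\SimplicialComplex}{\bullet}$ as built up from an easier sub-semi-simplicial set by iterated cell attachments controlled by the links. Concretely, I would pass to the barycentric subdivision using \autoref{lem:BarycentricSubdivision}, then consider the forgetful poset map $\pi: \BarycentricSubdivion{\Ord{\SimplicialComplex}{\bullet}} \to \BarycentricSubdivion{\SimplicialComplex_\bullet}$ sending an ordered tuple to its underlying unordered simplex. After a poset deformation retract (\autoref{lem:PosetDeformation}) simplifying the ordered side, one filters by the number of vertices used and applies \autoref{cor:uniform-discrete-morse-theory} at each level: the cells attached at the $p$-th stage have links expressible in terms of $\Ord{\Link{\SimplicialComplex}{\tau}}{\bullet}$ for various simplices $\tau$, whose uniform $(n-|\tau|-2)$-acyclicity follows from \autoref{obs:CM-localness} together with the inductive hypothesis. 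Assembling these bounds via \autoref{lem:two-out-of-three-property} and \autoref{lem:compositions-of-highly-acyclic-maps} yields a recursive definition of $\UBCConstantOrdConstruction{n}{\UBCConstant{}{}}$.

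The hard part will be identifying the correct decomposition of $\Ord{\SimplicialComplex}{\bullet}$ so that the ``link of an attached cell'' has the desired uniform acyclicity: the natural candidate — the link of an ordered tuple $(x_0,\ldots,x_p)$ in the semi-simplicial set $\Ord{\SimplicialComplex}{\bullet}$ — is \emph{not} simply $\Ord{\Link{\SimplicialComplex}{\{x_0,\ldots,x_p\}}}{\bullet}$, since one must also track the positions at which new entries can be inserted among $x_0,\ldots,x_p$. This combinatorial bookkeeping produces, in effect, a join of ordered link complexes with a boundary-of-simplex factor; estimating its UBC constant via \autoref{lem:join-qubc} and \autoref{lem:LinkAttachments} is the key technical step. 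Critically, because the uniform CM hypothesis provides a single constant $\UBCConstant{}{}$ valid for all links simultaneously, the potentially infinitely many cell attachments do not destroy uniform acyclicity in the manner illustrated in \autoref{expl:attaching-infinitely-many-cones} — this is precisely the feature that distinguishes the uniform from the classical Cohen--Macaulay setting.
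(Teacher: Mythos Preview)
Your inductive framework and base case are correct, and you rightly identify that the uniformity of the UBC constant across all links is what rules out the pathology of \autoref{expl:attaching-infinitely-many-cones}. However, the core of your inductive step is a genuine gap rather than a different route.

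The plan to build $\Ord{\SimplicialComplex}{\bullet}$ by a filtration and cell attachments runs into exactly the obstacle you yourself flag: the ``link'' of an ordered tuple in $\Ord{\SimplicialComplex}{\bullet}$ does not decompose as a join of an ordered link with a boundary-of-simplex factor. Insertions at different positions interact nontrivially, and $\Ord{\SimplicialComplex}{\bullet}$ is not an ordered simplicial complex, so \autoref{lem:LinkAttachments} and \autoref{cor:uniform-discrete-morse-theory} do not apply to it directly. Your proposal acknowledges this but does not resolve it; the suggested ``poset deformation retract simplifying the ordered side'' and the filtration ``by number of vertices used'' are not specified, and it is not clear what they would be.

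The paper's argument (following Hatcher--Vogtmann) sidesteps this by first proving an auxiliary claim: for every simplex $\sigma$ of $\SimplicialComplex$, the complex $\Ord{\Star{\SimplicialComplex}{\sigma}}{\bullet}$ is uniformly $(n-1)$-acyclic with a constant depending only on $n$ and $\UBCConstant{}{}$. This is nontrivial---even though $\Star{\SimplicialComplex}{\sigma}$ is a cone, the ord construction does not preserve uniform acyclicity---and is established by writing $\Star{\SimplicialComplex}{\sigma}=\{a\}\ast Y$ and filtering $\Ord{(\{a\}\ast Y)}{\bullet}$ by the last allowed position of $a$, invoking the inductive hypothesis on $\Ord{(\Link{Y}{\eta})}{\bullet}$ and the Mayer--Vietoris principle (\autoref{lem:MV-qubc}) at each stage. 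With this claim in hand, the paper does \emph{not} attach cells; instead it picks a global ordering to get $s:\SimplicialComplex\hookrightarrow\Ord{\SimplicialComplex}{\bullet}$ with retraction $r$, and constructs a bounded chain homotopy between $\Identity$ and $s\circ r$ on $\BarycentricSubdivion{\Ord{\SimplicialComplex}{\bullet}}$ degree by degree, using that every simplex $\sigma_0\leq\dots\leq\sigma_k$ of the subdivision lies in the uniformly acyclic subcomplex $\BarycentricSubdivion{\Ord{\Star{\SimplicialComplex}{r(\sigma_0)}}{\bullet}}$. The missing idea in your proposal is precisely this star claim and the resulting chain-homotopy construction.
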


The proof of this lemma is a uniformly bounded refinement of an argument due to
Hatcher--Vogtmann \cite[Proposition 2.10]{hatchervogtmann2017}. Since this argument
is an adaptation, the structure and presentation are similar to
\cite[Proposition 2.10]{hatchervogtmann2017}.

\begin{proof}
	We first note that we may assume that
	$\SimplicialComplex$ has dimension $n$ because simplices
	of dimension $d > n$ do not influence uniform $(n-1)$-acyclicity.

	For $n = 0$ the claim holds trivially and we define
	$\UBCConstantOrdConstruction{0}{\UBCConstant{}{}} \coloneqq 0$. Hence, let $n
	> 0$ and assume for
	an induction on $n$ that we already constructed a function $\on{K^{ord}}: \{0, \dots, n-1\} \times \Reals \to \Reals$ with the claimed property for
	$\CMub(\SimplicialComplex) < n$. For the induction step we first verify the following claim.

\begin{claim}
	There exists a function $\on{K^{ord}_{St}}: \NaturalNumbers \times \Reals \to \Reals$ such that in the setting of \autoref{lem:ord-complex-connectivity-lemma} the following holds: Let $\Simplex{}$ be any simplex in
	$\SimplicialComplex$, then
	$
    \Ord
      {
          \Star
            {\SimplicialComplex}
            {\Simplex{}}
      }
      {\bullet}
  $  is
	uniformly
	$(n-1)$-acyclic with constant $\UBCConstantOrdConstructionStar{n}{\UBCConstant{}{}}$.
\end{claim}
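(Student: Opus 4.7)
The plan is to prove this sub-claim by an inner induction on $p = \dim \Simplex{}$, run inside the outer induction on $n$ from the proof of \autoref{lem:ord-complex-connectivity-lemma} (within which the sub-claim is also implicitly proved at each previous level). The key idea is to exploit the cone structure of the star via a choice of vertex $v \in \Simplex{}$, together with a filtration of $\Ord{\Star{\SimplicialComplex}{\Simplex{}}}{\bullet}$ by the position of $v$ in the ordered tuple. As in the main proof, I may assume $\SimplicialComplex$ has dimension $n$, so that $\Star{\SimplicialComplex}{\Simplex{}}$ has dimension at most $n$ and the filtration below is finite.

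For the base case $p = 0$, write $\Simplex{} = \{v\}$. By \autoref{obs:CM-localness}, the link $\Link{\SimplicialComplex}{v}$ is uniformly bounded Cohen--Macaulay of level $n-1$ with constant $\UBCConstant{}{}$, so the outer induction hypothesis on $n$ gives that $\Ord{\Link{\SimplicialComplex}{v}}{\bullet}$ is uniformly $(n-2)$-acyclic with UBC constant $\UBCConstantOrdConstruction{n-1}{\UBCConstant{}{}}$. I would then consider the sub-semi-simplicial set $K^v_\bullet \subseteq \Ord{\Star{\SimplicialComplex}{v}}{\bullet}$ of ordered tuples $(x_0, \ldots, x_k)$ with $x_0 = v$ or $v \notin \{x_0, \ldots, x_k\}$. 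A direct check of face maps shows that $K^v_\bullet$ is naturally isomorphic to the semi-simplicial cone $v \ast \Ord{\Link{\SimplicialComplex}{v}}{\bullet}$, hence uniformly $\infty$-acyclic with UBC constant $1$ by \autoref{lem:ConeUBC}. It will therefore suffice, by \autoref{item-3-toot} of \autoref{lem:two-out-of-three-property}, to show that the inclusion $K^v_\bullet \hookrightarrow \Ord{\Star{\SimplicialComplex}{v}}{\bullet}$ is itself uniformly $(n-1)$-acyclic.

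For this I would introduce the finite increasing filtration $K^v_\bullet = F_0 \subseteq F_1 \subseteq \cdots \subseteq F_n = \Ord{\Star{\SimplicialComplex}{v}}{\bullet}$ where $F_i$ consists of tuples in which $v$ is either absent or appears at a position at most $i$; this is closed under face maps since deleting an entry either removes $v$ or decreases its position. Each stratum $F_i / F_{i-1}$ has as basis the ordered tuples with $v$ in position exactly $i$, with differential consisting only of the face maps $d_j$ for $j > i$. Splitting each such tuple at $v$ identifies this stratum with a chain complex built out of pairs of ordered sub-tuples of $\Ord{\Link{\SimplicialComplex}{v}}{\bullet}$ whose disjoint union is a simplex of $\Link{\SimplicialComplex}{v}$, and the uniform $(n-2)$-acyclicity of $\Ord{\Link{\SimplicialComplex}{v}}{\bullet}$ together with the tools of this section would let me show that each $F_{i-1} \hookrightarrow F_i$ is uniformly $(n-1)$-acyclic with a quantitative bound depending on $i$ and $\UBCConstantOrdConstruction{n-1}{\UBCConstant{}{}}$. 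Composing these finitely many inclusions via \autoref{lem:compositions-of-highly-acyclic-maps} completes the base case. For the inductive step $p > 0$, the same cone-and-filtration construction applies at any $v \in \Simplex{}$, now with $\Ord{\Link{\SimplicialComplex}{v}}{\bullet}$ replaced by $\Ord{\Star{\Link{\SimplicialComplex}{v}}{\Simplex{} \setminus v}}{\bullet}$; this latter complex is uniformly $(n-2)$-acyclic by the sub-claim at level $n-1$ applied to the simplex $\Simplex{} \setminus v$ of dimension $p-1$ in the complex $\Link{\SimplicialComplex}{v}$, which is uniformly bounded Cohen--Macaulay of level $n-1$ by \autoref{obs:CM-localness}.

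The hard part will be the shuffle analysis of the strata $F_i / F_{i-1}$ and the quantitative transfer of uniform acyclicity across each step of the filtration; this is essentially a uniformly bounded refinement of the skeletal filtration argument used by Farmer, Kerz and Hatcher--Vogtmann for the homotopy analogue of \autoref{lem:ord-complex-connectivity-lemma}, and careful bookkeeping of the constants produced at each step of the recursion yields the desired function $\UBCConstantOrdConstructionStar{n}{\UBCConstant{}{}}$.
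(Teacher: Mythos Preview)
Your filtration by the position of a chosen vertex $v\in\Simplex{}$ is exactly the one the paper uses, and your identification $F_0 \cong \{v\}\ast\Ord{\Link{\SimplicialComplex}{v}}{\bullet}$ as a cone is correct. Two points, however, deserve correction.

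First, the inner induction on $p=\dim\Simplex{}$ is unnecessary and does not buy you anything. The paper simply writes $\Star{\SimplicialComplex}{\Simplex{}}=\{a\}\ast Y$ with $Y=\tau\ast\Link{\SimplicialComplex}{\Simplex{}}$ (where $\Simplex{}=\{a\}\sqcup\tau$) and checks once and for all that $Y$ is uniformly bounded Cohen--Macaulay of level $n-1$ with constant $\max\{1,K\}$. This handles all $\Simplex{}$ in one shot; your base case $p=0$ already requires the full stratum analysis, so the induction on $p$ adds complexity without simplifying the hard step.

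Second, and more importantly, your stated input for the stratum analysis is wrong. You claim that the uniform $(n-2)$-acyclicity of $\Ord{\Link{\SimplicialComplex}{v}}{\bullet}$ is what drives the filtration step $F_{i-1}\hookrightarrow F_i$. It is not. When you split a tuple $(x_0,\dots,x_{i-1},v,y_{i+1},\dots,y_k)$ at $v$, the relative differential only acts on the $y$-part, and the $y$'s range over ordered simplices of $\Link{Y}{\eta}$ where $\eta=\{x_0,\dots,x_{i-1}\}$. So what you actually need is that $\Ord{(\Link{Y}{\eta})}{\bullet}$ is uniformly $(n-i-2)$-acyclic, \emph{for every such $\eta$}, with a bound independent of $\eta$. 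This comes from the outer induction hypothesis on $n$ (the full \autoref{lem:ord-complex-connectivity-lemma} at level $n-i-1$) applied to $\Link{Y}{\eta}$, which is CM of level $n-i-1$; it does \emph{not} follow from acyclicity of $\Ord{Y}{\bullet}$ alone. Once you have this, the paper glues each cone $F_i(\vec{x}a)_\bullet=(\vec{x}a)\ast\Ord{(\Link{Y}{\eta})}{\bullet}$ onto $F_{i-1}$ one prefix at a time using Mayer--Vietoris (\autoref{lem:MV-qubc}), noting that the intersection is the suspension $\partial(\vec{x}a)\ast\Ord{(\Link{Y}{\eta})}{\bullet}$, and then invokes \autoref{lem:DirectedUnion} to pass to the infinite union over all prefixes. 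Your proposed direct relative-chain-complex argument would also work, but it is this link input that makes it go through, not the global acyclicity of $\Ord{Y}{\bullet}$.
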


\begin{proof}[Proof of Claim]
	For $n = 0$, the claim holds trivially and we define
	$\UBCConstantOrdConstructionStar{0}{\UBCConstant{}{}} \coloneqq 0$. Hence,
	let us assume that $n > 0$.

	Let $a \in \Simplex{}$ be a vertex, and write $\Simplex{} = \{a\} \sqcup
	\tau$. Then $\Star{\SimplicialComplex}{\Simplex{}} = \{a\} \ast (\tau \ast
	\Link{\SimplicialComplex}{\Simplex{}}) = \{a\} \ast \Subcomplex$ is a simplicial cone
	over $\Subcomplex \coloneqq \tau \ast
	\Link{\SimplicialComplex}{\Simplex{}}$. \autoref{obs:CM-localness} implies that $\Link{\SimplicialComplex}{\Simplex{}}$ is a uniformly
	bounded Cohen--Macaulay of level $n - \dim \Simplex{} - 1$ with
	constant $\UBCConstant{}{}$ and it is easy to check that $\tau$ is uniformly bounded Cohen--Macaulay of level $\dim \Simplex{} - 2$ with UBC constant $1$.
	It follows that the join $\Subcomplex$ is uniformly bounded Cohen--Macaulay of level $n-1$ with UBC constant $\max \{1, K\}$.

	We now filter the complex
	$
    \Ord
    {
      \Star
      {\SimplicialComplex}
      {\Simplex{}}
    }
    {\bullet}
    = \Ord{(\{a\} \ast
	\Subcomplex)}{\bullet}$ by the subcomplexes $\{F_i\Ord{(\{a\} \ast \Subcomplex)}{\bullet}\}_{i \in \NaturalNumbers}$
	where $F_i\Ord{(\{a\} \ast \Subcomplex)}{\bullet}$ contains all simplices $(x_0, \dots, x_k)$ with
	the property that the vertex $a$ can only be equal appear in the first
	$i+1$ positions, i.e.\ either $a \in \{x_0, \dots, x_i\}$ or $a \not\in
	\{x_0, \dots, x_k\}$. Notice that this filtration is finite: The assumption
	that $\SimplicialComplex$ has dimension $n$, implies that $F_n\Ord{(\{a\} \ast \Subcomplex)}{\bullet} =
	\Ord{\Star{\SimplicialComplex}{\Simplex{}}}{\bullet}$.

	We now use induction on $i \in \NaturalNumbers$ to show that $F_i\Ord{(\{a\} \ast \Subcomplex)}{\bullet}$ is uniformly $(n-1)$-acyclic.
	For the base case, $i = 0$, we observe that $F_0\Ord{(\{a\} \ast \Subcomplex)}{\bullet} = \{a\} \ast \Ord{Y}{\bullet}$ is a
	simplicial cone and hence uniformly $\infty$-acyclic with UBC constant $K_0 =
	1$ by \autoref{lem:ConeUBC}. For the induction step, let $i > 0$ and assume
	that $F_{i-1}\Ord{(\{a\} \ast Y)}{\bullet}$ is uniformly
	$(n-1)$-acyclic with UBC constant in degree $q \leq n-1$ given by $\UBCConstant{}{i-1}(n, \UBCConstant{}{})$, a function only depending on $n$, $\UBCConstant{}{}$ and $i$.

	Passing from $F_{i-1}(\{a\} \ast Y)_\bullet^{\text{ord}}$ to $F_{i}(\{a\}
	\ast Y)_\bullet^{\text{ord}}$, we need to glue on all $k$-simplices of the
	form $(x_0, \dots, x_{i-1}, a, y_{i+1}, \dots, y_k)$ for $k \leq n$. Fixing
	the prefix $\vec x a = (x_0, \dots, x_{i-1}, a)$ and letting the elements $y_{i+1}, \dots, y_k$
	vary, we obtain a semi-simplicial subset $F_{i}(\vec x a)_\bullet$ of
	$F_{i}(\{a\} \ast Y)_\bullet^{\text{ord}}$.
	Denote by $\eta = \{x_0, \dots, x_{i-1}\}$ the simplex of $\Subcomplex$ associated to $\vec x$.
	It holds that $F_{i}(\vec x a)_\bullet$ is the simplicial cone $F_{i}(\vec x a)_\bullet = (\vec x a)
	\ast \Ord{(\Link{Y}{\eta})}{\bullet}$ with cone point $a$, and
	therefore $F_{i}(\vec xa)_\bullet$ is uniformly $\infty$-acyclic with
	UBC constant $1$. Notice that $F_{i}(\vec xa)_\bullet \cap F_{i-1}\Ord{(\{a\} \ast
	Y)}{\bullet} = \partial(\vec x a) \ast
	\Ord{(\Link{Y}{\eta})}{\bullet}$ is a $(i-1)$-suspension, since $\partial (\vec x a)$ is the
	boundary of an $i$-simplex. The complex $\Link{Y}{\eta}$ is uniformly bounded Cohen--Macaulay
	of level $(n - 1) - (i - 1) - 1 = n - i - 1$ with UBC constant $\max \{1, K\}$
	as a link in $\Subcomplex$.
	Hence, the first induction hypothesis implies that
	$\Ord{(\Link{Y}{\eta})}{\bullet}$ is uniformly $(n-i-2)$-acyclic
	with constant $\UBCConstantOrdConstruction{n - i - 1}{\max \{1, K\}}$. \autoref{lem:join-qubc} therefore shows that $\partial(\vec x
	a) \ast \Ord{(\Link{Y}{\eta})}{\bullet}$ is uniformly $(n-2)$-acyclic
	with UBC constant in degree $q \leq n-2$ given by $\UBCConstantSuspension{i-1}{q}{\UBCConstantOrdConstruction{n - i - 1}{\max \{1, K\}}}$.
	As a consequence, \autoref{lem:MV-qubc} implies that
	$F_{i}(\vec xa)_\bullet \cup F_{i-1}\Ord{(\{a\} \ast
	Y)}{\bullet}$ is uniformly $(n-1)$-acyclic where the UBC constant in degree $q \leq n-1$ can be chosen as
	\begin{align*}
		\UBCConstant{}{i}(n, \UBCConstant{}{}) &\coloneqq \\
		&\max_{q \leq n-1} \UBCConstantMayerVietoris{q}{1}{\UBCConstant{}{i-1}(n, \UBCConstant{}{})}{\UBCConstantSuspension{i-1}{q-1}{\UBCConstantOrdConstruction{n - i - 1}{\max \{1, K\}}}}.
	\end{align*}

	Note that for any other $\vec z a = (z_0, \dots, z_{i-1}, a) \neq (\vec x
	a)$, the intersection of $F_{i}(\vec xa)_\bullet$ and $F_{i}(\vec
	za)_\bullet$ is contained in $F_{i-1}\Ord{(\{a\} \ast Y)}{\bullet}$.
	Furthermore, applying the same argument as in the previous paragraph to
	$F_{i}(\vec za)_\bullet$ shows that $F_{i}(\vec za)_\bullet$ also is
	uniformly $(n-1)$-acyclic with the \emph{same} UBC constant $\UBCConstant{}{i}(n, \UBCConstant{}{})$.
	We conclude that for any \emph{finite} choice of $(\vec z_0 a), \dots,
	(\vec z_l a)$ of pairwise different prefixes, it holds that
	$$F_{i-1}\Ord{(\{a\} \ast Y)}{\bullet} \cup (\bigcup_{m = 0}^l F_{i}(\vec
	z_m a)_\bullet)$$
	is uniformly $(n-1)$-acyclic with constant $\UBCConstant{}{i}(n, \UBCConstant{}{})$. Therefore
	\autoref{lem:DirectedUnion} implies that $F_{i}\Ord{(\{a\}
	\ast Y)}{\bullet}$ is uniformly $(n-1)$-acyclic with constant
	$\UBCConstant{}{i}(n, \UBCConstant{}{})$. This completes the induction step,
	and we may define $\UBCConstantOrdConstructionStar{n}{\UBCConstant{}{}} \coloneqq \UBCConstant{}{n}(n, \UBCConstant{}{})$.
\end{proof}

	We now proceed with the proof of \autoref{lem:ord-complex-connectivity-lemma}, assuming that $n \geq 1$.
	Note that, on the one hand, forgetting the ordering
	yields a map $\Retraction: \Ord{\SimplicialComplex}{\bullet}
	\to \SimplicialComplex$. On the other hand, picking a total order on the
	set of vertices of $\SimplicialComplex$, yields an embedding
	$s: \SimplicialComplex \hookrightarrow
	\Ord{\SimplicialComplex}{\bullet}$ with the property $\Retraction
	\circ s = \Identity_{\SimplicialComplex}$. This shows that the image of
	$s$, which we denote by $\SimplicialComplex_\bullet$, is a retract of
	$\Ord{\SimplicialComplex}{\bullet}$. This remains true when passing
	to the barycentric subdivisions
	$\BarycentricSubdivion{\SimplicialComplex_\bullet}$ and
	$\BarycentricSubdivion{\SimplicialComplex_\bullet^{\text{ord}}}$. We
	claim that, in degrees $k < n$, the map $s \circ \Retraction$ is bounded
	chain-homotopic
	to
	$\Identity_{\BarycentricSubdivion{\Ord{\SimplicialComplex}{\bullet}}}$,
	$$\Homotopy_* :
	\ChainComplex{*}{\BarycentricSubdivion{\Ord{\SimplicialComplex}{\bullet}}}
	\to
	\ChainComplex{*+1}{\BarycentricSubdivion{\Ord{\SimplicialComplex}{\bullet}}}
	 \text{ for } * < n.
	$$
	We construct this chain-homotopy inductively. Our first task is to
	describe a map
	$$\Homotopy_0 :
	\ChainComplex{0}{\BarycentricSubdivion{\Ord{\SimplicialComplex}{\bullet}}}
	\to
	\ChainComplex{1}{\BarycentricSubdivion{\Ord{\SimplicialComplex}{\bullet}}}.
	$$
	A vertex in
	$\BarycentricSubdivion{\Ord{\SimplicialComplex}{\bullet}}$ is a
	simplex $\sigma \in \Ord{\SimplicialComplex}{\bullet}$,
	which upon forgetting the order yields unique simplices
	$\Retraction(\sigma)
	\in \SimplicialComplex$ and $(s \circ \Retraction)(\sigma) \in \SimplicialComplex_\bullet$.
	By the claim, it holds that the subcomplex
	$\Ord{\Star{\SimplicialComplex}{\Retraction(\sigma)}}{\bullet}$ of
	$\Ord{\SimplicialComplex}{\bullet}$ is uniformly $(n-1)$-acyclic with UBC constant $\UBCConstantOrdConstructionStar{n}{\UBCConstant{}{}}$.
	Notice that both $\sigma$ and $(s \circ \Retraction)(\sigma)$ are vertices in $\Ord{\Star{\SimplicialComplex}{\Retraction(\sigma)}}{\bullet}$.
	It follows that there is a simplicial path $\gamma^1 = (\sigma
	= \sigma_0, \sigma_1, \dots, \sigma_l = (s \circ \Retraction)(\sigma))$ in
	$\Ord{\Star{\SimplicialComplex}{\Retraction(\sigma)}}{\bullet}$
	of length $l \leq 2 \cdot \UBCConstantOrdConstructionStar{n}{\UBCConstant{}{}}$.
	We define
	$$\Homotopy_0(\sigma) \coloneqq \gamma^1 = \sum_1^{l-1} \on{sign}_i \cdot [\sigma_i, \sigma_{i+1}]$$
	where $\on{sign}_i \in \{+1,-1\}$ is chosen such that
	$$
	\BoundarySimplex_1 \Homotopy_0(\sigma) = \sigma - (s \circ \Retraction)(\sigma) =
	\Identity_{\BarycentricSubdivion{\Ord{\SimplicialComplex}{\bullet}}}(\sigma)
	- \BarycentricSubdivion{s \circ \Retraction}(\sigma).
	$$
	This finishes the induction beginning $k = 0$. We note that $\Norm{\Homotopy_0} \leq 2 \cdot \UBCConstantOrdConstructionStar{n}{\UBCConstant{}{}}$. Now assume for an induction on $k$,
	that for any $k < n-1$ we have defined a homotopy between
	$\Identity_{\BarycentricSubdivion{\Ord{\SimplicialComplex}{\bullet}}}$
	 and $\BarycentricSubdivion{s \circ \Retraction}$,
	$$\Homotopy_k :
	\ChainComplex{k}{\BarycentricSubdivion{\Ord{\SimplicialComplex}{\bullet}}}
	\to
	\ChainComplex{k+1}{\BarycentricSubdivion{\Ord{\SimplicialComplex}{\bullet}}},
	$$
	with the property that for all $k$-simplices $\alpha_k = \sigma_0 \leq
	\dots \leq \sigma_k$ in
	$\BarycentricSubdivion{\Ord{\SimplicialComplex}{\bullet}}$ it holds
	that
	$$
	\Homotopy_k(\alpha_k) \in
	\ChainComplex{k+1}{\BarycentricSubdivion{\Ord{\Star{\SimplicialComplex}{\Retraction(\sigma_0)}}{\bullet}}}
	\subseteq
	\ChainComplex{k+1}{\BarycentricSubdivion{\Ord{\SimplicialComplex}{\bullet}}}
	$$
	and $\Norm{\Homotopy_k} \leq \UBCConstant{\Homotopy_k}{}(n, \UBCConstant{}{})$ for a function only depending on $k, n$ and $\UBCConstant{}{}$.

	Now, let $\alpha_{n-1} = \sigma_0 \leq \dots \leq \sigma_{n-1}$ be a
	$(n-1)$-simplex in
	$\BarycentricSubdivion{\Ord{\SimplicialComplex}{\bullet}}$.
	Since
	$$\Star{\SimplicialComplex}{\Retraction(\sigma_{n-1})}
	 \subseteq \dots \subseteq
	\Star{\SimplicialComplex}{\Retraction(\sigma_0)},$$
	it holds that
	$$
	\BarycentricSubdivion{\Ord{\Star{\SimplicialComplex}{\Retraction(\sigma_{n-1})}}{\bullet}}
	\subseteq \dots \subseteq
	\BarycentricSubdivion{\Ord{\Star{\SimplicialComplex}{\Retraction(\sigma_0)}}{\bullet}}.
	$$
	The induction hypothesis implies that
	$\Homotopy_{n-2}(\BoundaryChainComplex{} 	\alpha_{n-1})$ is contained in
	$$\ChainComplex{n-1}{\BarycentricSubdivion{\Ord{\Star{\SimplicialComplex}{\Retraction(\sigma_0)}}{\bullet}}}.$$
	It follows that
	$$\alpha_{n-1} + \BarycentricSubdivion{s \circ \Retraction}(\alpha_{n-1}) - \Homotopy_{n-2}(\BoundaryChainComplex{} \alpha_{n-1})$$
	is a $(n-1)$-cycle in
	$\ChainComplex{n-1}{\BarycentricSubdivion{\Ord{\Star{\SimplicialComplex}{\Retraction(\sigma_0)}}{\bullet}}}$.
	By the claim, the cycle is a boundary and there exists some
	$$\gamma^n \in
	\ChainComplex{n}{\BarycentricSubdivion{\Ord{\Star{\SimplicialComplex}{\Retraction(\sigma_0)}}{\bullet}}}$$
	with the property that $\BoundaryChainComplex{} \gamma^n = \alpha_{n-1} +
	\BarycentricSubdivion{s \circ \Retraction}(\alpha_{n-1}) -
	\Homotopy_{n-2}(\BoundaryChainComplex{} \alpha_{n-1})$ and that
	$\Norm{\gamma^n}$ is at most
	\begin{align*}
		&\UBCConstantOrdConstructionStar{n}{\UBCConstant{}{}}
		\cdot
		(\Norm{\Identity} + \Norm{\BarycentricSubdivion{s \circ \Retraction}} + \Norm{\Homotopy_{n-2} \circ \BoundaryChainComplex{}})
		&\leq
		\UBCConstantOrdConstructionStar{n}{\UBCConstant{}{}}
		\cdot
		(2 + n \cdot \UBCConstant{\Homotopy_{n-2}}{}(n, \UBCConstant{}{})),
	\end{align*}
	using that $\Norm{\BarycentricSubdivion{s \circ \Retraction}} \leq 1$.
	We set $\UBCConstant{\Homotopy_{n-1}}{}(n, \UBCConstant{}{}) \coloneqq \UBCConstantOrdConstructionStar{n}{\UBCConstant{}{}}
	\cdot (2 + n \cdot \UBCConstant{\Homotopy_{n-2}}{}(n, \UBCConstant{}{}))$ and conclude that
	$$\Homotopy_{n-1}(\alpha_{n-1}) \coloneqq \gamma^n$$
	is a chain homotopy with the desired properties.

	Let $\UBCConstant{\Homotopy}{}(n, \UBCConstant{}{}) \coloneqq \max_{k \leq n-1}  \UBCConstant{\Homotopy_{k}}{}(n, \UBCConstant{}{})$.
	Then it follows that for any $k \leq n-1$, $\Norm{\Homotopy_k} \leq \UBCConstant{\Homotopy}{}(n, \UBCConstant{}{})$ and \autoref{cor:uniform-homotopy-equivalence}
	implies that $\BarycentricSubdivion{\Ord{\SimplicialComplex}{\bullet}}$ is uniformly $(n-1)$-acyclic such that the UBC constant in degree $q \leq n-1$ can be chosen
	as $\UBCConstant{\Homotopy}{}(n, \UBCConstant{}{})$. An application of \autoref{lem:BarycentricSubdivision} therefore shows that $\Ord{\SimplicialComplex}{\bullet}$
	is also uniformly $(n-1)$-acyclic and that its UBC constant in degree $q \leq n-1$ can be chosen as
	\[
	\UBCConstantOrdConstruction{n}{\UBCConstant{}{}} \coloneqq n! \cdot \UBCConstant{\Homotopy}{}(n, \UBCConstant{}{}) \qedhere
	\]
\end{proof}

\subsection{Technical lemma I}
The following lemma will play a crucial
role in the proof of stability patterns in bounded cohomology of automorphism groups of quadratic modules.
It is a uniformly bounded analogue of \cite[Proposition 2.5]{GalatiusRandalWilliamsStability}.
\begin{lemma}
  \label{lem:FullSubcomplex}
  Let $\SimplicialComplex$ be a simplicial complex,
  $\Subcomplex \subseteq \SimplicialComplex$ be a full subcomplex
  and $n \in \NaturalNumbers$.
  Assume that for each $q \leq n$ there exists a constant $\UBCConstant{q}{\Linkfunction} \geq 0$ such that the following holds:
  For every $p$-simplex $\Simplex{p}$ of $\SimplicialComplex$
  having no vertex in $\Subcomplex$, the complex
  $\Link{\Subcomplex}{\Simplex{p}} = Y \cap \Link{X}{\Simplex{p}}$ is uniformly $(n-p-1)$-acyclic
  and its UBC constant in degree $q \leq n-p-1$ can be chosen as $\UBCConstant{q}{\Linkfunction}$.
  Then the inclusion
  $\Subcomplex \hookrightarrow \SimplicialComplex$
  is uniformly $n$-connected and its UBC constant in degree $q \leq n$ can be
  chosen as
  \begin{align*}
  \UBCConstantTechnicalLemmaI{q}{n}{\UBCConstant{q}{\Linkfunction}}
  &\coloneqq\\
  &\UBCConstantFactorThrough{q}{\UBCConstantCellAttachment{n}{q}{\UBCConstant{q}{\Linkfunction}}}{\UBCConstantFactorThrough{q}{\UBCConstantCellAttachment{n-1}{q}{\UBCConstant{q}{\Linkfunction}}}{K^{\on{Fact}} \bigl ( \dots \bigr)}}.
  \end{align*}
\end{lemma}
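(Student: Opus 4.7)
The plan is to adapt the proof of \cite[Proposition 2.5]{GalatiusRandalWilliamsStability} to the uniformly bounded setting: I will filter $\SimplicialComplex$ between $\Subcomplex$ and itself by full subcomplexes, apply the cell attachment result of \autoref{lem:LinkAttachments} at each step, and compose the resulting inclusions using \autoref{lem:compositions-of-highly-acyclic-maps}.

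Define a filtration $\Subcomplex = Z_{-1} \subseteq Z_{0} \subseteq \dots \subseteq Z_{n}$ by declaring $Z_{k}$ to be the full subcomplex of $\SimplicialComplex$ spanned by those simplices that contain at most $k+1$ vertices outside $\Subcomplex$. Because $\Subcomplex$ is full in $\SimplicialComplex$, the passage from $Z_{k-1}$ to $Z_{k}$ amounts to attaching, for each $k$-simplex $\Simplex{k}$ of $\SimplicialComplex$ with no vertex in $\Subcomplex$, the simplex $\Simplex{k}$ along its link in $Z_{k}$, and fullness also yields the crucial identification $\Link{Z_{k}}{\Simplex{k}} = \Link{\Subcomplex}{\Simplex{k}} \subseteq Z_{k-1}$. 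This places us precisely in the setting of \autoref{lem:LinkAttachments}, and the hypothesis of the present lemma supplies the required uniform $(n-k-1)$-acyclicity of each link with UBC constants bounded by $\UBCConstant{\cdot}{\Linkfunction}$ uniformly over all such $\Simplex{k}$. Consequently, each inclusion $Z_{k-1} \hookrightarrow Z_{k}$ is uniformly $n$-acyclic with UBC constant in degree $q \leq n$ equal to $\UBCConstantCellAttachment{k}{q}{\UBCConstant{q}{\Linkfunction}}$.

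I then iteratively compose these inclusions via \autoref{lem:compositions-of-highly-acyclic-maps}, starting with $Z_{-1} \hookrightarrow Z_{0}$ and wrapping outward through $Z_{0} \hookrightarrow Z_{1}$, then $Z_{1} \hookrightarrow Z_{2}$, and so on, up to $Z_{n-1} \hookrightarrow Z_{n}$. Each composition step combines two constants via $\UBCConstantFactorThrough{q}{-}{-}$, and after $n$ such steps one obtains uniform $n$-acyclicity of $\Subcomplex \hookrightarrow Z_{n}$ with UBC constant given precisely by the nested expression $\UBCConstantTechnicalLemmaI{q}{n}{\UBCConstant{q}{\Linkfunction}}$, whose outermost layer is $\UBCConstantCellAttachment{n}{q}{\cdot}$ and whose innermost layer is $\UBCConstantCellAttachment{0}{q}{\cdot}$.

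Finally, I upgrade from $\Subcomplex \hookrightarrow Z_{n}$ to $\Subcomplex \hookrightarrow \SimplicialComplex$ without altering the constants. Every simplex of $\SimplicialComplex$ of dimension at most $n$ has at most $n+1$ vertices and therefore lies in $Z_{n}$, so $\ChainComplex{q}{\SimplicialComplex} = \ChainComplex{q}{Z_{n}}$ for all $q \leq n$. Using the criterion of \autoref{obs:InclusionAcyclic}, any chain $\Chain \in \ChainComplex{q}{\SimplicialComplex}$ with $\partial \Chain \in \ChainComplex{q-1}{\Subcomplex}$ already lies in $\ChainComplex{q}{Z_{n}}$, and a bounding $(q+1)$-chain provided by uniform $n$-acyclicity of $\Subcomplex \hookrightarrow Z_{n}$ lies in $\ChainComplex{q+1}{Z_{n}} \subseteq \ChainComplex{q+1}{\SimplicialComplex}$. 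The main technical point will be the link identification $\Link{Z_{k}}{\Simplex{k}} = \Link{\Subcomplex}{\Simplex{k}}$, which rests squarely on the fullness of $\Subcomplex$; once this is in hand, the remainder is an inductive accounting of constants.
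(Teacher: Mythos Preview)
Your proof is correct and follows essentially the same approach as the paper: both define the identical filtration (the paper's $Y_p = Y_{p-1} \cup \bigcup_{\sigma_p} \sigma_p \ast \Link{Y}{\sigma_p}$ coincides with your $Z_p$), invoke \autoref{lem:LinkAttachments} at each step with the link identification $\Link{Z_k}{\sigma_k} = \Link{Y}{\sigma_k}$, and compose via \autoref{lem:compositions-of-highly-acyclic-maps}. One minor wording issue: your $Z_k$ is \emph{not} a full subcomplex of $X$ for $k \geq 0$ (it contains every vertex of $X$ but not every simplex on those vertices); you should simply call it the subcomplex consisting of simplices with at most $k+1$ vertices outside $Y$.
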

\begin{proof}
  We construct a factorization of the inclusion $\Subcomplex \hookrightarrow
  \SimplicialComplex$
  through a sequence of subcomplex $\Subcomplex_0, \dots, \Subcomplex_n$ of $\SimplicialComplex$
  and then invoke \autoref{lem:LinkAttachments}.

  Let $S_{p}$ denote the set of $p$-simplices of $\SimplicialComplex$ with no
  vertices in $\Subcomplex$.
  We define
  $
    \Subcomplex_{-1}
    =
    \Subcomplex
  $
  and inductively for $p \leq n$,
  $
    \Subcomplex_{p}
    =
    \Subcomplex_{p-1}
    \cup
    (\bigcup_{\Simplex{p}\in S_{p}}
    \Simplex{p}
    \ast
    \Link{\Subcomplex}{\Simplex{p}}).
  $

  Applying \autoref{lem:LinkAttachments}, it follows that the inclusion
  $
    \Subcomplex_{p-1}
    \hookrightarrow
    \Subcomplex_{p},
  $
  is uniformly $n$-acyclic and that its UBC constant
  in degree $q \leq n$ can be chosen as
  $\UBCConstantCellAttachment{p}{q}{\UBCConstant{q}{\Linkfunction}}$.

  For the last inclusion $\Subcomplex_n \hookrightarrow \SimplicialComplex$,
  we claim that the $n$-skeleton of $\SimplicialComplex$ is
  contained in $\Subcomplex_{n}$: Let $\Simplex{p}$ be a $p$-simplex in
  $\SimplicialComplex$ for $p\leq n$. If $\Simplex{p}$ does not contain any
  vertices in $\Subcomplex$, then it is contained in $\Subcomplex_p \subseteq \Subcomplex_{n}$
  by construction. If $\Simplex{p}$ contains vertices in $\Subcomplex$, then it
  can
  be written as
  $
  \Simplex{p}^{\Subcomplex}
  \sqcup
  \Simplex{p}^{\SimplicialComplex \setminus \Subcomplex}
  $
  for $\Simplex{p}^{\Subcomplex}$ a nonempty simplex of $\Subcomplex$
  and $\Simplex{p}^{\SimplicialComplex \setminus \Subcomplex}$ a (possibly empty) simplex of $\SimplicialComplex$
  that has no vertex in $\Subcomplex$. Note that the dimension $d$ of
  $
  \Simplex{p}^{\SimplicialComplex \setminus \Subcomplex}
  $
  is strictly less than $p \leq n$ and that
  $
  \Simplex{p}^{\Subcomplex}
  \in
  \Link{\Subcomplex}{\Simplex{p}^{\SimplicialComplex \setminus \Subcomplex}}.
  $
  Therefore $\Simplex{p}$ is contained in $\Subcomplex_d \subseteq \Subcomplex_{n}$.

  It follows that the inclusion
  $\Subcomplex_{n}\hookrightarrow\SimplicialComplex$ is uniformly $n$-acyclic
  as well and that its UBC constant in degree $q \leq n$ can be chosen as $0$.

  Iteratively applying \autoref{lem:compositions-of-highly-acyclic-maps} hence implies that the inclusions
  $\Subcomplex \hookrightarrow \SimplicialComplex$ is uniformly $n$-acyclic as
  well and that its UBC constant in degree $q \leq n$ can be chosen as stated in the claim.
\end{proof}

\subsection{Technical lemma II}
The final lemma of this section is the lynchpin of the bounded acyclicity argument which implies
that general linear groups satisfy bounded cohomological stability.
It is a uniformly bounded refinement of an argument contained in \cite[Section 2]{vanderkallen1980homologystabilityforlineargroups}.
\begin{lemma}
\label{lem:UBCindegreenplusone}
  Let $\Poset$ denote a poset and suppose that $\Poset\cup \{v\}$ is obtained
  from $\Poset$ by attaching $v$ along $\Link{\Poset}{v}$. Furthermore assume:
  \begin{enumerate}[(i)]
  \item \label{item-1-UBCindegreenplusone}
    $\Link{\Poset}{v}$ is uniformly $n$-acyclic with constant
    $\UBCConstant{q}{\Linkfunction}$.
  \item \label{item-2-UBCindegreenplusone}
    There exists a subposet $\Subposet \subseteq \Poset$ as in \autoref{lem:PosetDeformation}, i.e.\ 
    $\iota: \Subposet \hookrightarrow \Poset$ is a poset deformation retract 
    via the map $\Retraction \colon \Poset \to \Subposet$, $\apply{\Retraction}{\PosetElement} \coloneqq \sup \Subposet^-(\PosetElement)$.
  \item \label{item-3-UBCindegreenplusone}
    There exists a poset embedding $\Psi \colon \Subposet \to \Link{\Poset}{v}$
    such that $\Retraction|_{\im(\Psi)} \circ \Psi = \Identity_{\Subposet}$.
  \end{enumerate}
  Then $\Poset \cup \{v\}$ is uniformly $(n+1)$-acyclic and its UBC constant in
  degree $q \leq n+1$ is bounded by
  \[
  	\UBCConstantTechnicalLemmaII{q}{\UBCConstant{q-1}{\Linkfunction}}
  	\coloneqq
    (q+2)
    (1 + (q+1) \UBCConstant{q-1}{\Linkfunction})
    \left(
    2
    +
    2(q+2)(q+1)
    +
    2 (q+1)
    \right).
  \]
  In particular, if $\Link{\Poset}{v}$ is $(-1)$-acyclic then
  $\Poset\cup\{v\}$ is uniformly $0$-acyclic, i.e.\ it is connected of finite diameter and satisfies the $0$-UBC with constant $\UBCConstantTechnicalLemmaII{0}{0}$.
\end{lemma}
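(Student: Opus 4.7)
The plan is to fill any reduced cycle $\Chain \in \ChainComplex{q}{\Poset \cup \{v\}}$ with $q \leq n+1$ by reducing the problem to two filling operations inside the simplicial cone $\Star{\Poset \cup \{v\}}{v} = \{v\}\ast\Link{\Poset}{v}$, which is uniformly acyclic with UBC constant $1$ in every degree by \autoref{lem:ConeUBC}. Writing $S \coloneqq \Star{\Poset \cup \{v\}}{v}$ and $L \coloneqq \Link{\Poset}{v}$, one has $\Poset \cup \{v\} = \Poset \cup S$ with $\Poset \cap S = L$. Using \autoref{obs:norm-preserving-splitting}, decompose $\Chain = \alpha + \beta$ norm-preservingly with $\alpha \in \ChainComplex{q}{\Poset}$ and $\beta$ supported on simplices containing $v$. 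The common boundary $\partial\alpha = -\partial\beta$ automatically lies in $\ChainComplex{q-1}{L}$ and is a cycle there, and since $q-1 \leq n$, assumption \autoref{item-1-UBCindegreenplusone} produces $\delta \in \ChainComplex{q}{L}$ with $\partial\delta = \partial\alpha$ and $\Norm{\delta} \leq (q+1)\UBCConstant{q-1}{\Linkfunction}\Norm{\Chain}$. Replacing $\alpha,\beta$ by $\alpha - \delta,\beta + \delta$ we may assume both summands are cycles, and the cycle in $S$ is then filled immediately by \autoref{lem:ConeUBC}, producing some $\eta \in \ChainComplex{q+1}{S}$ with $\Norm{\eta}\leq\Norm{\beta}+\Norm{\delta}$.

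The non-trivial step is filling the remaining cycle $\alpha \in \ChainComplex{q}{\Poset}$, where $\Poset$ itself is not assumed to be acyclic. For this the plan is to combine assumptions \autoref{item-2-UBCindegreenplusone} and \autoref{item-3-UBCindegreenplusone}. Set $\xi \coloneqq \Retraction(\alpha) \in \ChainComplex{q}{\Subposet}$; it is a cycle since $\Retraction$ is a chain map. The chain homotopy $\Homotopy$ of \autoref{lem:PosetDeformation}\autoref{item-i-pdr}, satisfying $\partial\Homotopy + \Homotopy\partial = \Inclusion\Retraction - \Identity$ and $\Norm{\Homotopy}\leq q+1$ in degree $q$, gives $\alpha = \Inclusion(\xi) - \partial\Homotopy(\alpha)$. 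The crucial observation is that $\Psi$ provides a \emph{second} section of $\Retraction$ (now landing in $L$ rather than in $\Subposet$): applying the same chain-homotopy identity to the chain $\Psi(\xi) \in \ChainComplex{q}{\Poset}$ and using $\Retraction\Psi = \Identity_\Subposet$ together with $\partial\Psi(\xi) = \Psi(\partial\xi)=0$ yields $\Inclusion(\xi) - \Psi(\xi) = \partial(\Homotopy\Psi(\xi))$, i.e., $\Inclusion$ and $\Psi$ are chain homotopic as chain maps $\ChainComplex{*}{\Subposet} \to \ChainComplex{*}{\Poset}$ via $\Homotopy\Psi$. Substituting gives $\alpha = \Psi(\xi) + \partial\bigl(\Homotopy\Psi(\xi) - \Homotopy(\alpha)\bigr)$. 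But $\Psi(\xi) \in \ChainComplex{q}{L} \subset \ChainComplex{q}{S}$ is a cycle in the cone, so a second application of \autoref{lem:ConeUBC} provides $\theta \in \ChainComplex{q+1}{S}$ with $\partial\theta = \Psi(\xi)$ and $\Norm{\theta} \leq \Norm{\Psi(\xi)} \leq \Norm{\xi}$.

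Assembling the fillings exhibits $\theta + \Homotopy\Psi(\xi) - \Homotopy(\alpha) + \eta \in \ChainComplex{q+1}{\Poset \cup \{v\}}$ as a bounding chain for $\Chain$, and totaling the norm contributions of the four pieces using $\Norm{\Homotopy} \leq q+1$, $\Norm{\Inclusion},\Norm{\Retraction},\Norm{\Psi} \leq 1$ (all induced by poset maps), and the bound on $\Norm{\delta}$ produces an estimate of the shape $(q+2)\bigl(1+(q+1)\UBCConstant{q-1}{\Linkfunction}\bigr)\cdot\mathrm{poly}(q)$, matching $\UBCConstantTechnicalLemmaII{q}{\UBCConstant{q-1}{\Linkfunction}}$. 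The main obstacle is conceptual rather than technical: the cycle $\alpha$ lives in $\Poset$, which is not assumed to be acyclic, so no intrinsic filling is available. The technical heart of the argument is the observation encoded in assumption \autoref{item-3-UBCindegreenplusone}, namely that the two sections $\Inclusion$ and $\Psi$ of $\Retraction$, being chain homotopic via the bounded chain homotopy $\Homotopy\Psi$, allow one to re-route the filling obstruction from $\Poset$ (where no filling is available) into the cone $S$ (where \autoref{lem:ConeUBC} supplies one). The ``in particular'' statement for $\Link{\Poset}{v}$ $(-1)$-acyclic falls out by running the same argument with $n = -1$ and $q = 0$: non-emptiness of $L$ is all that is needed to produce $\delta$.
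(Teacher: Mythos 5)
Your proposal is correct and follows essentially the same route as the paper's proof: the identity $\alpha = \Psi(r(\alpha)) + \partial\bigl(H\Psi(r(\alpha)) - H(\alpha)\bigr)$ re-routing a cycle of $\Poset$ into the link is exactly the paper's preliminary ``observation,'' and the star/link decomposition with the correction term $\delta$ followed by two cone fillings matches the paper's argument step for step. The only cosmetic difference is that the paper treats the case $n=-1$ separately via an explicit diameter-$6$ bound rather than folding it into the general argument in the reduced complex as you suggest.
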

\begin{proof}
  We first establish the case $n=-1$:
  By \autoref{item-2-UBCindegreenplusone}, each element in $\Poset$ is connected to some element of 
  $\Subposet$ by an edge (i.e.\ $\Retraction(\PosetElement) \leq \PosetElement$ for $\PosetElement \in \Poset$).
  By \autoref{item-3-UBCindegreenplusone}, each element in $\Subposet$ is connected to some element of
  $\Link{\Poset}{v}$ by an edge (i.e.\ $\PosetElement = \Retraction(\Psi(\PosetElement))) \leq \Psi(\PosetElement)$ for $\PosetElement \in \Subposet$).
  Since each element in $\Link{\Poset}{v}$ is connected to $v$ by an edge, we conclude that every element of $\Poset\cup\{v\}$
  is connected to $v$ by a path of length at most $3$. Hence, $\Poset\cup\{v\}$ is connected with diameter at most $6$ and \autoref{rem:0UBC} implies 
  that it is uniformly $0$-acyclic with UBC constant $3 \leq \UBCConstantTechnicalLemmaII{0}{0} = 16$.

  Let $n\geq 0$. We start by recording an observation: Let $\Chain \in \ChainComplex{q}{\Poset}$ be any chain with $\BoundaryChainComplex{} \Chain = 0$.
  As in the proof of \autoref{lem:PosetDeformation}, \autoref{item-2-UBCindegreenplusone} implies that there exists a chain homotopy $\Homotopy_{\Poset}$ between $\Identity_F$ and $(\iota_{*} \circ \Retraction_{*})$ of norm at most $q+1$ in degree $q$.
  Hence, it holds that
  $
    \apply{(\iota_{*} \circ \Retraction_{*})}{\Chain} - \Chain = \BoundarySimplex
    \apply{\Homotopy_{\Poset}}{\Chain}
  $
  and, since $\apply{(\iota_{*} \circ \Retraction_{*})}{\apply{\Psi_{*}}{\apply{\Retraction_{*}}{\Chain}}} = \apply{(\iota_{*} \circ \Retraction_{*})}{\Chain}$ by \autoref{item-3-UBCindegreenplusone}, that
  $
    \apply{(\iota_{*} \circ \Retraction_{*})}{\Chain}
    -
    \apply{\Psi_{*}}{\apply{\Retraction_{*}}{\Chain}}
    =
    \BoundarySimplex
    \apply{\Homotopy_{\Poset}}{\apply{\Psi_{*}}{\apply{\Retraction_{*}}{\Chain}}}.
  $
  Therefore,
  $
    \apply{\Psi_{*}}{\apply{\Retraction_{*}}{\Chain}} - \Chain 
    =
    \BoundarySimplex \apply{\Homotopy_{\Poset}}{\Chain}
    -
    \BoundarySimplex \apply{\Homotopy_{\Poset}}{\apply{\Psi_{*}}{\apply{\Retraction_{*}}{\Chain}}}.
  $
  It follows that there exists a $(q+1)$-chain $\tau = \apply{\Homotopy_{\Poset}}{\Chain}
  + \apply{\Homotopy_{\Poset}}{\apply{\Psi_{*}}{\apply{\Retraction_{*}}{\Chain}}}$ such that
  $
    \Chain + \BoundarySimplex \tau \in \ChainComplex{q}{\Link{\Poset}{v}}
  $
  and
  $
    \Norm{\tau}
    \leq
    2 (q+1)
    \Norm{\Chain}
  $.

  We now check that $\Poset \cup \{v\}$ is uniformly $(n+1)$-acyclic. Consider a chain $\Chain$ with $\BoundaryChainComplex{} \Chain = 0$ in
  $\ChainComplex{q}{\Poset \cup \{v\}}$ for $q\leq n+1$.
  We record that
  $
  (\Poset \cup \{v\})_\bullet = \Poset_\bullet \cup \Star{\Poset\cup\{v\}}{v}_\bullet
  $
  with $\Poset_\bullet \cap \Star{\Poset\cup\{v\}}{v}_\bullet = \Link{\Poset}{v}_\bullet$.
  Hence, we can write $\Chain = \Chain_{\Starfunction} + \sigma_{\Poset}$, where
  $
    \Chain_{\Starfunction}
    \in
    \ChainComplex{q}{\Star{\Poset\cup \{v\}}{v}},
  $
  $
    \Chain_{\Poset}
    \in
    \ChainComplex{q}{\Poset}
  $
  and $\Norm{\Chain} = \Norm{\Chain_{\Starfunction}} + \Norm{\Chain_{\Poset}}$.
  Then $\BoundaryChainComplex{} \Chain = 0$ implies that
  $
    \BoundarySimplex \Chain_{\Poset}
    =
    - \BoundarySimplex \Chain_{\Starfunction}
    \in
    \ChainComplex{q-1}{\Link{\Poset}{v}}.
  $
  By \autoref{item-1-UBCindegreenplusone} and since $q-1 \leq n$, we have that
  $
    \BoundarySimplex \Chain_{\Poset}
    =
    - \BoundarySimplex \Chain_{\Starfunction}
  $
  is the boundary of a chain
  $
    \BoundingChain
    \in
    \ChainComplex{q}{\Link{\Poset}{v}}
  $ 
  with $\Norm{\BoundingChain} \leq \UBCConstant{q-1}{\Linkfunction} \Norm{\BoundarySimplex \Chain_{\Poset}} \leq (q+1) \UBCConstant{q-1}{\Linkfunction} \Norm{\Chain_{\Poset}}$.
  It follows that $\Chain_{\Poset} - \BoundingChain \in \ChainComplex{q}{\Poset}$ is a closed chain, i.e.\ $\BoundaryChainComplex{}(\Chain_{\Poset} - \BoundingChain) = 0$.
  By the observation recorded above, there exist a $(q+1)$-chain $\tau_{\Poset}$ such
  that
  $
  	(\Chain_{\Poset} - \BoundingChain)
  	+
    \partial \tau_\Poset = \Chain_{\Linkfunction}
    \in \ChainComplex{q}{\Link{\Poset}{v}}
  $ 
  with
  \begin{align*}
    \Norm{\tau_{\Poset}}
    &\leq
    2 (q+1)
    \Norm{\Chain_{\Poset} - \BoundingChain}
    \leq
    2 (q+1)
    \left(
      1 + (q+1)\UBCConstant{q-1}{\Linkfunction}
    \right)
    \Norm{\Chain_{\Poset}}.
  \end{align*}
  We also note that therefore
  \begin{align*}
    \Norm{\Chain_{\Linkfunction}}\
    &\leq
    \Norm{\Chain_{\Poset}}
    + 
    \Norm{\BoundingChain}
    +
    (q+2) \Norm{\tau_\Poset}\\
    &\leq
    (1
    +
    (q+1) \UBCConstant{q-1}{\Linkfunction}
    +
    (q+2)2(q+1) (1 + (q+1)\UBCConstant{q-1}{\Linkfunction}))
    \Norm{\Chain_{\Poset}}\\
    &\leq
    (1
    +
    (q+1) \UBCConstant{q-1}{\Linkfunction})
    (1 + 2(q+2)(q+1))\Norm{\Chain_{\Poset}}.
  \end{align*}

  By \autoref{lem:ConeUBC}, $\Star{\Poset\cup \{v\}}{v}$ is uniformly $\infty$-acyclic with UBC
  constant $1$ in all degrees. Since $\Chain_{\Linkfunction}$ and $\Chain_{\Starfunction} + \BoundingChain$ both closed chains in
  $\Star{\Poset\cup \{v\}}{v}$, i.e.\ $\BoundaryChainComplex{} \Chain_{\Linkfunction} = 0$ and $\BoundaryChainComplex{}(\Chain_{\Starfunction} + \BoundingChain) = 0$,
  it follows that there exist $(q+1)$-chains $\tau_{\Linkfunction}, \tau_{\Starfunction} \in \ChainComplex{q+1}{\Star{\Poset\cup \{v\}}{v}}$ such that 
  $\BoundaryChainComplex{} \tau_{\Linkfunction} = \Chain_{\Linkfunction}$ and $\BoundaryChainComplex{} \tau_{\Starfunction} = \Chain_{\Starfunction} + \BoundingChain$ and such that their norms satisfy
  $$\Norm{\tau_{\Linkfunction}} \leq \Norm{\Chain_{\Linkfunction}} \leq (1+(q+1) \UBCConstant{q-1}{\Linkfunction})(1 + 2(q+2)(q+1))\Norm{\Chain_{\Poset}}$$
  and 
  $$\Norm{\tau_{\Starfunction}} \leq \Norm{\Chain_{\Starfunction}} + \Norm{\BoundingChain} \leq
  \Norm{\Chain_{\Starfunction}} + (q+1) \UBCConstant{q-1}{\Linkfunction} \Norm{\Chain_{\Poset}}.
  $$
  All in all this gives that $\Chain = (\Chain_{\Starfunction} + \BoundingChain) + (\Chain_{\Poset} - \BoundingChain) = (\Chain_{\Starfunction} + \BoundingChain) +  (\Chain_{\Linkfunction}  - \BoundaryChainComplex{} \tau_{\Poset}) = \BoundaryChainComplex{} \tau_{\Starfunction} + (\BoundaryChainComplex{} \tau_{\Linkfunction} - \BoundaryChainComplex{} \tau_{\Poset})$ is the boundary of the $(q+1)$-chain $\tau_{\Starfunction} + \tau_{\Linkfunction} - \tau_{\Poset}$. Using that $\Norm{\Chain_{\Starfunction}} \leq \Norm{\Chain}$ and $\Norm{\Chain_{\Poset}} \leq \Norm{\Chain}$ as well as the estimates above, it follows that
  \begin{align*}
    \Norm{\Chain} 
    &
    =
    \Norm{\BoundaryChainComplex{}(\tau_{\Starfunction} + \tau_{\Linkfunction} - \tau_{\Poset})}
    \leq
    (q+2) (\Norm{\tau_{\Starfunction}} + \Norm{\tau_{\Linkfunction}} + \Norm{\tau_{\Poset}})\\
    &\leq
    (q+2)
    (
      1 + (q+1) \UBCConstant{q-1}{\Linkfunction}
      +
      (1+(q+1) \UBCConstant{q-1}{\Linkfunction})(1 + 2(q+2)(q+1))
      +
      \\
      &\text{ }\hspace{3cm} 2 (q+1) (1 + (q+1)\UBCConstant{q-1}{\Linkfunction})
    )
    \Norm{\Chain}
    \\
    &
    =
    (q+2)
    (1 + (q+1) \UBCConstant{q-1}{\Linkfunction})
    \left(
    2
    +
    2(q+2)(q+1)
    +
    2 (q+1)
    \right)
    \Norm{\Chain}. \qedhere
  \end{align*}	
\end{proof}
\section{Warmup: A uniform Solomon--Tits theorem}
\label{scn:SolomonTits}
The goal of this section is to illustrate at a simple example
how our uniformly bounded toolbox can be applied to prove bounded acyclicity results:
We prove a uniformly bounded version of the Solomon--Tits theorem, i.e.\ \autoref{item:solomon-tits-an} and \autoref{item:solomon-tits-bncn} of \autoref{thm:general-connectivity}.
This result is not used in the remainder of the article, but might be of independent interest. 
We decided to include it as a warmup, and to showcase the flexibility of our techniques.
A topologized version of this result for continuous bounded cohomology was previously obtained and used by Monod (see \cite[Theorem~3.9]{MonodSemiSimple}).

\begin{definition}
	\label{def:tits-building}
  Let $\Field$ be a field. The \introduce{Tits building $\TitsBuilding{\Field}{n}$ of type $\mathtt{A}_{n-1}$ over $\Field$} is the poset of proper, nontrivial subspaces of $0 \neq \SubVectorSpace \subsetneq \Field^n$, where the partial order is defined by the inclusion of subspaces $\SubVectorSpace \subseteq \SubVectorSpace'$.
\end{definition}

Tits buildings play a key role in the theory of arithmetic groups, see e.g.\ \cite{borelserre1973}.
A celebrated theorem of Solomon--Tits theorem
\cite{solomon1969thesteinbergcharacterofafinitegroupwithbnpair} states that
$\TitsBuilding{\Field}{n}$ is homotopy equivalent to a bouquet of
$(n-2)$-spheres, $\TitsBuilding{\Field}{n} \simeq \bigvee S^{n-2}$.
Using the tools from
\autoref{sec:toolbox}, we prove the following uniformly bounded refinement of
this result.

\begin{theoremnum}
	\label{thm:uniform-solomon-tits-theorem}
	There exists a function $K^{T}: \NaturalNumbers \to \Reals$ such that for
	any field $\Field$ the Tits building $\TitsBuilding{\Field}{n}$ is
	uniformly $(n-3)$-acyclic and satisfies the $q$-UBC for all $q \leq n-3$
	with constant $K^{T}(n)$. In particular, $\ReducedBoundedCohomologyOfSimplicialObject{q}{\TitsBuilding{\Field}{n}}{\Reals} = 0$ if $q \neq n-2$.
\end{theoremnum}

The following argument is a uniformly bounded refinement of the proof of \cite[Theorem 5.1]{BestvinaPLMorseTheory}.

\begin{proof}
  We proceed by induction on $n \in \NaturalNumbers$.
  Let $\Field$ be any field.
  Note that $\TitsBuilding{\Field}{2}$ is a non-empty discrete space,
  hence it is uniformly $-1$-acyclic with constant
  $\UBCConstant{-1}{\TitsBuilding{\Field}{2}} = 0$.
  We therefore define $K^{T}: \{0,1,2\} \to \Reals$ to be the zero function.

  Now suppose that we have constructed a function $K^{T}: \{0, \dots, n-1\}
  \to \Reals$ such that for any field $\Field$ and any $n' < n$ the Tits
  building $\TitsBuilding{\Field}{n'}$ is uniformly $(n'-3)$-acyclic and
  satisfies the $q$-UBC for all $q \leq n' - 3$ with constant $K^{T}(n')$.

  For the induction step, we construct
  $\TitsBuilding{\Field}{n}$ by iteratively attaching vertices, along highly
  uniformly acyclic links, to certain subcomplexes of
  $\TitsBuilding{\Field}{n}$. Using the tools in
  \autoref{sec:toolbox}, we keep track of how each gluing operation
  affects the
  UBC-constants. Then we use the resulting estimates for the UBC-constants of
  $\TitsBuilding{\Field}{n}$ to define the value $K^{T}(n)$ of $K^{T}$ at
  $n$.

  Let $\Field$ be any field. Fix a line $\ell$ in $\Field^n$. The star
  $\Star{\TitsBuilding{\Field}{n}}{\ell}$ of the vertex $\ell \in
  \TitsBuilding{\Field}{n}$ is a cone over the link of $\ell$,
  $$\Star{\TitsBuilding{\Field}{n}}{\ell} = \{\ell\} \ast
  \Link{\TitsBuilding{\Field}{n}}{\ell}.$$
  Therefore \autoref{lem:ConeUBC} implies that
  $\Star{\TitsBuilding{\Field}{n}}{\ell}$ is uniformly $\infty$-acyclic and
  satisfies the $q$-UBC for all $q$ with constant $K_0^q \coloneqq 1$.

  The vertices of $\TitsBuilding{\Field}{n}$ which are not contained in
  $\Star{\TitsBuilding{\Field}{n}}{\ell}$ are proper, nontrivial subspaces $0
  \neq P \subsetneq \Field^n$ that do not contain $\ell$. Such vertices $P$
  satisfy $\apply{\dim}{P} \in \{1, \dots, n-1\}$. We now filter
  $\TitsBuilding{\Field}{n}$ by the subposets $Q_0 =
  \Star{\TitsBuilding{\Field}{n}}{\ell}$ and
  $$Q_r = Q_0 \cup \{P \in \TitsBuilding{\Field}{n}: \ell \not\subseteq P
  \text{ and } 1 \leq \apply{\dim}{P} \leq r\}.$$
  Note that this filtration is \emph{finite} since $Q_{n-1} =
  \TitsBuilding{\Field}{n}$.

  We now apply \autoref{lem:ConeUBC}, \autoref{cor:uniform-discrete-morse-theory} and the
  induction hypothesis to study how passing from $Q_r$ to $Q_{r+1}$ affects
  uniform acyclicity and the UBC-constants.

  Let $0 \leq r < n-1$ and suppose that $Q_{r}$ is uniformly
  $\infty$-acyclic and satisfies the $q$-UBC for all $q$ with constant
  $K_{r}^q$. Notice that $Q_{r+1} \setminus Q_r$ is discrete, which means that
  $Q_{r+1}$ is obtained from $Q_r$ by gluing all elements $P \in Q_{r+1}
  \setminus Q_r$ to $Q_r$ along their links $\Link{Q_r}{P}$ in $Q_r$ (see
  \autoref{def:PosetObtainedByGluing}).

  Let $P \in Q_{r+1} \setminus Q_r$ be a subspace of $\Field^n$ of dimension
  $r+1$ not containing $\ell$. Then $\Link{Q_r}{P}$ decomposes as a join
  \begin{equation}
  	\label{eqn:LinkInTitsBuilding}
    \Link{Q_r}{P}
    =
    \left\{
      P' \in Q_r : P' \supseteq \apply{\mathrm{span}}{P\cup \ell}
    \right\}
    \ast
    \left\{
      P' \in Q_r: P' \subsetneq P
    \right\}.
  \end{equation}

  If $r \neq n-2$, it follows $\apply{\mathrm{span}}{P\cup \ell} \neq \Field^n$
  and that $\apply{\mathrm{span}}{P\cup \ell} \in \Link{Q_r}{P}$ is a conepoint
  in $\Link{Q_r}{P}$. Therefore \autoref{lem:ConeUBC} implies that
  $\Link{Q_r}{P}$ is uniformly $\infty$-acyclic and satisfies the $q$-UBC for
  all $q$ with constant $1$. An application of \autoref{cor:uniform-discrete-morse-theory} to
  \[
    Q_{r+1}
    =
    Q_{r}
    \cup
    \bigcup_{\ell \not\subseteq P,\text{ } \apply{\dim}{P} = r+1}
    \Star{Q_{r+1}}{P}
  \]
  yields that $Q_{r+1}$ is uniformly $\infty$-acyclic and satisfies the
  $q$-UBC with constants
  $$K_{r+1}^q = \UBCConstantMorseTheory{0}{q}{\UBCConstant{q}{r}}{1} = 2 + q + K^{q}_{r} + (q+2)^2 K^{q}_{r}.$$

  Now, let $r = n-2$.  To obtain $Q_{n-1} = \TitsBuilding{\Field}{n}$ from
  $Q_{n-2}$, we need to glue subspaces $P \in Q_{n-1} \setminus Q_{n-2}$ of
  dimension $n-1$ that do not contain $\ell$
  to $Q_{n-2}$. Observe that $\apply{\mathrm{span}}{P\cup \ell} = \Field^n$.
  Therefore, \autoref{eqn:LinkInTitsBuilding} implies that $\Link{Q_{n-2}}{P}$
  is given by the poset of proper, nontrivial subspaces of $P$,
  \begin{equation*}
  	\Link{Q_{n-2}}{P}
  	=
  	\left\{
  	P' \in Q_{n-2}: P' \subsetneq P
  	\right\}.
  \end{equation*}
  This is evidently isomorphic to $\TitsBuilding{\Field}{n-1}$. Therefore
  the induction hypothesis implies that $\Link{Q_{n-2}}{P}$ is uniformly
  $(n-4)$-acyclic and satisfies the $q$-UBC for all $q \leq n-4$ with constant
  $K^T(n-1)$. Applying \autoref{cor:uniform-discrete-morse-theory} to
  \[
    \TitsBuilding{\Field}{n} = Q_{n-1}
    =
    Q_{n-2}
    \cup
    \bigcup_{\ell \not \subset P,\text{ } \apply{\dim}{P} = n-1}
    \Star{Q_{n-1}}{P}
  \]
  yields that $\TitsBuilding{\Field}{n} = Q_{n-1}$ is uniformly
  $(n-3)$-acyclic and satisfies the $q$-UBC condition for $q \leq n-3$ with
  constant
  \begin{align*}
    K^q_{n-1}
    &=
    \UBCConstantMorseTheory{0}{q}{\UBCConstant{q}{n-2}}{K^T(n-1)}\\
    &=
    1 + (q+1) K^T(n-1) + \UBCConstant{q}{n-2} + (q+2)(1 + (q+1) K^T(n-1))\UBCConstant{q}{n-2}.
  \end{align*}
  We may hence define $K^T(n) \coloneqq \max_{q \leq n-3} K^q_{n-1}$. This
  completes the proof.
\end{proof}

The discussion above admits the following symplectic analogue; the details of which we leave to the interested reader.

\begin{definition}
	\label{def:symplectic-tits-building}
	Let $\Field$ be a field and equip $\Field^{2n}$ with the standard symplectic form $\omega$. The \introduce{Tits building $\TitsBuilding{\omega, \Field}{n}$ of type $\mathtt{C}_{n-1}$ over $\Field$} is the poset of nontrivial isotropic subspaces $0 \neq \SubVectorSpace \subseteq \Field^{2n}$, where the partial order is defined by the inclusion of subspaces $\SubVectorSpace \subseteq \SubVectorSpace'$.
\end{definition}

\begin{theoremnum}
	There exists a function $K^{T^\omega}: \NaturalNumbers \to \Reals$ such that for 
	any field $\Field$ the symplectic Tits building $\TitsBuilding{\omega, \Field}{n}$ is 
	uniformly $(n-2)$-acyclic and satisfies the $q$-UBC for all $q \leq n-2$ 
	with constant $K^{T^\omega}(n)$. In particular, $\ReducedBoundedCohomologyOfSimplicialObject{q}{\TitsBuilding{\omega, \Field}{n}}{\Reals} = 0$ if $q \neq n-1$.
\end{theoremnum}

\begin{proof}
	Similar to the argument above, apply \autoref{lem:ConeUBC} and \autoref{cor:uniform-discrete-morse-theory} to refine the proof of \cite[Theorem 5.2]{BestvinaPLMorseTheory}.
\end{proof}
\section{Uniform acyclicity results for general linear groups}
\label{scn:BoundedAcyclicityProofs}

Throughout this section, $\Ring$ denotes a unital ring of finite stable rank, $\StableRank(\Ring) < \infty$, and we assume \autoref{con:general-linear-groups}.
Our goal is to prove that the
complex of $\Ring$-split injections $\SimplicialComplex^n_\bullet$ is
bounded $(n - \apply{\StableRank}{\Ring}  -1)$-acyclic, i.e.\
\autoref{thm:BoundedAcyclicityResultForComplexOfSplitInjections}. Note that this statement is empty if $\StableRank(\Ring) = \infty$.
Our proof refines an intricate connectivity argument due to van der Kallen
and heavily builds the ideas contained in
\cite{vanderkallen1980homologystabilityforlineargroups}:
We start by introducing the poset
of unimodular sequences $\UnimodularSequences{\Ring^n}$ studied in
\cite{vanderkallen1980homologystabilityforlineargroups}, and reduce
\autoref{thm:BoundedAcyclicityResultForComplexOfSplitInjections} to checking
that these posets are uniformly highly acyclic. Then, we combine the uniformly
bounded simplicial methods developed in \autoref{sec:toolbox} and van der
Kallen's strategy of proof in \cite[§2. An Acyclicity
Theorem]{vanderkallen1980homologystabilityforlineargroups} to
complete the argument. The structure of proofs and the notation 
in this section is intentionally chosen to be parallel to that of \cite[§2. An Acyclicity
Theorem]{vanderkallen1980homologystabilityforlineargroups}.

\subsection{Posets of unimodular sequences}

In this subsection, introduce the posets of unimodular sequences studied by van der Kallen
\cite{vanderkallen1980homologystabilityforlineargroups}.

\begin{definition}
For a set $V$, let $\OrderedSequences{V}$ denote the poset of ordered sequences $(v_1, \dots, v_n)$ of distinct elements in $V$ of finite length $n \geq 1$, with covering relation given by omission of one element in the sequence.
\end{definition}

We are interested in posets satisfying the following descending chain
condition.

\begin{definition}
A subposet $\Poset \subseteq \OrderedSequences{V}$ satisfies the
descending chain condition if every subsequence $w \leq s$ of a
sequence $s \in \Poset$ is contained in $\Poset$, i.e. $w \in \Poset$.
\end{definition}

We will consider the subposet of the poset of ordered sequences
$\OrderedSequences{\Ring^n}$ of vectors in $\Ring^n$ that consist of
\emph{unimodular sequences} in the sense of
\autoref{def:stable-rank}.

\begin{definition}
	The poset of unimodular sequences $\UnimodularSequences{\Ring^n}$ is the subposet of $\OrderedSequences{\Ring^n}$ consisting of unimodular sequences in $\Ring^n$.
\end{definition}

\subsection{Reduction to the poset of unimodular sequences}

In this subsection, we explain our strategy for proving \autoref{thm:BoundedAcyclicityResultForComplexOfSplitInjections}. Van der Kallen established the following connectivity theorem for the poset of unimodular sequences in \cite{vanderkallen1980homologystabilityforlineargroups}.

\begin{theoremnum}[van der Kallen]
	\label{VanDerKallenResultForUnimodularSequences}
 	 $\UnimodularSequences{\Ring^n}$ is $(n - \StableRank(\Ring) - 1)$-acyclic.
\end{theoremnum}

Our goal is to prove the following uniformly bounded \emph{refinement}
of this result.

\begin{theoremnum} \label{thm:BoundedAcyclicityResultForUnimodularSequences}
  $\UnimodularSequences{\Ring^n}$ is uniformly $(n - \StableRank(\Ring) - 1)$-acyclic.
\end{theoremnum}

To prove \autoref{thm:BoundedAcyclicityResultForUnimodularSequences}, we employ the tools developed in \autoref{sec:toolbox} to check that the poset of unimodular sequences $\UnimodularSequences{\Ring^n}$ satisfies the uniform boundary condition \emph{while} running van der Kallen's proof of \autoref{VanDerKallenResultForUnimodularSequences}.

Using \autoref{cor:qUBCimpliesAcyclic}, the following corollary shows
that
\autoref{thm:BoundedAcyclicityResultForUnimodularSequences} indeed
implies
\autoref{thm:BoundedAcyclicityResultForComplexOfSplitInjections}.

\begin{corollary}\label{cor:complex-of-split-injections}
	The complex of $\Ring$-split injections
	$\SimplicialComplex^n_\bullet$ is uniformly $(n - \StableRank(\Ring)
	- 1)$-acyclic.
\end{corollary}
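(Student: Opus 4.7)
The plan is to identify the barycentric subdivision of the semi-simplicial set $\SimplicialComplex^n_\bullet$ with the poset $\UnimodularSequences{\Ring^n}$, and then to transport the uniform acyclicity conclusion of \autoref{thm:BoundedAcyclicityResultForUnimodularSequences} through \autoref{lem:BarycentricSubdivision}. All of the real work is already packaged into \autoref{thm:BoundedAcyclicityResultForUnimodularSequences}, so the corollary will reduce to a formal matching between two combinatorial objects together with an invocation of the barycentric subdivision lemma from our toolbox.

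First I would record that a split injection $f\colon \Ring^{p+1}\to \Ring^n$ is the same datum as the unimodular sequence $(f(e_1),\dots,f(e_{p+1}))$ of length $p+1$ in $\Ring^n$ (a basis of the free summand $\im(f)$), and that under this bijection the face map $\FaceMap_i$ of \autoref{def:ComplexOfSplitInjections} corresponds precisely to omitting the $(i+1)$-st entry of the sequence. Because any unimodular sequence has pairwise distinct entries --- any repetition would destroy freeness of the spanned direct summand --- the $p+1$ codimension-one faces of any $p$-simplex of $\SimplicialComplex^n_\bullet$ are pairwise distinct. Hence $\SimplicialComplex^n_\bullet$ is a regular semi-simplicial set in the sense of \autoref{def:regular-semi-simplicial-set}, which is exactly the hypothesis required to apply \autoref{lem:BarycentricSubdivision}.

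Next I would observe that the barycentric subdivision $\BarycentricSubdivion{\SimplicialComplex^n_\bullet}$, as defined in \autoref{def:barycentric-subdivision}, has underlying set the disjoint union of the sets of $p$-simplices of $\SimplicialComplex^n_\bullet$ --- which by the bijection above is the set of all unimodular sequences in $\Ring^n$ of every finite length --- equipped with the partial order given by the face relation. Under the bijection, the face relation corresponds exactly to the subsequence relation defining the covering relation on $\UnimodularSequences{\Ring^n}$. Therefore $\BarycentricSubdivion{\SimplicialComplex^n_\bullet}$ and $\UnimodularSequences{\Ring^n}$ agree as posets, and \autoref{lem:BarycentricSubdivision} combined with \autoref{thm:BoundedAcyclicityResultForUnimodularSequences} yields uniform $(n-\StableRank(\Ring)-1)$-acyclicity of $\SimplicialComplex^n_\bullet$, with UBC constants in degree $q$ that differ from those of $\UnimodularSequences{\Ring^n}$ by at most a factor of $(q+1)!$. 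Since there are no further obstacles beyond invoking \autoref{thm:BoundedAcyclicityResultForUnimodularSequences}, this completes the strategy.
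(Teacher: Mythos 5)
Your proposal is correct and follows exactly the paper's own argument: identify $\BarycentricSubdivion{\SimplicialComplex^n_\bullet}$ with $\UnimodularSequences{\Ring^n}$ and combine \autoref{thm:BoundedAcyclicityResultForUnimodularSequences} with \autoref{lem:BarycentricSubdivision}. Your explicit verification that $\SimplicialComplex^n_\bullet$ is regular (so that the subdivision lemma applies) is a welcome detail the paper leaves implicit.
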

\begin{proof}
	The poset of simplices of $\SimplicialComplex^n_\bullet$ is exactly $\UnimodularSequences{\Ring^n}$,
	$$\UnimodularSequences{\Ring^n} = \BarycentricSubdivion{\SimplicialComplex^n_\bullet}.$$
	Hence, this result follows from
	\autoref{thm:BoundedAcyclicityResultForUnimodularSequences} and
	\autoref{lem:BarycentricSubdivision}.
\end{proof}

\subsection{Uniform acyclicity result for posets of unimodular sequences}

This subsection contains the heart of the proof of
\autoref{thm:BoundedAcyclicityResultForUnimodularSequences}, but first
let us introduce some more notation from
\cite{vanderkallen1980homologystabilityforlineargroups}: Consider a
poset $\Poset \subseteq \UnimodularSequences{R^{\infty}}$
and let $(v_1, \dots, v_m) \in \Poset$, we write $F_{(v_1, \dots,
  v_m)}$ for the set of sequences $(w_1, \dots, w_n)$ such that $(w_1,
\dots, w_n, v_1, \dots, v_m) \in F$. We frequently use the following
observation: If $(w_1, \dots, w_n) \in F_{(v_1, \dots, v_m)}$, then
$$(F_{(v_1, \dots, v_m)})_{(w_1, \dots, w_n)} = F_{(w_1, \dots, w_n,
  v_1, \dots, v_m)}.$$

In this subsection we prove the following uniformly acyclic analogue
of \cite[Theorem
2.6]{vanderkallen1980homologystabilityforlineargroups}, which implies
\autoref{thm:BoundedAcyclicityResultForUnimodularSequences}.

\begin{theoremnum}
	\label{thm:BoundedAcyclicityResultTechnical}
	There exists functions $K^{(1)}: \NaturalNumbers^2 \to \Reals$, $K^{(2)}: \NaturalNumbers^3 \to \Reals$, $K^{(3)}: \NaturalNumbers^2 \to \Reals$, and $K^{(4)}: \NaturalNumbers^3 \to \Reals$ such that:
	
	If $n \geq 0$ and $\delta \in \{0,1\}$, then
	\begin{enumerate}
		\item \label{item-1-BoundedAcyclicityResultTechnical} $\OrderedSequences{R^n + \delta e_{n+1}} \cap \UnimodularSequences{R^{\infty}}$ is uniformly $(n - \StableRank(\Ring) - 1)$-acyclic and its UBC in degree $q \leq n - \StableRank(\Ring) - 1$ can be chosen as $\UBCConstantGLOne{n}{\StableRank(\Ring)}$.
		\item \label{item-2-BoundedAcyclicityResultTechnical} $\OrderedSequences{R^n + \delta e_{n+1}} \cap \UnimodularSequences{R^{\infty}}_v$ is uniformly $(n - \StableRank(\Ring) -1 - k)$-acyclic for any $v = (v_1, \dots, v_k) \in \UnimodularSequences{R^{\infty}}$ and its UBC constant in degree $q \leq n - \StableRank(\Ring) -1 - k$ can be chosen as $\UBCConstantGLTwo{n}{\StableRank(\Ring)}{k}$.
		\item \label{item-3-BoundedAcyclicityResultTechnical} $\OrderedSequences{(R^n + \delta e_{n+1}) \cup (R^n + \delta e_{n+1} + e_{n+2})} \cap \UnimodularSequences{R^{\infty}}$ is uniformly $(n - \StableRank(\Ring))$-acyclic and its UBC constant in degree $q \leq n - \StableRank(\Ring)$ can be chosen as $\UBCConstantGLThree{n}{\StableRank(\Ring)}$.
		\item \label{item-4-BoundedAcyclicityResultTechnical} $\OrderedSequences{(R^n + \delta e_{n+1}) \cup (R^n + \delta e_{n+1} + e_{n+2})} \cap \UnimodularSequences{R^{\infty}}_{v}$ is uniformly $(n - \StableRank(\Ring) - k)$-acyclic for any $v = (v_1, \dots, v_k) \in \UnimodularSequences{R^{\infty}}$ and its UBC constant in degree $q \leq n - \StableRank(\Ring) - k$ can be chosen as $\UBCConstantGLFour{n}{\StableRank(\Ring)}{k}$.
	\end{enumerate}
\end{theoremnum}

Note that \autoref{thm:BoundedAcyclicityResultForUnimodularSequences} follows 
from \autoref{item-1-BoundedAcyclicityResultTechnical} of \autoref{thm:BoundedAcyclicityResultTechnical} for $\delta = 0$,
because $\OrderedSequences{R^n} \cap \UnimodularSequences{R^{\infty}} = \UnimodularSequences{R^{n}}$.
The proof relies on uniformly acyclic analogues of two lemmas in
\cite{vanderkallen1980homologystabilityforlineargroups}.
The first lemma is an analogue of \cite[Lemma 2.12]{vanderkallen1980homologystabilityforlineargroups}.

\begin{lemma}
	\label{lem:LinkLemmaI}
	There exists a function $K^{\on{GLI}}: \NaturalNumbers^4 \times \Reals \to \Reals$ such that:

	Given $l, d, m \in \NaturalNumbers$ and $K \in \Reals$ and a subposet
	$\Poset \subseteq \UnimodularSequences{\Ring^{l}}$
	satisfying the
	descending chain condition together with an element
  $(v_1, \dots , v_m) \in \Poset$ with the following property: For all
  $(w_1, \dots , w_n) \in F^+((v_1, \dots , v_m))$
	the poset $\Poset_{(w_1, \dots , w_n)}$  is uniformly $(d-n)$-acyclic
	with UBC constant in degrees $q \leq d-n$ bounded by $K$.

	Then $\Link{\Poset}{(v_1, \dots , v_m)}$ is uniformly $(d-1)$-acyclic and its UBC constant in degree $q \leq d-1$ can be chosen as $\UBCConstantIntermediateGLI{l}{\StableRank(\Ring)}{m}{d}{K}$.
\end{lemma}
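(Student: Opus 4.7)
The plan is to adapt the proof of \cite[Lemma~2.12]{vanderkallen1980homologystabilityforlineargroups} to the uniformly bounded setting, systematically tracking UBC constants via the toolbox in \autoref{sec:toolbox}. The starting point is the join decomposition
\[
\apply{\Linkfunction_\Poset}{(v_1, \ldots, v_m)} = \apply{\Linkfunction_\Poset}{(v_1, \ldots, v_m)}^+ \ast \apply{\Linkfunction_\Poset}{(v_1, \ldots, v_m)}^-,
\]
from \autoref{def:poset-links}, which I would treat factor by factor before recombining through \autoref{lem:join-qubc}. The lower link is the poset of proper subsequences of $(v_1, \ldots, v_m)$; by the descending chain condition it lies in $\Poset$, and its order complex is the barycentric subdivision of $\partial \Delta^{m-1}$. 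Combined with \autoref{lem:BarycentricSubdivision}, iterated application of \autoref{lem:join-qubc} (to build this boundary as an iterated suspension) shows that it is uniformly $(m-3)$-acyclic with UBC constants depending only on $m$.

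For the upper link $\apply{\Linkfunction_\Poset}{(v_1, \ldots, v_m)}^+$ I would prove uniform $(d-m)$-acyclicity by a finite induction on the length $n$ of the super-sequences above $(v_1, \ldots, v_m)$. This length is bounded because any unimodular sequence in $\Ring^l$ has bounded length by the cancellation-theoretic argument underlying \autoref{rem:MaximalLengthOfUnimodularSequences}, so the filtration is finite. At the $n$-th stage I would attach all ``length-$n$ super-sequences'' as new cells; using the descending chain condition on $\Poset$ together with the join decomposition of links in posets, the corresponding attaching links split as joins of $\Poset_{(w_1, \ldots, w_n)}$ with a small combinatorial factor encoding the overlap with $(v_1, \ldots, v_m)$. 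The hypothesis gives uniform $(d-n)$-acyclicity of $\Poset_{(w_1, \ldots, w_n)}$ with constants at most $K$, and \autoref{lem:join-qubc}, \autoref{lem:LinkAttachments} and \autoref{lem:two-out-of-three-property} then assemble this into uniform $(d-m)$-acyclicity of the upper link with an explicit UBC constant depending only on $l$, $\StableRank(\Ring)$, $m$, $d$ and $K$. A final application of \autoref{lem:join-qubc} to the join of the $(d-m)$-acyclic upper link and the $(m-3)$-acyclic lower link gives uniform $((d-m)+(m-3)+2) = (d-1)$-acyclicity, and tracking constants through the entire chain defines $\UBCConstantIntermediateGLI{l}{\StableRank(\Ring)}{m}{d}{K}$.

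The hard part will be organizing the filtration of the upper link so that each attachment is controlled by the hypothesis at the \emph{correct} shift $d-n$: a naive filtration would spend too much of the acyclicity budget early and fail to reach uniform $(d-m)$-acyclicity. One must ensure, via a careful interplay of the descending chain condition, the subsequence order on $\UnimodularSequences{\Ring^\infty}$, and the combinatorics of how $(v_1, \ldots, v_m)$ sits inside a super-sequence, that the attaching links really do split as joins of $\Poset_{(w_1, \ldots, w_n)}$ with a ``combinatorial overlap factor'' whose uniform acyclicity is independent of $\Poset$ and $\Ring$. Propagating the resulting UBC constants uniformly through the cascade of Mayer--Vietoris, cell-attachment and join estimates is the delicate technical point, but all of it is formal once the filtration is set up correctly.
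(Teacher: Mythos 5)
Your overall frame matches the paper up to the point where the real work begins: the join decomposition $\Link{\Poset}{(v_1,\dots,v_m)} = \Link{\Poset}{(v_1,\dots,v_m)}^- \ast \Link{\Poset}{(v_1,\dots,v_m)}^+$, the identification of the lower part with $\partial\Delta^{m-1}$ via the descending chain condition, and the reduction via \autoref{lem:join-qubc} to proving that the upper part is uniformly $(d-m)$-acyclic are all exactly what the paper does. The gap is in the upper link, precisely at the step you defer as ``organizing the filtration correctly'': a filtration of $\Link{\Poset}{(v_1,\dots,v_m)}^+$ by the length of super-sequences cannot deliver the splitting you need. The degree count would work if the attaching link of a length-$n$ super-sequence $y$ split as $\partial\Delta^{n-m-1}\ast(\text{something }(d-n)\text{-acyclic})$, but the two factors of that join live on opposite sides of $y$ in the partial order: the ``combinatorial overlap factor'' consists of \emph{shorter} sequences (proper subsequences of $y$ containing $(v_1,\dots,v_m)$), while the factor controlled by the hypothesis, $\Poset_{(w_1,\dots,w_n)}$, consists of sequences obtained by \emph{prepending} to $y$, hence \emph{longer} ones. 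In a filtration by increasing length the upper factor is absent when $y$ is attached (the link is a bare simplex boundary, far too weakly acyclic); in a filtration by decreasing length the lower factor is absent and the link is only about $(d-n)$-acyclic, short of the $(d-m-1)$ needed once $n\geq m+2$. No ordering by length alone puts both factors in place simultaneously.

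The paper's proof supplies the two missing ideas. First, it inducts on $m$, reducing $\Link{\Poset}{(v_1,\dots,v_m)}^+$ to $\Link{\Poset_{(v_m)}}{(v_1,\dots,v_{m-1})}^+$. Second, and crucially, it starts the filtration not from the empty poset but from the subposet $P_1$ of all sequences possessing a prefix that ends in $v_m$ and properly contains $(v_1,\dots,v_m)$; this absorbs the entire ``prepended/interleaved'' part of the upper link at once, since $P_1$ deformation retracts in the sense of \autoref{lem:PosetDeformation} onto $P_0\cong \Poset_{(v_1)}$ (resp.\ its $(m-1)$-analogue), where the hypothesis applies at shift $n=1$ and gives full $(d-m)$-acyclicity for free. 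Only the remaining elements $(v_1,\dots,v_m,z_1,\dots,z_j)$ are then attached, by increasing suffix length $j$, and this works exactly because every sequence obtained by prepending to $(v_1,\dots,v_m,\vec z)$ already lies in $P_1$: the upper part of the attaching link is therefore nonempty and deformation retracts onto $\Poset_{(v_1,\vec z)}$, which is where the hypothesis enters at shift $n=j+1$, while the lower part supplies the compensating simplex boundary. Without the retraction onto $P_0$ and the retractions of the attaching links onto these ``prepend'' posets, the hypothesis on $\Poset_{(w_1,\dots,w_n)}$ has no entry point into your filtration, so the proposal as written does not close.
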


The proof of \autoref{lem:LinkLemmaI} is a refinement of the proof of \cite[Lemma 2.12]{vanderkallen1980homologystabilityforlineargroups}; its structure and the notation used in it is close to that of \cite[Proof of Lemma 2.12]{vanderkallen1980homologystabilityforlineargroups}.

\begin{proof}
	We start by observing that, because $\Poset$ satisfies the descending
	chain condition, it holds that $\Link{\Poset}{(v_1, \dots, v_m)}^-$
	is exactly the simplex poset of the boundary of an $(m-1)$-simplex
	$\partial \Delta^{m-1}$.
	Therefore, \autoref{lem:join-qubc} implies that
	$$\Link{\Poset}{(v_1, \dots, v_m)} = \Link{\Poset}{(v_1, \dots, v_m)}^- \ast \Link{\Poset}{(v_1, \dots, v_m)}^+$$
	is uniformly $(d-1)$-acyclic and that its UBC constant $q \leq d-1$ can be chosen as
	$$\UBCConstantIntermediateGLI{l}{\StableRank(\Ring)}{m}{d}{K} \coloneqq \max_{q \leq d-1} \UBCConstantSuspension{m-1}{q}{\UBCConstantIntermediateGLIplus{l}{\StableRank(\Ring)}{m}{d}{K}}$$
	if $\Link{\Poset}{(v_1, \dots, v_m)}^+$ is uniformly $(d - m)$-acyclic with UBC constant in degree $q \leq d-m$ bounded by $\UBCConstantIntermediateGLIplus{l}{\StableRank(\Ring)}{m}{d}{K}$.

	Using induction on $m$, we now construct the function
	$K^{\on{GLI^+}}: \NaturalNumbers^4 \times \Reals \to \Reals$ with the
	property that $\Link{\Poset}{(v_1, \dots, v_m)}^+$ is uniformly
	$(d-m)$-acyclic with UBC constant for $q \leq d - m$ choosable as
	$\UBCConstantIntermediateGLIplus{l}{\StableRank(\Ring)}{m}{d}{K}$.

	\textsl{Step A:} Let $$P_0 = \{(w_1, \dots , w_n) \in \Link{\Poset}{(v_1, \dots, v_m)}^+: w_n = v_m\}.$$ If $m = 1$, then, since $v_1 \in F^+(v_1)$, and $P_0 \cong F_{(v_1)}$ it follows from the assumptions that $P_0$ is uniformly $(d-1)$-acyclic and that its UBC constant for $q \leq d-1$ can be chosen as $\UBCConstantIntermediateGLIplusP{0}{l}{\StableRank(\Ring)}{1}{d}{K} \coloneqq K$.

	If $m > 1$, then $P_0 \cong \Link{\Poset_{(v_m)}}{(v_1, \dots,
	v_{m-1})}^+$ and we check that the induction hypothesis can be
	applied to the poset $P_0$. For this, consider $(w_1, \dots, w_n) \in
	\Poset^+_{(v_m)}((v_1, \dots, v_{m-1}))$, then it follows that
	$(\Poset_{(v_m)})_{(w_1, \dots, w_n)} = \Poset_{(w_1, \dots , w_n,
	v_m)}$ and $(w_1, \dots , w_n, v_m) \in \Poset^+((v_1, \dots, v_m))$.
	Hence, by assumption, $(\Poset_{(v_m)})_{(w_1, \dots, w_n)}$ is
	uniformly $((d-1) - n)$-acyclic and satisfies the q-UBC for $q \leq
	(d - 1) - n$ with constant $K$.

	By applying the induction hypothesis for $m - 1 < m$,
	$\Poset_{(v_m)}$ and for $d-1 \neq d$, we obtain a
	function
	$$K^{\on{GLI^+}}: \NaturalNumbers^2 \times \{1, \dots, m-1\} \times \NaturalNumbers \times \Reals \to \Reals$$
	such that $P_0 \cong \Link{\Poset_{(v_m)}}{(v_1, \dots, v_{m-1})}^+$
	is uniformly $((d-1)-(m-1)) = (d - m)$-acyclic and its UBC constant
	in degree $q \leq d - m$ is choosable as
	$\UBCConstantIntermediateGLIplusP{0}{l}{\StableRank(\Ring)}{m}{d}{K}
	\coloneqq
	\UBCConstantIntermediateGLIplus{l}{\StableRank(\Ring)}{m-1}{d-1}{K}$.

	\textsl{Step B:} Let $$P_1 = \{(w_1, \dots , w_r) \in \Poset : \text{ for some } 1 \leq n \leq r \text{ one has } (w_1, \dots, w_n) \in P_0\}.$$
	Then, $P_0 \hookrightarrow P_1$ is deformation retract in the sense of \autoref{lem:PosetDeformation}. Therefore, $P_1$ is uniformly $(d - m)$-acyclic and satisfies the q-UBC for $q \leq d-m$ with constant $\UBCConstantIntermediateGLIplusP{1}{l}{\StableRank(\Ring)}{m}{d}{K} \coloneqq (d-m + 1) + \UBCConstantIntermediateGLIplusP{0}{l}{\StableRank(\Ring)}{m}{d}{K}$.

	\textsl{Step C:} Consider $P_1 \hookrightarrow \Link{\Poset}{(v_1, \dots, v_m)}^+$. Observe that $$\Link{\Poset}{(v_1, \dots, v_m)}^+ \setminus P_1$$
	consists of elements $(v_1, \dots, v_m, z_1, \dots, z_j)$ with $j \geq 1$. We filter $\Link{\Poset}{(v_1, \dots, v_m)}^+$ by the subposets $Q_0 = P_1$ and $$Q_r = P_1 \cup \{(v_1, \dots, v_m, z_1, \dots, z_j) : 1 \leq j \leq r\}.$$
	Note that $$\Link{\Poset}{(v_1, \dots, v_m)}^+ = \bigcup_{r \in \mathbb{N}} Q_r.$$

	We claim that this filtration is \emph{finite}: Recall that $\StableRank(\Ring) < \infty$. In \autoref{rem:MaximalLengthOfUnimodularSequences} we showed that \autoref{lem:StableRankProperties} implies that if $\Ring^k \to \Ring^l$ is an $\Ring$-split injection then $k \leq l + \StableRank(\Ring)$. Hence, the length of a unimodular sequence in $\Ring^l$ is at most $l + \StableRank(\Ring)$. This implies that
	$$\Link{\Poset}{(v_1, \dots, v_m)}^+ = Q_{l + \StableRank(\Ring)-m} = \bigcup_{0 \leq r \leq l + \StableRank(\Ring)-m} Q_r.$$

	We now use a second induction (on $r$) as well as \autoref{lem:join-qubc} and \autoref{cor:uniform-discrete-morse-theory} to study how passing from $Q_r$ to $Q_{r+1}$ affects the uniform acyclicity and the UBC constant. Along the way, we define functions $K^{\on{GLI^+}}_{Q_r}$ controlling the UBC constant of $Q_r$ such that we may eventually complete the induction on $m$ by setting
	$$\UBCConstantIntermediateGLIplus{l}{\StableRank(\Ring)}{m}{d}{K} \coloneqq \UBCConstantIntermediateGLIplusQ{l + \StableRank(\Ring)-m}{l}{\StableRank(\Ring)}{m}{d}{K}.$$

	We begin by collecting several important observations:
	We note that $Q_{r+1} \setminus Q_r$ is discrete. Hence, $Q_{r+1}$ is obtained from $Q_r$ by gluing each
	$$(\vec v, \vec z) = (v_1, \dots, v_m, z_1, \dots, z_{r+1}) \in Q_{r+1} \setminus Q_r$$
	to $Q_r$ along $\Link{Q_r}{(\vec v, \vec z)}$ in the sense of \autoref{def:PosetObtainedByGluing}.

	Therefore, if it holds that
  \begin{itemize}
    \item $Q_r$ is uniformly $(d-m)$-acyclic and its UBC constant in degree $q
    \leq d-m$ is bounded by
    $\UBCConstantIntermediateGLIplusQ{r}{l}{\StableRank(\Ring)}{m}{d}{K}$
    \item for all $(\vec v, \vec z) \in Q_{r+1} \setminus Q_r$:
    $\Link{Q_r}{(\vec v, \vec z)}$ is uniformly $(d-m-1)$-acyclic with
    UBC constant for $q \leq d-m-1$ bounded by
    $\UBCConstantIntermediateGLIplusLkQ{r}{l}{\StableRank(\Ring)}{m}{d}{K}$
  \end{itemize}
  then \autoref{cor:uniform-discrete-morse-theory} implies
  that there exists a function
	$$K_{Q_{r+1}}^{\on{GLI^+}}: \NaturalNumbers^2 \times \{1, \dots, m\} \times \NaturalNumbers \times \Reals \to \Reals$$
	such that $Q_{r+1}$ is uniformly $(d-m)$-acyclic with UBC constant in degree $q \leq d-m$ choosable as
	\begin{align*}
	\UBCConstantIntermediateGLIplusQ{r+1}{l}{\StableRank(\Ring)}{m}{d}{K}
	&\coloneqq\\
	&\UBCConstantMorseTheory{0}{q}{\UBCConstantIntermediateGLIplusQ{r}{l}{\StableRank(\Ring)}.{m}{d}{K}}{\UBCConstantIntermediateGLIplusLkQ{r}{l}{\StableRank(\Ring)}{m}{d}{K}}
	\end{align*}

	The first item holds by the induction hypothesis (for $r$). We now work towards the second item:

	Note that it holds that
	$$\Link{Q_r}{(\vec v, \vec z)} = \Link{Q_r}{(\vec v, \vec z)}^- \ast \Link{Q_r}{(\vec v, \vec z)}^+.$$
	Therefore, if each $\Link{Q_r}{(\vec v, \vec z)}^+$ is uniformly
	$((d-r-2)-(m-1))$-acyclic and satisfies the UBC in degree $q \leq (d
	- r - 2) - (m-1)$ with constant
	$
    \UBCConstantIntermediateGLIplusLkQplus
      {r}
      {l}
      {\StableRank(\Ring)}
      {m}
      {d}
      {K}
  $, 
  then \autoref{lem:join-qubc} implies that
	$\Link{Q_r}{(\vec v, \vec z)}$ is uniformly $(d-m-1)$-acyclic and
	satisfies the q-UBC for $q \leq d-m-1$ with constant
	$$
	\UBCConstantIntermediateGLIplusLkQ{r}{l}{\StableRank(\Ring)}{m}{d}{K} \coloneqq
	\max_{q \leq d-m-1} \UBCConstantSuspension{m+r}{q}{\UBCConstantIntermediateGLIplusLkQplus{r}{l}{\StableRank(\Ring)}{m}{d}{K}}
	$$

	\textsl{Step D:} In this final step, we describe how to construct the desired function
	$$K^{\on{GLI^+}}_{\on{Lk}_{Q_r}^+}: \NaturalNumbers^2 \times \{1, \dots, m\} \times \NaturalNumbers \times \Reals \to \Reals$$
	using the induction hypothesis on $m$.

	If $m = 1$, then
	$$\{(w_1, \dots, w_s, v_1, \vec z) : (w_1, \dots, w_s) \in F_{(v_1, \vec z)}\} \cong \Poset_{(v_1, \vec z)}.$$
	Since $(v_1, \vec z) \in \Poset^+((v_1))$, it follows that
	$\Poset_{(v_1, \vec z)}$ is uniformly $(d-r-2)$-acyclic and satisfies
	the q-UBC for $q \leq (d - r - 2)$ with constant $K$ by assumption.
	Furthermore,
	$$\{(w_1, \dots, w_s, v_1, \vec z) : (w_1, \dots, w_s) \in F_{(v_1, \vec z)}\} \hookrightarrow \Link{Q_r}{(v_1, \vec z)}^+$$
	is a deformation retract in the sense of
	\autoref{lem:PosetDeformation}, because the length of $\vec z$ is
	$r+1>r$.
	Therefore,
	\autoref{lem:PosetDeformation} implies that $\Link{Q_r}{(v_1, \vec
	z)}^+$ is uniformly $(d-r-2)$-acyclic and satisfies the q-UBC for $q
	\leq (d - r - 2)$ with constant
	$$\UBCConstantIntermediateGLIplusLkQplus{r}{l}{\StableRank(\Ring)}{1}{d}{K} \coloneqq (d - r - 2) + K.$$

	If $m > 1$, then
	\begin{align*}
		&\{(w_1, \dots, w_s, v_m, \vec z) : (w_1, \dots, w_s) \in \Link{\Poset_{(v_m,\vec z)}}{(v_1, \dots, v_{m-1})}^+\}\\
		& \cong \Link{\Poset_{(v_m,\vec z)}}{(v_1, \dots, v_{m-1})}^+.
	\end{align*}
	and we check that the induction hypothesis applies to $\Link{\Poset_{(v_m,\vec z)}}{(v_1, \dots, v_{m-1})}^+$: For this, 
	let $(w_1, \dots, w_n) \in \Poset_{(v_m,\vec z)}((v_1, \dots, v_{m-1}))$, then we have $(\Poset_{(v_m,\vec z)})_{(w_1, \dots, w_n)} = \Poset_{(w_1, \dots , w_n, v_m,\vec z)}$ and $(w_1, \dots , w_n, v_m,\vec z) \in \Poset^+((v_1, \dots, v_m))$. Hence, by assumption, $(\Poset_{(v_m,\vec z)})_{(w_1, \dots, w_n)}$ is uniformly $(d - (n + r + 2)) = ((d-r-2) - n)$-acyclic and satisfies the q-UBC for $q \leq (d-r-2) - n$ with constant $K$.

	By applying the induction hypothesis for $m-1 < m$,
	$\Poset_{(v_m,\vec z)}$ and $d-r-2 \neq d$, we obtain
	a function
	$$K^{\on{GLI^+}}: \NaturalNumbers^2 \times \{1, \dots, m-1\} \times \NaturalNumbers \times \Reals \to \Reals$$
	such that $\Link{\Poset_{(v_m,\vec z)}}{(v_1, \dots, v_{m-1})}^+$ is
	uniformly $((d-r-2)-(m-1))$-acyclic and satisfies the q-UBC $q \leq
	(d-r-2)-(m-1)$ with constant
	\[\UBCConstantIntermediateGLIplus{l}{\StableRank(\Ring)}{m-1}{d-r-2}{K}.\]

	To complete the argument we use that
	$$\{(w_1, \dots, w_s, v_m, \vec z) : (w_1, \dots, w_s) \in \Link{\Poset_{(v_m,\vec z)}}{(v_1, \dots, v_{m-1})}^+\} \hookrightarrow \Link{Q_r}{(\vec v, \vec z)}^+$$
	is deformation retract in the sense of \autoref{lem:PosetDeformation}. Therefore, \autoref{lem:PosetDeformation} implies that $\Link{Q_r}{(\vec v, \vec z)}^+$ is uniformly $((d-r-2)-(m-1))$-acyclic and satisfies the q-UBC for  $q \leq (d-r-2)-(m-1)$ with constant
	$$\UBCConstantIntermediateGLIplusLkQplus{r}{l}{\StableRank(\Ring)}{m}{d}{K} \coloneqq (d - r - 2) - (m - 1) + 1 + \UBCConstantIntermediateGLIplus{l}{\StableRank(\Ring)}{m-1}{d-r-2}{K}$$
	as claimed. \qedhere
\end{proof}

The second lemma is the uniform acyclic analogue of \cite[Lemma 2.13]{vanderkallen1980homologystabilityforlineargroups}.

\begin{lemma}
	\label{lem:LinkLemmaII}
	There exists a function $K^{\on{GLII}}: \NaturalNumbers^3 \times \Reals \to \Reals$ such that:

	Given $l, d \in \NaturalNumbers$ and $K \in \Reals$ as well as a
	poset $\Poset \subseteq \UnimodularSequences{\Ring^l}$ satisfying the
	descending chain condition and a subset $X \subseteq \Ring^\infty$,
	then the following holds:
	\begin{enumerate}
		\item \label{item-1-LinkLemmaII} If the poset
		$\OrderedSequences{X} \cap \Poset$ is uniformly $d$-acyclic and
		satisfies the q-UBC for $q \leq d$ with constant $K$ and if further
   		for all $(v_1, \dots, v_k) \in F \setminus
		\OrderedSequences{X}$ the poset $F_{(v_1, \dots, v_k)} \cap
		\OrderedSequences{X}$ is uniformly $(d-k)$-acyclic and satisfies
		the q-UBC for $q \leq d-k$ with constant $K$,
		then $\Poset$ is uniformly $d$-acyclic and its UBC
		constant in degree $q \leq d$ can be chosen as
		$\UBCConstantIntermediateGLII{l}{\StableRank(\Ring)}{d}{K}$.
		\item \label{item-2-LinkLemmaII} If for all $(v_1, \dots, v_k) \in
		\Poset \setminus \OrderedSequences{X}$ the poset $F_{(v_1, \dots,
		v_k)} \cap \OrderedSequences{X}$ is uniformly $((d+1)-k)$-acyclic
		and satisfies the q-UBC for $q \leq (d+1)-k$ with constant $K$ and
		if further there is an element $(y_0) \in F$ such that $F
		\cap \OrderedSequences{X} \subseteq F_{(y_0)}$, then $F$ is
		uniformly $(d+1)$-acyclic and its UBC constant in degree $q \leq
		d+1$ can be chosen as
		$\UBCConstantIntermediateGLII{l}{\StableRank(\Ring)}{d}{K}$.
	\end{enumerate}
\end{lemma}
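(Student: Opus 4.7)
The proof will closely adapt van der Kallen's argument for \cite[Lemma 2.13]{vanderkallen1980homologystabilityforlineargroups} to the uniformly bounded setting, tracking UBC constants with the toolbox from \autoref{sec:toolbox}. The overall strategy for both items is to present $\Poset$ as being built from $\OrderedSequences{X} \cap \Poset$ by successively attaching vertices corresponding to elements of $\Poset \setminus \OrderedSequences{X}$, while carefully controlling the UBC constant at every attachment step.

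For \autoref{item-1-LinkLemmaII}, I plan to organize the attachment in two nested filtrations. The outer filtration $\Poset_0 \subseteq \Poset_1 \subseteq \dots \subseteq \Poset_N = \Poset$ is by the number of sequence entries lying outside $X$; it is finite by \autoref{rem:MaximalLengthOfUnimodularSequences}, since $\StableRank(\Ring) < \infty$ bounds the length of unimodular sequences in $\Ring^l$, and it starts at $\Poset_0 = \OrderedSequences{X} \cap \Poset$, which is uniformly $d$-acyclic with constant $K$ by hypothesis. The inner filtration refines each step $\Poset_i \hookrightarrow \Poset_{i+1}$ by ordering the new elements according to their length so that at each inner sub-step the set of newly-attached sequences forms a discrete subposet of the current intermediate complex. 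For a sequence $w = (v_1, \dots, v_k)$ being attached, its relevant link decomposes as a join $\Link{\Poset_{i+1}}{w}^{-} \ast \Link{\Poset_{i+1}}{w}^{+}$, where $\Link{\Poset_{i+1}}{w}^{-}$ is (a subcomplex of) the boundary of a $(k-1)$-simplex and $\Link{\Poset_{i+1}}{w}^{+}$ deformation-retracts via \autoref{lem:PosetDeformation} onto a subposet naturally identified with $\Poset_w \cap \OrderedSequences{X}$. The hypothesis guarantees that this latter poset is uniformly $(d-k)$-acyclic with constant $K$, so \autoref{lem:LinkLemmaI} combined with the join estimate \autoref{lem:join-qubc} yields uniform $(d-1)$-acyclicity of the full link with an explicit constant depending only on $l$, $\StableRank(\Ring)$, $d$, and $K$. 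Applying \autoref{cor:uniform-discrete-morse-theory} at each inner sub-step then transports uniform $d$-acyclicity from $\Poset_i$ to $\Poset_{i+1}$, and iterating through the finite outer filtration assembles the desired constant $\UBCConstantIntermediateGLII{l}{\StableRank(\Ring)}{d}{K}$.

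For \autoref{item-2-LinkLemmaII}, the additional hypothesis that $\Poset \cap \OrderedSequences{X} \subseteq \Poset_{(y_0)}$ asserts that every sequence in $\Poset \cap \OrderedSequences{X}$ extends by appending $y_0$ to a sequence ending in $y_0$. Via the retraction $w \mapsto (w, y_0)$, \autoref{lem:PosetDeformation} exhibits $\Poset \cap \OrderedSequences{X}$ as deformation-retracting onto a cone with apex $(y_0)$, which by \autoref{lem:ConeUBC} is uniformly $\infty$-acyclic with UBC constant $1$ in every degree. Substituting this upgraded base acyclicity into the argument of part (1) allows its conclusion at one degree higher, with $K$ replaced by $\max(1, K)$, yielding uniform $(d+1)$-acyclicity of $\Poset$ with the stated UBC constant.

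The main obstacle will be arranging the inner length-based attachment scheme so that at every sub-step the newly-added vertices really form a discrete subposet of the current intermediate complex: this requires picking the correct direction (longer-first versus shorter-first) and verifying that comparabilities among elements of $\Poset_{i+1} \setminus \Poset_i$ are neutralized by this ordering, so that \autoref{cor:uniform-discrete-morse-theory} applies cleanly. Once this combinatorial bookkeeping is settled, the remaining task — collecting the UBC constants across the two nested finite filtrations by composing the outputs of \autoref{lem:LinkLemmaI}, \autoref{lem:join-qubc}, and \autoref{cor:uniform-discrete-morse-theory} — is mechanical and yields the explicit form of $K^{\on{GLII}}$.
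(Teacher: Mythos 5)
Your overall architecture (build $\Poset$ up from $\OrderedSequences{X}\cap\Poset$ by discrete attachments, control links via \autoref{lem:LinkLemmaI}, \autoref{lem:join-qubc} and \autoref{lem:PosetDeformation}, and assemble constants with \autoref{cor:uniform-discrete-morse-theory}) is the right one, but both parts contain a genuine gap.

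For \autoref{item-1-LinkLemmaII}, the doubly-indexed filtration by ``number of entries outside $X$'' does not produce usable links, and the problem is not the discreteness bookkeeping you flag at the end. Fix an outer stage and a sequence $w$ with exactly $i+1$ entries outside $X$. Its lower link in $\Poset_{i+1}$ contains proper subsequences obtained by deleting an $X$-entry (same count of non-$X$ entries, shorter), while its upper link in $\Poset_{i+1}$ contains supersequences obtained by inserting $X$-entries (same count, longer); no ordering by length can have both families present before $w$ is attached, so at attachment time the link is missing either its upper or its lower part. In the extreme case of a length-$(i+1)$ sequence consisting entirely of non-$X$ entries, attached shorter-first, the upper link in the pre-attachment complex is empty and the link is just the sphere $\partial\Delta^{i}$, which destroys the acyclicity estimate in degrees $\geq i$. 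The paper's proof avoids this by \emph{first} replacing $\OrderedSequences{X}\cap\Poset$ by the larger poset $P_0$ of all sequences having at least one entry in $X$ (a deformation retract in the sense of \autoref{lem:PosetDeformation}), and only then attaching the purely-non-$X$ sequences by increasing length; with that $P_0$ in place every attached $w$ has the full boundary sphere as lower link and all of its $X$-extensions already present in the upper link, which is what makes the reduction to \autoref{lem:LinkLemmaI} (applied to $F\cap\OrderedSequences{X\cup\{w_1,\dots,w_{r+1}\}}$, followed by a deformation retract onto the actual link) go through.

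For \autoref{item-2-LinkLemmaII}, the claim that $\Poset\cap\OrderedSequences{X}$ ``deformation-retracts onto a cone with apex $(y_0)$'' and is therefore uniformly $\infty$-acyclic is false: $(y_0)\notin\OrderedSequences{X}$, so the apex does not lie in that complex, and the hypotheses only give uniform $d$-acyclicity of $\Poset\cap\OrderedSequences{X}=\Poset_{(y_0)}\cap\OrderedSequences{X}$ (the case $k=1$ of the link hypothesis applied to $(y_0)$). If $\Poset\cap\OrderedSequences{X}$ really were $(d+1)$-acyclic, part (2) would be an instance of part (1); it is not, and the gain of one degree has a different source. The paper attaches the single vertex $(y_0)$ to $P_0$ using \autoref{lem:UBCindegreenplusone}: the link of $(y_0)$ is only uniformly $d$-acyclic, but because $P_0$ retracts onto $\OrderedSequences{X}\cap\Poset$ and the embedding $\vec v\mapsto(\vec v,y_0)$ places $\OrderedSequences{X}\cap\Poset$ inside that link compatibly with the retraction, every $d$-cycle of $P_0$ can be pushed into the link and coned off, so $P_0\cup\{(y_0)\}$ becomes uniformly $(d+1)$-acyclic. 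That cone-off step is the lynchpin of part (2) and is absent from your argument.
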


The proof of \autoref{lem:LinkLemmaII} is split in two parts concerning \autoref{item-1-LinkLemmaII} and \autoref{item-2-LinkLemmaII}; the second part depends on the first. The structure and the notation used in these arguments is again close to that of \cite[Proof of Lemma 2.13]{vanderkallen1980homologystabilityforlineargroups}.

\begin{proof}[Proof of \autoref{item-1-LinkLemmaII} of \autoref{lem:LinkLemmaII}]
	We start with the following observation: Let $P_0 = \{(v_0, \dots, v_k) \in F: \text{ at least one } v_i \text{ is in } X\}$. Then, $$\Poset \cap \OrderedSequences{X} \hookrightarrow P_0$$ is a deformation retract in the sense of \autoref{lem:PosetDeformation} and, hence, $P_0$ is uniformly $d$-acyclic and satisfies the q-UBC for $q \leq d$ with constant $$\UBCConstantIntermediateGLIIPOne{0}{l}{\StableRank(\Ring)}{d}{K} \coloneqq d + K.$$

	Similar to the previous lemma, we filter $\Poset$ by the subposets
	$$P_r = P_0 \cup \{(w_1, \dots, w_j) \in F : 1 \leq j \leq r\}.$$
	Note that $$\Poset = \bigcup_{r \in \mathbb{N}} P_r.$$
	We claim that this filtration is \emph{finite}: Recall that $\StableRank(\Ring) < \infty$. As explained in  \autoref{rem:MaximalLengthOfUnimodularSequences}, it follows from \autoref{lem:StableRankProperties} that if $\Ring^k \to \Ring^l$ is an $\Ring$-split injection then $k \leq l + \StableRank(\Ring)$. Hence, the length of a unimodular sequence in $\Ring^l$ is at most $l + \StableRank(\Ring)$. This implies that
	$$\Poset = P_{l+\StableRank(\Ring)} = \bigcup_{0 \leq r \leq l+\StableRank(\Ring)} P_r.$$

	Hence, we can apply \autoref{cor:uniform-discrete-morse-theory} to study how passing from $P_r$ to $P_{r+1}$ affects uniform acyclicity and the uniform boundary condition. We continue with an induction on $r$.

	For the induction step, passing from $r$ to $r+1$, we may assume that we already constructed a function
	$$K_{P_r, (1)}^{\on{GLII}}: \NaturalNumbers^3 \times \Reals \to \Reals$$
	such that $P_r$ is uniformly $d$-acyclic with q-UBC constant for $q \leq d$ choosable as $\UBCConstantIntermediateGLIIPOne{r}{l}{\StableRank(\Ring)}{d}{K}$.

	We start observe that $P_{r+1} \setminus P_r$ is discrete. Hence, $P_{r+1}$ is obtained from $P_r$ by gluing each
	$$(w_1, \dots, w_{r+1}) \in P_{r+1} \setminus P_r$$
	to $P_r$ along $\Link{P_r}{(w_1, \dots, w_{r+1})}$ in the sense of \autoref{def:PosetObtainedByGluing}.

	In order to invoke \autoref{cor:uniform-discrete-morse-theory}, we need to study $\Link{P_r}{(w_1, \dots, w_{r+1})}$. Set $Q = F \cap \OrderedSequences{X \cup \{w_1, \dots, w_{r+1}\}}$. Then, $\Link{Q}{(w_1, \dots, w_{r+1})}$ is uniformly $(d-1)$-acyclic and satisfies the q-UBC for $q \leq d-1$ with constant $\UBCConstantIntermediateGLI{l}{\StableRank(\Ring)}{r+1}{d}{K}$ by an application of the previous \autoref{lem:LinkLemmaI}. Since
	$$\Link{Q}{(w_1, \dots, w_{r+1})} \hookrightarrow \Link{P_{r}}{(w_1, \dots, w_{r+1})}$$
	is a poset deformation retract in the sense of \autoref{lem:PosetDeformation}, we can conclude that $\Link{P_{r}}{(w_1, \dots, w_{r+1})}$ is uniformly $(d-1)$-acyclic and satisfies the q-UBC for $q \leq d-1$ with constant $\UBCConstantIntermediateGLIILk{r}{l}{\StableRank(\Ring)}{d}{K} \coloneqq d + \UBCConstantIntermediateGLI{l}{\StableRank(\Ring)}{r+1}{d}{K}$.

	The induction hypothesis (on $r$) and \autoref{cor:uniform-discrete-morse-theory} therefore imply that $P_{r+1}$ is uniformly $d$-acyclic and satisfies the q-UBC condition for $q \leq d$ with constant
	\begin{align*}
		\UBCConstantIntermediateGLIIPOne{r+1}{l}{\StableRank(\Ring)}{d}{K} &\coloneqq\\
		&\max_{q \leq d} \UBCConstantMorseTheory{0}{q}{\UBCConstantIntermediateGLIIPOne{r}{l}{\StableRank(\Ring)}{d}{K}}{\UBCConstantIntermediateGLIILk{r}{l}{\StableRank(\Ring)}{d}{K}}.
	\end{align*}
	This completes the induction step. We hence showed that $\Poset$ is uniformly $d$-acyclic and satisfies the q-UBC condition for $q \leq d$ with constant $$\UBCConstantIntermediateGLIIOne{l}{\StableRank(\Ring)}{d}{K} \coloneqq \UBCConstantIntermediateGLIIPOne{l + \StableRank(\Ring)}{l}{\StableRank(\Ring)}{d}{K}.$$

	We highlight that $K^{\on{GLII}}_{(1)}$ is \emph{not} the function occurring in \autoref{lem:LinkLemmaII}. In the proof of \autoref{item-2-LinkLemmaII}, the function $K^{\on{GLII}}$ is constructed in such a way that $$\UBCConstantIntermediateGLIIOne{l}{\StableRank(\Ring)}{d}{K} \leq \UBCConstantIntermediateGLII{l}{\StableRank(\Ring)}{d}{K}.$$
	From this the claim in \autoref{item-1-LinkLemmaII} follows.
\end{proof}
\begin{proof}[Proof of \autoref{item-2-LinkLemmaII} of \autoref{lem:LinkLemmaII}]
	The key in this argument is the following observation due to van der Kallen: Since $y_0 \not\in \OrderedSequences{X}$, the assumptions imply that
	$$\OrderedSequences{X} \cap \Poset = \OrderedSequences{X} \cap \Poset_{(y_0)}$$
	is uniformly $d$-acyclic and satisfies the q-UBC for $q \leq d$ with constant $K$. Apart from one step that exploits this observation, the argument is exactly as that for \autoref{item-1-LinkLemmaII} of \autoref{lem:LinkLemmaII}. Taking the slight modification of the argument into account, we construct the function $K^{\on{GLII}}$ inductively in such a way that it always dominates the $K^{\on{GLII}}_{(1)}$ in the proof of \autoref{item-1-LinkLemmaII}. This is the reason why one can use the \emph{same} function to estimate the UBC constants in both part \autoref{item-1-LinkLemmaII} and \autoref{item-2-LinkLemmaII}.

	Define $P_r$ exactly as in the proof of \autoref{item-1-LinkLemmaII}. Then, using the key observation above and exactly as in the proof of \autoref{item-1-LinkLemmaII}, $P_0$ is uniformly $d$-acyclic and satisfies the q-UBC for $q \leq d$ with constant $\UBCConstantIntermediateGLIIPOne{0}{l}{\StableRank(\Ring)}{d}{K} = d + K$.

	Again, $P_{r+1}$ is obtained from $P_r$ by gluing $(w_1, \dots, w_{r+1}) \in P_{r+1} \setminus P_r$ to $P_r$ along the link $\Link{P_r}{(w_1, \dots, w_{r+1})}$ in the sense of \autoref{def:PosetObtainedByGluing}.

	Using the assumption in \autoref{item-2-LinkLemmaII} that for all $(v_1, \dots, v_k) \in F \setminus \OrderedSequences{X}$ the poset $F_{(v_1, \dots, v_k)} \cap \OrderedSequences{X}$ is uniformly $((d+1)-k)$-acyclic and satisfies the q-UBC for $q \leq (d+1)-k$ with constant $K$ (note that this is one degree more than in \autoref{item-1-LinkLemmaII}), exactly the same argument as in \autoref{item-1-LinkLemmaII} shows that $\Link{P_r}{(w_1, \dots, w_{r+1})}$ is uniformly $d$-acyclic and satisfies the q-UBC for $q \leq d$ with constant
	$$\UBCConstantIntermediateGLIILk{r}{l}{\StableRank(\Ring)}{d+1}{K} = (d+1) + \UBCConstantIntermediateGLI{l}{\StableRank(\Ring)}{r+1}{d+1}{K}.$$
	(note that this one degree more than in \autoref{item-1-LinkLemmaII}).

	We now invoke \autoref{lem:UBCindegreenplusone}: The previous paragraph implies that $\Link{P_0}{(y_0)}$ is uniformly $d$-acyclic and that its UBC constant in degree $q \leq d$ can be chosen as $\UBCConstantIntermediateGLIILk{1}{l}{\StableRank(\Ring)}{d+1}{K}$.
	Now, $$\psi: \OrderedSequences{X} \cap \Poset \to \Link{P_0}{(y_0)}: \vec v \mapsto (\vec v,y_0)$$
	is an embedding of posets, and $\OrderedSequences{X} \cap \Poset$ is a deformation retraction of $P_0$ in the sense of \autoref{lem:PosetDeformation} via a retraction, whose restriction to $\im(\psi)$ is inverse to $\psi$. Therefore \autoref{lem:UBCindegreenplusone} implies that $P_0 \cup \{(y_0)\}$ is uniformly $(d+1)$-acyclic and satisfies the q-UBC for $q \leq d+1$ with constant
	$$K_{P_0 \cup \{(y_0)\}}(l,\StableRank(\Ring),d,K) \coloneqq \max_{q \leq d+1} \UBCConstantTechnicalLemmaII{q}{\UBCConstantIntermediateGLIILk{1}{l}{\StableRank(\Ring)}{d+1}{K}}.$$

	Passing from $P_0 \cup \{(y_0)\}$ to $P_1$, and from $P_r$ to $P_{r+1}$ one adds cones over links which are uniformly $d$-acyclic and satisfy the q-UBC for $q \leq d$ with constants choosable as $\UBCConstantIntermediateGLIILk{0}{l}{\StableRank(\Ring)}{d+1}{K}$ and $\UBCConstantIntermediateGLIILk{r}{l}{\StableRank(\Ring)}{d+1}{K}$, respectively. Hence, \autoref{cor:uniform-discrete-morse-theory} implies that $P_1$ is uniformly $(d+1)$-acyclic and satisfies the q-UBC for $q \leq d+1$ with constant
	\begin{align*}
	\UBCConstantIntermediateGLIIPTwo{1}{l}{\StableRank(\Ring)}{d}{K} &\coloneqq \\
	&\UBCConstantMorseTheory{0}{q}{K_{P_0 \cup \{(y_0)\}}(l,\StableRank(\Ring),d,K)}{\UBCConstantIntermediateGLIILk{0}{l}{\StableRank(\Ring)}{d+1}{K}}
	\end{align*}
	Then, we continue our induction passing from $r$ to $r+1$ as in the proof of \autoref{item-1-LinkLemmaII} using $K_{P_1, (2)}^{\on{GLII}}$ instead of $K_{P_1, (1)}^{\on{GLII}}$, and conclude that $P_{r+1}$ is uniformly $(d+1)$-acyclic and satisfies the q-UBC for $q \leq d+1$ with constant
	\begin{align*}
		&\UBCConstantIntermediateGLIIPTwo{r+1}{l}{\StableRank(\Ring)}{d}{K} \coloneqq\\
		&\max_{q \leq d+1} \UBCConstantMorseTheory{0}{q}{\UBCConstantIntermediateGLIIPTwo{r}{l}{\StableRank(\Ring)}{d}{K}}{\UBCConstantIntermediateGLIILk{r}{l}{\StableRank(\Ring)}{d+1}{K}}.
	\end{align*}
	It follows that $\Poset = P_{l+\StableRank(\Ring)}$ is uniformly $(d+1)$-acyclic and that its UBC constant in degree $q \leq d+1$ can be chosen as
	$\UBCConstantIntermediateGLIITwo{l}{\StableRank(\Ring)}{d}{K} \coloneqq \UBCConstantIntermediateGLIIPTwo{l+\StableRank(\Ring)}{l}{\StableRank(\Ring)}{d}{K}$.
	To complete the proof, we simply define
	\begin{equation*}
	\UBCConstantIntermediateGLII{l}{\StableRank(\Ring)}{d}{K}
	\coloneqq
	\max\{\UBCConstantIntermediateGLIIOne{l}{\StableRank(\Ring)}{d}{K}, \UBCConstantIntermediateGLIITwo{l}{\StableRank(\Ring)}{d}{K}\}. \qedhere
	\end{equation*}
\end{proof}

With these two lemmas at hand, we are ready to prove
\autoref{thm:BoundedAcyclicityResultTechnical}.
This proof is the uniform acyclic analogue of \cite[§ 2.14.-18.]{vanderkallen1980homologystabilityforlineargroups}; its structure and the notation used in it is close to that in \cite[§ 2.14.-18.]{vanderkallen1980homologystabilityforlineargroups}. Further helpful details are given in \cite[Proof of Theorem 2.4]{friedrich2016homologicalstabilityofautomorphismgroupsofquadraticmodulesandmanifolds}.

\begin{proof}[Proof of \autoref{thm:BoundedAcyclicityResultTechnical}]
	As described in \cite[§2.14.]{vanderkallen1980homologystabilityforlineargroups}, we prove \autoref{thm:BoundedAcyclicityResultTechnical} by induction. For each poset, we either prove directly that is uniformly $(-1)$-acyclic --- in which case we only need to check that the poset is nonempty and set the UBC constant in degree $(-1)$ equal to zero --- or we invoke \autoref{lem:LinkLemmaII}. The induction is on the ``size'' of the poset: A poset as in \autoref{item-1-BoundedAcyclicityResultTechnical} of \autoref{thm:BoundedAcyclicityResultTechnical} has size $2n$; a poset as in \autoref{item-2-BoundedAcyclicityResultTechnical} of \autoref{thm:BoundedAcyclicityResultTechnical} has size $2n - k$; a poset as in \autoref{item-3-BoundedAcyclicityResultTechnical} of \autoref{thm:BoundedAcyclicityResultTechnical} has size $2n+1$; a poset as in \autoref{item-4-BoundedAcyclicityResultTechnical} of \autoref{thm:BoundedAcyclicityResultTechnical} has size $2n-k+1$. In particular, during the induction step, the induction hypothesis applies to all posets of smaller size in this sense.
	For negative sizes, all four claims are empty and hence trivially hold for the UBC constant $0$ in every case.

	\textsl{Proof of \autoref{item-1-BoundedAcyclicityResultTechnical}, i.e.\ the analogue of \cite[§2.15.]{vanderkallen1980homologystabilityforlineargroups}:} Let $\Poset \coloneqq \OrderedSequences{\Ring^{n} + \delta e_{n+1}} \cap \UnimodularSequences{\Ring^{\infty}}$ and $d  \coloneqq n - \StableRank(\Ring) - 1$. We need to define the value of the function $K^{(1)}$ at $(n, \StableRank(\Ring))$ such that $\Poset$ is uniformly $d$-acyclic and satisfies the q-UBC for $q \leq d$ with constant $\UBCConstantGLOne{n}{\StableRank(\Ring)}$.

	If $n = 0$, then $d < -1$ because $\StableRank(\Ring) > 0$  (see \autoref{def:stable-rank}). Hence, the condition is empty and trivially satisfied for $\UBCConstantGLOne{0}{\StableRank(\Ring)} \coloneqq 0$.

	For $n > 0$, we invoke the induction hypothesis and use \autoref{item-1-LinkLemmaII} of \autoref{lem:LinkLemmaII}: To see this, set $X \coloneqq (\Ring^{n-1} + e_{n+1} \delta) \cup (\Ring^{n-1} + e_n + e_{n+1} \delta)$. Then $\OrderedSequences{X} \cap \Poset = \OrderedSequences{X} \cap \UnimodularSequences{R^{\infty}}$ is uniformly $((n-1) - \StableRank(\Ring)) = d$-acyclic and satisfies the q-UBC for $q \leq d$ with constant
	$$\UBCConstantGLThree{n-1}{\StableRank(\Ring)}$$
	by \autoref{item-3-BoundedAcyclicityResultTechnical} of the induction hypothesis, since up to a change of coordinates $X$ is equal to $(\Ring^{n-1} + e_{n} \delta) \cup (\Ring^{n-1} + e_n \delta + e_{n+1})$.
	Similarly, if $(v_1, \dots, v_k) \in F \setminus \OrderedSequences{X}$ the poset $\OrderedSequences{X} \cap F_{(v_1, \dots, v_k)} = \OrderedSequences{X} \cap \UnimodularSequences{\Ring^{\infty}}_{(v_1, \dots, v_k)}$ is uniformly $((n-1) - \StableRank(\Ring) - k) = (d-k)$-acyclic and satisfies the q-UBC for $q \leq d-k$ with constant $$\UBCConstantGLFour{n-1}{\StableRank(\Ring)}{k}$$
	by \autoref{item-4-BoundedAcyclicityResultTechnical} of the induction hypothesis. Notice that, because $(v_1, \dots, v_k) \in \Poset$ and $F \subseteq \Ring^{n+1}$, it follows from \autoref{rem:MaximalLengthOfUnimodularSequences} that $k \leq n+1+\StableRank(\Ring)$. We set
	$$K^{(4)}_{\max}(n-1, \StableRank(\Ring)) \coloneqq \max_{k \in \{1, \dots, n+1+\StableRank(\Ring)\}} K^{(4)}(n-1, \StableRank(\Ring), k).$$
	Invoking \autoref{item-1-LinkLemmaII} of \autoref{lem:LinkLemmaII} for
	$$K = K(n,\StableRank(\Ring)) \coloneqq \max \{\UBCConstantGLThree{n-1}{\StableRank(\Ring)}, K^{(4)}_{\max}(n-1, \StableRank(\Ring))\} \text{ and } l = n+1,$$
	we conclude that $\Poset$ is uniformly $(n - \StableRank(\Ring) - 1) = d$-acyclic and satisfies the q-UBC for $q \leq d$ with constant
	$$\UBCConstantGLOne{n}{\StableRank(\Ring)} \coloneqq \UBCConstantIntermediateGLII{n+1}{\StableRank(\Ring)}{n - \StableRank(\Ring) - 1}{K(n,\StableRank(\Ring))}.$$

	\textsl{Proof of \autoref{item-2-BoundedAcyclicityResultTechnical}, i.e.\ the analogue of \cite[§2.16.]{vanderkallen1980homologystabilityforlineargroups}:}
	In \cite{vanderkallen1980homologystabilityforlineargroups} this argument is omitted since it is similar to the previous one; we present it to clarify the construction of $K^{(2)}: \NaturalNumbers^3 \to \Reals$.

	Let $\Poset \coloneqq \OrderedSequences{\Ring^{n} + \delta e_{n+1}} \cap \UnimodularSequences{\Ring^{\infty}}_{(v_1, \dots, v_k)}$ and $d  \coloneqq n - \StableRank(\Ring) - 1 - k$. We need to define the value of the function $K^{(2)}$ at $(n, \StableRank(\Ring),k)$ such that $\Poset$ is uniformly $d$-acyclic and satisfies the q-UBC for $q \leq d$ with constant $\UBCConstantGLTwo{n}{\StableRank(\Ring)}{k}$.

	For $k = 0$, we set $\UBCConstantGLTwo{n}{\StableRank(\Ring)}{k} \coloneqq \UBCConstantGLOne{n}{\StableRank(\Ring)}$. Now, assume $k > 0$.

	If $n \leq \StableRank(\Ring)$, then $d < -1$ because $\StableRank(\Ring) > 0$ and $k > 0$. Hence, if $n \leq \StableRank(\Ring)$ the condition is empty and trivially satisfied for $\UBCConstantGLTwo{n}{\StableRank(\Ring)}{k} \coloneqq 0$.

	For $n > \StableRank(\Ring)$, we apply the induction hypothesis and \autoref{item-1-LinkLemmaII} of \autoref{lem:LinkLemmaII}: To see this, set $X \coloneqq (\Ring^{n-1} + e_{n+1} \delta) \cup (\Ring^{n-1} + e_n + e_{n+1} \delta)$. Then, $\OrderedSequences{X} \cap \Poset = \OrderedSequences{X} \cap \UnimodularSequences{\Ring^{\infty}}_{(v_1, \dots, v_k)}$ is uniformly $((n-1) - \StableRank(\Ring) - k) = d$-acyclic and satisfies the q-UBC for $q \leq d$ with constant
	$$\UBCConstantGLFour{n-1}{\StableRank(\Ring)}{k}$$
	by \autoref{item-4-BoundedAcyclicityResultTechnical} of the induction hypothesis, since up to a change of coordinates $X$ is equal to $(\Ring^{n-1} + e_{n} \delta) \cup (\Ring^{n-1} + e_n \delta + e_{n+1})$. Similarly, if $(w_1, \dots, w_{k'}) \in F \setminus \OrderedSequences{X}$ the poset $\OrderedSequences{X} \cap F_{(w_1, \dots, w_{k'})} = \OrderedSequences{X} \cap \UnimodularSequences{\Ring^{\infty}}_{(w_1, \dots, w_{k'}, v_1, \dots, v_{k})}$ is uniformly $((n-1) - \StableRank(\Ring) - k - k') = (d-k')$-acyclic and satisfies the q-UBC for $q \leq d-k'$ with constant $$\UBCConstantGLFour{n-1}{\StableRank(\Ring)}{k + k'}$$
	by \autoref{item-4-BoundedAcyclicityResultTechnical} of the induction hypothesis. Note that, because $(w_1, \dots, w_{k'}) \in \Poset$ and $F \subseteq \Ring^{n+1}$, it follows from \autoref{rem:MaximalLengthOfUnimodularSequences} that $k' \leq n+1+\StableRank(\Ring)$. We set
	$$K^{(4)}_{\max}(n-1, \StableRank(\Ring), k) \coloneqq \max_{k' \in \{1, \dots, n+1+\StableRank(\Ring)\}} K^{(4)}(n-1, \StableRank(\Ring), k + k').$$
	Invoking \autoref{item-1-LinkLemmaII} of \autoref{lem:LinkLemmaII} for
	$$K = K(n,\StableRank(\Ring),k) \coloneqq \max \{K^{(4)}(n - 1, \StableRank(\Ring), k), K^{(4)}_{\max}(n-1, \StableRank(\Ring), k)\} \text{ and } l = n+1,$$
	we conclude that $\Poset$ is uniformly $(n - \StableRank(\Ring) - 1 - k) = d$-acyclic and satisfies the q-UBC for $q \leq d$ with constant
	$$\UBCConstantGLTwo{n}{\StableRank(\Ring)}{k} \coloneqq \UBCConstantIntermediateGLII{n+1}{\StableRank(\Ring)}{n - \StableRank(\Ring) - 1 - k}{K(n,\StableRank(\Ring),k)}.$$

	\textsl{Proof of \autoref{item-3-BoundedAcyclicityResultTechnical}, i.e.\ the analogue of \cite[§2.17.]{vanderkallen1980homologystabilityforlineargroups}:}
	Let $\Poset = \OrderedSequences{(\Ring^{n} + e_{n+1} \delta) \cup (\Ring^{n} + e_{n+1} \delta + e_{n+2})} \cap \UnimodularSequences{\Ring^{\infty}}$ and $d  = n - \StableRank(\Ring) - 1$.  We need to define the value of the function $K^{(3)}$ at $(n, \StableRank(\Ring))$ such that $\Poset$ is uniformly $(d+1)$-acyclic and satisfies the q-UBC for $q \leq d+1$ with constant $\UBCConstantGLThree{n}{\StableRank(\Ring)}$.

	The next construction is exactly as in \cite[§2.17.]{vanderkallen1980homologystabilityforlineargroups}: We set $X \coloneqq \Ring^n + e_{n+1} \delta$ and $y_0 \coloneqq e_{n+1} \delta + e_{n+2}$. For $(v_1, \dots, v_k) \in F \setminus \OrderedSequences{X}$, we study $\OrderedSequences{X} \cap F_{(v_1, \dots, v_k)} = \OrderedSequences{X} \cap \UnimodularSequences{\Ring^{\infty}}_{(v_1, \dots, v_k)}$: We may assume $v_1 \notin X$; otherwise permute the $v_i$. The definition of $F$ implies that the $(n + 2)$-nd co-ordinate of $v_1$ equals 1. Setting $v_i' = v_i - \alpha_i v_1$ for $\alpha_i \in \Ring$, we have that $\UnimodularSequences{\Ring^{\infty}}_{(v_1, v_2 \dots, v_k)} = \UnimodularSequences{\Ring^{\infty}}_{(v_1, v_2' \dots, v_k')}$. For each $2 \leq i \leq k$, we can choose $\alpha_i$ such that the $(n + 2)$-nd co-ordinate of $v_i'$ vanishes. As a consequence,
	$\OrderedSequences{X} \cap F_{(v_1, \dots, v_k)} = \OrderedSequences{X} \cap \UnimodularSequences{\Ring^{\infty}}_{(v_2' \dots, v_k')}.$
	The poset $\OrderedSequences{X} \cap \UnimodularSequences{\Ring^{\infty}}_{(v_2' \dots, v_k')}$ is uniformly $(n - \StableRank(\Ring) -1 - (k-1))=((d+1)-k)$-acyclic and satisfies the q-UBC for $q \leq (d+1)-k$ with constant $\UBCConstantGLTwo{n}{\StableRank(\Ring)}{k-1}$ by \autoref{item-2-BoundedAcyclicityResultTechnical} of the induction hypothesis.

	Because $(v_1, \dots, v_k) \in \Poset$ and $F \subseteq \Ring^{n+2}$, it follows from \autoref{rem:MaximalLengthOfUnimodularSequences} that $k \leq n+2+\StableRank(\Ring)$. We set
	$$K^{(2)}_{\max}(n, \StableRank(\Ring)) \coloneqq \max_{k \in \{1, \dots, n+2+\StableRank(\Ring)\}} K^{(2)}(n, \StableRank(\Ring), k).$$
	Therefore, we can invoke \autoref{item-2-LinkLemmaII} of \autoref{lem:LinkLemmaII} using
	$$K = K(n,\StableRank(\Ring)) \coloneqq K^{(2)}_{\max}(n, \StableRank(\Ring)) \text{ and } l = n+2,$$
	to conclude that $\Poset$ is uniformly $(n - \StableRank(\Ring)) = (d+1)$-acyclic and satisfies the q-UBC for $q \leq d+1$ with constant
	$$\UBCConstantGLThree{n}{\StableRank(\Ring)} \coloneqq \UBCConstantIntermediateGLII{n+2}{\StableRank(\Ring)}{n - \StableRank(\Ring) - 1}{K(n,\StableRank(\Ring))}.$$

	\textsl{Proof of \autoref{item-4-BoundedAcyclicityResultTechnical}, i.e.\ the analogue of \cite[§2.18.]{vanderkallen1980homologystabilityforlineargroups}:}
	Let $\Poset = \OrderedSequences{(\Ring^{n} + e_{n+1} \delta) \cup (\Ring^{n} + e_{n+1} \delta + e_{n+2})} \cap \UnimodularSequences{\Ring^{\infty}}_{(v_1, \dots, v_k)}$ and $d  = n - \StableRank(\Ring) - k$. We need to define the value of the function $K^{(4)}$ at $(n, \StableRank(\Ring), k)$ such that $\Poset$ is uniformly $d$-acyclic and satisfies the q-UBC for $q \leq d$ with constant $\UBCConstantGLFour{n}{\StableRank(\Ring)}{k}$.

	For $k = 0$, we set $\UBCConstantGLFour{n}{\StableRank(\Ring)}{k} \coloneqq \UBCConstantGLThree{n}{\StableRank(\Ring)}$. Now, assume $k > 0$.

	If $n < \StableRank(\Ring)$, then $d < -1$ because $\StableRank(\Ring) > 0$ and $k > 0$. Hence, if $n < \StableRank(\Ring)$ the condition is empty and trivially satisfied for $\UBCConstantGLFour{n}{\StableRank(\Ring)}{k} \coloneqq 0$.

	If $n = \StableRank(\Ring)$, the only nontrivial case is $k = 1$. For this case, we need to check that $\Poset$ is non-empty. This can be done exactly as in \cite[§2.18.]{vanderkallen1980homologystabilityforlineargroups} using the stable range condition. In all of these cases, we set $\UBCConstantGLFour{n}{\StableRank(\Ring)}{k} \coloneqq 0$.

	Finally, let $n > \StableRank(\Ring)$. This case follows from \autoref{item-1-LinkLemmaII} of \autoref{lem:LinkLemmaII} similar to previous cases: Exactly as in \cite[§2.18.]{vanderkallen1980homologystabilityforlineargroups}, one can use the stable range condition to change coordinates such that
	$(\Ring^{n} + e_{n+1} \delta) \cup (\Ring^{n} + e_{n+1} \delta + e_{n+2})$ is preserved and that the first coordinate of $v_1$ is equal to 1. Then, one sets
	\begin{align*}
		X &= \{v \in (\Ring^{n} + e_{n+1} \delta) \cup (\Ring^{n} + e_{n+1} \delta + e_{n+2}): \text{ first coordinate of } v \text{ vanishes}\}.
	\end{align*}
	Then, $\OrderedSequences{X} \cap \Poset = \OrderedSequences{X} \cap \UnimodularSequences{\Ring^{\infty}}_{(v_1, \dots, v_k)}$. As in the proof of \autoref{item-3-BoundedAcyclicityResultTechnical}, we may choose $\alpha_i \in \Ring$ such that $v_i' = v_i - \alpha_i v_1$ has vanishing first coordinate. Again, it holds that $\UnimodularSequences{\Ring^{\infty}}_{(v_1, v_2 \dots, v_k)} = \UnimodularSequences{\Ring^{\infty}}_{(v_1, v_2', \dots, v_k')}$ and that
	$$\OrderedSequences{X} \cap \Poset =  \OrderedSequences{X} \cap \UnimodularSequences{\Ring^{\infty}}_{(v_1, v_2', \dots, v_k')} = \OrderedSequences{X} \cap \UnimodularSequences{\Ring^{\infty}}_{(v_2', \dots, v_k')}.$$
	As a consequence, $\OrderedSequences{X} \cap \Poset$ is uniformly $((n-1)-\StableRank(\Ring) - (k -1)) = d$-acyclic and satisfies the q-UBC for $q \leq d$ with constant
	$$\UBCConstantGLFour{n-1}{\StableRank(\Ring)}{k-1}$$
	by \autoref{item-4-BoundedAcyclicityResultTechnical} of the induction hypothesis, since up to a change of coordinates $X$ is equal to $(\Ring^{n-1} + e_{n} \delta) \cup (\Ring^{n-1} + e_n \delta + e_{n+1})$.
	Similarly, if $(w_1, \dots, w_{k'}) \in F \setminus \OrderedSequences{X}$ the poset $F_{(w_1, \dots, w_{k'})} \cap \OrderedSequences{X} = \OrderedSequences{X} \cap \UnimodularSequences{\Ring^{\infty}}_{(w_1', \dots, w_{k'}', v_2', \dots, v_{k}')}$ is uniformly $((n-1) - \StableRank(\Ring) - (k'+k-1)) = (d-k')$-acyclic and satisfies the q-UBC for $q \leq d-k'$ with constant $$\UBCConstantGLFour{n-1}{\StableRank(\Ring)}{k'+k-1}$$
	by \autoref{item-4-BoundedAcyclicityResultTechnical} of the induction hypothesis. Note that, because $(w_1, \dots, w_{k'}) \in \Poset$ and $F \subseteq \Ring^{n+2}$, it follows from \autoref{rem:MaximalLengthOfUnimodularSequences} that $k' \leq n+2+\StableRank(\Ring)$. We set
	$$K^{(4)}_{\max}(n-1, \StableRank(\Ring), k-1) \coloneqq \max_{k' \in \{1, \dots, n+2+\StableRank(\Ring)\}} K^{(4)}(n-1, \StableRank(\Ring), k' + k-1).$$
	Invoking \autoref{item-1-LinkLemmaII} of \autoref{lem:LinkLemmaII} for
	$$K = K(n,\StableRank(\Ring),k) \coloneqq \max \{K^{(4)}(n - 1, \StableRank(\Ring), k-1), K^{(4)}_{\max}(n-1, \StableRank(\Ring), k-1)\}$$
	and $l = n+2$, we conclude that $\Poset$ is uniformly $d = (n - \StableRank(\Ring) - k)$-acyclic and satisfies the q-UBC for $q \leq d = n - \StableRank(\Ring) - k$ with constant
	\begin{equation*}
		\UBCConstantGLFour{n}{\StableRank(\Ring)}{k} \coloneqq \UBCConstantIntermediateGLII{n+2}{\StableRank(\Ring)}{n - \StableRank(\Ring) - k}{K(n,\StableRank(\Ring),k)}. \qedhere
	\end{equation*}
\end{proof}

\section{Uniform acyclicity results for automorphism groups of quadratic
modules}
\label{scn:BoundedAcyclicityProofsForUnitaryGroups}

Throughout this section, we assume \autoref{con:quadratic-module-setting} and use the notation of \autoref{sec:automorphism-groups-of-quadratic-modules}. Our goal is to prove \autoref{thm:BoundedAcyclicityResultForComplexOfHyperbolicSplitInjections}, i.e.\ \autoref{item:stability-complex-sp} of \autoref{thm:general-connectivity}. For this we establish a bounded cohomology analogue of \cite[Section 3 and Section 4]{GalatiusRandalWilliamsStability}. The structure of proofs and the notation
in this section is intentionally chosen to be close to that in \cite[Section 3 and Section 4]{GalatiusRandalWilliamsStability}.

\begin{definition}
	Let $\QuadraticModule$ be an $\FormParameter$-quadratic module and
	let $\HyperbolicModule$ be the $\FormParameter$-hyperbolic module. We
	denote
	by $\SimplicialComplexS^\QuadraticModule$ the simplicial complex whose vertex set is given by set of
	$\FormParameter$-quadratic module morphisms
	\[
	h: \HyperbolicModule \to \QuadraticModule.
	\]
	A $k$-simplex in $\SimplicialComplexS^\QuadraticModule$ is a set $\{h_0, \dots, h_k\}$ of $k+1$ vertices with the property that their images $h_i(\HyperbolicModule)$ are pairwise orthogonal subspaces of $\QuadraticModule$. We call the $\SimplicialComplexS^\QuadraticModule$ the complex of unordered hyperbolic $\FormParameter$-split injections into $\QuadraticModule$.
\end{definition}

Notice that the complex of hyperbolic $\FormParameter$-split injections $\SimplicialComplex_\bullet^\QuadraticModule$ introduced in \autoref{def:ComplexOfHyperbolicSplitInjections} is related to the unordered version $\SimplicialComplexS^\QuadraticModule$ by
\[
(\SimplicialComplexS^\QuadraticModule)^{\text{ord}}_\bullet = \SimplicialComplex_\bullet^\QuadraticModule,
\]
where $(\SimplicialComplexS^\QuadraticModule)^{\text{ord}}_\bullet$ is obtained by applying the construction in \autoref{def:ord-complex} to $\SimplicialComplexS^\QuadraticModule$. In particular, the next theorem, \autoref{lem:ord-complex-connectivity-lemma} and \autoref{cor:qUBCimpliesAcyclic}	 imply \autoref{thm:BoundedAcyclicityResultForComplexOfHyperbolicSplitInjections}.

\begin{theoremnum}
	\label{thm:UniformAcyclicityResultForHyperbolicSplitInjections}
	There exists a function $\UBCUnitaryGroups: \NaturalNumbers \to \Reals$
	such that for any quadratic module $\QuadraticModule$ satisfying
	$\StableWittIndex{\QuadraticModule} \geq g$ it holds that the complex of unordered hyperbolic $\FormParameter$-split injections $\SimplicialComplexS^\QuadraticModule$ is uniformly $\lfloor \frac{g-4}{2} \rfloor$-acyclic and $\lCMub(\SimplicialComplexS^\QuadraticModule) \geq \lfloor \frac{g-1}{2} \rfloor$ with UBC-constant $\UBCUnitaryGroups(g)$, respectively. In particular, $\SimplicialComplexS^\QuadraticModule$ is uniformly Cohen--Macaulay of level
	$\lfloor \frac{g-2}{2} \rfloor$ with UBC constant $\UBCUnitaryGroups(g)$.
\end{theoremnum}

The proof of \autoref{thm:UniformAcyclicityResultForHyperbolicSplitInjections} is by induction on $g$, i.e.\ the stable Witt index of $\QuadraticModule$. The following \cite[Proposition 4.3]{GalatiusRandalWilliamsStability} of Galatius--Randal-Williams is the induction beginning.

\begin{proposition}
	\label{prop:UnitaryGroupsUniformAcyclicityLowDegrees}
	Let $\SimplicialComplexS^\QuadraticModule$ be the complex of unordered hyperbolic $\FormParameter$-split injections into a $\FormParameter$-quadratic module $\QuadraticModule$.
	\begin{itemize}
		\item If $\StableWittIndex{\QuadraticModule} \geq 2$, then
		$\SimplicialComplexS^\QuadraticModule \neq \emptyset$.
		\item If $\StableWittIndex{\QuadraticModule} \geq 4$, then
		$\SimplicialComplexS^\QuadraticModule$ is path-connected with diameter less or equal than $4$.
	\end{itemize}
\end{proposition}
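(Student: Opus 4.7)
The plan is to adapt the argument of \cite[Proposition 4.3]{GalatiusRandalWilliamsStability} to our setting. Both items rely on the cancellation theorem for hyperbolic modules \cite[Proposition 5.13]{randalwilliamswahl2017homologicalstabilityforautomorphismgroups}, which applies because the unitary stable rank of $\Ring = \Integers$ or any field of characteristic zero is at most $3$ (compare the proof of \autoref{lem:TransitivityUnitaryGroups}). Throughout, the key structural fact that I would exploit is the orthogonal splitting $\QuadraticModule \cong h(\HyperbolicModule) \oplus h(\HyperbolicModule)^\perp$ provided by \autoref{lem:split-injection-non-degenerate-quadratic-module} together with \autoref{lem:hyperbolic-module-non-degenerate}; in particular, this splitting gives $\StableWittIndex{h(\HyperbolicModule)^\perp} = \StableWittIndex{\QuadraticModule} - 1$ for every vertex $h$ of $\SimplicialComplexS^\QuadraticModule$.

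For the first item, I would unpack $\StableWittIndex{\QuadraticModule} \geq 2$: there exist $k \geq 0$ and a morphism $\HyperbolicModule^{\oplus (k+2)} \to \QuadraticModule \oplus \HyperbolicModule^{\oplus k}$, which is automatically split injective. Rearranging and iteratively applying the cancellation theorem to strip off the $k$ auxiliary hyperbolic summands yields an isomorphism $\QuadraticModule \cong \HyperbolicModule^{\oplus 2} \oplus Q'$, from which a vertex of $\SimplicialComplexS^\QuadraticModule$ is read off directly.

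For the second item, let $h_0, h_1$ be two vertices and aim at a path of length at most $4$ in the $1$-skeleton. First, since $\StableWittIndex{h_i(\HyperbolicModule)^\perp} \geq 3 \geq 2$, the first item applied inside $h_i(\HyperbolicModule)^\perp$ furnishes vertices $h_i' \in \SimplicialComplexS^\QuadraticModule$ with $h_i \perp h_i'$ for $i = 0, 1$. This produces partial paths $h_0 - h_0'$ and $h_1' - h_1$. To close the gap by a single edge $h_0' - h_1'$, I would choose $h_0'$ and $h_1'$ with foresight so that the sub-$\FormParameter$-quadratic module $M = h_0(\HyperbolicModule) \oplus h_0'(\HyperbolicModule) \oplus h_1'(\HyperbolicModule) \oplus h_1(\HyperbolicModule)$ splits off $\QuadraticModule$, and then invoke the first item once more inside $M^\perp$ to produce a hyperbolic summand simultaneously orthogonal to $h_0'$ and $h_1'$.

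The main obstacle that I expect is precisely the arrangement that makes $M$ (or at least a suitable enlargement of $h_0(\HyperbolicModule) + h_1(\HyperbolicModule)$) non-degenerate, so that the splitting required to invoke cancellation in its orthogonal complement is available. This is where the hypothesis $\StableWittIndex{\QuadraticModule} \geq 4$ is genuinely used: it provides enough room to modify $h_0'$ and $h_1'$ within the orthogonal complements of $h_0(\HyperbolicModule)$ and $h_1(\HyperbolicModule)$ so that the enlarged module becomes non-degenerate, while still leaving $M^\perp$ of positive stable Witt index. Handling the possible degeneracy of $h_0(\HyperbolicModule) + h_1(\HyperbolicModule)$ via such a controlled perturbation is the technically delicate step of the argument.
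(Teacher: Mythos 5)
Your proposal diverges from the paper at the outset: the paper does not reprove this statement but cites it --- for $\Ring = \Integers$ it is exactly \cite[Proposition~4.3]{GalatiusRandalWilliamsStability} (including the diameter bound), and for fields of characteristic zero the paper only explains which ingredients of that proof (their Propositions~3.3, 3.4, 4.1 and Corollary~4.2) carry over, with \cite[Lemma~6.6]{mirzaiivanderkallen2002} supplying the transitivity input. You instead attempt a from-scratch argument, and both halves have genuine gaps. For the first item, the iterated cancellation does not close over $\Integers$. The cancellation available here (see the proof of \autoref{lem:TransitivityUnitaryGroups}) lets you cancel a hyperbolic summand only when the module retained after cancellation still has Witt index at least $3$, since one only knows $\on{usr}(\Ring) \leq 3$. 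Starting from $\QuadraticModule \oplus \HyperbolicModule^{\oplus k} \cong \HyperbolicModule^{\oplus (k+2)} \oplus P$ you can strip summands only while the explicit hyperbolic part on the right keeps Witt index $\geq 3$; you get stuck at $\QuadraticModule \oplus \HyperbolicModule \cong \HyperbolicModule^{\oplus 3} \oplus P$, because the last step would require $\WittIndex{\HyperbolicModule^{\oplus 2} \oplus P} \geq 3$, which is unknown. The implication $\StableWittIndex{\QuadraticModule} \geq 2 \Rightarrow \WittIndex{\QuadraticModule} \geq 1$ is precisely the non-formal content of \cite[Propositions~4.1--4.3]{GalatiusRandalWilliamsStability}: over $\Integers$ it rests on arithmetic input (classification of indefinite unimodular forms and Eichler-type transitivity), not on Warfield-style cancellation. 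Your claimed conclusion $\QuadraticModule \cong \HyperbolicModule^{\oplus 2} \oplus Q'$ is moreover stronger than what is asserted or needed.

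For the second item there are two problems. First, a rank count: if $M = h_0(\HyperbolicModule) \oplus h_0'(\HyperbolicModule) \oplus h_1'(\HyperbolicModule) \oplus h_1(\HyperbolicModule)$ really were an orthogonal sum of four hyperbolic planes, then $\StableWittIndex{M^\perp} = \StableWittIndex{\QuadraticModule} - 4 \geq 0$ only, so under the hypothesis $\StableWittIndex{\QuadraticModule} \geq 4$ you cannot invoke the first item inside $M^\perp$; your path scheme would need $\StableWittIndex{\QuadraticModule} \geq 6$. Second, and more seriously, the step you yourself flag as ``technically delicate'' --- perturbing $h_0'$ and $h_1'$ so that a suitable non-degenerate submodule containing $h_0(\HyperbolicModule) + h_1(\HyperbolicModule)$ splits off --- is exactly where the whole proof lives (it is the content of \cite[Propositions~3.3 and~3.4]{GalatiusRandalWilliamsStability}), and you give no argument for it. The submodule $h_0(\HyperbolicModule) + h_1(\HyperbolicModule)$ can be degenerate and of any rank between $2$ and $4$, and \autoref{lem:split-injection-non-degenerate-quadratic-module} gives no splitting for it; producing a hyperbolic summand orthogonal to prescribed, possibly non-orthogonal vertices requires the unimodular-vector and transitivity machinery. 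As it stands the proposal identifies the right difficulty but does not resolve it, so it does not constitute a proof.
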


\begin{proof}
	For $\Ring = \Integers$ this is exactly the content of \cite[Proposition
	4.3]{GalatiusRandalWilliamsStability}; the bound on the diameter is
	established in its proof.
	If $\Ring$ is a field of characteristic zero, one can run exactly the same argument using that: the analogue of \cite[Proposition 3.3]{GalatiusRandalWilliamsStability} holds by the same proof; the analogue of \cite[Proposition 3.4]{GalatiusRandalWilliamsStability} also holds by the same proof using the previous item; the analogue of \cite[Proposition 4.1]{GalatiusRandalWilliamsStability} holds by e.g.\ \cite[Lemma 6.6]{mirzaiivanderkallen2002} (use $k = 1$ and that $\on{usr}(\Ring) = 1$ for local rings by \cite[Example 6.5]{mirzaiivanderkallen2002}); the analogue of \cite[Corollary 4.2]{GalatiusRandalWilliamsStability} holds by the same proof using the previous item.
\end{proof}

With these preparation, we are now ready to discuss the proof of \autoref{thm:UniformAcyclicityResultForHyperbolicSplitInjections}. The argument is the uniformly bounded analogue of \cite[Proof of Theorem 3.2]{GalatiusRandalWilliamsStability}; its structure and the notation used in it is parallel to that of \cite[Proof of Theorem 3.2]{GalatiusRandalWilliamsStability}.

\begin{proof}[Proof of \autoref{thm:UniformAcyclicityResultForHyperbolicSplitInjections}]
	The function $\UBCUnitaryGroups: \NaturalNumbers \to \Reals$ is
	constructed to be monotonous; i.e.\ $\UBCUnitaryGroups(g_1) \leq
	\UBCUnitaryGroups(g_2)$ if $g_1 \leq g_2$. Using this property, one
	can check the local uniform acyclicity statement
	$\lCMub(\SimplicialComplexS^\QuadraticModule) \geq \lfloor
	\frac{g-1}{2} \rfloor$ at each step of the proof of the global
	uniform acyclicity statement by invoking the induction hypothesis
	exactly as described in the first paragraph of \cite[Proof of Theorem
	3.2]{GalatiusRandalWilliamsStability} (The main idea here is that the
	links of a simplex, will again be isomorphic to complexes of
	unordered hyperbolic $\FormParameter$-split injections).

	We now use induction on $g$ to check the global statement, i.e.\ that $\SimplicialComplexS^\QuadraticModule$ is uniformly $\lfloor \frac{g-4}{2} \rfloor$-acyclic:	For $g \in \{0,1\}$, the claim is void. Hence we define $\UBCUnitaryGroups(0) \coloneqq 0$ and $\UBCUnitaryGroups(1) \coloneqq 0$. For $g \in \{2,3\}$, the claim is that $\SimplicialComplexS^\QuadraticModule$ is non-empty. This follows from the first part of \autoref{prop:UnitaryGroupsUniformAcyclicityLowDegrees}. The UBC part of the claim is void, therefore we may set $\UBCUnitaryGroups(2) \coloneqq 0$ and $\UBCUnitaryGroups(3) \coloneqq 0$. For $g \in \{4,5\}$, we claim that $\SimplicialComplexS^\QuadraticModule$ is path-connected and the UBC part asserts that $\SimplicialComplexS^\QuadraticModule$ has finite diameter. This follows from the second part of \autoref{prop:UnitaryGroupsUniformAcyclicityLowDegrees}. Using \autoref{rem:0UBC}, we set $\UBCUnitaryGroups(4) \coloneqq 2$ and $\UBCUnitaryGroups(5) \coloneqq 2$.

	For the induction step, assume that $g > 5$ and that we have constructed a monotonous function $\UBCUnitaryGroups: \{0, \dots, g-1\} \to \Reals$ such that for any quadratic module
	$\QuadraticModule$ satisfying $\StableWittIndex{\Module} = g'$ for
	some
	$g' \in \{0, \dots, g-1\}$ it holds that $\SimplicialComplexS^\QuadraticModule$ is
	uniformly $\lfloor \frac{g'-4}{2} \rfloor$-acyclic and satisfies $\lCMub(\SimplicialComplexS^\QuadraticModule) \geq \lfloor \frac{g'-1}{2} \rfloor$ with UBC constant $\UBCUnitaryGroups(g')$, respectively.

	Let $\Module$ be a $\FormParameter$-quadratic module with
	$\StableWittIndex{\Module} \geq g$. Since $g > 5$,
	\autoref{prop:UnitaryGroupsUniformAcyclicityLowDegrees}
	implies that $\SimplicialComplexS^\QuadraticModule \neq \emptyset$. Therefore we find a vertex $h: \HyperbolicModule \to \Module$, which is kept fixed throughout the argument. Let us denote by $\Module' = h(\HyperbolicModule)^\perp \subset \Module$ the orthogonal complement of the image of $h$ in $\Module$. Recall from \autoref{expl:hyperbolicmodule}, that $\HyperbolicModule$ has a basis $\{e,f\}$. Then $\Module \cong \HyperbolicModule \oplus \Module'$ and $h(e)^\perp \cong \Module' \oplus
	\Ring\{e\}$. The inclusion $\Module' \hookrightarrow \Module$ factors
	through the $\FormParameter$-module $\Module' \oplus \Ring\{e\}$, and this factorization
	induces inclusions of simplicial complexes
	\begin{equation}
		\SimplicialComplexS^{\QuadraticModule'}
		\xhookrightarrow{\textcircled{1}}
		\SimplicialComplexS^{\QuadraticModule' \oplus \Ring\{e\}}
		\xhookrightarrow{\textcircled{2}}
		\SimplicialComplexS^{\QuadraticModule}.
	\end{equation}
	Our first aim is to prove that the inclusions $\textcircled{1}$ and $\textcircled{2}$ are both uniformly
	$\lfloor \frac{g-4}{2} \rfloor$-acyclic using \autoref{lem:FullSubcomplex}.

	\textsl{Claim 1: The map $\textcircled{1}$ is uniformly
	$\lfloor \frac{g-4}{2} \rfloor$-acyclic with UBC constant} $$K^{\textcircled{1}}(g) \coloneqq \max_{q \leq \lfloor \frac{g-4}{2} \rfloor} \UBCConstantTechnicalLemmaI{q}{\lfloor \frac{g-4}{2} \rfloor}{\UBCUnitaryGroups(g - 1)}.$$
	As in \cite[Proof of Theorem 3.2]{GalatiusRandalWilliamsStability}, we use that the projection $\pi: \Module' \oplus \Ring\{e\} \to \Module'$ is a morphism of $\FormParameter$-modules, which induces a retraction $\pi: \SimplicialComplexS^{\Module' \oplus \Ring\{e\}} \to \SimplicialComplexS^{\Module'}$. For any $p$-simplex $\Simplex{p}$ of $\SimplicialComplexS^{\Module' \oplus \Ring\{e\}}$ having no vertices in $\SimplicialComplexS^{\Module'}$, it then holds that
	\[
	\SimplicialComplexS^{\Module'} \cap \Link{\SimplicialComplexS^{\Module' \oplus \Ring\{e\}}}{\Simplex{p}}
	 = \Link{\SimplicialComplexS^{\Module'}}{\pi(\Simplex{p})}.
	\]
	To invoke \autoref{lem:FullSubcomplex}, we need to check that this complex is uniformly $(\lfloor \frac{g-4}{2} \rfloor-p-1)$-acyclic with a suitable UBC constant. The splitting $\QuadraticModule \cong \Module' \oplus \HyperbolicModule$ implies that $\StableWittIndex{\Module'} \geq g-1$, hence the induction hypothesis implies that $\lCMub(\Module') \geq \lfloor \frac{g'-1}{2} \rfloor$ with UBC constant $\UBCUnitaryGroups(g-1)$. Therefore, $\Link{\SimplicialComplexS^{\Module'}}{\pi(\Simplex{p})}$ is uniformly $(\lfloor \frac{g - 2}{2} \rfloor - p - 2) = (\lfloor \frac{g-4}{2} \rfloor - p - 1)$-acyclic with constant UBC-constant $\UBCUnitaryGroups(g - 1)$ and the claim follows from \autoref{lem:FullSubcomplex}.

	\textsl{Claim 2: The map $\textcircled{2}$ is uniformly $\lfloor \frac{g-4}{2} \rfloor$-acyclic with UBC constant}
	$$K^{\textcircled{2}}(g) \coloneqq \max_{q \leq \lfloor \frac{g-4}{2} \rfloor} \UBCConstantTechnicalLemmaI{q}{\lfloor \frac{g-4}{2} \rfloor}{\max_{p \leq g-2} \UBCUnitaryGroups(g - p - 2)}.$$
	As in \cite[Proof of Theorem 3.2]{GalatiusRandalWilliamsStability}, we note that $\Module' \oplus \Ring\{e\} = h(e)^\perp$ is the orthogonal complement of $e$ in $\HyperbolicModule$, consider a $p$-simplex $\Simplex{p} = \{h_0, \dots, h_p\}$ in $\SimplicialComplexS^{\QuadraticModule}$ having no vertices in $\SimplicialComplexS^{\QuadraticModule' \oplus \Ring\{e\}}$ and denote by $\Module'' = (\oplus_0^p h_i(\HyperbolicModule))^\perp \subset \QuadraticModule$. Then
	\[
	\SimplicialComplexS^{\Module' \oplus \Ring\{e\}} \cap \Link{\SimplicialComplexS^{\Module'}}{\Simplex{p}}
	= \SimplicialComplexS^{\Module'' \cap h(e)^\perp}.
	\]
	As in \cite[Proof of Theorem 3.2]{GalatiusRandalWilliamsStability}, $\StableWittIndex{\Module'' \cap h(e)^\perp} \geq g - p - 2$. The induction hypothesis therefore implies that $\SimplicialComplexS^{\Module'' \cap h(e)^\perp}$ is at least uniformly $\lfloor \frac{(g- p - 2) - 4}{2} \rfloor \geq (\lfloor \frac{g-4}{2} \rfloor - p - 1)$-acyclic with constant UBC constant $\UBCUnitaryGroups(g - p - 2)$. In particular, it follows that for every $p$ the UBC constant of $\SimplicialComplexS^{\Module' \oplus \Ring\{e\}} \cap \Link{\SimplicialComplexS^{\Module'}}{\Simplex{p}}$ in degree $q \leq \lfloor \frac{g-4}{2} \rfloor-p-1$ can be chosen as $\max_{p \leq g-2} \UBCUnitaryGroups(g - p - 2)$. Therefore \autoref{lem:FullSubcomplex} implies the claim.

	\textsl{Completing the proof.} \autoref{lem:compositions-of-highly-acyclic-maps}, Claim 1 and Claim 2 imply that the inclusion $\SimplicialComplexS^{\QuadraticModule'} \hookrightarrow	\SimplicialComplexS^{\QuadraticModule}$ is uniformly $\lfloor \frac{g-4}{2} \rfloor$-acyclic and that its UBC constant in degree $q \leq \lfloor \frac{g-4}{2} \rfloor$ can be chosen as the value $\UBCConstantFactorThrough{q}{K^{\textcircled{2}}(g)}{K^{\textcircled{1}}(g)}$. The image of this composition is contained in the star of the vertex $h$ in $\SimplicialComplexS^{\QuadraticModule}$. This is a cone and uniformly $\infty$-acyclic with UBC-constant 1 by \autoref{lem:ConeUBC}. Therefore, \autoref{lem:factoring-through-highly-uniformly-acyclic-subcomplex} implies that $\SimplicialComplexS^{\QuadraticModule}$ is uniformly $\lfloor \frac{g-4}{2} \rfloor$-acyclic and that its UBC constant in degree $q \leq \lfloor \frac{g-4}{2} \rfloor$ can be chosen as $\UBCConstantFactorThrough{q}{\UBCConstantFactorThrough{q}{K^{\textcircled{2}}(g)}{K^{\textcircled{1}}(g)}}{1}$. Removing the dependence on $q$, we may choose
	$$K^{\on{Aut}}_{\on{help}}(g) \coloneqq \max_{q \leq \lfloor \frac{g-4}{2} \rfloor} \UBCConstantFactorThrough{q}{\UBCConstantFactorThrough{q}{K^{\textcircled{2}}(g)}{K^{\textcircled{1}}(g)}}{1}$$
	as the UBC constant of $\SimplicialComplexS^{\QuadraticModule}$ in degree $q \leq \lfloor \frac{g-4}{2} \rfloor$. To ensure the monotonicity of the function $\UBCUnitaryGroups: \NaturalNumbers \to \Reals$, we may hence define
	\begin{equation*}
	\UBCUnitaryGroups(g) \coloneqq \max \{K^{\on{Aut}}_{\on{help}}(g), \UBCUnitaryGroups(g-1), \dots, \UBCUnitaryGroups(0)\}. \qedhere
	\end{equation*}
\end{proof}

\emergencystretch=10em
\printbibliography
\end{document}